\definecolor{darkgreen}{rgb}{0.1,0.7,0.1}
\definecolor{darkred}{rgb}{0.7,0.1,0.1}
\newcommand{\llrrbracket}[1]{
  \left[\mkern-3mu\left[#1\right]\mkern-3mu\right]}
\newtheorem{theorem}{Theorem}
\newtheorem{lemma}{Lemma}[section]
\newtheorem{proposition}[lemma]{Proposition}
\newtheorem{corollary}[lemma]{Corollary}
\newtheorem{remark}[lemma]{Remark}
\newcommand{\ru}{\hat{r}}
\newcommand{\thetau}{\hat{\theta}}
\newcommand{\yu}{\hat{y}}
\newcommand{\cc}{\complement}
\newcommand{\cotan}{\mbox{cotan\,}}
\newcommand{\arccotan}{\mbox{arccotan\,}}
\newcommand{\cLa}{\cL^*}
\newcommand{\Eg}{\mathbf{E}}
\newcommand{\eps}{\varepsilon}
\newcommand\symb[2][\bf]{{\mathchoice{\hbox{#1#2}}{\hbox{#1#2}}%
        {\hbox{\scriptsize#1#2}}{\hbox{\tiny#1#2}}}}
\def\R{{\symb R}}
\def\N{{\symb N}}
\def\Z{{\symb Z}}
\def\C{{\symb C}}
\def\P{{\symb P}}
\def\un{\mathbf{1}}
\renewcommand{\P}{\mathbb{P}}
\newcommand{\E}{\mathbb{E}}
\newcommand{\bbQ}{\mathbb{Q}}
\newcommand{\cB}{\mathcal{B}}
\newcommand{\cC}{\mathcal{C}}
\newcommand{\cD}{\mathcal{D}}
\newcommand{\cE}{\mathcal{E}}
\newcommand{\cF}{\mathcal{F}}
\newcommand{\cG}{\mathcal{G}}
\newcommand{\cH}{\mathcal{H}}
\newcommand{\cL}{\mathcal{L}}
\newcommand{\cM}{\mathcal{M}}
\newcommand{\cN}{\mathcal{N}}
\newcommand{\cO}{\mathcal{O}}
\newcommand{\cW}{\mathcal{W}}
\newcommand{\ccC}{\mathscr{C}}
\newcommand{\ccD}{\mathscr{D}}
\newcommand{\ccN}{\mathscr{N}}
\newcommand{\ccQ}{\mathscr{Q}}
\newcommand{\ccV}{\mathscr{V}}
\newcommand{\gl}{\lambda}
\begin{document}

\title[Localization crossover for the Anderson Hamiltonian]{Localization crossover\\for the continuous Anderson Hamiltonian in $1$-d}

\author{Laure Dumaz}
\address{
CNRS \& Department of Mathematics and Applications, \'Ecole Normale Sup\'erieure (Paris), 45 rue d’Ulm, 75005 Paris, France}
\email{laure.dumaz@ens.fr}

\author{Cyril Labb\'e}
\address{
Universit\'e Paris-Dauphine, PSL University, UMR 7534, CNRS, CEREMADE, 75016 Paris, France\\
\& \'Ecole Normale Sup\'erieure, UMR 8553, CNRS, DMA, 75005 Paris, France}
\email{labbe@ceremade.dauphine.fr}

\vspace{2mm}

\date{\today}

\maketitle

\begin{abstract}
We investigate the behavior of the spectrum of the continuous Anderson Hamiltonian $\cH_L$, with white noise potential, on a segment whose size $L$ is sent to infinity. We zoom around energy levels $E$ either of order $1$ (Bulk regime) or of order $1\ll E \ll L$ (Crossover regime). We show that the point process of (appropriately rescaled) eigenvalues and centers of mass converge to a Poisson point process. We also prove exponential localization of the eigenfunctions at an explicit rate. In addition, we show that the eigenfunctions converge to well-identified limits: in the Crossover regime, these limits are \emph{universal}. Combined with the results of our companion paper \cite{DLCritical}, this identifies completely the transition between the localized and delocalized phases of the spectrum of $\cH_L$. The two main technical challenges are the proof of a two-points or Minami estimate, as well as an estimate on the convergence to equilibrium of a hypoelliptic diffusion, the proof of which relies on Malliavin calculus and the theory of hypocoercivity.

\medskip

\noindent
{\bf AMS 2010 subject classifications}: Primary 60H25, 60J60; Secondary 81Q10. \\
\noindent
{\bf Keywords}: {\it Anderson Hamiltonian; Hill's operator; localization; diffusion; Poisson statistics; hypocoercivity; Malliavin calculus.}
\end{abstract}

\setcounter{tocdepth}{1}
\tableofcontents

\section{Introduction}

In a celebrated article~\cite{Anderson58}, Anderson proposed the Hamiltonian $-\Delta + V$ on the lattice $\Z^d$ as a simplified model for electron conduction in a crystal. The so-called \emph{disorder} $V$ is a random potential that models the defects of the crystal. The question was whether those defects can trap the electron i.e. localize the electronic wave functions. He argued that for a large enough disorder $V$, the spectrum is pure point and the eigenfunctions exponentially \emph{localized} -- a phenomenon now referred to as \emph{Anderson localization}. Mathematically, one can see this model as the interpolation between the discrete Laplacian $-\Delta$ on the grid $\Z^d$, which has delocalized eigenfunctions and the multiplication by a potential $V$ on each site, whose eigenfunctions are the coordinate vectors.\\

One of the first rigorous results of Anderson localization was obtained by Goldsheid, Molchanov and Pastur~\cite{GMP1977} and Molchanov~\cite{molcanov1980}: it concerned the continuum analogue of the above model in dimension $d=1$, namely the operator $-\partial_x^2 +V$ on $\R$, for some specific random potential $V$. This was followed by a series of major articles~\cite{Kunz,FS,AM,Minami} to name but a few, in the discrete or the continuum setting, and for general dimension $d\ge 1$. One can summarize the main results as follows: (1) In dimension $d=1$, Anderson localization holds in the whole spectrum; (2) In dimension $d \geq 2$, for a large enough disorder or at a low enough energy, Anderson localization holds. In dimension $d \geq 3$, it is expected (but not proved) that there is a delocalized phase for $V$ weak enough while in dimension $d=2$, the question remains open. We refer to~\cite{CarLac,Kirsch} for more details.\\

In the present article, we consider the case where the potential is a white noise $\xi$ in dimension $d=1$. This is a random Gaussian distribution with covariance given by the Dirac delta: it models physical situations where the disorder is totally uncorrelated. Due to its universality property (white noise arises as scaling limit of appropriately rescaled noises with finite variance), this is a natural choice of potential in the continuum. However, white noise is a highly irregular potential as it is only distribution valued, and therefore falls out of scope of virtually all general results of the literature.\\

It is standard to tackle Anderson localization by considering first the Hamiltonian truncated to a finite box, before passing to the infinite volume limit. In this article we focus on the truncated Hamiltonian $\cH_L = -\partial^2_x + \xi$ on $(-L/2,L/2)$ with Dirichlet b.c., and investigate its \emph{entire} spectrum in the limit $L\to\infty$. More precisely, we study the local statistics of the operator recentered around energy levels $E$ that are either finite or diverge with $L$: note that the infinite volume limit only captures energy levels that do not depend on $L$. The results of this article, complemented by those presented in our companion papers~\cite{DL17,DLCritical}, reveal a rich variety of behaviors for the eigenvalues/eigenfunctions of $\cH_L$ according to the energy regime considered: in particular, \emph{delocalized} eigenfunctions arise at large enough energies. This is to be compared with the aforementioned general results of Anderson localization in dimension $d=1$ that assert localization in the full spectrum. The rigorous proof of Anderson localization for the infinite volume Hamiltonian with white noise potential will be presented in another article~\cite{DLAndLoc}. \\

Let us now present the main results of the present article, and their connections with the results already obtained in~\cite{DL17,DLCritical}. We recenter $\cH_L$ around some energy level $E=E(L)$ and distinguish five regimes:\begin{enumerate}
\item Bottom: $E \sim -(\frac38 \ln L)^{2/3}$,
\item Bulk: $E$ is fixed with respect to $L$,
\item Crossover: $1 \ll E \ll L$,
\item Critical: $E \asymp L$,
\item Top: $E\gg L$.
\end{enumerate}
For each regime, we investigate the local statistics of the eigenvalues $\gl_i$ near $E$, and the behavior of the corresponding normalized eigenfunctions $\varphi_i$. In~\cite{DL17,DLCritical}, we covered the Bottom, Critical and Top regimes, while in the present article we derive the Bulk and Crossover regimes. The main results are summarized on Figure \ref{Table}.

\begin{figure}
\footnotesize\renewcommand{\arraystretch}{1.5}
\begin{tabular}{c|c|c|c|c|c|}
\cline{2-6}& \textbf{Bottom} & \textbf{Bulk} & \textbf{Crossover} & \textbf{Critical} & \textbf{Top}\\
& $E\sim -(\frac38 \ln L)^{2/3}$ & $E = O(1)$ & $1 \ll E \ll L$ & $\quad E\asymp L\quad$ & $E\gg L$\\
\hline
\multicolumn{1}{|l|}{Eigenval.} & \multicolumn{3}{c|}{Poisson} & Sch & Picket Fence\\
\hline
\multicolumn{1}{|l|}{Eigenfct.} & \multicolumn{3}{c|}{Localized} & \multicolumn{2}{c|}{Delocalized}\\\cline{2-4}\cline{4-6}
\multicolumn{1}{|l|}{localis.~length} & $O(1/\sqrt{|E|})$ & $O(1)$ & $O(E)$ & \multicolumn{2}{c|}{$L$}\\\cline{2-6}
\multicolumn{1}{|l|}{asymp.~shape} & Determ. \& Specific & Random \& Specific & \multicolumn{2}{c|}{Random \& Universal} & Determ. \& Universal\\
\multicolumn{1}{|l|}{}&$\frac1{\cosh(t)}$ & $Y_E(t)$ & \multicolumn{2}{c|}{$\exp(-\frac{|t|}{8} + \frac1{2\sqrt2} \cB(t))$} & $\sin(t)$\\
\hline
\end{tabular}\label{Table}\caption{The five regimes of $\cH_L$.}
\end{figure}
\normalsize

\medskip

The transition between Poisson statistics and Picket fence occurs in the Critical regime where $E\asymp L$. In~\cite{DLCritical}, we prove that, in that regime, the eigenvalue statistics are given by the Sch point process introduced by Kritchevski, Valk\'o and Vir\'ag~\cite{KVV}. Actually, we obtain a convergence at the operator level, and show not only that the eigenvalues converge towards Sch but also that the eigenfunctions converge to a \emph{universal} limit given by the exponential of a two-sided Brownian motion plus a negative linear drift. Note that this limit lives on a finite interval thus making the eigenfunctions \emph{delocalized}. Our denomination universal is justified by the fact that this shape already appeared in the work of Rifkind and Vir\'ag~\cite{RV} who conjectured that it should arise in various critical models.
\medskip

In the present article, we show that in the Bulk and Crossover regimes, the local statistics of the eigenvalues (jointly with the centers of mass of the eigenfunctions) converge to a Poisson point process. Moreover, we establish exponential decay of the eigenfunctions (from their centers of mass) at an explicit rate, which is of order $1$ in the Bulk regime and of order $E$ in the Crossover regime.

Actually we provide much more information about the eigenfunctions. We show that the eigenfunctions (recentered at their centers of mass and rescaled in space by $1$ in the Bulk and by $E$ in the Crossover) converge to \emph{explicit} limits. In the Bulk regime, the limits are some well-identified diffusions $Y_E$, whose law depend of $E$. In the Crossover regime, the limits are given by the \emph{same} universal shape as in the Critical regime: namely, the exponential of a two-sided Brownian motion plus a negative linear drift. However since the space scale is $E \ll L$,  the eigenfunctions are still localized and the limiting shape lives on an infinite interval, in contrast with the Critical regime. As $E\uparrow L$, one formally recovers delocalized states and this justifies a posteriori the denomination \emph{Crossover}: this regime of energy interpolates between the (localized) Bulk regime and the (delocalized) Critical regime and shares features with both (Poisson statistics with the former and universal shape with the latter).

\medskip

Finally, in the Bottom regime, investigated in~\cite{DL17}, we also obtained Poisson statistics for the eigenvalues (and centers of mass). Furthermore we showed that the eigenfunctions are strongly localized: at space scale $1/(\log L)^{1/3}$ and recentered at their centers of mass, they converge to the deterministic limit $1/\cosh$.

\bigskip

Overall, our results provide a complete transition from strongly localized states (Bottom regime) to totally delocalized states (Top regime) and identifies explicitly the local statistics of the eigenvalues together with the asymptotic shapes of the eigenfunctions.

\bigskip

Let us now comment on the technical challenges that these results represent. Since white noise is out of scope of usual standing assumptions, we do not rely on general results from Anderson localization literature, so that our article is self-contained. Let us point out two major difficulties that we encounter. First, the derivation of the two-points estimate, often called \emph{Minami estimate}~\cite{Minami}, is delicate in the context of the irregular potential given by the white noise, whereas some general $1$-d results~\cite{KloppTunnel} are available for some related models with smoother potential. To prove this estimate, we rely on a thorough study of a joint diffusion, see Section \ref{Sec:TwoPoints}. Second, in the Crossover regime the phase function rotates at an unbounded speed on the unit circle and this yields many technical challenges. In particular, to obtain \emph{quantitative} (with respect to the unbounded parameter $E$) estimates on the convergence to equilibrium of this phase, we cannot simply apply H\"ormander's Theory of hypoellipticity in contrast with the situation in~\cite{molcanov1980,CarmonaDuke}: we obtain these estimates using Malliavin Calculus and the theory of hypocoercivity, and this constitutes one of the main technical achievements of the present article, see Section \ref{Sec:Expo}.

\bigskip

Let us relate our results to other studies in the literature. The discrete counterpart\footnote{Note that in our case, due to the scaling property of the white noise, our study covers the case where there is an additional parameter $\sigma_L \to 0$ in front of the white noise.} of our model is given by the $N\times N$ random tridiagonal matrix $-\Delta_N + \sigma_N \,V_N$ where $\Delta_N$ is the discrete Laplacian on $\{1,\ldots,N\}$, $V_N$ is a diagonal matrix with i.i.d.~entries of mean $0$ and variance $1$ and $\sigma_N$ a positive parameter, possibly depending on $N$. If $\sigma_N$ does not depend on the size $N$ of the matrix, then the limit $N\to\infty$ of the model falls into the scope of general $1$-d Anderson localization results, and the spectrum is localized. On the other hand, one recovers delocalized states when $\sigma_N = O(N^{-1/2})$, see~\cite{DSS}. Actually the point process of eigenvalues of the matrix in the bulk converges to the Sch random point process~\cite{KVV} and the eigenfunctions are delocalized~\cite{RV}. There are also connections with recent investigations in \cite{Nakano2014, KN17, Nakano2019} of the aforementioned Russian model of Goldsheid, Molchanov and Pastur~\cite{GMP1977}, in which, as in the random matrix model, a parameter depending on the size of the system is added in front of the potential to reduce its influence.

\medskip

Our description of the local statistics of the eigenvalues/centers of mass is in the vein of recent results by Germinet and Klopp~\cite{GK}, who proved precise results on the local and global eigenvalue statistics for a large class of Schr\"odinger operators. On the other hand, we provide a complete and explicit description of the asymptotic shape taken by the eigenfunctions: to the best of our knowledge, such results are very rare in the literature on Anderson localization.

\section{Main results}\label{Sec:MainResults}

Let $\xi$ be a Gaussian white noise on $\R$, that is, the derivative of a Brownian motion $B$. We consider the truncated Anderson Hamiltonian (sometimes also called Hill's operator)
\begin{equation}\label{Eq:Hamiltonian}
\cH_L = -\partial^2_x + \xi\;,\quad x\in(-L/2,L/2)\;,
\end{equation}
endowed with homogeneous Dirichlet boundary conditions. It was shown in~\cite{Fukushima} that this operator is self-adjoint on $L^2(-L/2,L/2)$ with pure point spectrum of multiplicity one bounded from below $\lambda_1 < \lambda_2 < \ldots$ We let $(\varphi_{k})_k$ be the corresponding eigenfunctions normalized in $L^2$. These r.v.~depend on $L$, but for notational simplicity we omit writing this dependence. 

\smallskip

This operator has a deterministic density of states $n(E)$, see~\cite{Halperin,Fukushima}. This is defined as $n(E) := dN(E)/dE$ where
\begin{equation}\label{Eq:NE}
N(E) := \lim_{L\to\infty} \frac1{L} \#\{\lambda_i: \lambda_i \le E\}\;,\quad E\in\R\;.
\end{equation}
Here the convergence is almost sure and the limit is deterministic. Roughly speaking, $1/(L n(E))$ measures the typical spacing between two consecutive eigenvalues lying near $E$ for the operator $\cH_L$. From the explicit integral expression of $n(E)$, see~\cite{Fukushima}, one deduces that $E\mapsto n(E)$ is smooth and that
\begin{align*}
n(E) \sim \frac{1}{2\pi\sqrt E}\;,\quad E\to+\infty\;.
\end{align*}

\bigskip

In the present article, we focus on two regimes of energy:\begin{enumerate}
\item Bulk regime: the energy $E$ is fixed with respect to $L$,
\item Crossover regime: the energy $E = E(L)$ satisfies $1 \ll E \ll L$,
\end{enumerate}
and investigate the asymptotic behavior as $L\to\infty$ of the eigenvalues $\lambda_i$ and of the eigenfunctions, seen as probability measures by considering $\varphi^2_i(t) dt$. For every eigenfunction $\varphi_i$, a relevant statistics is\footnote{Other statistics could be considered without altering the results.} the \emph{center of mass} $U_i$ defined through
$$ U_i := \int_{[-L/2,L/2]} t \varphi^2_i(t) dt\;.$$

Our first result shows convergence, in both regimes, of the point process of rescaled eigenvalues and centers of mass.

\begin{theorem}[Poisson statistics]\label{Th:Poisson}
In the Bulk and the Crossover regimes, the following random measure on $\R \times [-1/2,1/2]$
$$ \sum_{i\ge 1} \delta_{(L\, n(E)(\lambda_i - E), U_i/L)}$$
converges in law as $L\to\infty$ to a Poisson random measure on $\R \times [-1/2,1/2]$ of intensity $d\gl \otimes du$.
\end{theorem}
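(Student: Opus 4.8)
\noindent\emph{Proof strategy.}
The plan is to encode the spectral problem through the Pr\"ufer phase of $\cH_L$ and then prove Poisson convergence block by block, via a classical null-array limit theorem. First I would invoke Sturm oscillation theory: for a fixed energy $\lambda$, let $\psi_\lambda$ solve $\cH_L\psi_\lambda=\lambda\psi_\lambda$ with $\psi_\lambda(-L/2)=0$, $\psi'_\lambda(-L/2)=1$, and write $(\psi_\lambda,\psi'_\lambda)=\rho_\lambda(\sin\theta_\lambda,\cos\theta_\lambda)$ in polar coordinates. Then $\#\{i:\lambda_i\le\lambda\}$ equals (up to a boundary term) the number of $\pi\Z$-crossings of $\theta_\lambda$ on $(-L/2,L/2)$, and $\theta_\lambda$ solves an autonomous SDE driven by $\xi$ (the Riccati/Pr\"ufer equation), whose law does not involve $\rho_\lambda$. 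Setting $\lambda=E+a/(L\,n(E))$ turns the problem into the study, as $a$ ranges over a compact window, of the $\pi$-crossings of the phase diffusion $\theta^{(a)}$; monotonicity in $\lambda$ (Sturm) makes these crossings nested, so the rescaled eigenvalue point process is read off a single increasing family of phases. Here I would also use the exponential localization of the $\varphi_i$ established elsewhere in the article: it forces, outside an event of vanishing probability, the center of mass $U_i$ to lie within $o(L)$ of the spatial site where the corresponding crossing occurs, so that the mark $U_i/L$ may be replaced by (the normalized position of) that crossing.

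Next I would partition $[-L/2,L/2]$ into $\asymp L/\ell_L$ consecutive blocks of mesoscopic length $\ell_L$ --- with $1\ll\ell_L\ll L$ in the Bulk regime, and $E\ll\ell_L\ll L$ in the Crossover regime so that each block still contains many rotations of the phase --- and write the point process as the superposition, over blocks, of the contributions coming from the $\pi$-crossings located in that block. Three ingredients are then needed to run a null-array Poisson limit theorem (in the Kallenberg/Grigelionis form). \textbf{(i)} \emph{Intensity:} the expected number of crossings in a window $[a,b]$ sitting in a given block is $\approx(b-a)\,\ell_L/L$, so that summation over the blocks contained in $L\cdot I$, for $I\subset[-1/2,1/2]$, yields $(b-a)\,|I|$; this is where the explicit asymptotics of $n(E)$ and the invariant density of the phase diffusion enter, and amounts to an ergodic/law-of-large-numbers statement for $\theta^{(a)}$. \textbf{(ii)} \emph{Two-points (Minami) estimate:} the probability that a single block carries two or more crossings in the window is small enough that $\sum_j\P(\text{block }j\text{ carries}\ge2\text{ crossings})\to0$, so that multiply-occupied blocks are negligible. \textbf{(iii)} \emph{Asymptotic independence:} inserting short buffer segments between consecutive blocks and using the exponential convergence to equilibrium of the phase diffusion, the phases at the two ends of a buffer decouple, so that the (rescaled, marked) contributions of distinct blocks become a null array of almost-Bernoulli point processes with asymptotically independent entries. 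Feeding (i)--(iii) into the limit theorem --- equivalently, checking that the void probabilities $\P(\text{no crossing in }[a,b]\times L\cdot I)$ converge to $e^{-(b-a)|I|}$ together with convergence of first moments --- gives the claimed convergence; the product structure in $(\lambda,u)$ follows because, conditionally, the spatial location of a crossing is essentially uniform, by stationarity of the white noise.

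The main obstacle is ingredient (ii), the two-points/Minami estimate. With a white-noise potential the usual perturbative arguments based on regularity of the potential are unavailable, and one must instead control the joint law of the phase diffusions at two nearby energies $\lambda=E+a/(Ln(E))$ and $\lambda'=E+a'/(Ln(E))$ --- in particular rule out that both wind an extra $\pi$ inside the same short block. I would carry this out through a detailed analysis of the associated two-dimensional (joint) diffusion, which is the content of Section~\ref{Sec:TwoPoints}. A second, regime-specific difficulty feeds into ingredients (i) and (iii) in the Crossover: there the phase rotates at the unbounded speed of order $\sqrt E$, so the rate of return to equilibrium must be controlled \emph{quantitatively} in $E$; H\"ormander's hypoellipticity theorem does not suffice, and the required quantitative hypocoercive estimate --- obtained via Malliavin calculus --- is established in Section~\ref{Sec:Expo}. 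The remaining steps (Sturm reduction, block decomposition with buffers, and the transfer of the mark $U_i/L$ through localization) are comparatively routine once these two inputs are in hand.
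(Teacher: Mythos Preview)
Your outline is correct in spirit and identifies exactly the two hard technical inputs the paper also singles out (the Minami-type estimate of Section~\ref{Sec:TwoPoints} and the quantitative hypocoercivity of Section~\ref{Sec:Expo}). The overall architecture---block decomposition, Wegner/intensity, Minami, independence, null-array Poisson limit---is also the paper's. The main structural difference lies in how independence across blocks is obtained.

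You propose to keep the original operator $\cH_L$, insert short buffers between blocks, and use the exponential convergence to equilibrium of the phase diffusion to argue that the block contributions decouple. The paper instead performs a \emph{Dirichlet decoupling}: it introduces auxiliary operators $\cH_L^{(j)}$ on the sub-intervals $(t_{j-1},t_j)$ with Dirichlet b.c., so that the associated point processes $\cN_L^{(j)}$ are \emph{exactly} independent (the white noise on disjoint intervals is independent). This makes the convergence of $\bar\cN_L=\sum_j\cN_L^{(j)}$ to Poisson a clean computation from the Wegner/Minami/intensity estimates; the price is an additional approximation step (Proposition~\ref{Prop:Approx}) showing $\cN_L-\bar\cN_L\to0$, proved by building, on a high-probability event, a bijection between the eigenvalues of $\cH_L$ and those of the $\cH_L^{(j)}$ in $\Delta$ via exponential localization and the spectral gap inherited from the already-established Poisson limit for $\bar\cN_L$.

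Your buffer route is viable, but be aware of a subtlety you gloss over in (iii): Theorem~\ref{Th:CVDensities} gives equilibrium convergence for the \emph{single} phase $\{\theta_\lambda\}_\pi$, whereas the block contribution to the eigenvalue count depends on the \emph{pair} $(\{\theta_\lambda\}_\pi,\{\theta_\mu-\theta_\lambda\}_\pi)$ at the block boundary, and the second coordinate evolves on a time-scale comparable to $L/\Eg$, much longer than any buffer. To decouple you would need, in addition to the single-phase mixing, the control on the smallness of $\alpha=\theta_\mu-\theta_\lambda$ provided by the analysis around Lemma~\ref{Lemma:Eventalphatheta}; in effect the joint-diffusion study of Section~\ref{Sec:TwoPoints} has to be invoked not only for Minami but also for your asymptotic independence. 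The Dirichlet decoupling sidesteps this entirely, at the cost of the comparison lemma. Finally, note that the paper actually derives Theorem~\ref{Th:Poisson} as an immediate corollary of the stronger Theorem~\ref{Th:Shape}; the proof structure is the same, with the additional identification of the intensity in the third coordinate (Proposition~\ref{Prop:Intensity}).
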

In this statement, the convergence holds for the vague topology on the set of Radon measures on $\R \times [-1/2,1/2]$, that is, the smallest topology that makes continuous the maps $m\mapsto \langle m,f\rangle$ with $f:\R\times [-1/2,1/2]\to \R$ bounded continuous and compactly supported in its first variable.

\bigskip

Our second result establishes exponential localization of the eigenfunctions from their centers of mass. In the Bulk regime, the exponential rate is given by $(1/2)\nu_E$ where
\begin{equation}\label{Eq:nuE}
\nu_E = \frac{ \int_0^{+\infty} \sqrt{u} \exp(-2 E u - \frac{u^3}{6} ) du}{\int_0^{\infty} \frac{1}{\sqrt{u}} \exp(-2 E u - \frac{u^3}{6}) du}\,,
\end{equation}
while in the Crossover regime it is given by $1/(2E)$. This rate is of course related to the Lyapounov exponent of the underlying diffusions.

\begin{theorem}[Exponential localization]\label{Th:Loc}
Fix $h >0$ and set $\Delta :=  [E-h/Ln(E),E+h/Ln(E)]$. For every $\eps > 0$ small enough, there exist some r.v.~$c_i>0$ such that:\begin{enumerate}
\item for every eigenvalue $\lambda_i \in \Delta$ we have in the Bulk regime
$$ \big(\varphi_i(t)^2 + \varphi'_i(t)^2\big)^{1/2} \le c_i \exp\Big(-\frac{(\nu_E - \eps)}{2}\,|t-U_i|\Big)\;,\quad \forall t\in [-L/2,L/2]\;,$$
and in the Crossover regime
$$ \Big(\varphi_i(t)^2 + \frac{\varphi'_i(t)^2}{E}\Big)^{1/2} \le \frac{c_i}{\sqrt E} \exp\Big(-\frac{(1-\eps)}{2}\frac{|t-U_i|}{E}\Big)\;,\quad \forall t\in [-L/2,L/2]\;,$$
\item in both regimes, there exists $q=q(\eps) > 0$ such that
$$ \limsup_{L\to\infty} \E\Big[\sum_{\gl_i \in \Delta} c_i^q \Big] < \infty\;.$$
\end{enumerate}
\end{theorem}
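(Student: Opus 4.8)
The plan is to pass, as is classical for one-dimensional Schr\"odinger operators, from the eigenvalue equation $-\varphi_i'' + \xi \varphi_i = \lambda_i \varphi_i$ to a first-order system by introducing the Pr\"ufer-type variables (amplitude $\rho$ and phase $\theta$) associated with the pair $(\varphi_i,\varphi_i')$, appropriately rescaled so that in the Crossover regime $\varphi_i'$ is divided by $\sqrt E$ and the spatial variable by $E$. The quantity $\log\big(\varphi_i(t)^2 + \varphi_i'(t)^2/E\big)^{1/2}$ (resp.\ without the $1/E$ in the Bulk) is, up to a bounded oscillating term depending on the phase, an additive functional of the diffusion driven by $B$; its drift, after averaging over the stationary phase, is exactly $-\nu_E/2$ in the Bulk and $-1/(2E)$ in the Crossover, these being the Lyapounov exponents whose expressions \eqref{Eq:nuE} come from the invariant measure of the phase diffusion. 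First I would record the \emph{a priori} Lyapounov/Furstenberg-type estimates for the transfer matrices of $\cH_L - E$ that were established in the analysis underpinning Theorems \ref{Th:Poisson} and the convergence-to-equilibrium results of Section \ref{Sec:Expo}: these give, for the solution started from either endpoint $\pm L/2$, exponential growth at rate $\nu_E/2$ (resp.\ $1/(2E)$) away from that endpoint, uniformly in $\lambda$ over the microscopic window $\Delta$, with Gaussian-type control of the fluctuations.

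The key steps, in order. (i) Using the Dirichlet boundary conditions, express $\varphi_i$ on $[-L/2,U_i]$ in terms of the solution $y_{\lambda_i}^{-}$ of the eigenvalue ODE with $y(-L/2)=0$, $y'(-L/2)=1$, and on $[U_i,L/2]$ in terms of $y_{\lambda_i}^{+}$ started from the right endpoint; since $\varphi_i$ is an eigenfunction these two solutions are proportional to $\varphi_i$ on their respective sides. (ii) Show that the center of mass $U_i$ is, with high probability, within $O(1)$ (Bulk) resp.\ $O(E)$ (Crossover) of the point where $|\varphi_i|$ is maximal — this follows because $\varphi_i^2$ is, up to subexponential corrections, a product of two exponentials decaying from that maximum, so its barycenter is pinned there. (iii) Feed the one-sided Lyapounov estimates into the representation from (i): for $t$ on the right of $U_i$, $\log|\varphi_i(t)| = \log|\varphi_i(U_i)| - \int_{U_i}^{t}(\text{instantaneous rate})$, and the ergodic/hypocoercive control of the phase diffusion from Section \ref{Sec:Expo} gives that this integral is $\ge \tfrac{\nu_E-\eps}{2}(t-U_i) - R_i$ (resp.\ with $1/(2E)$), where $R_i$ is a remainder with stretched-exponential or exponential tails; symmetrically on the left. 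Combine the amplitude bound with the trivial bound $\varphi_i^2+\varphi_i'^2/E \le C \rho^2$ to get the pointwise estimate with $c_i := C\,\sup\big(\varphi_i(U_i)^2+\cdots\big)^{1/2}\,e^{R_i}\cdot(\text{normalization factor})$. (iv) The normalization constant: since $\|\varphi_i\|_{L^2}=1$ and $\varphi_i$ is essentially a two-sided exponential of rate $\nu_E$ (resp.\ width $E$), its peak value satisfies $\varphi_i(U_i)^2 \asymp \nu_E$ (resp.\ $\asymp 1/E$), which produces the prefactor $1$ in the Bulk and $1/\sqrt E$ in the Crossover; the fluctuations of the peak are again captured by the same remainders. (v) For part (2), bound $c_i^q$ by (a constant power of) $e^{q R_i}$ times moments of the peak height; the number of eigenvalues in $\Delta$ is $O(1)$ in expectation by the first-moment (intensity) bound underlying Theorem \ref{Th:Poisson}, and a Minami-type two-point estimate (Section \ref{Sec:TwoPoints}) controls the rare event of several eigenvalues in $\Delta$, so that $\E\big[\sum_{\lambda_i\in\Delta} c_i^q\big]$ is finite uniformly in $L$ provided $q=q(\eps)$ is small enough that the stretched-exponential moments $\E[e^{qR_i}]$ converge.

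The main obstacle is step (iii) in the Crossover regime: the instantaneous decay rate is not an additive functional of an ordinary ergodic diffusion but of a \emph{hypoelliptic} phase process whose mixing time and invariant measure both depend on the diverging parameter $E$, so the law of large numbers $\frac1{t}\int_0^t(\text{rate})\,ds \to 1/(2E)$ must be made quantitative with error bounds uniform in $E$ — this is precisely where the Malliavin calculus and hypocoercivity input of Section \ref{Sec:Expo} is essential, and the delicate point is to keep the deviation estimate (and hence the tails of $R_i$) strong enough (at least stretched-exponential) that it survives both the sum over the window $\Delta$ and the exponential moment in step (v). A secondary subtlety is that the one-sided solutions $y^{\pm}_{\lambda_i}$ are evaluated at the \emph{random} energy $\lambda_i$ and the \emph{random} center $U_i$, so one needs the Lyapounov estimates to hold simultaneously for all $\lambda$ in the window and all starting points, i.e.\ a uniform (in $\lambda$, and in the space shift) version of the convergence-to-equilibrium estimate, obtained by a union bound over an $\eps$-net in $\lambda$ together with the continuity of solutions in the energy parameter.
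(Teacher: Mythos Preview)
Your proposal has the right ingredients (Pr\"ufer variables, Lyapounov exponent, hypocoercive control of the phase) but misses the device that makes the argument close, and as written it has two genuine gaps.

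\textbf{The missing tool: the GMP formula.} The paper does not try to control the random eigenfunctions $\varphi_i$ at the random energies $\lambda_i$ and random centers $U_i$ directly. Instead, Proposition~\ref{Prop:GMP} expresses
\[
\E\Big[\sum_{\lambda_i\in\Delta} G(\lambda_i,\varphi_i,\varphi_i')\Big]
= \sqrt{\Eg}\int_u\int_{\lambda}\int_\theta p_{\lambda,\cdot}(\theta)\,p_{\lambda,\cdot}(\pi-\theta)\sin^2\theta\;\E^{(u)}_{\theta,\pi-\theta}\big[G(\lambda,\hat y_\lambda/\|\hat y_\lambda\|,\ldots)\big]\,d\theta\,d\lambda\,du,
\]
where $\hat y_\lambda$ is the \emph{concatenation at $u$} of the forward and backward diffusions conditioned to land at $\theta,\pi-\theta$. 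This turns the sum over eigenvalues into a deterministic integral: $\lambda$ and $u$ are now integration variables, not random, so your $\eps$-net in $\lambda$ and your worry about evaluating at the random $U_i$ simply disappear. The Lyapounov estimate (Lemma~\ref{Lemma:Zgl}) is then applied to the \emph{unconditioned} diffusion after passing from the bridge law to the free law via the adjoint diffusion and Lemma~\ref{Lemma:CondToUncond}. This gives Proposition~\ref{Prop:ExpoDecayCrossover}: the $q$-th moment of $\inf_u G_u(\lambda_i,\varphi_i,\varphi_i')$ summed over $\lambda_i\in\Delta$ is bounded.

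\textbf{Gap in step (ii).} Your step (ii) assumes the eigenfunction already looks like a two-sided exponential from its maximum in order to locate $U_i$ near that maximum; but that shape is exactly what you are proving. The paper avoids this circularity by first establishing decay from the \emph{optimal} point $\tilde U_i:=\arg\inf_u G_u$ (which is available directly from Proposition~\ref{Prop:ExpoDecayCrossover}), and only afterwards transferring to $U_i$ via a one-line Jensen argument: since $\tilde U_i-U_i=\int(\tilde U_i-t)\varphi_i^2(t)\,dt$, one gets $e^{a|\tilde U_i-U_i|/\Eg}\le\int e^{a|\tilde U_i-t|/\Eg}\varphi_i^2(t)\,dt\lesssim \tilde c_i^2$ for $a<\boldsymbol{\nu_E}-\eps$, so $c_i:=\tilde c_i\,e^{\frac{\boldsymbol{\nu_E}-\eps}{2}|\tilde U_i-U_i|/\Eg}$ is a power of $\tilde c_i$.

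\textbf{Gap in step (v).} Knowing $\E[N_L(\Delta)]=O(1)$ and even the Minami bound does not by itself control $\E[\sum_{\lambda_i\in\Delta}c_i^q]$: the $c_i$ are unbounded and correlated with $N_L(\Delta)$, so you cannot factor the expectation. The paper never invokes the Minami estimate here; the GMP formula gives the needed first-moment bound on $\sum c_i^q$ directly, by reducing it to moments of $\hat r_\lambda^{(\Eg)}$ under a bridge law, which are handled by Lemma~\ref{Lemma:Zgl} and the conditioning removal of Lemma~\ref{Lemma:CondToUncond}.
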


\bigskip

Our third result shows that the eigenfunctions (rescaled into probability measures) converge to a Poisson point process with an explicit intensity. Actually the convergence will be taken jointly with the eigenvalues and the centers of mass, so the result is a strengthening of Theorem \ref{Th:Poisson}.

Let us start with the definition of the probability measures associated with the eigenfunctions. Given the rate of exponential localization appearing in the previous result, one needs to recenter the eigenfunctions appropriately to get convergence, so we define for every eigenvalue $\lambda_i$
$$w_i(dt) := \begin{cases} \varphi_i(U_i+t)^2 dt\quad &\mbox{ in the Bulk regime,}\\
E \varphi_i(U_i+tE)^2 dt\quad &\mbox{ in the Crossover regime,}
\end{cases}$$
which is an element of the set $\cM = \cM(\R)$ of all probability measures on $\R$ endowed with the topology of weak convergence.\\
In the statement below, we rely on a probability measure on $\cM$ that describes the law of the limits. In the Crossover regime, this probability measure $\sigma_\infty$ admits a simple definition: it is the law of the random probability measure
\begin{align}
\frac{Y_\infty(t+U_\infty)^2 dt}{\int Y_\infty(t+U_\infty)^2 dt}\;,\quad \mbox{with}\quad Y_\infty(t) := \exp\Big(- \frac{|t|}{8} + \frac{\cB(t)}{2\sqrt 2}\Big)\;,\quad U_\infty :=  \frac{\int t Y_\infty(t)^2 dt}{\int Y_\infty(t)^2 dt}\;,\label{def:sigmainfty}
\end{align}
where $\cB$ is a two-sided Brownian motion on $\R$.\\
In the Bulk regime, this probability measure $\sigma_E$ is the law of the random probability measure
\begin{align}
\frac{Y_E(t+U_E)^2 dt}{\int Y_E(t+U_E)^2 dt}\;,\quad U_E := \frac{\int t Y_E(t)^2 dt}{\int Y_E(t)^2 dt}\;,\label{def:sigmaE}
\end{align}
where $Y_E$ is the concatenation at $t=0$ of two processes $(Y_E(t),t\ge 0)$ and $(Y_E(-t),t\ge 0)$ with explicit laws. The precise definition of $Y_E$ requires some notations and is given in Subsection \ref{Subsec:Intensity}.

\begin{theorem}[Shape]\label{Th:Shape}
In the Bulk and the Crossover regimes, the random measure
$$ \cN_L := \sum_{i\ge 1} \delta_{(L\, n(E)(\lambda_i - E), U_i/L, w_i)}\;,$$
converges in law as $L\to\infty$ to a Poisson random measure on $\R \times [-1/2,1/2] \times \cM$ of intensity $d\gl \otimes du \otimes \sigma_E$ in the Bulk regime and of intensity $d\gl \otimes du \otimes \sigma_\infty$ in the Crossover regime.
\end{theorem}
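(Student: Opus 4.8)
The plan is to build on the proof of Theorem~\ref{Th:Poisson}: that argument already yields convergence of the first two coordinates $\big(Ln(E)(\gl_i-E),U_i/L\big)$ to a Poisson process with intensity $d\gl\otimes du$, so what remains is to show that the shape coordinate $w_i$ is asymptotically independent of the pair $(\gl_i,U_i)$ and distributed according to $\sigma_E$ (resp.\ $\sigma_\infty$). I would work throughout in the Prüfer-type phase/amplitude coordinates $(\theta_x,r_x)$ of the shooting solution of $\cH_L\psi=\gl\psi$ from the left endpoint, as in the proofs of the previous theorems; recall that $\gl$ is an eigenvalue precisely when $\theta_{L/2}(\gl)\in\pi\Z$, that the corresponding normalized solution is the eigenfunction, and that an eigenfunction with eigenvalue in the window $\gl\asymp E$ and center of mass near $uL$ is produced by a ``resonance'' of the diffusion $(\theta,\ln r)$ inside the box around $uL$, resonances in distinct boxes being asymptotically independent by the convergence-to-equilibrium estimates of Section~\ref{Sec:Expo}.

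First I would show that the random probability measure $w_i$ is \emph{local}: it is determined, up to an error that vanishes as $L\to\infty$ and then the truncation parameter $K\to\infty$, by the restriction of $(\theta,\ln r)$ to a window of rescaled length $K$ centered at $U_i$ (length $K$ in the Bulk, $KE$ in the Crossover). This is where Theorem~\ref{Th:Loc} enters: the exponential decay of $\varphi_i(t)^2+(\varphi'_i(t))^2/(1\vee E)$ away from $U_i$, together with the moment bound on the constants $c_i$, lets us discard the mass that $w_i$ puts outside the window, uniformly over the eigenvalues in the energy window. Within the window, $\varphi_i(U_i+\cdot)$ agrees, for positive argument, with the solution of $\cH_L\psi=\gl_i\psi$ that decays past the box on the right (equivalently, the solution obtained by running the Riccati variable of the right shooting solution inward), and symmetrically for negative argument; a routine perturbation estimate then shows that over a window of fixed rescaled length one may replace $\gl_i$ by $E$ with negligible error, so the limiting shape does not feel the precise value of $\gl_i$ in the window.

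Next I would identify the conditional law of this local object given a resonance with center of mass at $U_i$. Conditioning on the resonance forces the log-amplitude of the eigenfunction to have negative average slope on \emph{both} sides of $U_i$; quantitatively, to the right of $U_i$ the Riccati variable of $\varphi_i$ runs as the stationary diffusion attached to the solution decaying at $+\infty$, to the left as the reversed stationary diffusion attached to the solution decaying at $-\infty$, the two half-trajectories being started from the common value $\varphi'_i(U_i)/\varphi_i(U_i)$ — whose law is reweighted so that the matching occurs — and otherwise run independently. In the Bulk regime this is exactly the construction of $Y_E$ recalled in Subsection~\ref{Subsec:Intensity}, so that $w_i$ converges in law to $\sigma_E$ once one accounts for the shift between the natural centering point of that construction (the matching point of the two half-solutions) and the center of mass $U_i$; that shift is precisely the deterministic functional $U_E$ of~\eqref{def:sigmaE}. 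In the Crossover regime one first rescales space by $E$; homogenizing over the fast rotation of $\theta$ — here the quantitative hypocoercive estimates of Section~\ref{Sec:Expo} are needed to control the speed of mixing uniformly in $E$ — collapses the two half-diffusions onto the \emph{same}, $E$-independent limit, namely the process whose logarithm is a Brownian motion with drift, which yields $Y_\infty$ of~\eqref{def:sigmainfty}; the centering correction is then $U_\infty$. This collapse is the source of the universality of the Crossover limit.

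Finally I would assemble these pieces into a Laplace-functional (equivalently, void-probability) computation for $\cN_L$. The Minami-type two-points estimate of Section~\ref{Sec:TwoPoints} forbids two resonances in a shrinking box, so each box contributes at most one atom; the asymptotic independence of boxes factorizes the Laplace functional over boxes; and in each box the contribution is the product of the resonance probability — which, exactly as in Theorem~\ref{Th:Poisson}, produces the intensity $d\gl\otimes du$ after rescaling — and the conditional shape law identified above, which is asymptotically $\sigma_E$ (resp.\ $\sigma_\infty$) and independent of the box and of the value of $\gl_i$ within the window. Passing to the limit and invoking Kallenberg's criterion for convergence of point processes gives convergence of $\cN_L$ to the Poisson random measure with intensity $d\gl\otimes du\otimes\sigma_E$ (resp.\ $d\gl\otimes du\otimes\sigma_\infty$); projecting away the shape coordinate recovers Theorem~\ref{Th:Poisson}. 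I expect the main obstacle to be the third step — rigorously identifying the limiting shape, and in particular proving universality in the Crossover regime — because one must simultaneously pass to the limit in the conditioning on a resonance and on the location of the center of mass, homogenize the fast phase with control that is quantitative in $E$, and track the random shift between the matching point and the center of mass; the locality step is also delicate in that it requires the exponential localization of Theorem~\ref{Th:Loc} to hold uniformly over the eigenvalues in the window.
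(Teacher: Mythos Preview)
Your overall architecture matches the paper's: box decomposition, a Minami-type estimate to forbid two atoms per box, exponential localization to make the shape a local functional, identification of the limiting shape law via two half-diffusions glued at a matching point, and assembly through the characteristic functional. Two implementation choices in the paper, however, turn your heuristic steps into clean computations, and they are worth noting.

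First, the paper never conditions on a resonance ``with center of mass at $U_i$''. Instead it uses the GMP formula (Proposition~\ref{Prop:GMP}), which writes the intensity of $\sum_i\delta_{(\lambda_i,\varphi_i,\varphi_i')}$ as an explicit integral over a concatenation point $u$, over $\lambda$, and over an angle $\theta$, with weight $p_{\lambda,L/2+u}(\theta)\,p_{\lambda,L/2-u}(\pi-\theta)\sin^2\theta$ and with the eigenfunction realized as the concatenation $\hat y_\lambda$ of two diffusion bridges. One then replaces the transition densities by the invariant ones (Theorem~\ref{Th:CVDensities}), passes to the adjoint diffusions to uncondition the bridges (Lemma~\ref{Lemma:CondToUncond}), and only at the very end corrects from the concatenation point $u$ to the center of mass via the random shift $U_E$ (resp.\ $U_\infty$). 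This is exactly your ``two half-trajectories with reweighted common starting value'', but the GMP identity is what makes the conditioning and the reweighting rigorous and tractable. Second, the ``asymptotic independence of boxes'' is realized concretely by imposing Dirichlet conditions on each subinterval to obtain genuinely independent operators $\cH_L^{(j)}$ and defining $\bar\cN_L=\sum_j\cN_L^{(j)}$; one then proves (i) $\bar\cN_L$ converges to the Poisson limit (the exact independence trivializes the factorization of the characteristic functional, and Proposition~\ref{Prop:Intensity} supplies the intensity) and (ii) $\cN_L-\bar\cN_L\to 0$ by exhibiting, on a high-probability event, a bijection between the atoms of $\cN_L$ and $\bar\cN_L$ that is close in all three coordinates (Proposition~\ref{Prop:Approx}). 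Finally, note that in the paper Theorem~\ref{Th:Shape} is proved directly and Theorem~\ref{Th:Poisson} is a corollary, rather than the other way around.
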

Here $\cN_L$ is seen as a random element of the set of measures on $\R\times [-1/2,1/2] \times \cM$ that give finite mass to $K\times [-1/2,1/2] \times \cM$, for any given compact set $K\subset \R$. The topology considered in the convergence above is then the smallest topology on this set of measures that makes continuous the maps $m\mapsto \langle f,m\rangle$, for every bounded and continuous function $f:\R\times [-1/2,1/2] \times \cM\to \R$ that is compactly supported in its \emph{first} coordinate.

\medskip

Let us make some comments on the limits of these eigenfunctions. First of all, the localization length is of order $1$ in the Bulk regime and $E$ in the Crossover regime: this is in line with the exponential decay of Theorem \ref{Th:Loc}. Moreover, this localisation length increases with the level of energy, and this is coherent with the general fact that: the lower we look into the spectrum, the more localized the eigenfunctions are. Second, it suggests that for $E$ of order $L$ the eigenfunctions are no longer localized: this is rigorously proved in our companion paper~\cite{DLCritical}.

\bigskip

We now outline the remaining of this article. We introduce in Section \ref{sec:thediffusions} the diffusions associated to our eigenvalue equation as they play a central role in this article. Then we detail in Section \ref{sec:strategy} our strategy of proof: this section presents the main intermediate results of this paper and contains the proofs of Theorems \ref{Th:Poisson}, \ref{Th:Loc} and \ref{Th:Shape}. The subsequent sections are then devoted to proving these intermediate results, more details on their contents and relationships will be given at the end of Section \ref{sec:strategy}.

\subsection*{Acknowledgements} The work of CL is supported by the project SINGULAR ANR-16-CE40-0020-01. The authors would like to warmly thank Emeric Bouin, Jean Dolbeault, Amic Frouvelle and Martin Hairer for inspiring discussions on hypoellipticity and hypocoercivity that led to the result of Section \ref{Sec:Expo}.

\section{The diffusions}\label{sec:thediffusions}

The eigenvalue problem associated to the operator $\cH_L$ gives rise to a family $y_\gl$, $\gl\in \R$ of ODEs, corresponding to the solution of the eigenvalue equation:
\begin{align}\label{ODEH_L}
 -y_\gl''(t) + \xi(dt)  y_\gl(t) = \gl y_\gl(t)\;,\quad t\in (-L/2,L/2)\;.
\end{align}
Up to fixing two degrees of freedom there is a unique solution to this equation for every parameter $\gl \in \R$. If we fix the starting conditions $y_\gl(-L/2) = 0$ and $y_\gl'(-L/2)$ to an arbitrary value different from $0$, then the map $\gl \mapsto y_\gl(L/2)$ is continuous. The zeros of this map are the eigenvalues of $\cH_L$, and the corresponding solutions $y_\gl$ are the eigenfunctions of $\cH_L$ (which are defined up to a multiplicative factor corresponding to $y_\gl'(-L/2)$ of course).

\medskip
It is crucial in our analysis to consider the evolution of \emph{both} $y_\gl$ and $y'_\gl$, and this is naturally described by the complex function $y_\lambda' + i y_\lambda$.
It will actually be convenient to work in polar coordinates (also called Pr\"ufer variables): we consider $(r_\lambda,\theta_\lambda)$ where
$$ y_\lambda' + i y_\lambda = r_\gl e^{i \theta_\gl} \;.$$
The process $\theta_\gl$ is called the \emph{phase} of the above ODE. It is instrumental for the study of the spectrum of $\cH_L$ as we will see later on. It is also convenient to define
$$ \rho_\gl(t) := \ln r_\gl^2(t)\;.$$
In these new coordinates, we have the following coupled stochastic differential equations:
\begin{align}
d\theta_\gl(t) &= \big(1 + (\gl-1) \sin^2 \theta_\gl + \sin^3\theta_\gl \cos\theta_\gl\big) dt - \sin^2 \theta_\gl dB(t)\;, \label{eq_theta}\\
d\rho_\gl(t) &= \big(-(\gl-1) \sin 2\theta_\gl - \frac12 \sin^2 2\theta_\gl + \sin^2 \theta_\gl \big)dt+ \sin 2\theta_\gl dB(t)\;.\label{EDSlnr}
\end{align}
where $dB(t) = \xi(dt)$.

\medskip

The two degrees of freedom given by the initial conditions $y_\gl(-L/2)$ and $y_\gl'(-L/2)$ are transferred to $\theta_\gl(-L/2)$ and $r_\gl(-L/2)$ (or equivalently $\rho_\gl(-L/2)$). Note that
$$y_\gl(-L/2) = 0 \;\;\&\;\; y_\gl'(-L/2) > 0 \quad \Longleftrightarrow\quad \theta_\gl(-L/2) \equiv 0 [2\pi] \;\;\&\;\; r_\gl(-L/2) = y_\gl'(-L/2)\;,$$
while
$$y_\gl(-L/2) = 0 \;\;\&\;\; y_\gl'(-L/2) < 0 \quad \Longleftrightarrow\quad \theta_\gl(-L/2) \equiv \pi [2\pi] \;\;\&\;\; r_\gl(-L/2) = -y_\gl'(-L/2)\;.$$

\medskip

For any angle $\theta \in \R$, we define
$$\lfloor \theta \rfloor_\pi := \lfloor \theta/\pi\rfloor \pi \in \pi\Z\;,\quad \mbox{ and } \quad \{\theta\}_\pi := \theta - \lfloor \theta \rfloor_\pi \in [0,\pi)\;.$$
Let us point out an important property of the process $\theta_\gl$: at the times $t$ such that $\{\theta_\gl(t)\}_\pi = 0$, the drift term is strictly positive while the diffusion coefficient vanishes. Consequently $\{\theta_\gl\}_\pi$ cannot hit $0$ from above: this readily implies that $t \mapsto \lfloor \theta_\gl(t) \rfloor_\pi$ is non-decreasing. Moreover, the evolution of $\theta_\gl$ depends only on $\{\theta_\gl\}_\pi$ so that the latter is a Markov process. We let $p_{\gl,t}(\theta_0, \theta)$ be the density of its transition probability at time $t$ when it starts from $\theta_0$ at time $0$. When this process starts from $0$ at time $0$, we drop the first variable and simply write $p_{\gl,t}(\theta)$. At some occasions, we will write $\P_{(t_0,\theta_0)}$ for the law of the process $\theta_\gl$ starting from $\theta_0$ at time $t_0$, and we will write $\P_{(t_0,\theta_0)\to (t_1,\theta_1)}$ to denote the law of the bridge of diffusion that start from $\theta_0$ at time $t_0$ and is conditioned to reach $\theta_1$ at time $t_1$.

\smallskip
Most of the time, we will take $\theta_\gl(-L/2) = 0$ and $r_\gl(-L/2) > 0$. In this setting, $\lambda$ is an eigenvalue of $\cH_L$ iff $\{\theta_\gl(L/2)\}_\pi = 0$ and then, the function $y_\lambda$ is the associated eigenfunction. Since $\gl\mapsto \lfloor \theta_\gl(L/2) \rfloor_\pi$ is non-decreasing, we deduce the so-called \emph{Sturm-Liouville property}: almost surely,
\begin{align}
\#\{\gl_i \;:\; \gl_i \leq \gl\} = \lfloor \theta_\gl(L/2) \rfloor_\pi\;, \quad \mbox{ for all $\gl \in \R$\,,}\label{SturmLiouvilleTheta}
\end{align}

This phase function $\theta_\gl(\cdot)$ is a powerful tool to investigate the spectrum of $\cH_L$. It has been used in numerous articles on $1$d-Schr\"odinger operators, sometimes under the form of the so-called Riccati transform $y'_\gl/y_\gl$ which is nothing but $\cotan \theta_\gl$.

\subsection{The distorted coordinates}

For large energies, that is, whenever $\gl$ is of order $E$ for some $E=E(L) \to \infty$, the phase $\theta_\gl$ takes a time of order $1/\sqrt E$ to make one rotation so that the solutions $y_\gl, y_\gl'$ oscillate very fast. It is then more convenient to use \emph{distorted} and \emph{sped up} coordinates. We set
$$ \frac{y_\gl'(tE)}{\sqrt{E}} + i y_\gl(tE) =: r_\gl^{(E)}(t) e^{i \theta_\gl^{(E)}(t)}\;,\quad t\in [-L/(2E),L/(2E)]\;,$$
and
$$ y_\gl^{(E)}(t) := r_\gl^{(E)}(t) \sin \theta_\gl^{(E)}(t)\;,\quad (y_\gl^{(E)})'(t) = E^{3/2} r_\gl^{(E)}(t) \cos \theta_\gl^{(E)}(t)\;,\quad t\in [-L/(2E),L/(2E)]\;.$$

Defining the Brownian motion $B^{(E)}(t) = E^{-1/2} B(tE)$ and setting $\rho_\gl^{(E)} := \ln (r_\gl^{(E)})^2$, the evolution equations take the form
\begin{align*}
&d\theta_\gl^{(E)} = \Big(E^{3/2} + \sqrt{E} (\gl-E) \sin^2 \theta_\gl^{(E)} +  \sin^3 \theta_\gl^{(E)} \cos\theta_\gl^{(E)}\Big) dt -  \sin^2 \theta_\gl^{(E)} dB^{(E)}(t)\;,\\
&d\rho_\gl^{(E)} = \big(-\sqrt{E} (\gl-E) \sin 2\theta_\gl^{(E)}  - \frac{1}{2} \sin^2 2 \theta_\gl^{(E)} + \sin^2 \theta_\gl^{(E)} \big) dt + \sin 2\theta_\gl^{(E)} dB^{(E)}(t)\;.
\end{align*}

Let $p_{\gl,t}^{(E)}(\theta_0, \theta)$ be the density of the transition probability at time $t$ of the processes $\{\theta_\gl^{(E)}\}_\pi$ starting from $\theta_0$ at time $t=0$. When this process starts from $0$ a time $0$, we drop the first variable and simply write  $p_{\gl,t}^{(E)}(\theta)$. The change of variable formula shows that
\begin{equation}\label{Eq:CoVp}
p_{\gl,t}^{(E)}(\theta) = p_{\gl,t}(\arccotan(\sqrt E \cotan \theta)) \frac{\sqrt E}{\sin^2 \theta + E \cos^2 \theta}\;.
\end{equation}

Again, we use the notation $\P_{(t_0,\theta_0)}$, resp.~$\P_{(t_0,\theta_0)\to (t_1,\theta_1)}$, to denote the law of the diffusion, resp.~bridge of diffusion.

\subsection{Condensed notation}

The distorted coordinates will be used in the \emph{Crossover} regime, while we will rely on the original coordinates in the \emph{Bulk} regime. However most of our arguments apply indifferently to both cases. Consequently, we will adopt condensed notations as much as possible. First we introduce
$$\Eg := \begin{cases} 1 & \mbox{ original coordinates}\;,\\ E&\mbox{ distorted coordinates}\;.\end{cases}$$
Moreover, when the arguments apply to both sets of coordinates, we will use the generic notation $\theta_\gl^{(\Eg)}$, $\rho_\gl^{(\Eg)}$ to denote $\theta_\gl$, $\rho_\gl$ in the original coordinates, and $\theta_\gl^{(E)}$, $\rho_\gl^{(E)}$ in the distorted coordinates. We will sometimes introduce notations using the generic notation at once. For instance, we could have introduced the SDE satisfied by $\theta_\gl, \rho_\gl$ and $\theta_\gl^{(E)}, \rho_\gl^{(E)}$ by simply writing
\begin{align}
&d\theta_\gl^{(\Eg)} = \Big(\Eg^{3/2} + \sqrt{\Eg} (\gl-\Eg) \sin^2 \theta_\gl^{(\Eg)} +  \sin^3 \theta_\gl^{(\Eg)} \cos\theta_\gl^{(\Eg)}\Big) dt -  \sin^2 \theta_\gl^{(\Eg)} dB^{(\Eg)}(t)\;,\label{SDEthetaE}\\
&d\rho_\gl^{(\Eg)} = \big(-\sqrt{\Eg} (\gl-\Eg) \sin 2\theta_\gl^{(\Eg)}  - \frac{1}{2} \sin^2 2 \theta_\gl^{(\Eg)} + \sin^2 \theta_\gl^{(\Eg)} \big) dt + \sin 2\theta_\gl^{(\Eg)} dB^{(\Eg)}(t)\;. \label{SDErhoE}
\end{align}
In the proofs, we will sometimes drop the superscript $(\Eg)$ to alleviate the notation: we hope this will never raise any confusion.\\
Let us finally emphasize that, when working with the distorted coordinates, the parameter $E$ is not necessarily sent to $\infty$. However, in the Crossover regime where $E=E(L)\to\infty$, we will always use the distorted coordinates.
 
\subsection{Invariant measure}
The Markov process $\{\theta_\gl^{(\Eg)}\}_\pi$ admits a unique invariant probability measure whose density $\mu_\gl^{(\Eg)}$ has a simple integral expression, see Subsection \ref{Appendix:integral} of the Appendix for more details.
Straightforward computations relying on this expression show the following estimates:
\begin{lemma}[Bounds on the invariant measure]\label{Lemma:InvMeas}
\emph{Original coordinates.} For any compact interval $\Delta \subset\R$, there are two constants $0 < c < C$ such that for all $\theta\in [0,\pi)$ and all $\gl\in \Delta$ we have
$$c \le \mu_\gl(\theta) < C\;,\qquad |\partial_\theta \mu_\gl(\theta)| < C\;.$$
\emph{Distorted coordinates.} For any $h>0$ there are two constants $c,C > 0$ such that for all $E>1$, all $\theta\in [0,\pi)$ and all $\gl \in \Delta := [E-\frac{h}{En(E)},E+\frac{h}{En(E)}]$
$$c \le \mu_\gl^{(E)}(\theta) < C\;,\qquad |\partial_\theta \mu_\gl^{(E)}(\theta)| < C\;.$$
Finally, we have $\sup_{\gl \in \Delta} \sup_{\theta \in [0,\pi)} |\mu_\gl^{(E)}(\theta) - \frac1{\pi}| \to 0$ as $E \to \infty$.
\end{lemma}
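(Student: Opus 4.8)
The starting point is the integral expression for the invariant density. Following Subsection~\ref{Appendix:integral}, one has, up to a normalizing constant, in the original coordinates
\[
\mu_\gl(\theta) \propto \frac{1}{f(\gl,\theta)^2}\int_0^{\pi} \un_{\{\theta' < \theta \text{ in the natural order on the circle}\}}\, \text{(something)}\, d\theta',
\]
i.e.\ the classical Rice/Halperin-type formula expressing $\mu_\gl$ as an integral of $\exp(-$potential$)$ against the drift/diffusion data of~\eqref{eq_theta}. Concretely $\mu_\gl(\theta)$ can be written as $C_\gl\, g_\gl(\theta)^{-1}\int_0^\infty h_\gl(\theta,u)\,du$ for explicit smooth functions, where the $u$-integral converges because of the cubic term $-u^3/6$ in the exponent (compare~\eqref{Eq:nuE}). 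The first step is therefore simply to record this formula and rewrite it in a form amenable to uniform estimates; in the distorted coordinates one uses~\eqref{Eq:CoVp} to transport everything, which introduces the Jacobian $\sqrt E/(\sin^2\theta + E\cos^2\theta)$.

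\emph{Original coordinates.} Since $\Delta$ is compact, the drift and diffusion coefficients in~\eqref{eq_theta} and all their $\theta$-derivatives are bounded uniformly in $(\gl,\theta)\in\Delta\times[0,\pi)$, and the diffusion coefficient $\sin^2\theta$ vanishes only at $\theta\in\{0,\pi\}$ where the drift is $\ge 1$. So $\mu_\gl$ is a smooth, strictly positive function of $\theta$ on the compact circle $\R/\pi\Z$, depending continuously (indeed smoothly) on $\gl$; a compactness argument gives the two-sided bound $c\le\mu_\gl(\theta)\le C$ and the derivative bound $|\partial_\theta\mu_\gl(\theta)|\le C$. Here I would be slightly careful near $\theta=0$: although the coefficients are smooth there, one must check the density does not degenerate, which is exactly where the explicit integral formula (rather than abstract ellipticity) is used — the $u$-integral is bounded above and below on $\Delta\times[0,\pi)$.

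\emph{Distorted coordinates.} This is the substantive part. Apply~\eqref{Eq:CoVp}: $\mu_\gl^{(E)}(\theta) = \mu_\gl(\arccotan(\sqrt E\cotan\theta))\,\sqrt E/(\sin^2\theta + E\cos^2\theta)$. On $\Delta = [E-\tfrac{h}{En(E)},E+\tfrac{h}{En(E)}]$ one has $|\gl - E| \le h/(En(E)) \asymp h\sqrt E / E \to 0$, so $\gl - E$ is small and $\gl/E\to 1$; the plan is to Taylor-expand $\mu_\gl$ (known explicitly) in this regime. The change of variable $\phi := \arccotan(\sqrt E\cotan\theta)$ is a diffeomorphism of the circle with derivative exactly $\sqrt E/(\sin^2\theta+E\cos^2\theta)$, so the Jacobian factor is precisely the density of the pushforward of the uniform measure under $\phi^{-1}$; hence it integrates to~$1$ over $[0,\pi)$, but it is \emph{not} bounded — near $\theta = \pi/2$ it is $\approx 1/\sqrt E$ and near $\theta\in\{0,\pi\}$ it is $\approx\sqrt E$. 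The key point is that $\mu_\gl(\phi)$ \emph{compensates} this: from the explicit formula, $\mu_\gl(\phi)$ behaves like $\sqrt{\gl}\,\times$(a bounded factor) for $\phi$ away from $0,\pi$ and, crucially, $\mu_\gl(\phi)\to 0$ like $1/\sqrt E$ as $\phi\to 0,\pi$ (this reflects that the original phase spends little time near the singular angles when $\gl$ is large). I would make this precise by plugging $\gl\asymp E$ into the integral formula, using $n(E)\sim 1/(2\pi\sqrt E)$ so that $\sqrt E\,\mu_\gl(\phi)$ is comparable to $1$ uniformly, and then tracking the two competing factors through the change of variables to get $c\le\mu_\gl^{(E)}(\theta)\le C$. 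The derivative bound follows by differentiating the product and using $\partial_\theta\phi = \sqrt E/(\sin^2\theta+E\cos^2\theta)$ together with the bound on $\partial_\phi\mu_\gl$ — the dangerous large factor $\sqrt E$ in $\partial_\theta\phi$ is again killed by the smallness of $\mu_\gl(\phi)$ and its derivative near the singular angles.

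\emph{Convergence to uniform.} For the last claim, I would show directly from the integral formula that, after rescaling, $\mu_\gl^{(E)}(\theta)\to 1/\pi$ uniformly: heuristically, in the distorted coordinates the phase $\theta_\gl^{(E)}$ rotates at the huge deterministic speed $E^{3/2}$ (see~\eqref{SDEthetaE}) with only $O(\sqrt E)$ multiplicative perturbation, so on the time scale of one rotation the process is close to uniform rotation on the circle, whose stationary measure is Lebesgue. Rigorously: substitute $\gl = E + O(1/(En(E)))$ into the explicit $\mu_\gl^{(E)}$, factor out the leading constant, and show the remainder is $O(E^{-1/2})$ uniformly in $\theta$, using dominated convergence in the $u$-integral.

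\emph{Main obstacle.} The delicate step is the \emph{lower} bound on $\mu_\gl^{(E)}$ near $\theta\in\{0,\pi\}$ (equivalently, controlling the cancellation between the vanishing factor $\mu_\gl(\phi)$ and the blowing-up Jacobian $\sqrt E$). A crude triangle-inequality estimate loses the uniformity in $E$; one genuinely needs the sharp two-sided asymptotics of the explicit density near the singular angles, with constants uniform over $\gl\in\Delta$ and $E>1$. This is where essentially all the work lies; the rest is bookkeeping built on Lemma~\ref{Lemma:InvMeas}'s original-coordinates half plus the change-of-variable identity~\eqref{Eq:CoVp}.
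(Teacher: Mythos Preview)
Your overall strategy --- express $\mu_\gl^{(E)}$ via \eqref{Eq:CoVp} as $\mu_\gl(\phi)\cdot J(\theta)$ and show the two factors compensate --- is sound in spirit, but the concrete asymptotics you state are \emph{reversed}. The Jacobian $J(\theta)=\sqrt E/(\sin^2\theta+E\cos^2\theta)$ equals $1/\sqrt E$ at $\theta\in\{0,\pi\}$ and $\sqrt E$ at $\theta=\pi/2$, not the other way around. Correspondingly, for $\gl\sim E$ large the original-coordinate density $\mu_\gl(\phi)$ is \emph{large} (of order $\sqrt\gl$) near $\phi\in\{0,\pi\}$ and \emph{small} (of order $1/\sqrt\gl$) near $\phi=\pi/2$: the drift $1+(\gl-1)\sin^2\phi$ is of order~$1$ near the singular angles and of order~$\gl$ elsewhere, so the phase lingers near $0,\pi$. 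The compensation still happens, but your identification of the ``main obstacle'' and the heuristic you give for it are off, which suggests the computation has not actually been carried out.

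The paper avoids this change-of-variable gymnastics entirely. It works directly in the Riccati coordinate $x=\cotan\theta$ for the \emph{distorted} process, where the invariant density is $\mu_\gl^{(E)}(\theta)=(1+x^2)f_\gl^{(E)}(x)/m_\gl^{(E)}$ with $f_\gl^{(E)}(x)=2e^{-2V(x)}\int_{-\infty}^x e^{2V(y)}\,dy$ and $V(x)=\gl\sqrt E\,x+E^{3/2}x^3/3$. The key technical idea --- which your proposal does not identify --- is a single integration by parts yielding
\[
f_\gl^{(E)}(x)=\frac{1}{V'(x)}+2e^{-2V(x)}\int_{-\infty}^x e^{2V(y)}\frac{V''(y)}{2V'(y)^2}\,dy,
\]
and the remainder is bounded by $C_E/V'(x)$ with $C_E\to 0$. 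Since $V'(x)=\gl\sqrt E+E^{3/2}x^2$ and $m_\gl^{(E)}\sim\pi/(\sqrt\gl\,E)$, this gives $(1+x^2)f_\gl^{(E)}(x)/m_\gl^{(E)}\to 1/\pi$ uniformly in~$x$, which is both the two-sided bound and the convergence to uniform in one stroke. The derivative bound follows from $f'=2-2fV'$ and further integrations by parts. This is cleaner than tracking two competing singular factors through a change of variable, and it makes the uniformity in~$E$ transparent.
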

The last convergence shows that our distorted coordinates are the ``right ones'' in the Crossover regime: as $E\to\infty$ the corresponding invariant measure converges to a non-degenerate limit given by the uniform measure on the circle.

\subsection{Rotation time and density of states}

Let us introduce the first rotation time of the diffusion $\theta_\gl^{(\Eg)}$
$$\zeta_\gl^{(\Eg)} := \inf\{t\ge 0: \theta_\gl^{(\Eg)}(t) = \theta_\gl^{(\Eg)}(0) + \pi\}\;.$$
By the Markov property, the law of $\zeta_\gl^{(\Eg)}$ is actually independent of the initial condition $\theta_\gl^{(\Eg)}(0)$ so that it is relevant to set
$$ m_\gl^{(\Eg)} := \E[\zeta_\gl^{(\Eg)}]\;.$$

Note that $m_\gl$ is nothing but $1/N(\gl)$ the inverse of the integrated density of states introduced in \eqref{Eq:NE}. Indeed, by the law of large numbers, the number of rotations of $\theta_\gl$ on the interval $[-L/2,L/2]$ is of order $L/m_\gl$, so that the Sturm-Liouville property recalled above implies that the number of eigenvalues of $\cH_L$ that lie below $\gl$ is of the same order.\\

For any $E\ge 1$, the mere definition of our distorted coordinates imply that $\zeta_\gl^{(E)}$ is equal in law to $\zeta_\gl / E$, so that $m_\gl^{(E)} = m_\gl / E$. Moreover, we have the following integral expression for $m_\gl^{(\Eg)}$ (see~\cite{AllezDumazTW} for more details):
\begin{align}
 m_\gl^{(\Eg)}= \frac{\sqrt{2 \pi}}{\Eg} \int_0^{\infty} \frac{1}{\sqrt{u}} \exp(-2 \lambda u - \frac{u^3}{6}) du\; \label{expressionmgl}.
\end{align}
From this expression, we deduce that $m_\gl$ is bounded from above and below uniformly over all $\gl$ in compact sets, and that
\begin{equation}\label{Eq:Asympmgl} m_\gl \sim \frac{\pi}{\sqrt\gl}\;,\quad \gl \to \infty\;.\end{equation}
This allows to recover the asymptotic of $N(\gl)$ stated in the introduction. Let us finally mention that some moment estimates on $\zeta_\gl^{(\Eg)}$ are presented in Appendix \ref{Subsec:Moments}.

\subsection{Forward and backward diffusions}\label{subsec:defforwardbackward}

We introduce in this paragraph the so-called forward and backward diffusions as their concatenation will be instrumental in our study. Let us consider the original coordinates first.

The eigenproblem is symmetric in law under the map $t\mapsto -t$ since $B'(\cdot)$ and $B'(-\cdot)$ have the same law. To take advantage of this symmetry, we consider the solutions $(r_\gl^\pm =e^{\frac12 \rho_\gl^\pm},\theta_\gl^\pm)$ of
\begin{align}
d\theta_\gl^\pm(t) &= \Big(1 + (\gl-1) \sin^2 \theta_\gl^\pm + \sin^3\theta_\gl^\pm \cos\theta_\gl^\pm\Big) dt - \sin^2 \theta_\gl^\pm dB^\pm(t)\;,\\
d\rho_\gl^\pm(t) &=\big(-(\gl-1) \sin 2\theta_\gl^\pm - \frac12 \sin^2 2\theta_\gl^\pm + \sin^2 \theta_\gl^\pm \big)dt+ \sin 2\theta_\gl^\pm dB^\pm(t)\;,\label{rho+-}
\end{align}
for $t\in [-L/2,L/2]$ where
$$ B^+(t) := B(t)\;,\quad B^-(t) := B(L/2)-B(-t)\;.$$

The processes $(r_\gl^+,\theta_\gl^+)$ will be called the \emph{forward} diffusions, while $(r_\gl^-,\theta_\gl^-)$ will be called the \emph{backward} diffusions. We also introduce $(y^\pm_\gl)' + i y^\pm_\gl := r_\gl^{\pm}\exp(i \theta^{\pm}_\gl)$. Of course the forward diffusions coincide with the original diffusions $(r_\gl,\theta_\gl)$.

\smallskip

Take $\theta_\gl^+(-L/2) = \theta_\gl^-(-L/2) = 0$. We have already seen that $\gl$ is an eigenvalue of $\cH_L$ if and only if $\{\theta_\gl^+(L/2)\}_\pi = 0$. From the symmetry of the eigenvalue problem, we can also read off the set of eigenvalues out of the backward diffusions: namely, $\gl$ is an eigenvalue if and only if $\{\theta_\gl^-(L/2)\}_\pi = 0$. 

It will actually be convenient to combine these two criteria in the following way: one follows the forward diffusion up to some given time $u \in [-L/2,L/2]$, and the backward diffusion up to time $-u$. The set of eigenvalues can then be read off using the following fact (whose proof is postponed below):
\begin{lemma}\label{Lemma:Concat}
$\gl$ is an eigenvalue of $\cH_L$ if and only if $\{\theta_\gl^+(u) + \theta_\gl^-(-u)\}_\pi = 0$.
\end{lemma}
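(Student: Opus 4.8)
The plan is to reduce the ``concatenated'' criterion to the two separate criteria already established, namely that $\lambda$ is an eigenvalue iff $\{\theta_\gl^+(L/2)\}_\pi = 0$, and equivalently iff $\{\theta_\gl^-(L/2)\}_\pi=0$. The key observation is a deterministic relation between the forward phase run from the left endpoint up to a time $u$ and a phase run backward from the right endpoint down to $-u$: both describe the \emph{same} function $y_\gl$ solving \eqref{ODEH_L}, just parametrized from opposite ends. Concretely, fix $\gl\in\R$ and let $y_\gl$ be the solution of \eqref{ODEH_L} with $y_\gl(-L/2)=0$ and $y_\gl'(-L/2)>0$; this is an eigenfunction iff $y_\gl(L/2)=0$. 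Now consider the time-reversed profile $\tilde y(t):=y_\gl(-t)$ on $[-L/2,L/2]$: it solves the ODE driven by $B^-$, and the pair $(\tilde y',\tilde y)=(-y_\gl'(-\cdot),y_\gl(-\cdot))$ in polar coordinates gives a phase that, when started appropriately at $t=-L/2$ (i.e.\ from $y_\gl$ at the right endpoint $L/2$), evolves exactly like $\theta_\gl^-$ up to sign/parity bookkeeping.

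First I would write out, at a fixed $u\in[-L/2,L/2]$, the values $(y_\gl^+(u),(y_\gl^+)'(u))$ and $(y_\gl^-(-u),(y_\gl^-)'(-u))$ in terms of the single solution $y_\gl$ and its derivative at the point $u$. The forward diffusion gives $y_\gl^+(u)=y_\gl(u)$, $(y_\gl^+)'(u)=y_\gl'(u)$, hence $\theta_\gl^+(u)=\arg(y_\gl'(u)+iy_\gl(u))$ up to the usual unwinding that makes $t\mapsto\lfloor\theta_\gl^+\rfloor_\pi$ nondecreasing. For the backward diffusion, started from $\theta_\gl^-(-L/2)=0$ (which corresponds to the \emph{right} endpoint of the original interval), one checks that at parameter $-u$ it represents the value of the \emph{same} Dirichlet solution from the right, so that $(y_\gl^-)'(-u)+iy_\gl^-(-u)$ is a positive multiple of $\overline{y_\gl'(u)+iy_\gl(u)}\cdot e^{i\pi\cdot(\#\text{rotations})}$ — i.e.\ $\theta_\gl^-(-u)\equiv -\theta_\gl^+(u)+\pi\cdot k$ modulo $\pi$ for the relevant integer $k$. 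The upshot of this bookkeeping is the key identity
\[
\{\theta_\gl^+(u)+\theta_\gl^-(-u)\}_\pi = 0 \quad\Longleftrightarrow\quad y_\gl(u)\,\tilde w(u)=0
\]
where, by a standard Sturm–Liouville / Wronskian argument, the right-hand side signals that the solution from the left and the solution from the right are linearly dependent on $[-L/2,L/2]$, equivalently that a nontrivial solution vanishes at both $\pm L/2$, equivalently that $\lambda$ is an eigenvalue.

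The cleanest route to make this rigorous is via the Wronskian. Let $y^L$ be the solution with $y^L(-L/2)=0$, $(y^L)'(-L/2)=1$ and let $y^R$ be the solution with $y^R(L/2)=0$, $(y^R)'(L/2)=1$; both solve \eqref{ODEH_L}, so their Wronskian $W:=y^L (y^R)' - (y^L)' y^R$ is constant in $t$ (the white-noise term is proportional to $y^L y^R$ and cancels in $dW$). Then $\lambda$ is an eigenvalue iff $W=0$. Evaluating $W$ at the single point $t=u$ and rewriting $y^L,(y^L)'$ via $(r_\gl^+,\theta_\gl^+)$ and $y^R,(y^R)'$ via $(r_\gl^-,\theta_\gl^-)$ (using $B^-(t)=B(L/2)-B(-t)$ to see that $t\mapsto y^R(-t)$ solves the $B^-$-driven ODE, hence is proportional to $y_\gl^-$), one gets $W = c\, r_\gl^+(u) r_\gl^-(-u)\sin(\theta_\gl^+(u)+\theta_\gl^-(-u))$ for a nonzero constant $c$; since the $r$'s never vanish, $W=0\iff\sin(\theta_\gl^+(u)+\theta_\gl^-(-u))=0\iff\{\theta_\gl^+(u)+\theta_\gl^-(-u)\}_\pi=0$. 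This is exactly the claim.

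\textbf{Main obstacle.} The only delicate point is the matching of initial conditions and the parity/orientation bookkeeping between the forward and backward parametrizations: one must be careful that $\theta_\gl^-$ started from $0$ at $t=-L/2$ really does encode the Dirichlet solution from the right endpoint $L/2$, and that the monotone unwinding conventions for $\lfloor\cdot\rfloor_\pi$ on the two sides are compatible so that $\theta_\gl^+(u)+\theta_\gl^-(-u)$ has the right sign of drift at the relevant times (this is what guarantees the statement is about $\{\cdot\}_\pi=0$ and not some shifted value). Everything else — the constancy of the Wronskian despite the distributional potential, and the sine identity $\arg z_1 + \arg z_2$ reading off $\operatorname{Im}(z_1\bar z_2)$ up to conjugation — is routine. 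I do not expect analytic difficulties; the proof is essentially a change-of-variables identity plus the classical Wronskian characterization of Dirichlet eigenvalues.
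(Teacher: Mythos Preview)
Your Wronskian argument is correct and takes a genuinely different route from the paper. The paper proves the two directions separately: if $\gl$ is an eigenvalue then $y_\gl^+$ and $y_\gl^-(-\cdot)$ are proportional, so their logarithmic derivatives match at $u$, giving $\{\theta_\gl^+(u)\}_\pi=\{\pi-\theta_\gl^-(-u)\}_\pi$; conversely, the angle condition makes the concatenation $\yu_\gl$ continuously differentiable at $u$, hence a Dirichlet eigenfunction. Your approach computes the Wronskian of the left- and right-Dirichlet solutions at the single point $u$: once one checks that $t\mapsto y_\gl^-(-t)$ solves the original ODE with $y_\gl^-(-\cdot)\big|_{t=L/2}=0$ (which follows from $dB^-(t)=dB(-t)$ in the distributional sense), the identity $W=-c\,r_\gl^+(u)r_\gl^-(-u)\sin\!\big(\theta_\gl^+(u)+\theta_\gl^-(-u)\big)$ drops out of the sine addition formula, and constancy of $W$ plus non-vanishing of the $r$'s finish the proof in one stroke. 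Your route is cleaner and makes the $u$-independence of the criterion transparent; the paper's route is more hands-on but has the advantage of working directly with the concatenation object $\yu_\gl$ that is used throughout the rest of the argument. The rambling first paragraph of your plan (with the unspecified $\tilde w$ and the parity bookkeeping) is unnecessary once you commit to the Wronskian; you can drop it.
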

It is therefore natural to consider concatenations of the forward and backward diffusions. For any time $u\in [-L/2,L/2]$, we set:
\begin{align}
r_\gl^+(u) = r_\gl^-(-u) = 1\;, \label{bc_r+-}
\end{align}
and we define
\begin{equation}\label{Eq:Concat} (\ru_\gl(t),\thetau_\gl(t)) := \begin{cases} (r_\gl^+(t),\theta_\gl^+(t)),\quad &t\in [-L/2,u]\;,\\
(r_\gl^-(-t),k\pi - \theta_\gl^-(-t)),\quad &t\in (u,L/2]\;,
\end{cases}
\end{equation}
where $k\pi := \lfloor\theta_\gl^+(u)+\theta_\gl^-(-u) \rfloor_\pi$. Note that $\ru_\gl,\thetau_\gl$ depend on $u$, but for notational convenience we omit writing explicitly this dependence.

\smallskip

\noindent We also define the process $\yu_\gl$ by setting
\begin{align}
\yu_\gl(t) := \ru_\gl(t) \sin \thetau_\gl(t)\;.\label{def:concaty}
\end{align}
Note that for all $t\in [-L/2,L/2] \backslash \{u\}$ we have
$$ (\yu_\gl)'(t) =  \ru_\gl(t) \cos \thetau_\gl(t)\;,$$
and that this identity remains true at $u_+$ and $u_-$, with possibly a discontinuity there. When $\gl_i$ is an eigenvalue, denoting $\|\cdot\|_2$ the $L^2$-norm, we have the identity (valid for all $u \in [-L/2,L/2]$):
\begin{align*}
 \varphi_{i}(t) = \frac{\yu_{\gl_i}(t)}{\| \yu_{\gl_i} \|_2},\qquad  t \in [-L/2,L/2]\,.
\end{align*}


\begin{proof}[Proof of Lemma \ref{Lemma:Concat}]
If $\gl$ is an eigenvalue of $\cH_L$, and provided $\{\theta_\gl^\pm(-L/2)\}_\pi = 0$, the functions $y^+_\gl$ and $y^-_\gl(-\cdot)$ coincide up to a multiplicative factor (which is related to the values $r_\gl^\pm(-L/2)$). Consequently if $\gl$ is an eigenvalue then we must have the following equality in $\{-\infty\}\cup\R$
$$ \lim_{\epsilon\downarrow 0} \frac{(y^+_\gl)'(u-\epsilon)}{y^+_\gl(u - \epsilon)} = -\lim_{\epsilon\downarrow 0} \frac{(y^-_\gl)'(-u+\epsilon)}{y^-_\gl(-u+\epsilon)}\;.$$
This is equivalent to $\cotan( \theta^+_\gl(u_-)) = \cotan(\pi-\theta^-_\gl((-u)_+))$, which is itself equivalent to $\{\theta_\gl^+(u_-)\}_\pi = \{\pi-\theta_\gl^-((-u)_+)\}_\pi$. Note that $\{\theta_\gl^+(u_-)\}_\pi = \{\theta_\gl^+(u)\}_\pi$, and similarly $\{\pi-\theta_\gl^-((-u)_+)\}_\pi = \{\pi-\theta_\gl^-(-u)\}_\pi$. We end up with $\{\theta_\gl^+(u)\}_\pi = \{\pi-\theta_\gl^-((-u))\}_\pi$ which is equivalent to $\{\theta_\gl^+(u) + \theta_\gl^-(-u)\}_\pi =0$.\\
On the other hand, if $\{\theta_\gl^+(u) + \theta_\gl^-(-u)\}_\pi = 0$ then the concatenation $\yu_\gl$ is continuously differentiable at $u$ (recall that $r_\gl^{\pm}(\pm u)=1$), satisfies \eqref{ODEH_L} at all points $t\in [-L/2,L/2]$ and satisfies the Dirichlet b.c. Consequently $\gl$ is an eigenvalue.
\end{proof}

With distorted coordinates, all the above quantities find naturally their counterparts. For $u\in [-L/(2E),L/(2E)]$, we denote by $\yu^{(E)}_\gl$, $\ru^{(E)}_\gl$, $\thetau^{(E)}_\gl$ the concatenation of the respective backward $\theta^{-,(E)}_\gl$ and forward diffusions $\theta^{+,(E)}_\gl$ on $[-L/(2E),L/(2E)]$. In particular, $\yu_\gl^{(E)} = \ru_\gl^{(E)} \sin \thetau^{(E)}_\gl$, 
$(\yu_\gl^{(E)})'/E^{3/2} = \ru_\gl^{(E)} \cos \thetau^{(E)}_\gl$ and the link between $\yu^{(E)}_{\gl_i}$ and $\varphi_i$ becomes:
\begin{align*}
\varphi_i(t) = \frac{\yu^{(E)}_{\gl_i}(t/E)}{\sqrt{E} \| \yu^{(E)}_{\gl_i}\|_2},\qquad  t \in [-L/2,L/2]\,.
\end{align*}

For both sets of coordinates and for any $u\in [-L/(2\Eg),L/(2\Eg)]$ we will denote by $\P^{(u)}_{\theta,\theta'}$ the product law $\P^+_{(-L/(2\Eg),0)\rightarrow(u,\theta)} \otimes \P^-_{(-L/(2\Eg),0)\rightarrow(-u,\theta')}$ under which $\theta^{\pm,(\Eg)}_\gl$ are two independent bridges of diffusion between $(-L/(2\Eg),0)$ and $(u,\theta)$ for $\theta_\gl^{+,(\Eg)}$, and between $(-L/(2\Eg),0)$ and $(-u,\pi-\theta)$ for $\theta_\gl^{-,(\Eg)}$. Then, the processes $\ru_\gl^{(\Eg)}$, $\thetau_\gl^{(\Eg)}$ and $\yu_\gl^{(\Eg)}$ are defined under $\P^{(u)}_{\theta,\theta'}$ according to \eqref{Eq:Concat} and \eqref{def:concaty} with the original coordinates, and according to similar equations with distorted coordinates.

\section{Strategy of proof}\label{sec:strategy}

\subsection{Convergence to equilibrium of the phase}\label{subsec:CVequilibrium}
Our proofs rely on the following exponential convergence of the transition probability of the phase $\theta_\gl^{(\Eg)}$ toward its equilibrium measure $\mu_{\gl}^{(\Eg)}$.

\begin{theorem}[Exponential convergence to the invariant measure]\label{Th:CVDensities}
In the original coordinates, fix a compact set of energies $\Delta$. Then there exist $c,C>0$ such that for all $t\ge 1$
$$ \sup_{\gl \in \Delta}\sup_{\theta_0,\theta\in [0,\pi]} |p_{\gl,t}(\theta_0,\theta) - \mu_{\gl}(\theta)| \le ce^{-C \, t}\;.$$
In the distorted coordinates, fix $h>0$ and set $\Delta = [E-\frac{h}{n(E)E},E+\frac{h}{n(E)E}]$. There exist $c,C > 0$ such that uniformly over all $E>1$ and for all $t\ge 1$
$$ \sup_{\gl \in \Delta}\sup_{\theta_0,\theta\in [0,\pi]} |p_{\gl,t}^{(E)}(\theta_0,\theta) - \mu_{\gl}^{(E)}(\theta)| \le ce^{-C \, t}\;.$$
\end{theorem}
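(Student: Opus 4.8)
The plan is to establish the exponential convergence via a coupling/Doeblin-type argument combined with the a priori control on the invariant measure from Lemma \ref{Lemma:InvMeas}. The starting point is the observation that $\{\theta_\gl^{(\Eg)}\}_\pi$ is a diffusion on the circle $\R/\pi\Z$ whose drift is strictly positive at $0$ and whose diffusion coefficient $\sin^2\theta$ degenerates only at $0\equiv\pi$; since the process \emph{rotates} (it crosses every level of $\pi\Z$ in finite time with positive probability, as encoded by the rotation time $\zeta_\gl^{(\Eg)}$ having finite mean $m_\gl^{(\Eg)}$), it is genuinely ergodic. The key quantitative input is a \emph{minorization} (Doeblin) condition: there exist $t_0>0$ and $\delta>0$, uniform over $\gl\in\Delta$ (and over $E>1$ in the distorted case), such that for all starting points $\theta_0$,
\begin{align*}
p^{(\Eg)}_{\gl,t_0}(\theta_0,\theta) \ge \delta\,\mu_\gl^{(\Eg)}(\theta)\;,\qquad \forall \theta\in[0,\pi)\;.
\end{align*}
Once this is in hand, the standard contraction estimate for the total variation distance gives $\|p^{(\Eg)}_{\gl,nt_0}(\theta_0,\cdot) - \mu_\gl^{(\Eg)}\|_{\mathrm{TV}} \le (1-\delta)^n$, and interpolating over non-integer multiples of $t_0$ together with a regularization step to upgrade from total variation to the sup-norm on densities yields the claimed bound $ce^{-Ct}$ for $t\ge 1$.

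The main work is therefore to prove the uniform minorization. First I would fix a small arc away from the degeneracy, say $\theta\in[\pi/4,3\pi/4]$, where the generator is uniformly elliptic (the diffusion coefficient $\sin^2\theta$ is bounded below) with bounded coefficients; on such an arc a parabolic Harnack inequality / standard Gaussian lower bound gives a two-sided heat-kernel estimate on a fixed time interval. Next, since $t\mapsto\lfloor\theta_\gl^{(\Eg)}(t)\rfloor_\pi$ is non-decreasing and the process makes a full rotation in time with finite expectation, from \emph{any} starting point $\theta_0\in[0,\pi)$ the process reaches the arc $[\pi/4,3\pi/4]$ (on its way around the circle) within time $t_0/2$ with probability bounded below uniformly in $\gl\in\Delta$; this uses the moment bounds on $\zeta_\gl^{(\Eg)}$ recalled in the Appendix, plus a Markov inequality. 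Composing ``reach the good arc'' with ``the elliptic lower bound on the good arc for the remaining time $t_0/2$'' and using the Chapman–Kolmogorov equation produces the minorization against Lebesgue measure on $[\pi/4,3\pi/4]$, and then against $\mu_\gl^{(\Eg)}$ on all of $[0,\pi)$ by running an extra fixed time and invoking the uniform two-sided bounds $c\le\mu_\gl^{(\Eg)}\le C$ of Lemma \ref{Lemma:InvMeas}.

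The delicate point — and the reason this is stated as a theorem rather than a routine remark — is obtaining all constants \emph{uniformly in $E$} in the distorted coordinates. As $E\to\infty$ the drift of $\theta_\gl^{(E)}$ contains the huge term $E^{3/2}$: the phase then spins extremely fast, so in unit time it makes $\asymp E^{1/2}$ rotations. On the one hand this is helpful (mixing should only improve), but on the other hand the naive elliptic estimates degenerate because the coefficients blow up. The way around this is precisely the change of variables already built into the distorted coordinates: by \eqref{Eq:CoVp} the density $p^{(E)}_{\gl,t}$ is obtained from $p_{\gl,t}$ by the diffeomorphism $\theta\mapsto\arccotan(\sqrt E\cotan\theta)$, and Lemma \ref{Lemma:InvMeas} tells us that in these coordinates the invariant density converges to the \emph{uniform} measure $1/\pi$. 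So I would run the whole Doeblin argument after a time change that removes the $E^{3/2}$ ballistic part, reducing to a diffusion whose coefficients are controlled uniformly in $E$ on $\Delta$; the fast rotation then only contributes an averaging that makes the minorization constant $\delta$ \emph{better} (bounded away from $0$) as $E$ grows. Technically this requires care in tracking how the sped-up Brownian motion $B^{(E)}$ interacts with the time change, and in checking that the ``reach the good arc'' probability does not deteriorate — this is where I expect the bulk of the effort to go, and it is presumably handled in the paper via the Malliavin calculus / hypocoercivity machinery advertised in the introduction, which gives a more robust route to the uniform spectral gap than the hands-on coupling sketch above.
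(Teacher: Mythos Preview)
Your approach is genuinely different from the paper's. The paper does \emph{not} use a Doeblin/minorization argument at all. Instead, its proof has two analytic steps: (i) a quantitative regularization estimate $\sup_{\theta_0}\|p_1(\theta_0,\cdot)\|_{\cC^k}\le C_k$ obtained via Malliavin calculus (controlling the Malliavin covariance $\ccC_1$ and the derivative of the flow $J_{0,t}$), and (ii) exponential decay in a twisted $H^1$-norm via Villani-style hypocoercivity, writing $-\cL^*=A^*A+B$ with $A=\sigma\partial_\theta$ and building a modified inner product out of the iterated commutators $C_k=(-1)^k\sigma^{(k)}\partial_\theta$. Sobolev embedding then turns $H^1$ decay into the $L^\infty$ statement. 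The crucial structural fact that makes both steps uniform in $E$ is that the unbounded term $E^{3/2}$ in the drift is a \emph{constant}: it disappears when one differentiates $V_0$ (so $J_{0,t}$ has $E$-independent moments and the Malliavin bounds are uniform), and it sits in the antisymmetric part $B$ where it enters the commutator $[A,B]$ as the helpful factor $E^{3/2}\sigma'$, so that after rescaling by $E^{-3/2}$ the twisted norm produces a coercivity constant independent of $E$.

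Your Doeblin sketch is plausible for the Bulk regime, where compactness and H\"ormander's theorem as a black box suffice both for the lower bound on $p_s(\theta',\theta)$ near the degenerate point and for the regularization needed to pass from total variation to the sup-norm. But there is a genuine gap in the Crossover regime, and you essentially acknowledge it yourself. The proposed ``time change that removes the $E^{3/2}$ ballistic part'' is not a remedy: rescaling time by $E^{3/2}$ sends the diffusion coefficient $\sin^2\theta$ to $E^{-3/2}\sin^2\theta$, so on the good arc the elliptic Harnack constants degenerate exactly as $E\to\infty$, and your minorization constant $\delta$ is not bounded away from $0$. Likewise, your ``regularization step to upgrade from total variation to the sup-norm'' requires a uniform pointwise upper bound on $p_{t_0}(\cdot,\cdot)$, which is precisely what the paper's Malliavin argument supplies and which does not follow from the ingredients you list. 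In short, the Doeblin route does not naturally exploit the special feature (constant unbounded drift, hence vanishing upon differentiation) that the paper's Malliavin/hypocoercivity machinery is designed around; without an alternative mechanism to exploit it, the uniformity in $E$ remains open in your proposal.
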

The proof of this estimate is delicate, especially in the distorted coordinates since the bound is \emph{uniform} over all $E\in [1,\infty)$. The proof relies on tools from Malliavin calculus and the theory of hypocoercivity, it is an important technical step in our article. This result will be crucial for deriving the exponential decay of the eigenfunctions and evaluating the expectation of the number of eigenvalues in small intervals.

\subsection{Goldscheid-Molchanov-Pastur (GMP) formula}

We have seen that the Sturm-Liouville property allows to extract a lot of spectral information from the phase function. A major observation on which this article relies is that we can extract even more information through a beautiful formula, originally obtained by Goldscheid, Molchanov and Pastur~\cite{GMP1977} in a similar but smoother context, and that we name \emph{GMP formula} from now on. This formula expresses the intensity of the point process
$$ \sum_{i\ge 1} \delta_{(\gl_i, \varphi_i, \varphi'_i)}$$
in terms of \emph{concatenations} of the forward and backward diffusions introduced earlier. Roughly speaking, it shows that in average the eigenfunctions are concatenations of the forward and backward diffusions at uniform points $u \in (-L/2,L/2)$. Below $\|\cdot\|_2$ denotes the $L^2$-norm.

\begin{proposition}[GMP formula]\label{Prop:GMP}
Let $\cD$ be the Skorohod space of c\`adl\`ag functions on $[-L/2, L/2]$.
For any measurable map $G$ from $\R\times\cD \times \cD$ into $\R_+$, with the original coordinates we have
\begin{align*}
\E\big[\sum_{i\ge 1} G(\gl_i,\varphi_i, \varphi'_i)\big] &= \int_{u=-\frac{L}{2}}^{\frac{L}{2}} \int_{\gl\in\R} \int_{\theta=0}^\pi p_{\gl,\frac{L}{2}+u}(\theta) p_{\gl,\frac{L}{2}-u}(\pi-\theta)\sin^2 \theta\\
&\qquad\qquad\qquad\times\E^{(u)}_{\theta,\pi-\theta}\Big[G\Big(\gl,\frac{\yu_\gl}{\|\yu_\gl\|_2}, 
\frac{(\yu_\gl)'}{\|\yu_\gl\|_2}\Big)\Big] \, d\theta d\gl du\,,
\end{align*}
and with the distorted coordinates this becomes
\begin{align*}
\E\big[\sum_{i\ge 1} G\big(\gl_i, \varphi_i ,  \varphi'_i \big)\big] 
&= \sqrt E \int_{u=-\frac{L}{2E}}^{\frac{L}{2E}} \int_{\gl \in\R} \int_{\theta=0}^\pi p_{\gl,\frac{L}{2E}+u}^{(E)}(\theta) p_{\gl,\frac{L}{2E}-u}^{(E)}(\pi-\theta)\sin^2 \theta\\
&\qquad\qquad\times\E^{(u)}_{\theta,\pi-\theta}\Big[G\Big(\gl,\frac{\yu^{(E)}_\gl(\cdot/E)}{\sqrt{E}\|\yu_\gl^{(E)}\|_2},\frac{(\yu^{(E)}_\gl)'(\cdot/E)}{E^{3/2}\|\yu_\gl^{(E)}\|_2 }\Big)\Big] \, d\theta d\gl du\;.
\end{align*}
\end{proposition}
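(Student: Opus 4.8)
The plan is to reduce the statement to the original coordinates, since the distorted-coordinate version follows by the same argument (or by the change of variables linking $\theta^{(E)}_\gl$ to $\theta_\gl$ together with the identities $y^{(E)}_\gl(t/E)/\sqrt E \cdot \ldots$ already recorded in Section \ref{sec:thediffusions}). So I will concentrate on the original coordinates.

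First, I would pick an arbitrary reference time $u \in (-L/2,L/2)$; the key structural fact is that the left-hand side does not depend on $u$, so one is free to average over $u$ at the end. Fix $u$ and use the Sturm--Liouville picture: the eigenvalues are exactly the $\gl$ with $\{\theta_\gl^+(u)+\theta_\gl^-(-u)\}_\pi = 0$ by Lemma \ref{Lemma:Concat}, and near such a $\gl_i$ the map $\gl \mapsto \theta_\gl^+(u) + \theta_\gl^-(-u)$ crosses a multiple of $\pi$. Hence for a test function $G$,
\[
\sum_{i\ge1} G\big(\gl_i,\varphi_i,\varphi_i'\big)
= \sum_{i\ge1} G\Big(\gl_i, \tfrac{\yu_{\gl_i}}{\|\yu_{\gl_i}\|_2}, \tfrac{(\yu_{\gl_i})'}{\|\yu_{\gl_i}\|_2}\Big),
\]
because $\varphi_i = \yu_{\gl_i}/\|\yu_{\gl_i}\|_2$ for every admissible $u$. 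The idea is now to rewrite this sum over zeros as an integral by a co-area / change-of-variables argument: if $F(\gl) := \theta_\gl^+(u) + \theta_\gl^-(-u)$, then $\sum_i g(\gl_i) = \sum_{k} g(F^{-1}(k\pi))$, and since $\gl\mapsto \lfloor F(\gl)\rfloor_\pi$ is nondecreasing this equals $\int_\R g(\gl)\, dF(\gl)/\pi$ in an appropriate sense, i.e. one "undoes" the counting function by integrating against its derivative. The technically cleanest route is to average over $u\in[-L/2,L/2]$ first: since the left-hand side is independent of $u$,
\[
\E\big[\textstyle\sum_i G(\gl_i,\varphi_i,\varphi_i')\big]
= \frac1{L}\int_{-L/2}^{L/2} \E\big[\textstyle\sum_i G(\ldots)\big]\, du,
\]
and now for fixed $\gl_i$ the inner time-average of a delta at the (single, random) crossing location is handled by an occupation-time identity. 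This is where the density $\sin^2\theta$ appears: near an eigenvalue, $dU/d\gl$ (the speed of the crossing) relates to the value of $r_\gl^2(u)\sin^2\theta$ at the matching point, through the Wronskian-type quantity controlling $\partial_\gl\theta_\gl(u)$. More precisely, differentiating the ODE in $\gl$ gives $\partial_\gl \theta_\gl^+(u) = \int_{-L/2}^u r_\gl^+(s)^{-2}\cdots$, and combining forward and backward contributions one obtains that the "density of zeros" at level $\gl$, position $u$, phase $\theta$ is proportional to $p_{\gl,L/2+u}(\theta)\, p_{\gl,L/2-u}(\pi-\theta)\,\sin^2\theta$ — the two transition densities coming from the laws of $\theta_\gl^+(u)$ and $\theta_\gl^-(-u)$ (which, after the shift, must land at $\theta$ and $\pi-\theta$ respectively), and the $\sin^2\theta$ from the local crossing speed / normalization of $\yu_\gl$. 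Conditionally on this event, the laws of $\theta^{+}_\gl$ on $[-L/2,u]$ and $\theta^{-}_\gl$ on $[-L/2,-u]$ are exactly the two bridges appearing in $\P^{(u)}_{\theta,\pi-\theta}$, which is why $G$ evaluated at $\yu_\gl/\|\yu_\gl\|_2$ gets integrated under $\E^{(u)}_{\theta,\pi-\theta}$.

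The execution I have in mind: (i) prove the identity first for $G$ depending only on $\gl$ (this is essentially the Kac--Rice / Sturm-oscillation identity for the counting function $N_L$, and already forces the $\sin^2\theta$ weight); (ii) upgrade to general $G$ by noting that conditionally on the configuration of eigenvalues, the trajectory of the forward diffusion up to $u$ and the backward diffusion up to $-u$ is distributed as the corresponding pair of bridges pinned at the matching phases — this is a Markov/disintegration statement, using that $\theta_\gl$ restricted to a sub-interval is a time-inhomogeneous diffusion and the "eigenvalue event" is measurable with respect to the endpoint values $\theta^{\pm}_\gl(\pm u)$; (iii) for general bounded measurable $G\ge0$, conclude by a monotone class / approximation argument, since both sides are $\sigma$-additive in $G$. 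The cleanest rigorous implementation of (i) is probably via a regularization of the white noise (replace $\xi$ by a smooth mollification $\xi_\delta$, where everything is classical Kac--Rice for $C^1$ potentials, then pass to the limit using continuity of the relevant densities in the topology of the companion estimates), rather than trying to make sense of "$dF$" directly.

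The main obstacle I anticipate is step (i)/(iii) together: making the co-area argument rigorous when the potential is only a distribution, i.e. justifying the passage from "$\sum_i \delta_{\gl_i}$" to the integral formula with the precise constant $\sin^2\theta$ and no stray Jacobian factors. One has to be careful that $\gl\mapsto\theta_\gl^+(u)$ is $C^1$ in $\gl$ (true, since the ODE depends smoothly on $\gl$ even with white-noise potential, the noise not involving $\gl$), that crossings are simple (guaranteed by the strict positivity of the drift at $\{\theta\}_\pi=0$, already noted in the text, which also gives $\partial_\gl F>0$ at zeros), and that the normalization $\|\yu_\gl\|_2$ is exactly what converts the "crossing-speed" density into $\sin^2\theta$ times the product of transition densities. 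The bridge-disintegration in (ii) is conceptually routine but notationally heavy; I expect no real difficulty there beyond bookkeeping, given the definitions of $\P^{\pm}$, $\P^{(u)}_{\theta,\theta'}$ and the concatenation \eqref{Eq:Concat} already set up.
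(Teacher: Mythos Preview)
Your high-level structure is right --- co-area in $\gl$ at fixed $u$, the Jacobian $\partial_\gl(\theta_\gl^+(u)+\theta_\gl^-(-u))$, and bridge disintegration --- but the way you bring in the $u$-integral is a genuine gap. Your $\frac{1}{L}\int du$ averaging does nothing: the sum $\sum_i G(\gl_i,\varphi_i,\varphi_i')$ is independent of $u$, so averaging it over $u$ returns itself, and you are left with a spurious $1/L$ (equivalently, you never explain what cancels the Jacobian). The paper's mechanism is different and precise. With the normalization $r_\gl^\pm(\pm u)=1$, the Jacobian is
\[
\partial_\gl\big(\theta_\gl^+(u)+\theta_\gl^-(-u)\big)=z_\gl^+(u)+z_\gl^-(-u)=\|\yu_\gl\|_2^2
\]
by \eqref{Eq:zgl}. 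One first proves the \emph{weighted} identity (Proposition \ref{Prop:GMPalternative}) with the extra factor $\varphi_i(u)^2$ on the left-hand side: since $\varphi_i(u)^2=\yu_{\gl_i}(u)^2/\|\yu_{\gl_i}\|_2^2=\sin^2\theta/\|\yu_{\gl_i}\|_2^2$ at the matching phase $\theta$, this weight cancels the Jacobian exactly and leaves $\sin^2\theta$. That is where the $\sin^2\theta$ genuinely comes from --- not from any ``occupation-time identity'' or ``crossing speed in $u$''. One then integrates over $u$ and uses $\int\varphi_i(u)^2\,du=1$ to recover the unweighted formula of Proposition \ref{Prop:GMP}. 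No mollification of $\xi$ is needed: $\gl\mapsto\theta_\gl$ is already $C^1$, and the $\eps$-regularization \eqref{Eq:RVeps} together with dominated convergence handles the Dirac sum directly.

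A minor point: your expression $\partial_\gl\theta_\gl^+(u)=\int_{-L/2}^u r_\gl^+(s)^{-2}\cdots$ is not right; the correct formula \eqref{Eq:zgl} reads $r_\gl(u)^{-2}\int_{-L/2}^u r_\gl(s)^2\sin^2\theta_\gl(s)\,ds$, and it is precisely this structure (with $r_\gl(u)=1$ by \eqref{bc_r+-}) that makes the sum of forward and backward contributions equal to $\|\yu_\gl\|_2^2$.
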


To exploit this formula, one needs to analyze both the transition probabilities of the phase and the concatenation $\hat y^{(\Eg)}_\gl$ for $\gl$ in the regime of energies under consideration. Regarding the transition probabilities, Theorem \ref{Th:CVDensities} allows to replace them, up to an error that vanishes as $L\to\infty$, by the equilibrium measure whose expression is explicit. On the other hand, the concatenation of the diffusions can be thoroughly studied since the SDEs at stake are tractable.\\
Taking $G = 1_\Delta(\lambda)$, we will derive a \emph{Wegner estimate}, that is an estimate on the number of eigenvalues in small (microscopic) intervals: this will be an important ingredient for the convergence to a Poisson point process, see Subsection \ref{Subsec:StrategyPoisson}. Studying carefully the concatenation $\yu^{(\Eg)}_\gl$, we will prove the exponential decay of the eigenfunctions and we will derive their asymptotic behavior, see the next paragraph.

\subsection{Exponential decay}\label{subsec:strategyexpodecay}

Let us fix again $h >0$ and let $\Delta := [E-h/Ln(E),E+h/Ln(E)]$. Introduce
\begin{align}\label{def:nuEgras}
\boldsymbol{\nu_{E}} := \begin{cases} \nu_E & \mbox{ Bulk regime}\;,\\ 1&\mbox{ Crossover regime}\;,\end{cases}
\end{align}
where $\nu_E$ was defined in \eqref{Eq:nuE}. As we will see in Subsection \ref{subsec:Lyapounov}, this quantity is related to the Lyapounov exponent of our diffusions, that is, the rate of linear growth in time of $\ln r_\gl^{(\Eg)}(t)$.

\smallskip
The main technical step towards the exponential decay is the following result:
\begin{proposition}[Exponential decay of the eigenfunctions]\label{Prop:ExpoDecayCrossover}
For any $\eps > 0$ small enough, there exists $q=q(\eps) > 0$ such that
$$ \sup_{L>1} \E\bigg[\sum_{\gl_i \in \Delta} \Big(\inf_{u\in [-L/2,L/2]} G_u(\gl_i,\varphi_{i},\varphi_{i}') \Big)^q \bigg] < \infty\;,$$
where
$$G_u(\gl,\varphi,\psi) := \sup_{t\in [-\frac{L}{2},\frac{L}{2}]} \Big(\varphi^2(t) + \frac{\psi^2(t)}{\Eg}\Big)^{1/2} \sqrt \Eg\, e^{\frac12(\boldsymbol{\nu_{E}} - \eps) \frac{|t-u|}{\Eg}}\;.$$
\end{proposition}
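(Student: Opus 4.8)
The plan is to use the GMP formula (Proposition 2.5) to turn the left-hand side into an integral over the starting point $u$, the energy $\gl$ and the phase $\theta$, of a quantity depending only on the concatenated diffusion $\yu^{(\Eg)}_\gl$ under the bridge law $\P^{(u)}_{\theta,\pi-\theta}$. Since $G_u(\gl,\varphi,\psi)$ only depends on $\varphi,\psi$ through the normalized concatenation, we would apply the formula with $G(\gl,\varphi,\psi) = \bigl(\inf_{u} G_u(\gl,\varphi,\psi)\bigr)^q$; but the infimum over $u$ on the inside is awkward to handle directly, so first I would bound the infimum by the value at one particular well-chosen $u$ — namely, since the GMP formula already \emph{averages} over the choice of splitting point $u$, it suffices to bound $\E[\sum_i \inf_u(\cdots)^q]$ by $\frac{1}{L}\int_{-L/2}^{L/2}\E[\sum_i G_u(\gl_i,\varphi_i,\varphi_i')^q]\,du$ times $L$ and then use that for each fixed $u$ the GMP formula expresses $\E[\sum_i G_u(\gl_i,\varphi_i,\varphi_i')^q]$ exactly. (Here one uses that the eigenfunctions do not depend on $u$, while $\yu_{\gl_i}$ does, yet reproduces $\varphi_i$ for every $u$.) After this reduction, and using $\int_\R d\gl$ restricted to $\Delta$ which has length $\asymp 1/(Ln(E))$, together with the uniform bounds on the invariant density and Theorem 2.1 to replace $p_{\gl,\frac{L}{2\Eg}\pm u}^{(\Eg)}$ by $\mu_\gl^{(\Eg)}$ up to an $L$-uniform error, the problem reduces to showing
\[
\sup_{L,u,\gl,\theta} \ \E^{(u)}_{\theta,\pi-\theta}\Bigl[\Bigl(\sup_{t}\bigl(\yu_\gl^{(\Eg)}(t)^2 + \ldots\bigr)^{1/2}\tfrac{\sqrt\Eg\, e^{\frac12(\boldsymbol{\nu_E}-\eps)|t-u|/\Eg}}{\|\yu_\gl^{(\Eg)}\|_2}\Bigr)^q\Bigr] < \infty
\]
for some $q>0$ (with the understanding that $u$ ranges over the appropriate interval and the constants are uniform in $E$ in the Crossover case). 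The factor $1/(Ln(E))$ from $|\Delta|$ cancels against the $L$ and the $n(E)\asymp 1/\sqrt E$, using $\sqrt\Eg$ normalizations, so the counting factors balance.

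The core estimate is the display above, and this is where the real work lies. Writing $\yu_\gl^{(\Eg)} = \ru_\gl^{(\Eg)}\sin\thetau_\gl^{(\Eg)}$ with $\rho = \ln(\ru)^2$, one has $\yu_\gl^{(\Eg)}(t)^2 + (\yu_\gl^{(\Eg)})'(t)^2/\Eg^{\,?} = (\ru_\gl^{(\Eg)}(t))^2 = e^{\rho(t)}$ (in the distorted coordinates, with the appropriate rescaling of the derivative built into the definition of $\ru^{(E)}$), so the supremum in $t$ becomes $\sup_t e^{\rho(t)/2}\,e^{\frac12(\boldsymbol{\nu_E}-\eps)|t-u|/\Eg}$ and the $L^2$-norm in the denominator is at least a constant times $e^{\rho(u)/2}$ on, say, a unit interval around $u$ where $\thetau$ is not too close to $0$ modulo $\pi$ (which happens with good probability by ellipticity of $\{\theta\}_\pi$ away from $0$). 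So up to controlling the denominator — a local-time / occupation estimate for $\{\thetau_\gl^{(\Eg)}\}_\pi$ near $u$, which is standard given the non-degeneracy of the invariant measure — the estimate reduces to
\[
\sup \ \E^{(u)}_{\theta,\pi-\theta}\Bigl[\sup_{t}\exp\Bigl(\tfrac{q}{2}\bigl(\rho(t)-\rho(u)\bigr) + \tfrac{q}{2}(\boldsymbol{\nu_E}-\eps)\tfrac{|t-u|}{\Eg}\Bigr)\Bigr] < \infty.
\]
This is a statement that $\rho(t)-\rho(u)$ decays at least at linear rate $-\boldsymbol{\nu_E}|t-u|/\Eg$ with exponential-type control on the fluctuations. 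The natural route is: (i) identify the Lyapounov exponent, i.e.\ show $\frac{1}{t}\E[\rho_\gl^{(\Eg)}(t)] \to -\boldsymbol{\nu_E}/\Eg$ (this is where formula \eqref{Eq:nuE} and \eqref{expressionmgl} come from — integrating the drift of $\rho$ against the invariant measure $\mu_\gl^{(\Eg)}$); (ii) obtain an exponential moment / large-deviation upper bound $\E[e^{\kappa(\rho(t)-\rho(0)+\boldsymbol{\nu_E}t/\Eg)}] \le Ce^{-\delta t}$ for some small $\kappa,\delta>0$, uniformly in the relevant parameters, using the ergodic / mixing estimate of Theorem 2.1; (iii) run this forward and backward from $u$ along the concatenation, taking a union bound over integer times and using continuity of $\rho$ on unit intervals (Doob / BDG for the martingale part $\int \sin 2\theta\, dB^{(\Eg)}$, whose quadratic variation is $\lesssim |t-u|/\Eg$) to pass from a bound at integer times to the supremum over all $t$; (iv) transfer from the unconditioned diffusion to the bridge $\P^{(u)}_{\theta,\pi-\theta}$ — the bridge conditioning over a macroscopic time $L/(2\Eg)$ is absorbed using that the phase equilibrates in $O(1)$ time (Theorem 2.1 again), so the Radon–Nikodym cost of conditioning is bounded uniformly.

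The main obstacle I anticipate is step (ii)–(iii) carried out \emph{uniformly in $E$} in the Crossover regime: the drift of $\rho_\gl^{(E)}$ contains the large term $-\sqrt E(\gl - E)\sin 2\theta_\gl^{(E)}$, and although on $\Delta$ one has $|\gl - E| \lesssim 1/(Ln(E)) \lesssim \sqrt E/L$ so $\sqrt E|\gl-E|$ is small, extracting a clean Lyapounov exponent of size exactly $\boldsymbol{\nu_E}/\Eg = 1/E$ with $E$-uniform fluctuation control forces one to work with $B^{(E)}$ and the rescaled generator, and the exponential-moment bound must not degrade as $E\to\infty$. This is precisely the point where Theorem 2.1's $E$-uniform spectral gap is essential: it gives, via a standard argument (decompose $[0,t]$ into unit blocks, use the Markov property and the $L^\infty$ contraction to get geometric decay of $\mathrm{Var}$ and then an exponential Chernoff bound for the additive functional $\int_0^t(\text{drift of }\rho)\,ds$ minus its ergodic average), the uniform large-deviation estimate needed. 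The rest (the local-time lower bound on $\|\yu\|_2$, the BDG estimate for the martingale part, the bridge transfer) is routine but needs to be assembled carefully so that all constants are independent of $L$ and, in the Crossover case, of $E$.
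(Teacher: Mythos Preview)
Your overall strategy matches the paper's. One small correction to the GMP step: the formula does \emph{not} express $\E[\sum_i G_v(\gl_i,\varphi_i,\varphi_i')^q]$ for a fixed $v$; rather, one applies GMP directly to the $u$-independent observable $(\inf_v G_v)^q$, obtaining an integral over the concatenation point $u$, and then bounds $\inf_v G_v \le G_u$ \emph{inside} that integral (this is exactly what the paper does in Subsection~\ref{Subsec:ProofExpoDecay}). After this the reduction to controlling $\sup_t \hat r_\gl^{(\Eg)}(t)\, e^{\frac12(\nu_\gl^{(\Eg)}-\eps)|t-u|}$ and $\|\yu_\gl^{(\Eg)}\|_2^{-1}$ under $\P^{(u)}_{\theta,\pi-\theta}$ proceeds as you describe.

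The one genuine methodological difference is your step (ii). The paper obtains the key fluctuation bound (Lemma~\ref{Lemma:Zgl}) not via a spectral-gap/Chernoff argument on unit-time blocks, but by exploiting the \emph{renewal structure} of the phase: it chops time along the hitting times $T_k$ of $n\pi\Z$ by $\theta_\gl^{(\Eg)}$ with $n=\lfloor \Eg^{3/2}\rfloor$, so that the increments $G_k = Z_\gl(T_k)-Z_\gl(T_{k-1})$ are genuinely i.i.d., centered, with exponential moments bounded uniformly in $E$, and a two-term Taylor estimate on the Laplace transform (Lemma~\ref{Lemma:ExpoTaylor}) closes the argument. Your block/Chernoff route should also work --- the $E$-uniform $L^\infty$ gap from Theorem~\ref{Th:CVDensities} is strong enough to run a tilted-operator argument --- but the renewal approach sidesteps any perturbation theory and is shorter here. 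Finally, the paper implements your ``run backward from $u$'' via the \emph{adjoint} diffusion $(\bar\theta_\gl^{(\Eg)},\bar\rho_\gl^{(\Eg)})$ and the time-reversal identity~\eqref{Eq:Adjoint}: this converts the backward estimate on $r^+_\gl(t)/r^+_\gl(u)$ into a forward one for $\bar r_\gl$ started at $u$ with $\bar r_\gl(0)=1$, to which Lemma~\ref{Lemma:Zgl} applies directly; the same device handles the denominator $\|\yu_\gl\|_2^{-q}$, which after Cauchy--Schwarz splits into $(\inf\bar r)^{-2q}$ (again Lemma~\ref{Lemma:Zgl}) and the occupation term $(\int_0^1\sin^2\bar\theta)^{-q}$ (Lemma~\ref{Lemma:ExpoBoundTheta}).
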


With this result at hand, we can present the proof of the exponential decay of the eigenfunctions.

\begin{proof}[Proof of Theorem \ref{Th:Loc}]
Fix $\eps >0$ small enough. From the last proposition, we deduce that for every eigenvalue $\lambda_i \in \Delta$ there exists $\tilde U_i$ (depending on $\eps$) such that
$$ \Big(\varphi_i(t)^2 + \frac{\varphi'_i(t)^2}{\Eg}\Big)^{1/2} \le \frac{\tilde{c_i}}{\sqrt \Eg} \exp\Big(-\frac{(\boldsymbol{\nu_{E}}-\eps)}{2}\frac{|t-\tilde U_i|}{\Eg}\Big)\;,\quad \forall t\in [-L/2,L/2]\;,$$
where $\tilde c_i := \inf_{u\in [-L/2,L/2]} G_u(\gl_i,\varphi_{\gl_i},\varphi_{\gl_i}')$. Recentering the exponential term at $U_i$, we obtain
$$ \Big(\varphi_i(t)^2 + \frac{\varphi'_i(t)^2}{\Eg}\Big)^{1/2} \le \frac{{c_i}}{\sqrt \Eg} \exp\Big(-\frac{(\boldsymbol{\nu_{E}}-\eps)}{2}\frac{|t- U_i|}{\Eg}\Big)\;,\quad \forall t\in [-L/2,L/2]\;,$$
with $c_i = \tilde c_i \exp\Big(\frac{(\boldsymbol{\nu_{E}}-\eps)}{2}\frac{|\tilde U_i- U_i|}{\Eg}\Big)$. We need some control on the distance $\tilde U_i - U_i$ to conclude the proof. From the definition of $U_i$ and since $\varphi_i^2(t) dt$ is a probability measure, we have
$$ \tilde U_i - U_i = \int (\tilde U_i -t) \varphi_i^2(t) dt\;.$$
By Jensen's inequality, we thus have for any $a \in (0, \boldsymbol{\nu_{E}}-\eps)$
\begin{align*}
\exp\Big( a \frac{|\tilde U_i - U_i|}{\Eg} \Big) &\le \int \exp\Big(a \frac{|\tilde U_i -t|}{\Eg}\Big) \varphi_i^2(t) dt\\
&\le \frac{\tilde{c_i}^2}{\Eg} \int \exp\Big((a-(\boldsymbol{\nu_{E}}-\eps)) \frac{|\tilde U_i -t|}{\Eg} \Big) dt\\
&\le 2\frac{\tilde{c_i}^2}{\boldsymbol{\nu_{E}}-\eps-a}\;.
\end{align*}
From the bound of the proposition, we already know that
$$ \sup_{L>1} \E\Big[ \sum_{\gl_i \in \Delta} \tilde c_i^q \Big] < \infty\;.$$
Using the previous bound, this remains true with $c_i$ instead of $\tilde c_i$, up to decreasing $q$.
\end{proof}

The main ideas of the proof of Proposition \ref{Prop:ExpoDecayCrossover} are simple. First of all, we apply the GMP formula to rephrase our statement on the eigenfunctions in terms of the concatenation of the forward/backward processes. Second, we establish precise moment bounds on the exponential growth of $r^{(\Eg)}_\gl$. Third, we transfer these estimates to the concatenation $\yu^{(\Eg)}_\gl$. We refer to Section \ref{Sec:ExpoDecay} for the details.

\subsection{Poisson statistics}\label{Subsec:StrategyPoisson}

Obviously, Theorem \ref{Th:Shape} implies Theorem \ref{Th:Poisson} so we concentrate on the former statement. The argument is twofold. First, we introduce an approximation $\bar{\cN}_L$ of the random measure $\cN_L$ that possesses more independence, and we prove that it converges to the Poisson random measure of the statement. Second we show that $\cN_L-\bar{\cN}_L$ goes to $0$.

\smallskip
There are some topological difficulties arising in the spaces at stake, which will be explained in more details in the proof of Theorem \ref{Th:Shape}, see below. For the time being, we view the r.v.~$\cN_L$ and $\bar{\cN}_L$ as random Radon measures on $\R\times[-1/2,1/2]\times\cM(\bar\R)$, where $\bar\cM := \cM(\bar{\R})$, the set of probability measures on $\bar\R$ endowed with the topology of weak convergence, and where $\bar\R$ is the compactification of $\R$.

\smallskip

Let us present the approximation scheme that leads to the definition of $\bar\cN_L$. We subdivide the interval $(-L/2,L/2)$ into $k$ (microscopic) disjoint boxes $(t_{j-1},t_{j})$ where $t_j = -L/2+ j L/k$ and where $k=k(L)$ is a quantity that goes to $+\infty$ at a sufficiently small speed\footnote{Restrictions on the speed of $k$ are collected in Sections \ref{Sec:TwoPoints} and \ref{Sec:Poisson}.}. We consider the Anderson Hamiltonian $\cH^{(j)}_L = -\partial^2_x + \xi$ restricted to $(t_{j-1},t_{j})$ with Dirichlet b.c., and we denote by $\lambda^{(j)}_i$ its eigenvalues, $\varphi^{(j)}_i$ its eigenfunctions, ${U_i}^{(j)}$ its centers of mass and $w_i^{(j)}(dt) = \Eg \varphi^{(j)}_i({U_i}^{(j)} + \Eg\,t)^2 dt$ the associated probability measure (after recentering at $U_i^{(j)}$).
We then define
$$ {\cN}_L^{(j)} := \sum_{i} \delta_{(L\, n(E)(\lambda_i^{(j)} - E), U_i^{(j)}/L, w_i^{(j)})}\;,$$
as well as
$$ \bar{\cN}_L := \sum_{j=1}^k {\cN}_L^{(j)}\;.$$

\begin{proposition}\label{Prop:Approx}
The random measure $\cN_L - \bar{\cN}_L$ converges in law to the null measure as $L\to\infty$.
\end{proposition}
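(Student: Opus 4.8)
The plan is to show that $\cN_L$ and $\bar\cN_L$ differ only through the eigenvalues/eigenfunctions of $\cH_L$ that are "sensitive" to the introduction of the Dirichlet cuts at the points $t_j$, and that with high probability there are no such eigenvalues in the relevant window. Concretely, fix a compact $K = [-A,A]\subset\R$ and a bounded continuous $f:\R\times[-1/2,1/2]\times\bar\cM\to\R$ supported (in its first variable) in $K$; it suffices to prove $\langle f,\cN_L\rangle - \langle f,\bar\cN_L\rangle \to 0$ in probability. Set $\Delta := [E - A/(Ln(E)), E + A/(Ln(E))]$, so that only eigenvalues in $\Delta$ contribute to either pairing. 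The first step is a Dirichlet bracketing / interlacing argument: the spectrum of $\bigoplus_j \cH_L^{(j)}$ interlaces that of $\cH_L$ in the sense that adding $k-1$ rank-one (in the form sense) Dirichlet constraints shifts each eigenvalue upward by at most one "slot", so $\#\{\lambda_i^{(j)}\in\Delta\} $ and $\#\{\lambda_i\in\Delta\}$ differ by at most $k-1$ in total and, more usefully, each $\cH_L^{(j)}$-eigenvalue in $\Delta$ is close to a genuine $\cH_L$-eigenvalue unless the corresponding eigenfunction charges a boundary region.

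The second and main step is to control the eigenfunctions near the cut points. By Theorem \ref{Th:Loc} (exponential localization), for $\eps$ small every eigenfunction $\varphi_i$ with $\lambda_i\in\Delta$ satisfies $(\varphi_i^2+\varphi_i'^2/\Eg)^{1/2}(t)\le (c_i/\sqrt\Eg)\exp(-\tfrac12(\boldsymbol{\nu_E}-\eps)|t-U_i|/\Eg)$, with $\limsup_L \E[\sum_{\lambda_i\in\Delta} c_i^q]<\infty$. Hence an eigenfunction with center of mass $U_i$ at distance $\gg \Eg \log(\text{something})$ from every cut $t_j$ is exponentially small, together with its derivative, at all the $t_j$; such an eigenfunction restricted to the box $(t_{j(i)-1},t_{j(i)})$ containing $U_i$ is an approximate Dirichlet eigenfunction of $\cH_L^{(j(i))}$, and conversely. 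One turns this into a quantitative matching: writing the eigenvalue equation in Prüfer variables, a function that is $\delta$-small (with derivative) at the two endpoints of a box and solves $-y''+\xi y = \lambda y$ inside is, after the obvious truncation, within $O(\delta)$ (in $\lambda$, in $L^2$, and hence in the weak topology on the rescaled probability measures $w_i$) of an exact eigenpair of $\cH_L^{(j)}$. The "bad" set is then the set of eigenvalues in $\Delta$ whose center of mass lies within distance $\Eg\,(\log L)$ of some $t_j$; using the Wegner-type bound coming from the GMP formula (Proposition \ref{Prop:GMP} with $G=\un_\Delta$, together with Theorem \ref{Th:CVDensities} to replace transition densities by the invariant measure and Lemma \ref{Lemma:InvMeas} to bound it), the expected number of eigenvalues in $\Delta$ whose center of mass falls in a given sub-interval of length $\ell$ is $O(\ell/L \cdot Ln(E)|\Delta|) = O(\ell\, n(E)|\Delta|)$; summing over the $k$ cut-neighbourhoods of total length $k\cdot \Eg(\log L)$ gives an expected count $O(k\,\Eg (\log L)\, n(E)|\Delta|)$, which tends to $0$ provided $k$ grows slowly enough (this is exactly where the footnoted restriction on the speed of $k$ enters). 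On the complement of this bad event, the contributions to $\langle f,\cN_L\rangle$ and $\langle f,\bar\cN_L\rangle$ are in bijection and each matched pair of atoms differs by $o(1)$ in the metric on $\R\times[-1/2,1/2]\times\bar\cM$, so by uniform continuity of $f$ the two pairings differ by $o(1)$.

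Putting it together: $|\langle f,\cN_L\rangle - \langle f,\bar\cN_L\rangle| \le \|f\|_\infty\cdot(\text{number of bad eigenvalues}) + \omega_f(o(1))\cdot(\text{number of eigenvalues in }\Delta)$, where $\omega_f$ is the modulus of continuity of $f$; the first term vanishes in expectation by the Wegner bound just described, and the second vanishes because $\#\{\lambda_i\in\Delta\}$ is tight (again by Wegner) and the displacement of each matched atom is $o(1)$ uniformly. This yields convergence in probability, hence in law, of $\cN_L-\bar\cN_L$ to the null measure; since $\R$, $[-1/2,1/2]$ and $\bar\cM$ are Polish and the test functions considered determine vague convergence on this space, the proof is complete. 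I expect the main obstacle to be the quantitative "approximate eigenfunction $\Rightarrow$ nearby exact eigenfunction" step — making the $O(\delta)$ matching precise simultaneously in the eigenvalue, in the $L^2$-renormalized shape $w_i$, and in the center of mass $U_i$, uniformly in the box and (in the Crossover regime) uniformly in $E$ — together with the bookkeeping needed to ensure the error terms are summable against the growing number $k$ of boxes; the interlacing and Wegner inputs are comparatively standard once Theorems \ref{Th:Loc}, \ref{Th:CVDensities} and Proposition \ref{Prop:GMP} are in hand.
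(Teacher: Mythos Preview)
Your broad strategy --- exponential localization, plus a Wegner-type bound ruling out centers of mass near the cuts, plus an approximate-eigenfunction matching --- is exactly the architecture the paper uses. But there is a genuine gap in the step you flag yourself, and it is not where you think it is.

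The difficulty in ``approximate eigenfunction $\Rightarrow$ nearby exact eigenfunction'' is not the $O(\delta)$ closeness of the \emph{eigenvalue}: that falls out immediately from the spectral identity
\[
\|(\cH_L^{(j)}-\lambda)\psi\|^2 \;=\; \sum_i |\lambda_i^{(j)}-\lambda|^2\,\langle \psi,\varphi_i^{(j)}\rangle^2,
\]
applied to the truncated/normalized $\psi$. The hard part is getting closeness of the \emph{eigenfunction} (and hence of $w_i$ and $U_i$): for that you need to know that the near eigenvalue $\lambda_\ell^{(j)}$ is \emph{unique}, i.e.\ that no other $\lambda_i^{(j)}$ lies within the same exponentially small window. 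Only then can you conclude $\sum_{i\ne \ell}\langle\psi,\varphi_i^{(j)}\rangle^2$ is small and hence $\|\varphi-\varphi_\ell^{(j)}\|_2$ is small. This is precisely a spectral gap statement for the box operators, and it does \emph{not} follow from localization or Wegner alone --- it is a Minami-type input. The paper obtains it by using, as an \emph{input} to the proof of Proposition~\ref{Prop:Approx}, the already-established Poisson convergence of $\bar\cN_L$ (which needs Proposition~\ref{Prop:OneEigen}); on the resulting high-probability gap event $\cG_L$, distinct $\lambda_i^{(j)}$ in $\Delta$ are separated by at least $\delta_L/(Ln(E))$, which beats the matching error since $t_L\gg\ln(L/\Eg)$. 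You never invoke any such gap, so your ``in bijection and each matched pair differs by $o(1)$'' is not justified.

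Relatedly, interlacing does not help you here: adding $k-1$ Dirichlet constraints can shift the eigenvalue count in $\Delta$ by up to $k-1$, and since $k\to\infty$ this is useless for comparing two point processes that each have $O(1)$ atoms in $K$. The paper instead gets bijectivity in a second step by a soft counting argument: once the map $\lambda\mapsto\lambda_\ell^{(j)}$ is shown to be injective (via orthogonality of the $\varphi_i$), one uses $\E[N_L(\Delta)]=\E[\sum_j N_L^{(j)}(\Delta)]+o(1)$ (Proposition~\ref{Prop:MicroEstimates}) together with uniform integrability (Proposition~\ref{Prop:Moment}) to force equality of the counts with high probability. Your sketch has neither the injectivity argument (which again needs the gap event) nor this counting closure.
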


The proof of this proposition is presented in Subsection \ref{Subsec:Approx} and relies on two inputs: (1) the eigenfunctions of $\cH_L$ and $\cH^{(j)}_L$ are exponentially localized, (2) $\bar\cN_L$ converges to a Poisson random measure. Point (1) is the content of the previous proposition (the exponential localization of the eigenfunctions of $\cH^{(j)}_L$ holds for exactly the same reasons). Point (2) is proven below. Given these two inputs, the proof of Proposition \ref{Prop:Approx} consists in building a one-to-one correspondence between the atoms of $\cN_L$ and $\bar\cN_L$, when these measures are restricted to some arbitrary set $[-h,h]\times[-1/2,1/2]\times\bar\cM$, and to show that the corresponding pairs of atoms are close in the topology at stake.

\medskip

Let us now explain the main steps towards the convergence of $\bar{\cN}_L$ to a Poisson random measure. We fix a constant $h >0$ (independent of $L$) and define the interval of energies
\begin{align}
\Delta:= \Big[E-\frac{h}{Ln(E)}, E+ \frac{h}{Ln(E)}\Big]\;. \label{def:Delta}
\end{align}
For convenience, we set
\begin{equation}\label{Eq:NLDelta} N_L(\Delta) := \#\{\lambda_i \in \Delta\} = \int \un_{[-h,h]}(\gl) \cN_L(d\gl,dx,dw)\;,\end{equation}
as well as
\begin{equation}\label{Eq:NLDeltaj} N_L^{(j)}(\Delta) :=  \#\{\lambda_i^{(j)} \in \Delta\} = \int \un_{[-h,h]}(\gl) \cN_L^{(j)}(d\gl,dx,dw)\;.\end{equation}

\medskip

First, we control the second moments of $N_L^{(j)}(\Delta)$. Note that the $N_L^{(j)}(\Delta)$ are i.i.d., so it suffices to consider $j=1$. (We also state a similar estimate on $N_L(\Delta)$ for later convenience).

\begin{proposition}\label{Prop:Moment}
We have
$$\sup_{L>1} \E[N_L(\Delta)^2] < \infty\;,\quad \sup_{L>1} k\; \E[N^{(1)}_L(\Delta)^2] < \infty\;.$$
\end{proposition}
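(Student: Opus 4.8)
The plan is to use the GMP formula (Proposition \ref{Prop:GMP}) with a test function that counts \emph{pairs} of eigenvalues in $\Delta$, so that the bound on $\E[N_L(\Delta)^2]$ becomes a Minami-type (two-points) estimate. Precisely, one writes $N_L(\Delta)^2 = N_L(\Delta) + \sum_{i\neq i'} \un_\Delta(\gl_i)\un_\Delta(\gl_{i'})$. The diagonal term is controlled by the first-moment (Wegner) estimate obtained by taking $G=\un_\Delta(\gl)$ in the GMP formula, combined with Theorem \ref{Th:CVDensities} to replace the transition densities $p_{\gl,\cdot}^{(\Eg)}$ by the invariant density $\mu_\gl^{(\Eg)}$ (up to an $O(e^{-Ct})$ error), and Lemma \ref{Lemma:InvMeas} to bound $\mu_\gl^{(\Eg)}$ and the interval length $|\Delta| = 2h/(Ln(E))$; the $du$-integral over $[-L/(2\Eg),L/(2\Eg)]$ produces a factor $\asymp L/\Eg$ which is exactly compensated by $|\Delta|$ and $\sqrt{\Eg}$, using $m_\gl^{(\Eg)} = 1/(\Eg n(E))$ up to constants, so that $\sup_L \E[N_L(\Delta)] < \infty$ uniformly. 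The off-diagonal term requires a \emph{two-points} analogue of the GMP formula (the joint diffusion alluded to in Section \ref{Sec:TwoPoints}): one expresses $\E[\sum_{i\neq i'} \un_\Delta(\gl_i)\un_\Delta(\gl_{i'})]$ as an integral over two rotation parameters $u,u'$ and two energies $\gl,\gl'\in\Delta$ against the joint transition density of the phase at the two energies, and the key point is that this joint density, integrated, gains an extra small factor (of order $|\Delta|$ again, roughly) because two nearby energies cannot both produce an eigenvalue in a window of width $|\Delta|$ without a cost — this is where the vanishing of the diffusion coefficient of $\theta^{(\Eg)}_\gl$ at $\{\theta\}_\pi = 0$, and hence the monotonicity and "stickiness" of $\lfloor\theta_\gl\rfloor_\pi$, is used to show the pair of phases decorrelates. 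Carrying this out gives $\sup_L \E[N_L(\Delta)^2] < \infty$.

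For the second estimate, one applies the \emph{same} two bounds to $\cH^{(1)}_L$ on the box $(t_0,t_1) = (-L/2, -L/2 + L/k)$, whose length is $L/k$ rather than $L$. The GMP formula applied on this box has its $du$-integral ranging over an interval of length $\asymp (L/k)/\Eg$ instead of $L/\Eg$, so the first-moment bound becomes $\E[N_L^{(1)}(\Delta)] \asymp (L/k)\cdot |\Delta| \cdot \Eg^{-1}\cdot\Eg^{1/2}\cdot\mu \asymp 1/k$ (using $|\Delta|\asymp 1/(Ln(E))$ and $\Eg n(E) m_\gl^{(\Eg)}\asymp 1$), hence $k\,\E[N_L^{(1)}(\Delta)] = O(1)$; one must check that the exponential-convergence error term from Theorem \ref{Th:CVDensities} is still negligible, which needs $L/(k\Eg)\to\infty$, i.e.\ a lower bound on the growth rate allowed for $k$ (collected in Sections \ref{Sec:TwoPoints} and \ref{Sec:Poisson}). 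Likewise the two-points estimate on the box gives $\E[N_L^{(1)}(\Delta)^2] = \E[N_L^{(1)}(\Delta)] + O((1/k)^2\cdot(\text{Minami gain})) = O(1/k)$, so that $k\,\E[N_L^{(1)}(\Delta)^2] = O(1)$; i.i.d.-ness of the $N_L^{(j)}(\Delta)$ across $j$ then makes it enough to have treated $j=1$.

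The main obstacle is clearly the two-points (Minami) estimate: one needs good control of the joint law of $(\theta^{(\Eg)}_\gl, \theta^{(\Eg)}_{\gl'})$ for $\gl,\gl'\in\Delta$ close, and in the Crossover regime this must moreover be \emph{quantitative} in the unbounded parameter $E$, since the phase rotates at speed $\asymp E^{3/2}$. This is exactly the analysis deferred to Section \ref{Sec:TwoPoints}, and it is the delicate point flagged in the introduction (in contrast with smoother-potential models where general results are available). The Wegner part, by contrast, is a fairly direct consequence of the GMP formula together with Theorem \ref{Th:CVDensities} and the invariant-measure bounds of Lemma \ref{Lemma:InvMeas}, with the only subtlety being to track the powers of $\Eg$ and the scaling $m^{(\Eg)}_\gl = 1/(\Eg n(E))$ carefully so that all $L$- and $E$-dependence cancels.
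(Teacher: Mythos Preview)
Your plan takes a different route from the paper, and the main gap is in what you defer to Section \ref{Sec:TwoPoints}. You propose to handle $\sum_{i\ne i'}\un_\Delta(\gl_i)\un_\Delta(\gl_{i'})$ via a \emph{two-points} GMP formula over $(u,u',\gl,\gl')$ against a joint phase density, and assert that this is ``exactly the analysis deferred to Section \ref{Sec:TwoPoints}.'' The paper contains no such two-points GMP formula, and Section \ref{Sec:TwoPoints} does not do this.

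The paper's argument bypasses the off-diagonal decomposition entirely. By Sturm--Liouville, $N_L(\Delta)=\lfloor\theta_\mu(L/\Eg)\rfloor_\pi-\lfloor\theta_\gl(L/\Eg)\rfloor_\pi$ for the two endpoints $[\gl,\mu]=\Delta$, and \eqref{Eq:ObviousBound} gives $N_L(\Delta)\le \lfloor\alpha(L/\Eg)\rfloor_\pi+1$ where $\alpha:=\theta_\mu-\theta_\gl$. One then writes $\E[\lfloor\alpha(L/\Eg)\rfloor_\pi^2]=\sum_{n\ge 1}(2n-1)\P(\tau^{(1)}+\cdots+\tau^{(n)}\le L/\Eg)$, where $\tau^{(i)}$ is the $i$-th time $\alpha$ gains $\pi$. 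The key input is the coupling Lemma \ref{Lemma:KeyCoupling}: the (non-i.i.d.) $\tau^{(i)}$ are stochastically larger than i.i.d.\ copies of $(\tilde\tau^{(1)}-\tilde\zeta^{(1)})_+$, giving
\[
\P\Big(\sum_{i=1}^n\tau^{(i)}\le L/\Eg\Big)\le \big(\P(\tilde\tau^{(1)}\le 2L/\Eg)+\P(\tilde\zeta^{(1)}>L/\Eg)\big)^n.
\]
The Wegner estimate (Proposition \ref{Prop:MicroEstimates}) bounds the first summand by $1/4$ for $h$ small via $\P(\tilde\tau^{(1)}\le 2L/\Eg)\le\E[N_{2L}(\Delta)]$, and Markov on $\zeta^{(1)}$ bounds the second; the geometric series converges. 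The box estimate is identical with $L$ replaced by $L/k$, picking up the factor $1/k$ (this is where the restriction $k^2\ll L\sqrt\Eg$ enters).

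So the second-moment bound needs only the first-moment (Wegner) input plus the coupling of the single difference process $\alpha$; no Minami-type gain is required. The Minami estimate (Proposition \ref{Prop:OneEigen}) is a \emph{separate, harder} statement proved afterwards, using the same coupling together with the delicate Lemma \ref{Lemma:Eventalphatheta}. Your plan inverts this dependency and leans on a tool the paper never builds.
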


Second, we show that, with large probability, every box $(t_{j-1},t_{j})$ contains at most one eigenvalue that lies in the interval of energy $\Delta$.
\begin{proposition}[Minami estimate]\label{Prop:OneEigen}
$$\lim_{L\to\infty} k\P(N_L^{(1)}(\Delta) \ge 2) = 0\;.$$
\end{proposition}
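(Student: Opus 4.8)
The plan is to bound $k\,\P(N_L^{(1)}(\Delta)\ge 2)$ by controlling the second factorial moment $\E[N_L^{(1)}(\Delta)(N_L^{(1)}(\Delta)-1)]$, since $\P(N\ge 2)\le \E[N(N-1)]$ for an integer-valued $N$. The key tool is a \emph{two-points} version of the GMP-type formula of Proposition \ref{Prop:GMP}, expressing $\E[\sum_{i\neq i'}G(\gl_i,\gl_{i'})]$ for eigenvalues on a single box of length $\ell:=L/k$, in terms of a \emph{joint} diffusion in the phase variables attached to the two parameters $\gl,\gl'$. Concretely, I would first set up the joint process $(\theta_\gl^{(\Eg)},\theta_{\gl'}^{(\Eg)})$ driven by the same Brownian motion, together with the associated forward/backward bridges, and derive the analogue of the GMP formula: taking $G=\un_\Delta(\gl)\un_\Delta(\gl')$ yields an integral over two rotation-times $u,u'$, two angle variables, and the transition densities of the joint phase, weighted by $\sin^2\theta\sin^2\theta'$. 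Using Theorem \ref{Th:CVDensities} one replaces the one-parameter marginals of the transition densities by the invariant densities $\mu^{(\Eg)}$, up to errors that are exponentially small in the box length; the bounds of Lemma \ref{Lemma:InvMeas} keep these quantities under control uniformly in the relevant energy window.

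The heart of the matter is then a quantitative estimate showing that when $\gl,\gl'\in\Delta$ are \emph{distinct} (so $|\gl-\gl'|\lesssim 1/(Ln(E))$, which on the microscopic scale is tiny), the joint phase process nonetheless \emph{decorrelates}: the two phases cannot both return to a prescribed value at prescribed times without paying a price proportional to $|\gl-\gl'|$. This is where one must study the SDE for the difference $\delta:=\theta_{\gl'}^{(\Eg)}-\theta_\gl^{(\Eg)}$ (or better, a suitable lift of it that does not wrap around), whose drift contains a term of size $\sqrt{\Eg}(\gl'-\gl)\sin^2\theta$ and whose diffusion coefficient $\sin^2\theta'-\sin^2\theta$ is of order $|\delta|$; so $\delta$ behaves like a diffusion that is pushed deterministically at a rate $\propto|\gl-\gl'|$ while fluctuating by an amount controlled by its own size. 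The idea is to show that the joint density at the relevant configuration is bounded by $C|\gl-\gl'|$ times the product of the one-parameter densities (a genuine two-point repulsion). Integrating this against $d\gl\,d\gl'$ over $\Delta\times\Delta$ produces a factor $|\Delta|^3=O((Ln(E))^{-3})$ rather than $|\Delta|^2$; combined with the integration over $u,u'\in[-\ell/(2\Eg),\ell/(2\Eg)]$ giving a factor $(\ell/\Eg)^2$, and the overall normalisation, one obtains $\E[N_L^{(1)}(\Delta)(N_L^{(1)}(\Delta)-1)]=O(\ell^2/(L^3 n(E)^3\,\Eg^{?}))$ with an extra power of $|\Delta|$ relative to the Wegner (one-point) estimate. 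Since $\ell=L/k$, this reads $O(1/(k^2)\cdot k \cdot \text{(stuff)})$ after multiplying by $k$; choosing $k=k(L)\to\infty$ slowly enough (the restrictions alluded to in the footnote) makes $k\,\E[N_L^{(1)}(\Delta)(N_L^{(1)}(\Delta)-1)]\to 0$, and the Markov-type bound $\P(N\ge2)\le\E[N(N-1)]$ finishes the proof.

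The main obstacle is precisely the two-point repulsion estimate: one needs a \emph{uniform} (in $E$, in the Crossover regime) bound showing the joint phase density degenerates linearly in $|\gl-\gl'|$. Unlike the one-point convergence of Theorem \ref{Th:CVDensities}, the joint process is degenerate along the diagonal $\{\theta=\theta'\}$, so standard hypoellipticity does not apply directly; I expect one must analyse the diffusion $\delta$ carefully, e.g.\ via a change of variables linearising its small-$\delta$ behaviour and a Girsanov argument removing the $|\gl-\gl'|$-drift, or via an explicit Gaussian-type bound on the bridge of the joint diffusion. Controlling the behaviour of $\delta$ near the times when $\theta\in\pi\Z$ (where both the drift and diffusion of $\delta$ degenerate) is the delicate point, and handling this uniformly in $E$ will require input analogous to — and arguably harder than — the estimates behind Theorem \ref{Th:CVDensities}. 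This is the content of Section \ref{Sec:TwoPoints}.
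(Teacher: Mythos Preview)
Your proposal takes a genuinely different route from the paper. You aim at the second factorial moment via a \emph{two-points} GMP formula and a repulsion estimate on the joint phase density in $(\gl,\gl')$, integrated over $\Delta\times\Delta$. The paper does \emph{not} integrate over pairs of energies at all. Instead it fixes the two endpoints $[\gl,\mu]=\Delta$ and uses the Sturm--Liouville identity $N_L^{(1)}(\Delta)=\lfloor\theta_\mu(L')\rfloor_\pi-\lfloor\theta_\gl(L')\rfloor_\pi$ to reduce everything to a single difference process $\alpha:=\theta_\mu-\theta_\gl$. The event $\{N_L^{(1)}(\Delta)\ge 2\}$ is then essentially $\{\alpha(L')\ge 2\pi\}$ (up to a boundary correction handled by Lemma~\ref{Lemma:Eventalphatheta}), and one controls the hitting times $\tau^{(i)}$ of $i\pi$ by $\alpha$ via a coupling argument (Lemma~\ref{Lemma:KeyCoupling}) that stochastically bounds them from below by i.i.d.\ copies of $(\tau^{(1)}-\zeta^{(1)})_+$. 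This yields $k\P(\lfloor\alpha(L')\rfloor_\pi\ge 2)\le k(\P(\tilde\tau^{(1)}\le 2L')+\P(\tilde\zeta^{(1)}\ge L'))^2\to 0$ directly from the one-point Wegner estimate, with no joint density needed. The boundary correction is dealt with by a separate (and fairly involved) analysis of $R=\log\tan(\alpha/2)$ as a diffusion in an effective potential.

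What each approach buys: your factorial-moment strategy is the classical Minami template and, if the linear-in-$|\gl-\gl'|$ repulsion on the joint density could be established uniformly in $E$, would give a clean $k\E[N(N-1)]\to 0$. But that repulsion estimate is precisely the hard part you leave open, and the degeneracies you flag (along the diagonal and near $\pi\Z$) make it at least as delicate as anything in the paper. The paper's route sidesteps the joint density entirely: by anchoring at the endpoints of $\Delta$ and exploiting monotonicity of $\gl\mapsto\theta_\gl$, it converts the two-points question into a hitting-time question for a single process, which can be attacked with coupling and one-dimensional potential-theoretic arguments. Note also that the paper only proves $\sup_L k\,\E[(N_L^{(1)})^2]<\infty$ (Proposition~\ref{Prop:Moment}), not $k\,\E[N(N-1)]\to 0$; the Minami estimate is obtained by a different mechanism than a moment bound.
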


Our proof of this \emph{two points estimate}, which is usually named after Minami \cite{Minami}, is technically involved and appears as one of the main achievements of this paper. As mentioned in the introduction, it relies on probabilistic tools, and can be viewed as an alternative approach to the strategy of proof of Molchanov~\cite{molcanov1980} (in a smoother context). The proofs of Propositions \ref{Prop:Moment} and \ref{Prop:OneEigen} are given in Section \ref{Sec:TwoPoints}.
 
\medskip
Third, we identify the limit of the intensity measure of $\cN_L^{(j)}$. Let 
$\boldsymbol{\sigma_E}$ be the measure $\sigma_E$ (introduced in \eqref{def:sigmaE}) in the Bulk regime, and $\sigma_\infty$ (defined in \eqref{def:sigmainfty}) in the Crossover regime. 
Note that the definition in the Bulk regime will be given in Section \ref{Sec:Poisson}, while the definition in the Crossover regime was presented above Theorem \ref{Th:Shape}.
\begin{proposition}[Intensity of $\cN_L^{(j)}$]\label{Prop:Intensity}
For any compactly supported, continuous function $f:\R\times [-1/2,1/2]\times \bar\cM \to \R$, we have uniformly over all $j$ and all $L$ large enough
\begin{align*}
\E\Big[ \int f d\cN_L^{(j)} \Big] &=   \int_{\R\times [-\frac12 + \frac{j-1}{k},-\frac12 + \frac{j}{k}] \times \bar\cM} f\; d\gl \otimes dx \otimes \boldsymbol{\sigma_E} + o\big(\frac1k\big)\;.
 \end{align*}
\end{proposition}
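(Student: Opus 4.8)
The plan is to apply the GMP formula (Proposition \ref{Prop:GMP}) to the Hamiltonian $\cH_L^{(j)}$ on the box $(t_{j-1},t_j)$ of length $L/k$, with the test function $G(\gl,\varphi,\psi) = f\big(Ln(E)(\gl-E),\, x_j,\, \widehat{w}(\varphi)\big)\un_{[-h,h]}(Ln(E)(\gl-E))$, where $x_j$ is (essentially) the midpoint of the box and $\widehat w(\varphi)$ denotes the probability measure obtained by recentering $\varphi^2$ at its center of mass and rescaling space by $\Eg$. By translation invariance of the white noise we may assume the box is $(-L/(2k),L/(2k))$. The GMP formula then expresses $\E[\int f\,d\cN_L^{(j)}]$ as a triple integral over $u\in(-L/(2k\Eg),L/(2k\Eg))$, over $\gl$ (restricted by the indicator to the microscopic window $\Delta$), and over $\theta\in(0,\pi)$, of
\[
p^{(\Eg)}_{\gl,\frac{L}{2k\Eg}+u}(\theta)\, p^{(\Eg)}_{\gl,\frac{L}{2k\Eg}-u}(\pi-\theta)\,\sin^2\theta\;\E^{(u)}_{\theta,\pi-\theta}\Big[f\big(Ln(E)(\gl-E),x_j,\widehat w(\yu^{(\Eg)}_\gl)\big)\Big]\,.
\]

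The first step is to replace the two transition densities by the invariant density using Theorem \ref{Th:CVDensities}: since $k\to\infty$ slowly, the box length $L/(k\Eg)$ still tends to infinity (this is one of the ``restrictions on the speed of $k$''), so for $u$ not too close to the endpoints both $\frac{L}{2k\Eg}\pm u\ge 1$ and the product $p^{(\Eg)}\cdot p^{(\Eg)}$ equals $\mu_\gl^{(\Eg)}(\theta)\mu_\gl^{(\Eg)}(\pi-\theta) + O(e^{-C L/(k\Eg)})$, uniformly in $\gl\in\Delta$; the contribution of $u$ within distance $O(1)$ (or any $o(L/k)$ scale) of the endpoints is negligible once multiplied by the bounded remaining factors, because $f$ is bounded and the $u$-integral has length $L/(k\Eg)$, so the excluded region contributes $o(1/k)$. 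The second step is a change of variables $\gl = E + \mu/(Ln(E))$, which turns $\int_\Delta d\gl$ into $\frac1{Ln(E)}\int_{[-h,h]}d\mu$; combined with $d u = \Eg^{-1}\,\cdot$ and the relation $L/(k\Eg)\cdot \frac{1}{Ln(E)}\cdot(\text{normalization})$, and using that the $u$-dependence of the integrand now enters only through the law $\E^{(u)}_{\theta,\pi-\theta}$, one recognizes that for $u$ in the ``bulk'' of the box the bridge law $\E^{(u)}_{\theta,\pi-\theta}$ — a pair of long independent diffusion bridges — converges (after the recentering/rescaling inherent in $\widehat w$) to the law of the concatenation $Y_E$ (Bulk) or $Y_\infty$ (Crossover), independently of $u$, $\theta$ and $\mu$ in the microscopic window. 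This convergence of $\widehat w(\yu^{(\Eg)}_\gl)$ under $\P^{(u)}_{\theta,\pi-\theta}$ to $\boldsymbol{\sigma_E}$ is the heart of the ``Shape'' statement and will be established in Section \ref{Sec:Poisson}; here I only invoke it, together with a uniform-integrability/tightness bound (coming from the exponential decay estimates of Proposition \ref{Prop:ExpoDecayCrossover}) to pass the bounded continuous $f$ through the limit. Carrying out the $\theta$-integral against $\mu_\gl^{(\Eg)}(\theta)\mu_\gl^{(\Eg)}(\pi-\theta)\sin^2\theta$ and using Lemma \ref{Lemma:InvMeas} produces an $O(1)$ constant that, after bookkeeping, exactly cancels the $L,k,\Eg,n(E)$ prefactors and yields the Lebesgue density $d\mu\otimes dx$ on $[-h,h]\times[-\frac12+\frac{j-1}k,-\frac12+\frac jk]$; the box being microscopic in the $x$-variable, the midpoint $x_j$ may be replaced by the running variable $x$ at no cost.

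The main obstacle is the uniformity of all error terms over $j$ and the requirement that they be $o(1/k)$ rather than merely $o(1)$: each of the approximations above (replacing transition densities by the invariant measure, discarding the boundary layer in $u$, and replacing the finite-$L$ bridge functional by its infinite-volume limit) must be quantified with a rate that beats $1/k$, and this forces the genuine constraint on how slowly $k=k(L)$ may grow. The exponential rate $C$ in Theorem \ref{Th:CVDensities} gives $e^{-CL/(k\Eg)}$, which is $o(1/k)$ as soon as $L/(k\Eg)\gg \ln k$, i.e. for $k$ growing slower than roughly $L/(\Eg\ln L)$; the convergence of the bridge functional, being only qualitative in the excerpt, will need to be upgraded to a quantitative statement or localized to a box whose length still diverges — this is where the bulk of the work in Section \ref{Sec:Poisson} lies. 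A secondary technical point is that $\widehat w$ maps into $\cM(\bar\R)$ and one must check that the recentering by the center of mass is well-defined and continuous on the relevant event (the eigenfunction has a genuine peak), which again follows from the exponential localization; because $f$ is only required to be compactly supported in its first coordinate, no issue of mass escaping to $\pm\infty$ in the $w$-variable arises once tightness of $\widehat w(\yu^{(\Eg)}_\gl)$ under the bridge measures is in hand.
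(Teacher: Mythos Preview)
Your proposal is correct and follows essentially the same route as the paper: apply the GMP formula on the box, excise a boundary layer in $u$, replace the transition densities by the invariant measure via Theorem~\ref{Th:CVDensities}, and then invoke the key convergence of the recentered shape under the (\(\theta\)-averaged) bridge measure to $\boldsymbol{\sigma_E}$, which the paper packages as Lemma~\ref{Lemma:CVmgl}. Two small clarifications: the $\theta$-integral against $\mu_\gl^{(\Eg)}(\theta)\mu_\gl^{(\Eg)}(\pi-\theta)\sin^2\theta$ equals exactly $\sqrt{\Eg}\,n(\gl)$ by Corollary~\ref{cor:DOSformula}, which is what makes the bookkeeping close; and your concern about needing a quantitative rate for the shape convergence is unfounded, since \emph{uniform} (in $u,\gl$) weak convergence together with the fact that the total integral is $O(1/k)$ already gives an $o(1/k)$ error.
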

The proof of this proposition is presented in Subsection \ref{Subsec:Intensity}: it is notationally heavy since it relies on many different arguments and definitions introduced earlier in the article, however there is no major difficulty therein.

\begin{proof}[Proofs of Theorems \ref{Th:Poisson} and \ref{Th:Shape}]
As already explained, it suffices to concentrate on the stronger statement given by Theorem \ref{Th:Shape}. Let us now explain the topological difficulties at stake. The space $\cM(\R)$ is not locally compact so that the tightness of $(\cN_L)_{L\ge 1}$ or $(\bar{\cN}_L)_{L\ge 1}$ is not elementary in $\R\times[-1/2,1/2]\times\cM(\R)$. One option would have been to prove directly this tightness (using the exponential decay of the eigenfunctions), but we prefer a simpler point of view. Namely, we deal with $\bar\cM := \cM(\bar{\R})$, the set of probability measures on $\bar\R$ endowed with the topology of weak convergence, where $\bar\R$ is the compactification of $\R$. Note that $\cM(\bar{\R})$ is compact. We then view the r.v.~$\cN_L$ and $\bar{\cN}_L$ as random Radon measures on $\R\times[-1/2,1/2]\times\cM(\bar\R)$, so that we are in a more common setting to prove convergences. It can be checked that if the convergence of Theorem \ref{Th:Shape} holds in $\R\times[-1/2,1/2]\times\cM(\bar\R)$, then it also holds in $\R\times[-1/2,1/2]\times\cM(\R)$.

Given Proposition \ref{Prop:Approx}, we only need to show that $\bar{\cN}_L$ converges to a Poisson random measure of intensity $d\gl\otimes dx \otimes \boldsymbol{\sigma_E}$. By standard criteria, see for instance~\cite[Th 14.16]{Kallenberg}, the convergence follows if we can show that for any given compactly supported, continuous function $f:\R\times [-1/2,1/2]\times \bar\cM \to \R$ we have
$$ \E\Big[\exp\Big(i \int f d\bar{\cN}_L\Big)\Big] \to \exp\Big(\int (e^{if} -1) d\gl \otimes dx \otimes \boldsymbol{\sigma_E}\Big)\;.$$
We fix such a function and concentrate on the proof of this convergence. From the independence of the $\cN_L^{(j)}$ we have
\begin{equation}\label{Eq:PoissonIndep} \E\Big[\exp\Big(i \int f d\bar{\cN}_L\Big)\Big] = \prod_{j=1}^k \E\Big[\exp\Big(i \int f d{\cN}_L^{(j)}\Big)\Big]\;.\end{equation}
Choose $h>0$ large enough such that $f(\gl,\cdot,\cdot)= 0$ whenever $\gl \notin [-h,h]$, and set $\Delta$ as in \eqref{def:Delta}. The main observation is that on the event $\{N_L^{(j)}(\Delta)=1\}$, the random measure ${\cN}_L^{(j)}$ has at most one atom on the support of $f$ and therefore on this event
$$ \exp\Big(i \int f d{\cN}_L^{(j)}\Big) - 1 = \int (e^{i  f}-1) d{\cN}_L^{(j)}\;.$$
We can therefore write
\begin{align*}
\exp\Big(i \int f d{\cN}_L^{(j)}\Big) &= \un_{\{N_L^{(j)}(\Delta) =0\}} + \un_{\{N_L^{(j)}(\Delta) \ge 1\}}  \exp\Big(i \int f d{\cN}_L^{(j)}\Big)\\
&= 1 + \un_{\{N_L^{(j)}(\Delta) \ge 1\}}  \Big( \exp\Big(i \int f d{\cN}_L^{(j)}\Big) -1\Big)\\
&= 1 + \un_{\{N_L^{(j)}(\Delta) = 1\}} \int (e^{i  f}-1) d{\cN}_L^{(j)} + \un_{\{N_L^{(j)}(\Delta) \ge 2\}} \Big(\exp\Big(i \int f d{\cN}_L^{(j)}\Big)-1\Big)\;.
\end{align*}
The third term on the r.h.s.~is bounded by a constant (independent of $j$ and $L$) times $\un_{\{N_L^{(j)}(\Delta) \ge 2\}}$, so that by Proposition \ref{Prop:OneEigen} its expectation is negligible compared to $1/k$. Define $D_j := \R\times [-\frac12 + \frac{j-1}{k},-\frac12 + \frac{j}{k}] \times \bar\cM$. Since $f(\gl,\cdot,\cdot)=0$ whenever $\gl\notin [-h,h]$, $e^{i f}-1$ is compactly supported. By Proposition \ref{Prop:Intensity} we get for the second term
\begin{align*}
\E\Big[\un_{\{N_L^{(j)}(\Delta) = 1\}} \int ( e^{i f}-1) d{\cN}_L^{(j)}\Big] &= \int_{D_j} (e^{if}-1) d\gl \otimes dx \otimes \boldsymbol{\sigma_E} + o(1/k) \\
&\quad- \E\Big[\un_{\{N_L^{(j)}(\Delta) \ge 2\}} \int ( e^{i f}-1) d{\cN}_L^{(j)}\Big]\;.
\end{align*}
Set $C:=\|e^{i f}-1\|_\infty < \infty$. We have
$$ \Big| \int_{D_j} (e^{if}-1) d\gl \otimes dx \otimes \boldsymbol{\sigma_E}\Big| \le C \frac{2h}{k}\;.$$
Moreover
\begin{align*}
\Big| \E\Big[\un_{\{N_L^{(j)}(\Delta) \ge 2\}} \int (e^{i f}-1) d{\cN}_L^{(j)}\Big] \Big| &\le C \,\E[\un_{\{N_L^{(j)}(\Delta) \ge 2\}} N_L^{(j)}(\Delta)]\\
&\le C\, \P(N_L^{(j)}(\Delta) \ge 2)^{1/2}\, \E[N_L^{(j)}(\Delta)^2]^{1/2}\;.
\end{align*}
By Propositions \ref{Prop:Moment} and \ref{Prop:OneEigen}, this last term is negligible compared to $1/k$, uniformly over all $j$ and $L$ large enough. Putting everything together, we obtain
\begin{align*}
\E\Big[\exp(i \int f d{\cN}_L^{(j)})\Big] &= 1 + \int_{D_j} (e^{if}-1) \;d\gl \otimes dx \otimes \boldsymbol{\sigma_E} +o(1/k)\\
&= (1+o(1/k)) \exp\Big(\int_{D_j} (e^{if}-1)\; d\gl \otimes dx \otimes \boldsymbol{\sigma_E}\Big) \;,
\end{align*}
uniformly over all $j$ and all $L$ large enough. Plugging this identity into \eqref{Eq:PoissonIndep}, we get the desired limit.
\end{proof}

The remaining sections are organized as follows. In Section \ref{Sec:Expo}, we prove the convergence to equilibrium stated in Theorem \ref{Th:CVDensities}. In Section \ref{Sec:GMP}, we establish the GMP formula of Proposition \ref{Prop:GMP} and collect some corollaries. In Section \ref{Sec:ExpoDecay} we prove Proposition \ref{Prop:ExpoDecayCrossover} on the exponential decay of the eigenfunctions. In Section \ref{Sec:TwoPoints} we establish the estimates on $N_L^{(j)}(\Delta)$ stated in Propositions \ref{Prop:Moment} and \ref{Prop:OneEigen}. These four sections can be read independently of each other.\\
On the other hand, Section \ref{Sec:Poisson}, where the proofs of Proposition \ref{Prop:Approx} and \ref{Prop:Intensity} are presented, relies extensively on definitions and intermediate results collected in Section \ref{Sec:ExpoDecay}.\\
In order not to interrupt the flow of arguments, we have postponed to Appendix \ref{Appendix} some technical (but elementary) results used along the article.

\section{Convergence to equilibrium}\label{Sec:Expo}

The goal of this section is to prove Theorem \ref{Th:CVDensities}. This result is delicate for three reasons. First we stated $L^\infty$ bounds on the density thus requiring much finer control than the more usual total-variation bounds. Second, the (generator of the) diffusion that we are considering is not uniformly elliptic, but simply hypoelliptic, thus making both regularization and convergence estimates delicate. Third for $E\to \infty$, the drift term of the diffusion $\theta_\gl^{(E)}$ is unbounded and makes the process rotate very fast on the circle: it is then a priori unclear whether one can obtain bounds on the density that are uniform over $E>1$.

\medskip

The proof consists of two distinct steps. First, we show quantitative regularization estimates on the density of the diffusion. The existence and the smoothness of the density at any time $t>0$ is due to the hypoellipticity of the associated generator and follows from H\"ormander's Theorem~\cite{Hormander}. However this result does not provide any quantitative estimate on this density: this is problematic in particular in the case where $E\to\infty$. We establish a quantitative estimate using Malliavin calculus (which was originally introduced to give a probabilistic proof of H\"ormander's Theorem).

\smallskip

Second, we show exponential convergence to equilibrium in $H^1$: by Sobolev embedding, this readily implies exponential convergence in $L^\infty$. From the first step, we know that at a time of order $1$, the $H^1$-norm of the density is itself of order $1$. To establish an exponential convergence to equilibrium, one would use coercivity in $H^1$ of the (adjoint in $L^2(\mu_\gl^{(\Eg)})$ of the) generator of the diffusion: however the lack of ellipticity prevents one from getting this coercivity. We thus rely on hypocoercivity techniques following Villani's monograph~\cite{Villani}: we identify a ``twisted'' $H^1$-norm, equivalent to the original one, in which the (adjoint in $L^2(\mu_\gl^{(\Eg)})$ of the) generator of the diffusion is coercive. In particular, when working with distorted coordinates, we obtain a control on the coercivity constant which is uniform over $E > 1$.

\smallskip

From now on, all the functions are defined on the circle $\R/\pi\Z$ and take values in $\R$. We will denote by $\cC^k$ the space of $k$-times continuously differentiable functions on the circle. Furthermore ``uniformly over all parameters'' will mean uniformly over all $\gl \in \Delta$ when working with the original coordinates and uniformly over all $E >1$ and $\gl\in \Delta$ with the distorted coordinates.

\begin{remark}
Let us mention that the result of Theorem \ref{Th:CVDensities} controls the exponential decay in $L^\infty$ while a control in $L^2$ would have sufficed for our purpose, however the proof would have been only marginally simpler if we had worked in $L^2$ instead of $L^\infty$.
\end{remark}

\subsection{Hypoellipticity - regularization step}\label{subsec:hypo_regularization}
We apply Malliavin calculus to the diffusion $\theta_\gl^{(\Eg)}$ following Norris~\cite{Norris} and Hairer~\cite{HairerNotes}. We drop the superscript $(\Eg)$ but we argue simultaneously in both sets of coordinates. We also drop the subscript $\gl$. We let $\P_{\theta_0}$ denote the law of the diffusion starting from $\theta_0$ at time $0$ and $p_t(\theta_0,\cdot)$ the density of this diffusion. The goal of this step is to show the following estimate: for any $k\ge 1$, there exists $C_k>0$ such that uniformly over all parameters
\begin{equation}\label{Eq:Regularization}
\sup_{\theta_0}\|p_{1}(\theta_0,\cdot)\|_{\cC^k} < C_k\;.
\end{equation}

We write
$$ d\theta(t) = V_0(\theta(t)) dt + V_1(\theta(t))dB(t)\;,$$
with
$$V_0(x) = \Eg^{3/2} + \sqrt{\Eg} (\gl-\Eg) \sin^2 x + \sin^3 x \cos x\;,\quad V_1(x) = -\sin^2 x\;.$$

\begin{remark}\label{Rk:Hormander}
The diffusion is not elliptic since $V_1$ vanishes at $x=0$. However, it satisfies the so-called H\"ormander's Bracket Condition: there exists $k\ge 0$ (here $k=2$ works) such that
$$ \inf_{x\in [0,\pi)} \max_{F\in \ccV_k} |F|(x) > 0\;,$$
where $\ccV_0 := \{V_1\}$ and $\ccV_{n+1} := \{[F,\tilde{V}_0], [F,V_1]: F\in \ccV_n\}$ where $\tilde{V}_0 = V_0 + (1/2) V'_1 V_1$.
\end{remark}

We rely on the process $J_{0,t}$ which is the derivative of the flow associated to the SDE satisfied by $\theta$:
$$ dJ_{0,t} = \partial_x V_0(\theta(t)) J_{0,t} dt + \partial_x V_1(\theta(t)) J_{0,t} dB(t)\;,\quad J_{0,0} = 1\;.$$
We will also need the inverse $J_{0,t}^{-1}$ that satisfies
$$ dJ_{0,t}^{-1}(t) = -\Big( \partial_x V_0(\theta(t)) - (\partial_x V_1(\theta(t))^2 \Big)J_{0,t}^{-1} dt - \partial_x V_1(\theta(t)) J_{0,t}^{-1} dB(t)\;,\quad J_{0,0}^{-1} = 1\;.$$
We also set $J_{s,t} := J_{0,t} J_{0,s}^{-1}$.
\begin{lemma}\label{Lemma:BoundJ}
For every $p\ge 1$, there exists $c_p > 0$ such that uniformly over all parameters
\begin{equation}\label{Eq:BoundJ}
\sup_{\theta_0}\E_{\theta_0}\Big[\sup_{s\in [0,1]} |J_{0,s}|^p\vee |J_{0,s}|^{-p}\Big]^{1/p} < c_p\;.
\end{equation}
\end{lemma}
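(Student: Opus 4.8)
The plan is to exploit the fact that, in \emph{both} sets of coordinates, the linear SDEs satisfied by $J_{0,t}$ and $J_{0,t}^{-1}$ have coefficients that are bounded \emph{uniformly over all parameters}. Writing $a(t) := \partial_x V_0(\theta(t))$ and $b(t) := \partial_x V_1(\theta(t))$, one computes $\partial_x V_1(x) = -\sin 2x$ and $\partial_x V_0(x) = \sqrt{\Eg}(\gl-\Eg)\sin 2x + \sin^2 x\,(3\cos^2 x - \sin^2 x)$, so $|b(t)| \le 1$ and $|a(t)| \le 3 + |\sqrt{\Eg}(\gl-\Eg)|$. In the original coordinates $\sqrt{\Eg}(\gl-\Eg) = \gl-1$ stays bounded since $\gl$ ranges over a compact set; in the distorted coordinates $\sqrt{\Eg}\,|\gl-\Eg| = \sqrt E\,|\gl-E| \le h/(\sqrt E\, n(E))$, and since $\sqrt E\, n(E)$ is continuous and positive on $[1,\infty)$ with $\sqrt E\, n(E) \to 1/(2\pi)$ as $E\to\infty$, it is bounded below uniformly over $E>1$. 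Hence there is a constant $K$, uniform over all parameters, with $\sup_{t\ge 0}\big(|a(t)|\vee|b(t)|\big) \le K$ almost surely. (It is the periodicity of $V_0'$ and $V_1'$ that makes these bounds independent of how far the phase travels.)

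The second step is to solve the linear equations explicitly. The SDE $dJ_{0,t} = a(t) J_{0,t}\,dt + b(t) J_{0,t}\,dB(t)$ with $J_{0,0}=1$ has solution
$$ J_{0,t} = \exp\Big( \int_0^t b(s)\,dB(s) + \int_0^t\big(a(s) - \tfrac12 b(s)^2\big)\,ds\Big)\;, $$
and $J_{0,t}^{-1} = 1/J_{0,t}$ then solves the stated inverse equation. Since $\big|\int_0^s(a-\tfrac12 b^2)\,du\big| \le K + \tfrac12 K^2$ for $s\le 1$, we get
$$ \sup_{s\in[0,1]} \big(|J_{0,s}|^p \vee |J_{0,s}|^{-p}\big) \;\le\; e^{p(K + K^2/2)}\,\sup_{s\in[0,1]} e^{p|N_s|}\;,\qquad N_s := \int_0^s b(u)\,dB(u)\;, $$
so it remains to bound $\E_{\theta_0}\big[\sup_{s\le 1} e^{pN_s}\big]$ and $\E_{\theta_0}\big[\sup_{s\le 1} e^{-pN_s}\big]$ uniformly.

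For this, note that $N$ is a continuous martingale with $\langle N\rangle_s = \int_0^s b(u)^2\,du \le K^2$ for $s\le 1$. Write $e^{pN_s} = \cE^{(p)}_s\, e^{\frac{p^2}{2}\langle N\rangle_s} \le \cE^{(p)}_s\, e^{p^2K^2/2}$ with $\cE^{(p)}_s := \exp\!\big(pN_s - \tfrac{p^2}{2}\langle N\rangle_s\big)$; since $\langle N\rangle$ is bounded, Novikov's criterion shows $\cE^{(p)}$ is a true martingale, and $(\cE^{(p)})^2 = \cE^{(2p)}\,e^{p^2\langle N\rangle} \le \cE^{(2p)}e^{p^2K^2}$ with $\cE^{(2p)}$ again a positive martingale, so $\E_{\theta_0}[(\cE^{(p)}_1)^2] \le e^{p^2K^2}$. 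Doob's $L^2$-maximal inequality then gives $\E_{\theta_0}\big[\sup_{s\le1}\cE^{(p)}_s\big] \le \E_{\theta_0}\big[(\sup_{s\le1}\cE^{(p)}_s)^2\big]^{1/2} \le 2 e^{p^2K^2/2}$, whence $\E_{\theta_0}\big[\sup_{s\le1}e^{pN_s}\big] \le 2 e^{p^2K^2}$; the bound for $e^{-pN_s}$ is identical since $-N$ is a martingale with the same bracket. Combining everything yields $\E_{\theta_0}\big[\sup_{s\le1}(|J_{0,s}|^p\vee|J_{0,s}|^{-p})\big] \le 4\,e^{p(K+K^2/2)+p^2K^2}$, which is finite and independent of $\theta_0$ and of all parameters, and raising to the power $1/p$ gives the claim.

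There is no genuine obstacle here; the only point that really requires attention is the uniform boundedness of $\sqrt{\Eg}(\gl-\Eg)$ in the distorted coordinates, which is exactly why the energy window $\Delta$ is taken to scale like $1/(E n(E))$ — everything else is a routine application of the exponential-martingale/Doob estimate. (If one wished to bypass the explicit representation, the equivalent route is to apply It\^o's formula to $|J_{0,t}|^p$ and $|J_{0,t}|^{-p}$, dominate the drift using the bound $K$, and invoke the Burkholder--Davis--Gundy and Gr\"onwall inequalities, but the explicit solution above is the most direct.)
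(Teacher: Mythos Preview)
Your proof is correct and follows essentially the same route as the paper's: both pass to the logarithm of $J_{0,t}$ (you via the explicit exponential representation, the paper by setting $U=\ln J$), observe that the drift and diffusion coefficients of this logarithm are bounded uniformly over all parameters because the dangerous $\Eg^{3/2}$ term in $V_0$ disappears upon differentiation, and then control the resulting additive process by a martingale deviation estimate. The paper invokes its Lemma~\ref{Lemma:Mgale} for this last step, whereas you give a self-contained exponential-martingale/Doob argument and also spell out explicitly why $\sqrt{\Eg}(\gl-\Eg)$ stays bounded in the distorted coordinates; otherwise the arguments are the same.
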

\begin{proof}
The proof is virtually the same for $J$ and $J^{-1}$. If we let $U$ be the logarithm of either of these processes, then it solves
$$ dU(t) = \alpha(t) dt + \beta(t) dB(t)\;,$$
where $\alpha$ and $\beta$ are adapted processes. The crucial observation is that $|\alpha(t)|$ and $|\beta(t)|$ are bounded by some deterministic constant $C>0$ uniformly over all parameters: indeed, the unbounded term $E^{3/2}$ that appears in $V_0$ (when working with the distorted coordinates) is ``killed'' upon differentiation. Lemma \ref{Lemma:Mgale} then suffices to conclude.
\end{proof}

Define the so-called Malliavin covariance matrix (which is a scalar here since our diffusion is one-dimensional):
$$ \ccC_t := \int_0^t J_{0,s}^{-2} V_1^2(\theta(s)) ds\;,\quad t\ge 0\;.$$

The following is a standard result of Malliavin calculus: the only specificity is that our estimates are uniform over all parameters (in particular, w.r.t.~$E>1$ in the distorted coordinates) and this requires some extra care in the proof.
\begin{proposition}\label{Prop:Malliavin}
Assume that for every $p\ge 1$ there exists $K_p>0$ such that $\sup_{\theta_0}\E_{\theta_0}[\ccC_t^{-p}] < K_p$. Then for every $k\ge 1$ there exists $c_k > 0$ such that
$$ \sup_{\theta_0}\|p_t(\theta_0,\cdot)\|_{\cC^k} < c_k\;.$$
\end{proposition}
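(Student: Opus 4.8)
The plan is to run the standard Malliavin-calculus machinery that underlies the probabilistic proof of Hörmander's theorem, in the form presented by Norris~\cite{Norris} and Hairer~\cite{HairerNotes}, paying attention at each step to the uniformity of all constants with respect to the parameters (in particular $E>1$ in the distorted coordinates). First I would recall that for a one-dimensional diffusion the key object is the scalar Malliavin covariance $\ccC_t = \int_0^t J_{0,s}^{-2} V_1^2(\theta(s))\,ds$, and that the density of $\theta(t)$ can be represented, via the integration-by-parts formula of Malliavin calculus, as $p_t(\theta_0,y) = \E_{\theta_0}[\un_{\{\theta(t)\ge y\}}\, H]$ for suitable random variables $H$ built from iterated Skorokhod integrals of $\ccC_t^{-1}$, the Malliavin derivatives of $\theta(t)$, and the Jacobian processes $J_{0,s}$, $J_{0,s}^{-1}$. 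Differentiating this representation $k$ times in $y$ yields $\|p_t(\theta_0,\cdot)\|_{\cC^k} \le C_k \sup_{|\ga|\le k+1} \E_{\theta_0}[|H_\ga|]$, so the whole statement reduces to $L^p$ bounds, uniform in the parameters, on a finite list of random variables.

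The key steps, in order, are: (1) establish uniform $L^p$ control of the Malliavin derivatives $D^{(j)}\theta(t)$ for $j$ up to order $k+1$; since $\theta$ solves an SDE whose coefficients $V_0,V_1$ and their derivatives of all orders are bounded on the circle except for the additive constant $\Eg^{3/2}$ in $V_0$ — which disappears upon differentiation — the variation equations for the Malliavin derivatives have bounded coefficients, and Lemma~\ref{Lemma:Mgale} together with Lemma~\ref{Lemma:BoundJ} (which already supplies uniform $L^p$ bounds on $\sup_{s\le 1}|J_{0,s}|^{\pm p}$) gives the required uniform estimates; (2) combine these with the hypothesis $\sup_{\theta_0}\E_{\theta_0}[\ccC_t^{-p}] < K_p$ to control the inverse covariance and all the Skorokhod integrals appearing in $H_\ga$, using the continuity of the divergence operator on the relevant Sobolev–Watanabe spaces (or, more elementarily in this scalar one-dimensional setting, an explicit bound on $\delta$ in terms of $L^p$-norms of the integrand and its Malliavin derivative); (3) assemble the pieces to bound $\E_{\theta_0}[|H_\ga|]$ uniformly in the parameters by repeated Hölder's inequality, and conclude via $\|p_t(\theta_0,\cdot)\|_{\cC^k}<c_k$.

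The main obstacle is step (2): controlling the negative moments and the Malliavin-Sobolev norms of $\ccC_t^{-1}$ and of the various iterated divergences uniformly over $E>1$. The hypothesis handles the negative moments of $\ccC_t$ itself, but $H_\ga$ involves $D\ccC_t$, $D^2\ccC_t$, etc., and $D\ccC_t = -2\int_0^t J_{0,s}^{-2} V_1^2 (D_u J_{0,s}/J_{0,s})\,ds + \int_0^t J_{0,s}^{-2} (V_1^2)'(\theta(s)) D_u\theta(s)\,ds$ contains Malliavin derivatives of the Jacobian, whose defining equation again involves $\partial_x V_0$; one must check carefully that differentiating $V_0$ removes the $\Eg^{3/2}$ term and that all resulting coefficients, and hence all the processes $D^{(j)}J_{0,s}$, are bounded in $L^p$ uniformly in $E$. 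This is the point where the structural fact ``the unbounded term is killed upon differentiation'' — already exploited in Lemma~\ref{Lemma:BoundJ} — must be propagated through every layer of the computation; once that is done, the remaining estimates are routine applications of Hölder's inequality, the Burkholder–Davis–Gundy inequality, and the continuity of $\delta$.
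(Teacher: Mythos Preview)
Your proposal is correct and follows essentially the same route as the paper: both reduce the $\cC^k$ bound to uniform $L^p$ estimates on the random weights arising from iterated Malliavin integration by parts, both invoke the hypothesis for negative moments of $\ccC_t$ and Lemma~\ref{Lemma:BoundJ} for the Jacobians, and both isolate as the crucial structural point that the unbounded constant $\Eg^{3/2}$ in $V_0$ disappears upon differentiation, so that all the auxiliary SDEs (for $J$, for the Malliavin derivatives of $\theta$, $J^{-1}$, and the stochastic integrals) have coefficients bounded uniformly in $E$. The only cosmetic difference is that the paper phrases the reduction via the dual inequality $\sup_{\theta_0}\E_{\theta_0}[\partial^k G(\theta(t))]\le C_k\|G\|_\infty$ and builds the weights $Z_k$ by an explicit recursion (enlarging the state vector to $(\theta, J^{-1}, R)$ and its flow $\tilde J$), whereas you package the same computation in the Skorokhod-divergence/Sobolev--Watanabe language; these are interchangeable formulations of the same argument.
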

\begin{proof}
By standard functional analysis arguments, see for instance~\cite[Th. 0.1]{Norris}, the bound of the statement is granted if one can show that for every $k\ge 1$ there exists $C_k > 0$ such that for all $\cC^\infty$ function $G$
$$ \sup_{\theta_0}\E_{\theta_0}[\partial^k G(\theta(t))] \le C_k \|G\|_\infty\;.$$

At this point we rely on the notion of Malliavin derivative that we do not recall, see for instance~\cite[Sec.~3 and 5]{HairerNotes}. Let $Y$ be a Malliavin differentiable r.v.~and denote by $\ccD_s Y$ the Malliavin derivative at time $s$ of $Y$. We recall the following properties of the Malliavin derivative~\cite[Sec. 3]{HairerNotes}:
$$\ccD_s f(X) = f'(X) \ccD_s X\;,\quad \ccD_s (XY) = X \ccD_s Y + Y \ccD_s X\;.$$

The key tool in Malliavin Calculus is the so-called integration by parts formula which writes
$$ \E_{\theta_0}\big[\langle \ccD Y, u \rangle_{L^2([0,t])}\big] = \E_{\theta_0}\Big[Y \int_0^t u(s) dB(s)\Big]\;,$$
for any (regular enough) adapted process $u$.\\
It turns out that the Malliavin derivative of the solution to an SDE (with regular enough coefficients) admits a nice expression in terms of the derivative of the flow of the SDE. In the present case, we have (see for instance~\cite[Eq (5.6)]{HairerNotes})
$$ \ccD_s \theta(t) = J_{s,t} V_1(\theta(s)) = J_{0,t} u(s)\;,\quad u(s) := J_{0,s}^{-1} V_1(\theta(s))\;.$$
Note that $\langle\ccD \theta(t), u\rangle_{L^2([0,t])} = J_{0,t} \ccC_t =: \ccN_t$. For every smooth function $G$ and any Malliavin differentiable r.v.~$Z$, we apply the integration by parts formula to $Y=G(\theta(t)) (J_{0,t} \ccC_t)^{-1} Z$ and get
$$ \E_{\theta_0}[G'(\theta(t)) Z] = \E_{\theta_0}[G(\theta(t)) \tilde Z]\;,$$
where $\tilde Z$ is given by
$$ \tilde Z := \ccN_t^{-1} \Big(Z\int_0^t u(s) dB(s) -\langle \ccD Z, u\rangle_{L^2([0,t])} \Big) + Z \ccN_t^{-2} \langle \ccD \ccN_t, u\rangle_{L^2([0,t])}\;.$$
Applying iteratively this identity we obtain for some r.v.~$Z_k$
$$ \E_{\theta_0}[\partial^k G(\theta(t))] = \E_{\theta_0}[G(\theta(t)) Z_k]\;,$$
thus yielding
$$\sup_{\theta_0}\E_{\theta_0}[\partial^k G(\theta(t))] \le \|G\|_\infty \sup_{\theta_0}\E_{\theta_0}[|Z_k|]\;.$$

To conclude the proof, it suffices to check that the supremum on the r.h.s.~is bounded. Let us analyze the structure of $Z_k$. One can check that $Z_k$ is a polynomial in $\ccQ_t^{-1}$ and in the successive Malliavin derivatives of $\theta(t)$ and $R(t) := \int_0^t u(s) dB(s)$ in the direction $u$. The assumption of the statement together with \eqref{Eq:BoundJ} allow to bound the moments of $\ccN_t^{-1} = (J_{0,t} \ccC_t)^{-1}$. We are left with controlling the moments of the Malliavin derivatives of $\theta(t)$ and $R(t)$ in the direction $u$: we claim that these quantities are solutions of SDEs with smooth coefficients so that they admit moments of any order uniformly over the parameters (in particular, the term $E^{3/2}$ does not appear in the coefficients of these SDEs). This follows from a recursion as in~\cite[Sec.3]{Norris}, let us provide the details on the first step.\\
Note that $dR(t) = J_{0,t}^{-1} V_1(\theta(t)) dB(t)$. The triplet $(\theta(t),J_{0,t}^{-1}, R(t))$ is the solution of an SDE in $\R^3$. The derivative of the flow associated to this SDE is then a $3\times 3$ lower triangular matrix that we denote $\tilde{J}_{0,t}$:
$$ d\tilde{J}_{0,t} = \begin{pmatrix} \partial_x V_0(\theta(t)) & 0 & 0\\ A_{2,1}(t) & A_{2,2}(t) & 0\\ 0 & 0 & 0\end{pmatrix} \tilde{J}_{0,t} dt +  \begin{pmatrix} \partial_x V_1(\theta(t)) & 0 & 0\\ \partial_x^2 V_1(\theta(t)) & 0 & 0\\ J_{0,t}^{-1} \partial_x V_1(\theta(t)) & 0 & 0\end{pmatrix} \tilde{J}_{0,t} dB(t)\;,\quad \tilde{J}_{0,0} = I\;,$$
with $A_{2,1}(t) = -\Big( \partial_x^2 V_0(\theta(t)) - 2\partial_x V_1(\theta(t)) \partial_x^2 V_1(\theta(t))\Big)J_{0,t}^{-1}$ and $A_{2,2}(t) = -\Big( \partial_x V_0(\theta(t)) - (\partial_x V_1(\theta(t))^2 \Big)$. Note that all the entries are smooth and ``bounded'' (the term $E^{3/2}$ does not appear therein) so that a generalization of Lemma \ref{Lemma:BoundJ} allows to bound the moments of $\tilde{J}_{0,t}$. Moreover we have, see for instance~\cite[Eq.(5.6)]{HairerNotes}:
$$ \begin{pmatrix} \ccD_s \theta(t)\\ \ccD_s J_{0,t}^{-1} \\ \ccD_s R(t)\end{pmatrix} = \tilde{J}_{s,t} \begin{pmatrix} V_1(\theta(s))\\ \partial_x V_1(\theta(s)) \\ J_{0,s}^{-1}V_1(\theta(s))\end{pmatrix}\;,\quad s\in [0,t]\;,$$
so that the moments of $\int_0^t \ccD_s \theta(t) u(s) ds$ and $\int_0^t \ccD_s R(t) u(s) ds$ are all bounded uniformly over all parameters. One then iterates the above arguments by considering the SDE solved by
$$(\theta(t),J_{0,t}^{-1}, R(t), \int_0^t \ccD_s \theta(t) u(s) ds, \int_0^t \ccD_s J_{0,t}^{-1}(t) u(s) ds, \int_0^t \ccD_s R(t) u(s) ds)\;.$$
\end{proof}
It remains to check the assumption of Proposition \ref{Prop:Malliavin} at time $t=1$ (this is arbitrary). In the classical proof of H\"ormander's Theorem with Malliavin Calculus, this is where one uses the so-called H\"ormander's Bracket Condition (see Remark \ref{Rk:Hormander}) via repeated applications of It\^o's formula involving the process $\ccC_t$, see for instance~\cite[Proof of Theorem 6.3]{HairerNotes}. This would work with the original coordinates, but with the distorted coordinates this does not seem to work out (easily). We proceed differently and write
\begin{align*}
\E_{\theta_0}[\ccC_1^{-p}] &= \E_{\theta_0}\Big[\Big(\int_0^1 J_{0,s}^{-2} V_1(\theta(s))^2 ds\Big)^{-p}\Big]\\
&\le \E_{\theta_0}\Big[\Big(\int_0^1 V_1(\theta(s))^2 ds\Big)^{-2p}\Big]^{1/2}\E_{\theta_0}\Big[\sup_{t\in [0,1]} J_{0,t}^{4p}\Big]^{1/2}\,.
\end{align*}
Lemma \ref{Lemma:BoundJ} allows to bound the second term. To bound the first term, we need some control on the time spent near $\pi \Z$ by the diffusion $\theta$: this is provided by Lemma \ref{Lemma:ExpoBoundTheta}, that relies on a comparison of $\theta$ with the solution of the deterministic part of its SDE. This completes the proof of \eqref{Eq:Regularization}.

\subsection{Hypocoercivity - convergence step}\label{Subsec:Hypocoercivity}

In this subsection, we argue simultaneously in the two sets of coordinates. We abbreviate $\mu_\gl$ and $\mu_\gl^{(E)}$ to $\mu$. All the $L^p$ spaces will be taken w.r.t.~the measure $\mu$ (and not the Lebesgue measure). We write $\|\cdot\|$ and $\langle \cdot,\cdot\rangle$ for the $L^2$ norm and inner product.\\
By Lemma \ref{Lemma:InvMeas}, uniformly over all parameters the measure $\mu$ is equivalent to Lebesgue measure. This implies that the corresponding $L^p$ norms are equivalent, and that we have the following Poincar\'e inequality
\begin{equation}\label{Eq:Poincare} \Big\| f - \int f d\mu \Big\|^2 \lesssim \|f'\|^2\;,\end{equation}
uniformly over all parameters.\\
For some constant $c>0$ (to be adjusted later on) we define the following $H^1$ norm (w.r.t.~$\mu$)
$$ \| f\|_{H^1}^2 := \|f\|^2 + c \|f'\|^2\;.$$

Let $\cL$ be the generator of our diffusion
$$ \cL f = \sigma^2 f'' + b f'\;,$$ 
where
$$ \sigma = \frac{\sin^2 x}{\sqrt 2}\;,\quad b = \Eg^{3/2} + \sqrt \Eg(\gl-\Eg) \sin^2 x + \sin^3 x \cos x\;.$$
Let $\cLa$ be its adjoint in $L^2$. The unique decomposition of $-\cLa$ into a symmetric (actually self-adjoint) and an anti-symmetric part is given by $-\cLa= A^*A + B$ where
$$ A f = \sigma f'\;,\quad Bf = (b - 2\sigma \sigma' - \sigma^2 \frac{\mu'}{\mu})f'\;,$$
and $A^*$ is the adjoint in $L^2$ of $A$
$$ A^* f = - \frac{(\sigma \mu f)'}{\mu}\;.$$

It is a standard fact that the centered density $q_t = (p_t / \mu - 1)$ w.r.t.~the invariant measure of a diffusion satisfies the PDE
$$ \partial_t q_t = \cLa q_t\;.$$
In the previous section we dealt with regularity issues and showed that $p_1$ (and therefore $q_1$ since $\mu$ is bounded from above and below) is smooth: more precisely, its $\cC^1$-norm is bounded uniformly over all parameters. Our goal is to prove an exponential decay of the $L^\infty$ norm of $p_t$. By Sobolev Embedding it suffices to prove an exponential decay in $H^1$.\\

The natural route to such an estimate is to prove some coercivity: unfortunately this fails. Indeed, we have
$$ \langle f, \cLa f\rangle = - \| A f\|^2\;,$$
and since $\sigma$ vanishes at $0$, $\| Af\|$ is \emph{not} equivalent to $\| f'\|$ and so we cannot use Poincar\'e inequality to get a bound in terms of $\|f\|$. Actually it is possible to check that the operator $A^*A$ does not have spectral gap so that one cannot get exponential decay of the $L^2$-norm of $q_t$ from the previous computation. In $H^1$ the situation is similar. Generally speaking, if we work with ``standard'' norms then we do not have enough control on the derivative(s) of the function at stake near $0$.

\bigskip

These computations do not take advantage of the anti-symmetric part $B$ of $\cLa$. The general idea of hypocoercivity~\cite{Villani} consists in exploiting the successive Lie brackets between $A$ and $B$ to recover some coercivity. One can check that $[A,B] f := AB f-BA f$ contains the term $-\Eg^{3/2} \sigma' f'$ and $[[A,B],B] f$ contains the term $\Eg^3 \sigma'' f'$: since $\sigma''$ does not vanish anymore at $x=0$, these terms should offer the required control on the derivative at $0$. To implement this idea, one constructs a twisted $H^1$-norm, denoted $\mathfrak{H}^1$, that contains some successive Lie brackets of $A$ and $B$, see below for the precise expression, and that satisfies the following properties.

\begin{proposition}\label{Prop:Hypocoercivity}
There exists a norm $\|\cdot \|_{\mathfrak{H}^1}$, derived from an inner product $\langle \cdot,\cdot\rangle_{\mathfrak{H}^1}$ such that:\begin{enumerate}
\item There exists $\kappa>0$ such that uniformly over all parameters we have
$$ \|f\|_{H^1} \le \| f \|_{\mathfrak{H}^1} \le (1+\kappa) \|f\|_{H^1}\;.$$
\item There exists $K>0$ such that uniformly over all parameters we have
$$\langle f, \cLa f\rangle_{\mathfrak{H}^1} \le -K \|f'\|^2\;.$$
\end{enumerate}
\end{proposition}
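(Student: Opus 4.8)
The plan is to follow Villani's hypocoercivity scheme, building a twisted inner product of the schematic form
$$\langle f,g\rangle_{\mathfrak H^1} := \langle f,g\rangle + c\langle Af,Ag\rangle + 2a\langle Af, Cg\rangle + b\langle Cf, Cg\rangle + (\text{lower-order correctors}),$$
where $C$ is (a suitably normalized version of) the first Lie bracket $[A,B]$ — which, as noted in the text, produces the term $-\Eg^{3/2}\sigma' f'$ — and where $a,b,c>0$ are small constants to be tuned. The key algebraic input is that $\sigma'(x)=\sqrt 2\sin x\cos x$ still vanishes at $x=0$, so one bracket is not enough; one must iterate once more and use $[[A,B],B]\sim \Eg^3\sigma'' f'$ with $\sigma''(0)\neq 0$. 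Concretely I would introduce operators $C_1 \propto \Eg^{-3/2}[A,B]$ and $C_2\propto \Eg^{-3}[[A,B],B]$ (the powers of $\Eg$ chosen precisely so that the principal symbols of $C_1,C_2$ are $O(1)$ uniformly in $\Eg\ge 1$), observe that $C_1 f$ controls $\sin x\cos x\, f'$ and $C_2 f$ controls $\cos 2x\, f'$, and note that $\{\sin^2x,\ \sin x\cos x,\ \cos 2x\}$ together are bounded below away from $0$, so that $\|Af\|^2 + \|C_1 f\|^2 + \|C_2 f\|^2 \gtrsim \|f'\|^2$ uniformly over all parameters — this is the replacement for the missing Poincaré-type coercivity of $A^*A$ alone.

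For part (1), equivalence of norms, I would just expand $\|f\|_{\mathfrak H^1}^2$, use Cauchy–Schwarz on the cross terms $\langle Af, C_i f\rangle$, and bound $\|C_i f\|$ by $\|f'\|$ plus (after an integration by parts against $\mu$) lower-order terms controlled via Lemma \ref{Lemma:InvMeas}; choosing the coefficients $a,b,c$ small makes the quadratic form a small perturbation of $\|f\|^2+c\|f'\|^2=\|f\|_{H^1}^2$, giving $\|f\|_{H^1}\le\|f\|_{\mathfrak H^1}\le(1+\kappa)\|f\|_{H^1}$ with $\kappa\to 0$ as the coefficients shrink. The crucial point here is that all the commutator computations must be carried out keeping track of the $\Eg$-dependence: every genuinely unbounded contribution (the ones carrying positive powers of $\Eg^{3/2}$) must either cancel or be absorbed into the normalizations of $C_1,C_2$, leaving only $\Eg$-uniform remainders — this is exactly the place where the distorted coordinates pay off, since by Lemma \ref{Lemma:InvMeas} $\mu\to 1/\pi$ and $\partial_\theta\mu$ stays bounded as $E\to\infty$, so $B$ itself (which involves $\sigma^2\mu'/\mu$) has $\Eg$-uniformly bounded ``lower-order'' part and its unbounded part is a pure transport term $\Eg^{3/2}f'$ whose brackets with $A$ are computable in closed form.

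For part (2) I would compute $\frac{d}{dt}\|q_t\|_{\mathfrak H^1}^2 = 2\langle q_t,\cLa q_t\rangle_{\mathfrak H^1}$ term by term using $-\cLa = A^*A+B$, $[A,B]=\Eg^{3/2}\lambda_1 C_1$, $[A,C_1]=\Eg^{3}\lambda_2 C_2 + (\text{bounded})$, and the antisymmetry of $B$ (so $\langle f,Bf\rangle=0$). The favorable terms are $-\|Af\|^2$, $-a\Eg^{3/2}\|C_1 f\|^2$-type contributions coming from $\langle Af,[A,B]f\rangle$, and a further $-b$-term involving $\|C_2 f\|^2$; the dangerous terms are the ``error'' commutators and the cross terms $\langle C_i f, B A f\rangle$ etc., all of which, after integration by parts against $\mu$ and use of the $\Eg$-uniform bounds on $\mu,\mu'$, are bounded by $C\|f'\|\cdot(\|Af\|+\|C_1f\|+\|C_2f\|)$ plus, via the Poincaré inequality \eqref{Eq:Poincare}, by $\|f'\|^2$-controlled quantities. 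Choosing $0<b\ll a\ll c\ll 1$ in the usual cascading fashion makes the favorable quadratic form dominate, and combining with the lower bound $\|Af\|^2+\|C_1f\|^2+\|C_2f\|^2\gtrsim\|f'\|^2$ yields $\langle f,\cLa f\rangle_{\mathfrak H^1}\le -K\|f'\|^2$ uniformly over all parameters. The main obstacle, and the part requiring genuine care rather than routine computation, is the bookkeeping of $\Eg$-powers in all these commutators: one must verify that after the normalizations every surviving coefficient is $\Eg$-uniform, which is what ultimately forces the specific scalings in the definitions of $C_1$ and $C_2$ and is the reason this lemma is not an off-the-shelf application of Villani's framework.
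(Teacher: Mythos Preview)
Your plan follows the paper's strategy closely: Villani's hypocoercivity with operators $C_0 = A$, $C_1 f = -\sigma' f'$, $C_2 f = \sigma'' f'$ (exactly the paper's choice), and you correctly identify that two brackets are needed and that $\|Af\|^2 + \|C_1 f\|^2 + \|C_2 f\|^2 \gtrsim \|f'\|^2$ furnishes the missing coercivity. The overall architecture is right.

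However, your schematic twisted norm has a concrete $\Eg$-scaling problem that you flag as ``the main obstacle'' but do not resolve. If the norm contains a term $c\langle Af,Ag\rangle$ with $c=O(1)$, then differentiating along the flow via $-\cLa=A^*A+B$ produces, from the $B$-part, the contribution $-2c\langle Af,[A,B]f\rangle = -2c\,\Eg^{3/2}\langle Af,C_1 f\rangle + O(1)$; after Young's inequality this yields $+c\,\epsilon^{-1}\Eg^{3/2}\|Af\|^2$, a positive term that cannot be absorbed by the only favorable $\|Af\|^2$-term available (of size $1$, coming from $\langle f,-\cLa f\rangle$). The paper fixes this via two moves that are absent from your plan. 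First, the $H^1$ part of the norm is kept as $c\|f'\|^2$ (not $c\|Af\|^2$) and handled \emph{separately} by the bound $\langle f',(\cLa f)'\rangle\le C\|f'\|^2$ with $C$ uniform in $\Eg$; this works precisely because $[D,B]f=(b-2\sigma\sigma'-\sigma^2\mu'/\mu)'f'$ and the constant $\Eg^{3/2}$ in $b$ is killed by the derivative --- a non-obvious point that makes the genuine $H^1$-seminorm compatible with the commutator machinery. Second, the entire corrector block $\sum_k\big(a_k\|C_k f\|^2+2b_k\langle C_k f,C_{k+1}f\rangle\big)$ carries an explicit prefactor $\Eg^{-3/2}$, so that after differentiation the favorable terms $b_k\Eg^{3/2}\|C_{k+1}f\|^2$ become $O(1)$ and match the scale of $\|Af\|^2$. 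Without these two ingredients, the cascading $0<b\ll a\ll c\ll 1$ you describe cannot close uniformly in $\Eg$. (A minor slip: you write $[A,C_1]=\Eg^3\lambda_2 C_2+(\text{bounded})$, but $[A,C_1]$ is $O(1)$ since both operators have bounded coefficients; the second-level cascade comes from $[C_1,B]=\Eg^{3/2}C_2+R_2$.)
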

With this proposition at hand, we can easily conclude the proof of the main result of this section.
\begin{proof}[Proof of Theorem \ref{Th:CVDensities}]
In the previous subsection, we showed that there exists $C_1 > 0$ such that uniformly over all parameters
\begin{equation}\label{Eq:ControlC1}
\sup_{\theta_0}\|p_1(\theta_0,\cdot)\|_{\cC^1} < C_1\;.
\end{equation}
Set $q_t := \frac{p_t}{\mu} - 1$ and recall that $\partial_t q_t = \cLa q_t$. The second property of the proposition then yields
$$ \partial_t \|q_t\|_{\mathfrak{H}^1}^2 \le - 2K \|q_t'\|^2\;.$$
Since $\int q_t d\mu = 0$, Poincar\'e inequality \eqref{Eq:Poincare} together with the first property of the proposition show that for some constant $K'>0$
 $$ \partial_t \|q_t\|_{\mathfrak{H}^1}^2 \le - K'  \|q_t\|^2_{\mathfrak{H}^1}\;.$$
Applying the first property again, we deduce that uniformly over all the parameters
$$ \|q_t\|_{H^1} \le  c'\|q_1\|_{H^1} e^{-C'(t-1)}\;,$$
for some constants $c',C'>0$. Combining this bound with \eqref{Eq:ControlC1}, applying the Sobolev embedding $H^1(dx) \subset L^\infty(dx)$ and the fact that $\mu$ is equivalent to the Lebesgue measure, we obtain the desired result.
\end{proof}
The rest of this subsection is devoted to the proof of Proposition \ref{Prop:Hypocoercivity}. We start by introducing successive Lie brackets: actually we identify within the successive Lie brackets between $A$ and $B$ the terms $\Eg^{3/2} C_k$ that will allow us to gain coercivity and we denote the remainder $R_k$. Introduce $C_0 f := A f$, and recursively for $k=1,2$
$$ C_k f := (-1)^k \sigma^{(k)} f'\;,\quad R_k f:= [C_{k-1}, B] f - \Eg^{3/2} C_k f\;,$$
and finally
$$ C_3 f := 0\;,\quad R_3 f := [C_2,B] f\;.$$

We now introduce a family of coefficients indexed by $\delta \in (0,1/4)$. Set $b_{-1} := \delta$ and for every $k\ge 0$
$$ a_k := \delta^{6-2k} b_{k-1}\;,\quad b_k := \delta^{5-2k}a_k\;.$$
One can check that for all $0\le k \le 2$
\begin{equation}\label{Eq:akbk1}
b_k \le \delta a_k\;,\quad a_{k} \le \delta b_{k-1}\;,\quad  a_k^2 = {\delta b_{k-1} b_k}\;,\quad b_k^2 = {\delta a_k a_{k+1}}\;.
\end{equation}

We then introduce 
$$ \| f \|_{\mathfrak{H}^1}^2 :=\|f\|_{H^1}^2 + \frac1{\Eg^{3/2}}\sum_{k=0}^2 \Big( a_k\|C_k f\|^2 + 2b_k \langle C_k f, C_{k+1} f\rangle\Big)\;,$$
as well as $\langle f,g \rangle_{\mathfrak{H}^1} = \langle f,g \rangle_{{H}^1} +  \llrrbracket{f,g}$ where
$$ \llrrbracket{f,g} := \frac1{\Eg^{3/2}}\sum_{k=0}^2 \Big( a_k \langle C_k f, C_k g\rangle + b_k (\langle C_k f, C_{k+1} g\rangle + \langle C_k g, C_{k+1} f \rangle) \Big)\;.$$
This norm depends on two parameters, $c$ and $\delta$, that will be adjusted later on.

\medskip

The bound $b_k^2 \le \delta a_k a_{k+1}$ implies
$$ \Big|2b_k \langle C_k f, C_{k+1} f\rangle\Big| \le \delta a_k \|C_k f\|^2 + \delta a_{k+1} \|C_{k+1} f\|^2\;.$$
Note that for every $k\in\{0,1,2,3\}$, we have $\|C_k f\| \le 2\|f'\|$. Since $\Eg \ge 1$, we easily deduce the first property of Proposition \ref{Prop:Hypocoercivity}.

%
\medskip

The proof of the second property is carried out in two steps. First, we show that there exists a constant $C>0$ such that uniformly over all parameters,
\begin{equation}\label{Eq:BoundNabla}
\langle f',(\cLa f)'\rangle \le C \| f'\|^2\;.
\end{equation}
Second we show that there exists $\delta > 0$ and $K'>0$ such that uniformly over all parameters
\begin{equation}\label{Eq:BoundVillani}
\langle f,\cLa f\rangle + \llrrbracket{f,\cLa f} \le -K' \| f'\|^2\;.
\end{equation}
With these two bounds at hand, we get
$$\langle f, \cLa f\rangle_{\mathfrak{H}^1}\le(c\,C - K') \|f'\|^2 \;.$$
It suffices to set $c:= K' (2C)^{-1}$ and $K=K'/2$ in order to deduce the second property of the proposition.

\begin{remark}
Our proof is close to~\cite[Proof of Theorem 24]{Villani} and would be essentially the same if we had taken $\|f\|_2^2$ instead of $\|f\|_{H^1}^2$ in the definition of the $\mathfrak{H}_1$ norm. Actually, with the original coordinates, the proof would carry through. With the distorted coordinates however, the first property of Proposition \ref{Prop:Hypocoercivity} would fail.
\end{remark}

We proceed with the proof of \eqref{Eq:BoundNabla}. For convenience, we define the operator $Df=f'$. We then have
$$ \langle f',(\cLa f)'\rangle = -\langle Df, DA^*A f \rangle - \langle Df, DBf\rangle\;.$$
Since $B$ is anti-symmetric we find $\langle Df, DBf\rangle = \langle Df, [D,B] f\rangle$. Note that $[D,B] f = (b - 2\sigma\sigma' - \sigma^2 \frac{\mu'}{\mu})' f'$. The key point here is that the only unbounded factor ($\Eg^{3/2}$ which appears in $b$) is killed upon differentiation. A simple computation shows that there exists $C_1$ such that
\begin{equation}\label{Eq:Nabla1}
\big|\langle Df, [D,B] f\rangle\big| \le C_1 \| f'\|^2\;.
\end{equation}
On the other hand, we have
\begin{align*}
\langle Df, DA^*A f \rangle &= -\langle D^2 f, A^*A f \rangle - \langle Df, \frac{\mu'}{\mu} A^*A f\rangle\\
&= \langle \sigma D^2 f, \sigma D^2 f\rangle + \langle D^2 f, \frac{(\sigma^2 \mu)'}{\mu} Df\rangle - \langle Df, \frac{\mu'}{\mu} A^*A f\rangle\;.
\end{align*}
Recall the bounds on $\mu$ stated in Lemma \ref{Lemma:InvMeas}. There exists $C_2>0$ such that
$$ \big|\langle D^2 f, \frac{(\sigma^2 \mu)'}{\mu} Df\rangle\big| \le C_2 \| \sigma D^2 f\| \| Df\|\;,$$
and
$$ \big|\langle Df, \frac{\mu'}{\mu} A^*A f\rangle\big| \le C_2 (\|\sigma Df \| \|\sigma D^2f\| + \| Df\|^2)\;.$$
By Young's inequality, we deduce that there exists $C_3 > 0$ such that
\begin{equation}\label{Eq:Nabla2}
\langle Df, DA^*A f \rangle \ge \frac12 \langle \sigma D^2 f, \sigma D^2 f\rangle - C_3 \| Df\|^2\;.
\end{equation}
Combining \eqref{Eq:Nabla1} and \eqref{Eq:Nabla2} we obtain \eqref{Eq:BoundNabla}.

\medskip

We turn to the proof of \eqref{Eq:BoundVillani}, which is very close to~\cite[Proof of Theorem 24]{Villani}. The main difference lies in the unboundedness of the coefficients of the operator at stake, namely the term $E^{3/2}$ in $B$ in the distorted coordinates, that requires some additional care.

First of all, we say that an operator $Q$ is bounded relatively to some operators $(E_j)_j$ if
$$ \| Q f\|^2 \lesssim \sum_{j} \| E_j f\|^2\;,$$
uniformly over all parameters.

It is tedious but elementary to check that:\begin{enumerate}[label=(\roman*)]
\item For every $0\le k \le 2$, $[A,C_k]$ is bounded relatively to $\{C_j\}_{0\le j \le k}$ and $\{C_j A\}_{0\le j \le k-1}$,
\item For every $0\le k \le 2$, $[C_k,A^*]$ is bounded relatively to $I$ and $\{C_j\}_{0\le j \le k}$,
\item For every $1\le k \le 2$, $R_k$ is bounded relatively to $\{C_j\}_{0\le j \le k-1}$ and $\{C_j A\}_{0\le j \le k-1}$,
\item $R_3$ is bounded relatively to $\Eg^{3/2} C_1$ and $\Eg^{3/2} C_2$.
\end{enumerate}

Recall that $-\cLa=A^*A + B$. With self-evident notations, we have
\begin{equation}\label{Eq:cLaf}
\llrrbracket{f,-\cLa f} = \frac1{\Eg^{3/2}} \sum_{k=0}^2 \Big(a_k \big[(I)^k_A + (I)^k_B] + b_k[(II)^k_A + (II)^k_B\big]\Big)\;.
\end{equation}
Linear algebra manipulations and the Cauchy-Schwarz inequality yield (see~\cite[Proof of Theorem 24, pp.36-37]{Villani} for details, one simply needs to $\lambda_k=\Lambda_k=\Eg^{3/2}$ therein):
\begin{align*}
(I)^k_A \ge\,& \|C_k A f\|^2 - \|C_kAf\| \|[A,C_k]f\| - \|C_kf\|\|[C_k,A^*]Af\|\;,\\
(I)^k_B \ge\,& -\Eg^{3/2}\|C_kf\|\|C_{k+1}f\| - \|C_k f\| \|R_{k+1} f\|\;,\\
(II)^k_A \ge\,& - \|C_kf\| \|[C_{k+1},A^*]Af\| - \|C_kAf\| \|C_{k+1} Af\| - \|C_{k+1} Af\| \|[A,C_k] f\|\\
& - \|C_k Af\| \|C_{k+1} Af\| - \|C_k Af\|\|[A,C_{k+1}] f\| - \|C_{k+1} f\| \|[C_{k},A^*]Af\|\;,\\
(II)^k_B \ge\,& \Eg^{3/2} \|C_{k+1} f\|^2 - \|C_{k+1} f\| \|R_{k+1} f\| - \Eg^{3/2} \|C_k f\| \| C_{k+2} f\| - \|C_k f\| \|R_{k+2} f\|\;.
\end{align*}
At this point, the proof differs from~\cite[Proof of Theorem 24, pp.38]{Villani}: indeed therein the parameters $\lambda_k$, $\Lambda_k$ are of order $1$, while in our case they are taken equal to $\Eg^{3/2}$ and therefore need extra care.\\

Take $\epsilon =\sqrt\delta$. By Young's inequality we have $XY \le \epsilon X^2 + (4\epsilon)^{-1} Y^2$. We thus get for any $k\in\{0,1,2\}$
\begin{align}
a_k \big[(I)^k_A + (I)^k_B] + b_k[(II)^k_A + (II)^k_B\big] &\ge a_k \| C_k A f\|^2 + b_k \Eg^{3/2} \|C_{k+1} f\|^2\\
\label{5:8}&-\epsilon \Eg^{3/2} b_{k-1} \|C_k f\|^2 - \Eg^{3/2} \frac{a_k^2}{4\epsilon b_{k-1}} \|C_{k+1} f\|^2\\
&-\epsilon  b_{k-1} \|C_k f\|^2 - \frac{a_k^2}{4\epsilon b_{k-1}} \|R_{k+1} f\|^2\\
&-\epsilon a_{k} \|C_k A f\|^2 - \frac{a_k}{4\epsilon} \|[A,C_{k}] f\|^2\\
&-\epsilon b_{k-1} \|C_k f\|^2 - \frac{a_k^2}{4\epsilon b_{k-1}} \|[C_{k},A^*]A f\|^2\\
\label{5:12}&-\epsilon b_{k} \|C_{k+1} f\|^2 - \frac{b_k}{4\epsilon} \|R_{k+1} f\|^2\\
\label{5:13}&-\epsilon \Eg^{3/2} b_{k-1} \|C_k f\|^2 - \Eg^{3/2} \frac{b_k^2}{4\epsilon b_{k-1}} \|C_{k+2} f\|^2\\
\label{5:14}&-\epsilon b_{k-1} \|C_k f\|^2 - \frac{b_k^2}{4\epsilon b_{k-1}} \|R_{k+2} f\|^2\\
&-\epsilon b_{k-1} \|C_k f\|^2 - \frac{b_k^2}{4\epsilon b_{k-1}} \|[C_{k+1},A^*]A f\|^2\\
&-\epsilon a_{k} \|C_k A f\|^2 - \frac{b_k^2}{4\epsilon a_k} \|C_{k+1}A f\|^2\\
&-\epsilon a_{k+1} \|C_{k+1} A f\|^2 - \frac{b_k^2}{4\epsilon a_{k+1}} \|[A,C_k] f\|^2\\
&-\epsilon a_{k} \|C_k A f\|^2 - \frac{b_k^2}{4\epsilon a_k} \|C_{k+1}A f\|^2\\
&-\epsilon a_{k} \|C_k A f\|^2 - \frac{b_k^2}{4\epsilon a_k} \|[A,C_{k+1}] f\|^2\\
\label{5:20}&-\epsilon b_{k} \|C_{k+1} f\|^2 - \frac{b_k^2}{4\epsilon a_k} \|[C_{k},A^*]A f\|^2\;,
\end{align}
with the further condition that \eqref{5:12} to \eqref{5:20}, as well as \eqref{5:8}, are not present for $k=2$.\\
The inequalities \eqref{Eq:akbk1} ensure that for $\delta$ small enough (uniformly over all parameters) the sum over $k$ of all the terms from \eqref{5:8} to \eqref{5:20}, except the first terms of \eqref{5:8} and \eqref{5:13} for $k=0$,  is larger than
$$-\frac14 \|A f\|^2 - \frac12 \sum_{k=0}^{2} \Big(a_k \| C_k A f\|^2 + b_k \Eg^{3/2} \|C_{k+1} f\|^2\Big)\;.$$
For more details see~\cite[Proof of Theorem 24, pp.38-39]{Villani}. We now control the contribution to \eqref{Eq:cLaf} of the first terms of \eqref{5:8} and \eqref{5:13} for $k=0$. This contribution is given by (recall that $\delta < 1/4$):
$$ -2\epsilon b_{-1} \Eg^{3/2} \|A f\|^2 = -2\delta^{3/2} \Eg^{3/2} \|Af\|^2 \ge -\frac14 \Eg^{3/2}\|Af\|^2\;.$$
Consequently
$$ \llrrbracket{f,-\cLa f}  \ge -\frac14(1+\frac1{\Eg^{3/2}}) \|Af\|^2 + \frac1{2\Eg^{3/2}} \sum_{k=0}^2 \Big(a_k \| C_k A f\|^2 + b_k \Eg^{3/2} \|C_{k+1} f\|^2\Big)\;.$$
On the other hand, $\langle f, -\cLa f\rangle = \|Af\|^2$. Putting everything together, we thus showed that
\begin{align*}
\langle f, -\cLa f\rangle + \llrrbracket{f,-\cLa f}  &\ge \frac12 \| Af \|^2  + \frac1{2\Eg^{3/2}} \sum_{k=0}^2 \Big(a_k \| C_k A f\|^2 + b_k \Eg^{3/2} \|C_{k+1} f\|^2\Big)\\
&\ge \frac12 \| Af \|^2  + \frac12 \sum_{k=0}^1 b_k \|C_{k+1} f\|^2\\
&\ge \frac14 \int_{[0,\pi)} \big(\sin^4 x + b_0 \sin^2 (2x) + 4 b_1 \cos^2 (2x)\big) f'(x)^2 \mu(x)dx\\
&\ge K \|f'\|^2\;,
\end{align*}
for some $K$ only depending on $\delta$. This completes the proof.

\section{GMP formula}\label{Sec:GMP}

In this section, we prove the GMP formula stated in Proposition \ref{Prop:GMP} and we deduce some simple facts of this formula. Actually, we will prove a slightly stronger statement for later convenience:

\begin{proposition}\label{Prop:GMPalternative}
Fix $u \in (-L/(2 \Eg),L/(2 \Eg))$ and $a,b\in \R$. For any bounded and measurable map $G$ from $\R\times\cD\times\cD$ into $\R_+$, we have
\begin{align*}
&\E\bigg[\sum_{i\ge 1} \big(a  \varphi_i( \Eg u)^2 + b\; \varphi_i'(\Eg u)^2\big)\;G(\gl_i,\;\varphi_i,\varphi_i')\bigg] \\
&= \frac1{\sqrt \Eg}\int_{\gl\in\R} \int_{\theta=0}^\pi p^{(\Eg)}_{\gl,\frac{L}{2 \Eg}+u}(\theta) p^{(\Eg)}_{\gl,\frac{L}{2 \Eg}-u}(\pi-\theta) (a\sin^2\theta + b\,\Eg \cos^2 \theta)\\
&\qquad \qquad \qquad \qquad \times \E^{(u)}_{\theta,\pi-\theta}\bigg[G\Big(\gl,\frac{\yu^{(\Eg)}_\gl(\cdot/\Eg)}{\sqrt{\Eg}\|\yu^{(\Eg)}_\gl\|},\frac{(\yu^{(\Eg)}_\gl)'(\cdot/\Eg)}{\Eg^{3/2} \|\yu^{(\Eg)}_\gl\|}\Big)\bigg] \, d\theta d\gl\\
\end{align*}
\end{proposition}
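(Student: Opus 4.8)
The plan is to reduce everything to the Sturm--Liouville picture and an exact computation of the density of the point process of eigenvalues near a fixed energy. First I would work with the original coordinates ($\Eg=1$); the distorted case follows by the change of variables already recorded in the excerpt. Fix $u\in(-L/2,L/2)$. The key structural fact is Lemma \ref{Lemma:Concat}: $\gl$ is an eigenvalue iff $\{\theta_\gl^+(u)+\theta_\gl^-(-u)\}_\pi=0$, and then the eigenfunction is the concatenation $\hat y_\gl$ built from the forward diffusion on $[-L/2,u]$ and the (reversed) backward diffusion on $[u,L/2]$, normalized in $L^2$. Using the Sturm--Liouville monotonicity of $\lfloor\theta_\gl^+(u)+\theta_\gl^-(-u)\rfloor_\pi$ in $\gl$, the sum over eigenvalues $\sum_i \delta_{\gl_i}$ has, conditionally on nothing, an intensity that can be read off from the ``level crossings'' of the monotone random field $\gl\mapsto \theta_\gl^+(u)+\theta_\gl^-(-u)$ through $\pi\Z$. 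Concretely, for a test function $\Psi(\gl)$, $\E[\sum_i \Psi(\gl_i)] = \int \Psi(\gl)\,\varrho(\gl)\,d\gl$ where $\varrho(\gl)\,d\gl$ is the expected number of crossings in $[\gl,\gl+d\gl]$; this is the Kac--Rice / co-area heuristic made rigorous via the monotonicity (so no absolute values or Jacobian subtleties arise — crossings are counted with a definite sign).

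The second step is to compute this intensity explicitly and to insert the extra weights $a\varphi_i(u)^2+b\varphi_i'(u)^2$ and the functional $G(\gl_i,\varphi_i,\varphi_i')$. Here I would condition on the pair $(\theta_\gl^+(u),\theta_\gl^-(-u))$. By independence of $B^+$ and $B^-$ (the forward and backward Brownian motions are measurable w.r.t.\ disjoint portions of the driving noise once $u$ is fixed — this is where the specific splitting $B^+(t)=B(t)$, $B^-(t)=B(L/2)-B(-t)$ is used), the densities factor as $p_{\gl,\frac{L}{2}+u}(\theta)\,p_{\gl,\frac{L}{2}-u}(\theta')$ for $\theta=\theta_\gl^+(u)$, $\theta'=\theta_\gl^-(-u)$ taken mod $\pi$. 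The crossing condition forces $\theta'\equiv \pi-\theta$; the local rate of crossing per unit $\gl$ is governed by the speed $\partial_\gl(\theta_\gl^++\theta_\gl^-)$ at the crossing, which after the standard computation produces exactly the factor $\sin^2\theta$ (this is the classical identity: the $\gl$-derivative of the Prüfer phase accumulates $\int r_\gl^{-2} y_\gl^2 = \int \sin^2\theta_\gl\,\cdots$, and at a crossing the relevant contribution collapses to $\sin^2\theta$ after normalization, with the general $a\sin^2\theta+b\Eg\cos^2\theta$ appearing once the weights $a\varphi^2+b(\varphi')^2$ are carried along, since $\varphi\propto \hat r\sin\hat\theta$ and $\varphi'\propto \hat r\cos\hat\theta$ and at the matching point $\hat r=1$). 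Finally, conditionally on $\{\theta_\gl^+(u)=\theta,\ \theta_\gl^-(-u)=\pi-\theta\}$, the laws of the forward and backward paths are precisely the bridges defining $\E^{(u)}_{\theta,\pi-\theta}$, so the functional $G$ evaluated at $(\gl,\varphi,\varphi')=(\gl,\hat y_\gl/\|\hat y_\gl\|,\hat y_\gl'/\|\hat y_\gl\|)$ replaces $\Psi(\gl)$ inside the integral, giving the claimed formula.

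For a clean rigorous argument (rather than the Kac--Rice heuristic) I would proceed via a discretization / occupation-time identity: write $\#\{\gl_i\le \gl\}=\lfloor(\theta_\gl^+(u)+\theta_\gl^-(-u))/\pi\rfloor$, note $\sum_i \Psi(\gl_i) = \int \Psi(\gl)\, d\lfloor(\theta_\gl^++\theta_\gl^-)/\pi\rfloor$, use that $\gl\mapsto\theta_\gl^\pm$ is $\mathcal C^1$ in $\gl$ (smooth dependence of the ODE solution on the parameter, with an explicit linear variational equation for $\partial_\gl\theta_\gl^\pm$), so that the Stieltjes measure $d\lfloor\cdot/\pi\rfloor$ is a sum of Dirac masses at the crossing points, and then apply an area/Banach-indicatrix formula to rewrite $\E[\sum_i \Psi(\gl_i)] = \int_\R \E[\Psi(\gl)\,\partial_\gl(\theta_\gl^++\theta_\gl^-)\,\delta_{\pi\Z}(\theta_\gl^++\theta_\gl^-)]\,d\gl$ in a disintegrated form. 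The identity $\partial_\gl\theta_\gl = \frac1{r_\gl^2}\int_{-L/2}^{\cdot} y_\gl^2$ (up to the sped-up/distorted rescaling, which only inserts powers of $\Eg$) and the normalization $\hat r_\gl(u)=1$ are what turn the Jacobian into $a\sin^2\theta+b\Eg\cos^2\theta$ once the weights are present. The main obstacle I anticipate is making this disintegration fully rigorous with the white-noise-driven phase — i.e.\ justifying the joint smoothness in $\gl$, the absolute continuity of the crossing-point distribution (so that $\theta_\gl^++\theta_\gl^-$ hits $\pi\Z$ only at isolated, non-degenerate points a.s.), and the interchange of expectation and the singular integral; these are technical but standard, relying on the non-degeneracy of $\partial_\gl\theta_\gl>0$ (immediate since $y_\gl\not\equiv0$) and on the transition densities $p_{\gl,t}$ being honest smooth densities, which is exactly what the hypoellipticity input of Section \ref{Sec:Expo} provides.
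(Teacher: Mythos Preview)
Your approach is essentially the paper's: the paper makes the Kac--Rice step rigorous via exactly the $\eps$-mollification you anticipate (integrate over $\theta\in\pi\Z+[-\eps,\eps]$, change variables through the diffeomorphism $\gl\mapsto\theta_\gl^+(u)+\theta_\gl^-(-u)$, take expectation and disintegrate over $(\theta_\gl^+(u),\theta_\gl^-(-u))$ using independence, then send $\eps\downarrow0$ by dominated convergence). One clarification on your Jacobian discussion: $\partial_\gl(\theta_\gl^+(u)+\theta_\gl^-(-u))=z_\gl^+(u)+z_\gl^-(-u)=\|\hat y_\gl\|_2^2$ by \eqref{Eq:zgl} and the normalization $\hat r_\gl(u)=1$, not $\sin^2\theta$; what produces the factor $a\sin^2\theta+b\cos^2\theta$ is the exact cancellation of this Jacobian against the $\|\hat y_\gl\|_2^{-2}$ coming from $a\varphi_i(u)^2+b\varphi_i'(u)^2=(a\sin^2\theta+b\cos^2\theta)/\|\hat y_\gl\|_2^2$.
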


\begin{remark}
There's an ambiguity in the statement of this proposition (and of Proposition \ref{Prop:GMP}): the eigenfunctions $\varphi_i$ are uniquely defined up to a sign change while the functions $\yu_\gl$ are positive at $(-L/2)_+$. For the proposition to be totally rigorous one needs to assume further that $\varphi_i$ is positive at $(-L/2)_+$. Actually the maps $G$ that we will consider in the sequel are invariant under changing the sign of $\varphi_i$ so this has no impact.
\end{remark}

Given this proposition, the proof of the GMP formula is simple.
\begin{proof}[Proof of Proposition \ref{Prop:GMP}]
Recall that the eigenfunctions $\varphi_i$ are normalized in $L^2$. It suffices to take $a=1$, $b=0$ in the previous proposition, to integrate w.r.t.~$u$ and to apply Fubini's Theorem.
\end{proof}

Before we proceed to the proof of Proposition \ref{Prop:GMPalternative}, note that $\gl \mapsto \theta_\gl^{(\Eg)}$ is differentiable and let us introduce $z_\gl^{(\Eg)} = \partial_\gl \theta_\gl^{(\Eg)}$, the derivative with respect to $\gl$ of the angle $\theta_\gl^{(\Eg)}$. It satisfies the SDE
\begin{align*}
d z_\gl^{(\Eg)}(t) &= \sqrt\Eg \sin^2 \theta_\gl^{(\Eg)} dt - z_\gl^{(\Eg)} \Big[ d \rho_\gl^{(\Eg)}(t) - \frac12 d \langle \rho_\gl^{(\Eg)} \rangle_t \Big]\;.
\end{align*}
We thus obtain the following integral expression for $z_\gl$:
\begin{align} \label{Eq:zgl}
z_\gl^{(\Eg)}(t) = \frac{1}{(r_\gl^{(\Eg)}(t))^2}\int_{-\frac{L}{2 \Eg}}^t \sqrt\Eg \,\sin^2 \theta_\gl^{(\Eg)} (s) (r_\gl^{(\Eg)}(s))^2 ds\;.
\end{align}

\begin{proof}[Proof of Proposition \ref{Prop:GMP}]
It suffices to prove the statement with the original coordinates, since the statement with the distorted coordinates follows from \eqref{Eq:CoVp} and the change of variable $\theta\mapsto \arccotan(\frac1{\sqrt E} \cotan \theta)$ whose Jacobian is given by $(\sqrt E (\sin^2 \theta + \frac1{E} \cos^2 \theta))^{-1}$.

By the monotone convergence theorem, we can assume that $G$ is a continuous map from $\R\times\cD \times \cD$ into $\R_+$ such that $G(\gl,\varphi,\psi) = 0$ whenever $\gl \notin [-A,A]$ or $\|\varphi\|_{L^\infty} > A$ or  $\|\psi\|_{L^\infty} > A$, for some given $A>0$. Fix $u \in (-L/2,L/2)$. Recall from Lemma \ref{Lemma:Concat} that the set of eigenvalues $(\lambda_i)_{i\ge 1}$ coincides with the set of $\gl \in\R$ such that
$$ \big\{ \theta_\gl^{+}(u)+\theta_\gl^{-}(-u) \big\}_\pi = 0\;,$$
and the eigenfunction $\varphi_\gl$ associated to the eigenvalue $\gl$ coincides with $\yu_\gl/\|\yu_\gl\|$. Therefore,
\begin{align*}
&\sum_{i\ge 1} G(\gl_i,\varphi_i, \varphi'_i)  \big(a \; \varphi_i(u)^2 + b\; \varphi_i'(u)^2\big)\\
&= \sum_{\gl \;:\;\big\{ \theta_\gl^{+}(u)+\theta_\gl^{-}(-u) \big\}_\pi = 0} 
G\Big(\gl,\frac{\yu_\gl}{\|\yu_\gl\|},\frac{(\yu_\gl)'}{\|\yu_\gl\|}\Big)\; \frac{a (\yu_\gl(u))^2 + b ({{\yu_{\gl}}'(u)})^2}{\|\yu_\gl\|^2}\,.
\end{align*}
Almost surely the map $\gl \mapsto \theta^{+}_\gl(u) + \theta^{-}_\gl(-u)$ is a diffeomorphism from $\R$ to $\R$. We denote its inverse by $\theta\mapsto \gl(\theta)$. We can rewrite the last sum as:
\begin{align*}
\sum_{\theta \in \pi \Z} G\Big(\gl(\theta),\frac{\yu_{\gl(\theta)}}{\|\yu_{\gl(\theta)}\|},\frac{(\yu_{\gl(\theta)})'}{\|\yu_{\gl(\theta)}\|}\Big)\; \frac{a( \yu_{\gl(\theta)}(u))^2 + b ({{\yu_{\gl(\theta)}}'(u)})^2}{\|\yu_{\gl(\theta)}\|^2 } \,.
\end{align*}
Set for any $\eps \in (0,\pi)$
\begin{equation}\label{Eq:RVeps}
{G}_{\eps}(u) := \frac1{2\eps} \int_{\theta \in \pi\Z + [-\eps,\eps]} G\Big(\gl(\theta),\frac{\yu_{\gl(\theta)}}{\|\yu_{\gl(\theta)}\|}, \frac{(\yu_{\gl(\theta)})'}{\|\yu_{\gl(\theta)}\|}\Big) \frac{a( \yu_{\gl(\theta)}(u))^2 + b ({{\yu_{\gl(\theta)}}'(u)})^2}{\|\yu_{\gl(\theta)}\|^2} \;d\theta\;.
\end{equation}
Almost surely ${G}_{\eps}(u)$ is bounded by $\|G\|_\infty(|a|+|b|) A^2 \big(\#\{\lambda_i \in [-A,A]\} +2\big)$. From Lemma \ref{Eq:BdExpoDS} in the Appendix, this r.v.~has a finite expectation. Furthermore, by continuity ${G}_{\eps}(u)$ converges a.s.~as $\eps\downarrow 0$ to
$$\sum_{i\ge 1} G(\gl_i,\varphi_i, \varphi'_i)\, \big(a\varphi_i(u)^2 + b ({\varphi_i'(u)})^2\big)\;.$$
By the Dominated Convergence Theorem, we deduce that,
\begin{align*}
\E\Big[\sum_{i\ge 1} G(\gl_i,\varphi_i, \varphi'_i)\, \big(a\varphi_i(u)^2 + b ({\varphi_i'(u)})^2\big) \Big]= \lim_{\eps\downarrow 0} \E\big[{G}_{\eps}(u) \big]\;.
\end{align*}
We now compute the expectation of ${G}_{\eps}(u)$. Note that $\partial_\gl(\theta_\gl^{+}(u)+\theta_\gl^{-}(-u)) = z_\gl^+(u) + z_\gl^-(-u)$. Using \eqref{Eq:zgl} and given the boundary condition imposed on $r_\gl^\pm$, this equals $\|\yu_\gl\|^2$. We then apply the change of variable $\theta \mapsto \gl(\theta)$ and obtain
\begin{align*}
{G}_{\eps}(u)= \frac1{2\eps}&\int_{\gl\in\R} \un_{\{\pi\Z + [-\eps,\eps]\}}(\thetau_\gl(u_-)+\thetau_\gl(u_+)) G\Big(\gl,\frac{\yu_\gl}{\|\yu_\gl\|}, \frac{(\yu_\gl)'}{\|\yu_\gl\|}\Big) \\
&\qquad\qquad\times\big( a \sin^2 \theta_\gl^+(u) + b \cos^2 \theta_\gl^+(u)\big)d\gl\;.
\end{align*}

We then take expectation, use Fubini's Theorem, and integrate with respect to the value of the forward diffusion at time $u$:
\begin{align*}
\E[{G}_{\eps}(u)] &=\frac1{2\eps}\int_{\gl\in\R} \int_{\theta =0}^{\pi} \int_{\theta' = -\eps}^{\eps} p_{\gl,u}(\theta) p_{\gl,L-u}(\pi-\theta + \theta') (a\sin^2 \theta + b\cos^2\theta) \\
&\times\E^{(u)}_{\theta,\pi - \theta - \theta'}\Big[G\Big(\gl,\frac{\yu_\gl}{\|\yu_\gl\|}, \frac{(\yu_\gl)'}{\|\yu_\gl\|}\Big) \Big]d\theta' d\theta d\gl\;.
\end{align*}
By the Dominated Convergence Theorem, the boundedness of the transition probabilities and the continuity w.r.t.~$\theta'$, we can permute the integrals with respect to $\gl,\theta$ and the limit as $\eps\downarrow 0$ to find
\begin{align*}
\lim_{\eps\downarrow 0} \E[{G}_{\eps}(u)] =& \int_{\gl\in\R} \int_{\theta=0}^\pi p_{\gl,u}(\theta) p_{\gl,L-u}(\pi-\theta) (a\sin^2\theta + b\cos^2\theta)\\
&\qquad \times\E^{(u)}_{\theta,\pi - \theta}\Big[G\Big(\gl,\frac{\yu_\gl}{\|\yu_\gl\|}, \frac{(\yu_\gl)'}{\|\yu_\gl\|}\Big) \Big] \, d\theta d\gl\;,
\end{align*}
thus concluding the proof.
\end{proof}

As a first application of the GMP formula, one can derive an expression of the density of states in terms of the stationary measure of the process $\{\theta_\gl\}_\pi$. There is another (actually simpler) formula for this density of states given by $n(\gl) = \partial_\gl (1/m_\gl)$ where $m_\gl$ is introduced in Subsection \ref{Subsec:Moments}, see~\cite{Fukushima}.

\begin{corollary}\label{cor:DOSformula}
The density of states writes
$$ n(\gl) = \int_{0}^\pi \mu_{\gl}(\theta) \mu_{\gl}(\pi-\theta)\sin^2 \theta d\theta\;,\quad \gl\in\R\;.$$
We also have for any $E\ge 1$
$$ n(\gl) = \frac1{\sqrt E} \int_{0}^\pi \mu_{\gl}^{(E)}(\theta) \mu_{\gl}^{(E)}(\pi-\theta)\sin^2 \theta d\theta\;,\quad \gl\in\R\;.$$
\end{corollary}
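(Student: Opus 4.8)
The plan is to feed the GMP formula the simplest possible test function --- an energy indicator --- and then let $L\to\infty$, using the convergence to equilibrium of Theorem \ref{Th:CVDensities}. Concretely, fix a compact interval $\Delta=[a,b]$ and apply Proposition \ref{Prop:GMP} with $G(\gl,\varphi,\psi)=\un_\Delta(\gl)$; since $G$ does not depend on $(\varphi,\psi)$ the $\|\yu_\gl\|$ normalisations are irrelevant and $\E[\sum_i G(\gl_i,\varphi_i,\varphi_i')]=\E[\#\{\lambda_i\in\Delta\}]$. After the substitution $s=L/2+u$ (so that $L/2-u=L-s$ and $du=ds$), the formula becomes
\[
\E[\#\{\lambda_i\in\Delta\}]=\int_0^L\int_\Delta\int_0^\pi p_{\gl,s}(\theta)\,p_{\gl,L-s}(\pi-\theta)\,\sin^2\theta\;d\theta\,d\gl\,ds\,.
\]

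Next I would divide by $L$, split the $s$-integral into $[0,1]$, $[1,L-1]$ and $[L-1,L]$, and send $L\to\infty$. On the two boundary layers one of the two time arguments is $\le 1$: the corresponding (short-time) transition density integrates to $1$ over $[0,\pi)$, while the other factor, at a time $\ge 1$, is bounded by $\sup_\theta\mu_\gl(\theta)+c\le C$ uniformly over $\gl\in\Delta$ by Theorem \ref{Th:CVDensities} and Lemma \ref{Lemma:InvMeas}; hence each boundary contribution is $O(|\Delta|)$ and drops out after dividing by $L$. On $[1,L-1]$, Theorem \ref{Th:CVDensities} gives $|p_{\gl,s}(\theta)-\mu_\gl(\theta)|\le ce^{-Cs}$ and $|p_{\gl,L-s}(\pi-\theta)-\mu_\gl(\pi-\theta)|\le ce^{-C(L-s)}$, so, using again the bound on $\mu_\gl$, $|p_{\gl,s}(\theta)p_{\gl,L-s}(\pi-\theta)-\mu_\gl(\theta)\mu_\gl(\pi-\theta)|\le C'(e^{-Cs}+e^{-C(L-s)})$ uniformly in $\gl\in\Delta$, $\theta\in[0,\pi]$; replacing the $p$'s by $\mu$'s therefore costs only $O(1/L)$, and the prefactor $(L-2)/L\to1$. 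This yields
\[
\tfrac1L\,\E[\#\{\lambda_i\in\Delta\}]\;\xrightarrow[L\to\infty]{}\;\int_\Delta\Big(\int_0^\pi\mu_\gl(\theta)\mu_\gl(\pi-\theta)\sin^2\theta\,d\theta\Big)d\gl\,.
\]

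It then remains to identify the left-hand side limit as $N(b)-N(a)=\int_\Delta n(\gl)\,d\gl$. The almost sure convergence $\frac1L\#\{\lambda_i\le\gl\}\to N(\gl)$ is \eqref{Eq:NE}; for the convergence of expectations one can observe that, by the Sturm--Liouville identity \eqref{SturmLiouvilleTheta}, $\#\{\lambda_i\le\gl\}$ is the number of rotations of $\theta_\gl$ over $[-L/2,L/2]$, whose successive rotation times are i.i.d.\ with finite mean $m_\gl$ by the strong Markov property, so the elementary renewal theorem gives $\frac1L\E[\#\{\lambda_i\le\gl\}]\to 1/m_\gl=N(\gl)$. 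Comparing, $\int_\Delta n=\int_\Delta\int_0^\pi\mu_\gl(\theta)\mu_\gl(\pi-\theta)\sin^2\theta\,d\theta\,d\gl$ for every compact interval $\Delta$; since $n$ is continuous and $\gl\mapsto\int_0^\pi\mu_\gl(\theta)\mu_\gl(\pi-\theta)\sin^2\theta\,d\theta$ is continuous (by Lemma \ref{Lemma:InvMeas} together with the integral expression of $\mu_\gl$), the two functions coincide pointwise, which is the first formula.

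For the second (distorted) formula I would not rerun the argument in distorted coordinates but simply change variables $\psi=\arccotan(\sqrt E\cotan\theta)$ in the $\theta$-integral of the first formula: its Jacobian is $\sqrt E/(\sin^2\theta+E\cos^2\theta)$, one has $\pi-\psi=\arccotan(\sqrt E\cotan(\pi-\theta))$ and $\sin^2\psi=\sin^2\theta\cdot\sqrt E^{-1}\cdot\sqrt E/(\sin^2\theta+E\cos^2\theta)$, and --- exactly as for the transition densities in \eqref{Eq:CoVp} --- the stationary densities satisfy $\mu_\gl^{(E)}(\theta)=\mu_\gl(\arccotan(\sqrt E\cotan\theta))\,\sqrt E/(\sin^2\theta+E\cos^2\theta)$; bookkeeping the Jacobian factors (two copies in the numerator from the two $\mu_\gl$'s, cancelled against the change of variable and the $\sin^2\psi$ identity) leaves precisely $n(\gl)=\frac1{\sqrt E}\int_0^\pi\mu_\gl^{(E)}(\theta)\mu_\gl^{(E)}(\pi-\theta)\sin^2\theta\,d\theta$. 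Overall the argument is routine given the machinery already established; the only point needing a little care is the $s\in[0,1]\cup[L-1,L]$ boundary layer, where Theorem \ref{Th:CVDensities} does not control the short-time factor --- but since that factor is a sub-probability density in its terminal variable and the companion factor is uniformly bounded, these layers contribute only $O(1)$ and are killed by the $\frac1L$ scaling, so I do not expect any genuine obstacle.
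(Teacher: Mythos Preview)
Your proof is correct and follows essentially the same route as the paper: apply the GMP formula with $G=\un_\Delta$, divide by $L$, replace the transition densities by $\mu_\gl$ via Theorem \ref{Th:CVDensities}, and identify the limit as $\int_\Delta n$. The one substantive difference is how you pass from the a.s.\ convergence \eqref{Eq:NE} to convergence of expectations: the paper invokes the uniform integrability of $N_L(\Delta)/L$ established in Lemma \ref{Eq:BdExpoDS}, whereas you use the elementary renewal theorem on the i.i.d.\ rotation times (mean $m_\gl$) to get $\tfrac1L\E[\#\{\lambda_i\le\gl\}]\to 1/m_\gl=N(\gl)$ directly. Both are valid; your argument is arguably more elementary here since it avoids the exponential-moment estimate of Lemma \ref{Eq:BdExpoDS}, at the small cost of invoking the identification $N(\gl)=1/m_\gl$ from \cite{Fukushima}. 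Your explicit boundary-layer analysis and change-of-variables bookkeeping are details the paper leaves implicit.
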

\begin{proof}
Fix some interval $\Delta \subset \R$ and set $N_L(\Delta):= \#\{\lambda_i: \lambda_i \in \Delta\}$. By the uniform integrability stated in Lemma \ref{Eq:BdExpoDS}, $\int_\Delta n(\gl) d\gl$ is not only the a.s.~limit of $N_L(\Delta)/L$ but also its limit in $L^1$. The GMP formula allows to compute $\E[N_L(\Delta)]$ by simply choosing $G(\gl)=\un_\Delta(\gl)$. By Theorem \ref{Th:CVDensities} one can then replace the product of the densities of the forward/backward diffusions $p_{\gl,\frac{L}{2}+u}(\theta) p_{\gl,\frac{L}{2}-u}(\pi-\theta)$ by the product of the densities of the invariant measure $\mu_{\gl}(\theta) \mu_{\gl}(\pi-\theta)$, and derive the first expression of the statement. The formula involving distorted coordinates follows from a change of variables.
\end{proof}

Recall the r.v.~defined in \eqref{Eq:NLDelta} and \eqref{Eq:NLDeltaj}.

\begin{proposition}[Wegner estimates]\label{Prop:MicroEstimates} Fix $h>0$ and set $\Delta := [E- h/(L n(E)),E+h/(Ln(E))]$. In the Bulk and the Crossover regimes, we have as $L\to\infty$:
\begin{enumerate}
\item $\E[N_L(\Delta)] = 2h (1+o(1))$,
\smallskip
\item $\E[N_L^{(1)}(\Delta)] = \frac{2h}{k}(1+o(1))$.
\end{enumerate}
\end{proposition}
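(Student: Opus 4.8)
The plan is to apply the GMP formula (Proposition \ref{Prop:GMP}) with $G(\gl,\varphi,\psi) = \un_\Delta(\gl)$, which gives an exact integral expression for $\E[N_L(\Delta)]$, and then to use Theorem \ref{Th:CVDensities} to replace the transition densities of the forward/backward diffusions by the invariant measure, incurring only a vanishing error. Concretely, taking $G = \un_\Delta$ in the distorted-coordinates version of Proposition \ref{Prop:GMP} (and the original-coordinates version in the Bulk regime), one obtains
\begin{align*}
\E[N_L(\Delta)] = \sqrt\Eg \int_{u=-\frac{L}{2\Eg}}^{\frac{L}{2\Eg}} \int_{\gl\in\Delta} \int_{\theta=0}^\pi p_{\gl,\frac{L}{2\Eg}+u}^{(\Eg)}(\theta)\, p_{\gl,\frac{L}{2\Eg}-u}^{(\Eg)}(\pi-\theta)\,\sin^2\theta \; d\theta\, d\gl\, du\;.
\end{align*}
Here $\Delta$ has length $2h/(Ln(E))$, so the $\gl$-integral is over an interval that shrinks as $L\to\infty$.

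The first step is to control the region where $u$ is within distance (say) $\sqrt L$ of one of the endpoints $\pm L/(2\Eg)$: on this set one of the two time arguments $\frac{L}{2\Eg}\pm u$ is small, the transition density need not be close to $\mu_\gl^{(\Eg)}$, but one still has a uniform upper bound on $p^{(\Eg)}$ away from time $0$ (or, for very small times, an a priori bound on the total mass), and since this set of $u$'s has measure $O(\sqrt L)$ while the $\gl$-integral contributes $O(1/(Ln(E)))$ and $\sqrt\Eg/n(E) = O(\sqrt\Eg\sqrt E)$ is negligible against $L/\sqrt L$, this contribution is $o(1)$. In the Bulk regime $\Eg=1$ and this is immediate; in the Crossover regime one uses $n(E)\sim 1/(2\pi\sqrt E)$ and $E\ll L$ to check the same. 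For the bulk of the $u$-range, both $\frac{L}{2\Eg}+u$ and $\frac{L}{2\Eg}-u$ exceed $\sqrt L \gg 1$, so Theorem \ref{Th:CVDensities} gives $|p_{\gl,\frac{L}{2\Eg}\pm u}^{(\Eg)}(\cdot) - \mu_\gl^{(\Eg)}(\cdot)| \le c e^{-C\sqrt L}$ uniformly in the relevant parameters; replacing both densities by $\mu_\gl^{(\Eg)}$ and using the uniform boundedness of $\mu_\gl^{(\Eg)}$ from Lemma \ref{Lemma:InvMeas}, the error is $\sqrt\Eg \cdot L \cdot \frac{2h}{Ln(E)} \cdot O(e^{-C\sqrt L}) = o(1)$.

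After this replacement, the main term becomes
\begin{align*}
\sqrt\Eg \cdot \frac{L}{\Eg} \int_{\gl\in\Delta}\Big(\int_0^\pi \mu_\gl^{(\Eg)}(\theta)\,\mu_\gl^{(\Eg)}(\pi-\theta)\,\sin^2\theta\, d\theta\Big) d\gl + o(1)\;,
\end{align*}
the factor $L/\Eg$ coming from the length of the $u$-interval. By Corollary \ref{cor:DOSformula}, the inner $\theta$-integral equals $\sqrt\Eg\, n(\gl)$, so the expression is $L\int_\Delta n(\gl)\, d\gl + o(1)$. Finally, since $n$ is smooth and $\Delta$ has length $2h/(Ln(E))$ centered at $E$, one has $L\int_\Delta n(\gl)\,d\gl = L \cdot \frac{2h}{Ln(E)} \cdot n(E)(1+o(1)) = 2h(1+o(1))$; in the Crossover regime one must additionally check that the variation of $n$ over $\Delta$ is negligible, which follows from $n'(E)/n(E) \to 0$ and $h/(Ln(E)) \to 0$. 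This proves (1). For (2), the identical argument applies verbatim to $\cH_L^{(1)}$ on the box $(t_0,t_1)$ of length $L/k$: replacing $L$ by $L/k$ throughout gives $\E[N_L^{(1)}(\Delta)] = \frac{L}{k}\int_\Delta n(\gl)\,d\gl + o(1/k) = \frac{2h}{k}(1+o(1))$, provided $k=k(L)\to\infty$ slowly enough that $L/k \to \infty$ (so that the endpoint-region estimate and Theorem \ref{Th:CVDensities} still apply with $\sqrt{L/k}\gg 1$); this restriction on the growth of $k$ is compatible with the ones imposed elsewhere in the paper. The main obstacle is the bookkeeping near the endpoints $u \approx \pm L/(2\Eg)$ and making all error estimates uniform in $E$ in the Crossover regime — but this is routine given Lemma \ref{Lemma:InvMeas}, Theorem \ref{Th:CVDensities} and the moment bounds on $\zeta_\gl^{(\Eg)}$.
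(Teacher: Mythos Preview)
Your approach is essentially identical to the paper's: apply the GMP formula with $G=\un_\Delta$, replace the transition densities by the invariant measure via Theorem~\ref{Th:CVDensities}, and invoke Corollary~\ref{cor:DOSformula} to recognize the resulting $\theta$-integral as $\sqrt{\Eg}\,n(\gl)$.

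There is one quantitative slip in your endpoint bookkeeping. Your buffer of width $\sqrt{L}$ in the $u$-variable only makes sense if $L/\Eg > 2\sqrt{L}$, i.e.\ $\Eg < \sqrt{L}/2$, which fails in part of the Crossover regime (where $E$ can be as large as $L^{1-\epsilon}$). Correspondingly, your bound on the endpoint contribution, of order $\sqrt{\Eg}\cdot\sqrt{L}/(Ln(E)) \asymp E/\sqrt{L}$, does not vanish when $E \ge \sqrt{L}$. The paper instead integrates the exponential bound $|p_{\gl,t}^{(\Eg)} - \mu_\gl^{(\Eg)}| \le c e^{-Ct}$ (valid for $t\ge 1$) directly over $u$, and handles the two boundary layers of width $O(1)$ by the trivial bound $\int_0^\pi p_{\gl,t}^{(\Eg)}(\theta)\sin^2\theta\,d\theta \le 1$ together with the uniform boundedness of the other factor; this yields a total replacement error of order $\Eg/L = o(1)$, uniform over the entire Crossover regime. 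Your argument goes through verbatim if you take the buffer of size $O(1)$ (or any size $\ll L/\Eg$ that tends to infinity) rather than $\sqrt{L}$.
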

\begin{proof}
By Proposition \ref{Prop:GMP}, we have:
\begin{align*}
\E\big[N_L(\Delta)\big]&= \sqrt \Eg \int_{-\frac{L}{2\Eg}}^{\frac{L}{2\Eg}} \int_{\gl\in\Delta} \int_{\theta=0}^\pi p_{\gl,\frac{L}{2\Eg}+u}^{(\Eg)}(\theta) p_{\gl,\frac{L}{2\Eg}-u}^{(\Eg)}(\pi-\theta)\sin^2 \theta d\theta d\gl du
\end{align*}
If one replaces the transition probabilities by the equilibrium densities, then one gets
$$  \frac{L}{\sqrt \Eg} \int_{\gl\in\Delta} \int_{\theta=0}^\pi \mu_\gl^{(\Eg)}(\theta) \mu_\gl^{(\Eg)}(\pi - \theta) \sin^2 \theta d\theta d\gl\;,$$
which goes to $2h$ by Corollary \ref{cor:DOSformula}. The error made upon this replacement is of order $\Eg /L$ by Theorem \ref{Th:CVDensities} and Lemma \ref{Lemma:InvMeas} and therefore vanishes in the limit $L\to\infty$.
Consequently we get the first estimate.\\
The second estimate is derived from the same argument, one simply replaces the interval $[-L/\Eg,L/\Eg]$ by an interval of length $L/(\Eg k)$ and uses that $L/(\Eg k) \gg 1$.
\end{proof}

\section{Exponential decay}\label{Sec:ExpoDecay}

Fix $h >0$ and set $\Delta := [E - \frac{h}{n(E) L},E + \frac{h}{n(E) L}]$ for all this section. Recall that in the Bulk regime, $E\in \R$ is fixed while in the Crossover regime $E=E(L) \to \infty$ as $L\to\infty$.\\

The goal of this section is to prove the exponential decay of the eigenfunctions stated in Proposition \ref{Prop:ExpoDecayCrossover}. We first compute the Lyapounov exponent associated to the diffusions in Subsection \ref{subsec:Lyapounov}, then we introduce in Subsection \ref{subsec:adjointandmain} the adjoint diffusions as they will naturally arise in the estimate of the exponential decay. Finally in Subsection \ref{Subsec:ProofExpoDecay} we present the proof of Proposition \ref{Prop:ExpoDecayCrossover}.

\subsection{Lyapounov exponent}\label{subsec:Lyapounov}

Recall the definition of the SDEs \eqref{SDEthetaE} and \eqref{SDErhoE}. Take $\rho_\gl^{(\Eg)}(0) := 0$. The Lyapounov exponent is usually defined as the almost sure limit of
$$\frac{\ln r_\gl^{(\Eg)}(t)}{t}\;,\quad t\to\infty\;.$$
This deterministic quantity is intimately related to the rate of exponential decay of the eigenfunctions. For technical convenience, we manipulate the almost sure limit of
$$ 2\frac{\ln r_\gl^{(\Eg)}(t)}{t} = \frac{\rho_\gl^{(\Eg)}(t)}{t} \;,\quad t\to\infty\;,$$
that we denote by $\nu_\gl^{(\Eg)}$. In other words, $\nu_\gl^{(\Eg)}$ is twice the Lyapounov exponent. Our next proposition gives an explicit expression of this exponent.

\begin{proposition}[Lyapounov exponent]\label{propo:Lyapounov}
The random variables $\zeta_\gl^{(\Eg)}$ and $\rho_\gl^{(\Eg)}(\zeta_\gl^{(\Eg)})$ are integrable, and we have
\begin{align*}
 \nu_\gl^{(\Eg)} = \frac{\E[\rho_\gl^{(\Eg)}(\zeta_\gl^{(\Eg)})]}{\E[\zeta_\gl^{(\Eg)}]}\;.
\end{align*}
Moreover for any $\gl\in\R$
\begin{equation}\label{Eq:nugl} \nu_\gl^{(\Eg)} = \Eg \; \frac{ \int_0^{+\infty} \sqrt{u} \exp(-2 \lambda u - \frac{u^3}{6} ) du}{\int_0^{\infty} \frac{1}{\sqrt{u}} \exp(-2 \lambda u - \frac{u^3}{6}) du} > 0\;.\end{equation}
\end{proposition}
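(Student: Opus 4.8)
The plan is to exploit the regeneration structure of the phase. The reduced diffusion $\{\theta_\gl^{(\Eg)}\}_\pi$ is autonomous (its SDE does not involve $\rho_\gl^{(\Eg)}$), and each time $\theta_\gl^{(\Eg)}$ completes a rotation it sits at $0$ modulo $\pi$ and, by the strong Markov property, restarts afresh. With $\rho_\gl^{(\Eg)}(0):=0$, set $\zeta^{(0)}:=0$ and let $\zeta^{(n)}$ be the time of the $n$-th rotation of $\theta_\gl^{(\Eg)}$, so that $\zeta^{(n)}-\zeta^{(n-1)}\stackrel{d}{=}\zeta_\gl^{(\Eg)}$ for every $n\ge 1$. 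First I would record integrability: $\E[\zeta_\gl^{(\Eg)}]=m_\gl^{(\Eg)}<\infty$ by \eqref{expressionmgl} (with the finer moment bounds of Appendix \ref{Subsec:Moments} if needed), while writing $\rho_\gl^{(\Eg)}(t)=\int_0^t b_\rho^{(\Eg)}(\theta_\gl^{(\Eg)}(s))\,ds + M_t$, where $b_\rho^{(\Eg)}$ is the ($\pi$-periodic, hence bounded) drift of \eqref{SDErhoE} and $M$ the martingale part with $d\langle M\rangle_t=\sin^2(2\theta_\gl^{(\Eg)})\,dt\le dt$, one gets $|\rho_\gl^{(\Eg)}(\zeta_\gl^{(\Eg)})|\le C\zeta_\gl^{(\Eg)}+|M_{\zeta_\gl^{(\Eg)}}|$ and $\E[M_{\zeta_\gl^{(\Eg)}}^2]=\E[\langle M\rangle_{\zeta_\gl^{(\Eg)}}]\le \E[\zeta_\gl^{(\Eg)}]<\infty$ by optional stopping; hence $\rho_\gl^{(\Eg)}(\zeta_\gl^{(\Eg)})\in L^2\subset L^1$.

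Next, the strong Markov property shows that the pairs $\big(\zeta^{(n)}-\zeta^{(n-1)},\,\rho_\gl^{(\Eg)}(\zeta^{(n)})-\rho_\gl^{(\Eg)}(\zeta^{(n-1)})\big)$, $n\ge 1$, are i.i.d.\ and distributed as $(\zeta_\gl^{(\Eg)},\rho_\gl^{(\Eg)}(\zeta_\gl^{(\Eg)}))$ (both the drift and the stochastic-integral contributions over a cycle are measurable functions of the post-$\zeta^{(n-1)}$ trajectory and Brownian increments, and $\rho$ does not feed back into $\theta$). The strong law of large numbers gives $\zeta^{(n)}/n\to\E[\zeta_\gl^{(\Eg)}]$ and $\rho_\gl^{(\Eg)}(\zeta^{(n)})/n\to\E[\rho_\gl^{(\Eg)}(\zeta_\gl^{(\Eg)})]$ a.s. To pass to continuous time, let $N_t:=\max\{n:\zeta^{(n)}\le t\}$, so that $N_t\to\infty$ and $N_t/t\to 1/\E[\zeta_\gl^{(\Eg)}]$ a.s.; moreover the cycle suprema $S_n:=\sup_{\zeta^{(n-1)}\le s\le\zeta^{(n)}}|\rho_\gl^{(\Eg)}(s)-\rho_\gl^{(\Eg)}(\zeta^{(n-1)})|$ are i.i.d.\ and integrable (same optional-stopping/BDG estimate), whence $S_n/n\to 0$ and therefore $\max_{k\le n}S_k/n\to 0$ a.s. Writing $\rho_\gl^{(\Eg)}(t)/t=\big(\rho_\gl^{(\Eg)}(\zeta^{(N_t)})/N_t+O(S_{N_t+1}/N_t)\big)\cdot(N_t/t)$ yields $\rho_\gl^{(\Eg)}(t)/t\to \E[\rho_\gl^{(\Eg)}(\zeta_\gl^{(\Eg)})]/\E[\zeta_\gl^{(\Eg)}]$ a.s., which is the asserted ratio formula for $\nu_\gl^{(\Eg)}$.

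For the explicit expression \eqref{Eq:nugl}, I would first convert the ratio into an average against the invariant measure: by optional stopping, $\E[\rho_\gl^{(\Eg)}(\zeta_\gl^{(\Eg)})]=\E\big[\int_0^{\zeta_\gl^{(\Eg)}}b_\rho^{(\Eg)}(\theta_\gl^{(\Eg)}(s))\,ds\big]$, and by the standard identity identifying the normalized expected occupation measure of $\{\theta_\gl^{(\Eg)}\}_\pi$ over one regeneration cycle with its invariant density $\mu_\gl^{(\Eg)}$ (Kac's formula for the invariant measure via return cycles, here to $\{\theta\}_\pi=0$), this equals $\E[\zeta_\gl^{(\Eg)}]\int_0^\pi b_\rho^{(\Eg)}(\theta)\mu_\gl^{(\Eg)}(\theta)\,d\theta$. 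Hence $\nu_\gl^{(\Eg)}=\int_0^\pi b_\rho^{(\Eg)}(\theta)\mu_\gl^{(\Eg)}(\theta)\,d\theta$, and it remains to insert the explicit integral expression of $\mu_\gl^{(\Eg)}$ derived in Subsection \ref{Appendix:integral} and carry out the resulting change of variables to the $u$-integral; the cubic term $u^3/6$ reflects the $\sin^2$-nonlinearity of the underlying Riccati dynamics, exactly as in the derivation of \eqref{expressionmgl} in \cite{AllezDumazTW}, and one checks directly that the denominator in \eqref{Eq:nugl} equals $\Eg\,m_\gl^{(\Eg)}/\sqrt{2\pi}$, consistent with $\nu_\gl^{(\Eg)}=\E[\rho_\gl^{(\Eg)}(\zeta_\gl^{(\Eg)})]/m_\gl^{(\Eg)}$. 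Positivity of $\nu_\gl^{(\Eg)}$ is then immediate since both $u$-integrals are strictly positive. The main obstacle is this last identification: since $b_\rho^{(\Eg)}$ is not pointwise nonnegative, neither the positivity nor the closed form can be read off directly, and matching the $\theta$-integral with the Airy-type $u$-integral is the one non-automatic computation — reusing the change of variables behind \eqref{expressionmgl} is the efficient route.
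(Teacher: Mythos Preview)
Your proposal is correct and follows essentially the same strategy as the paper: both use the i.i.d.\ regeneration structure and the law of large numbers to obtain the ratio formula, then express $\E[\rho_\gl^{(\Eg)}(\zeta_\gl^{(\Eg)})]$ as $m_\gl^{(\Eg)}\int_0^\pi b_\rho^{(\Eg)}(\theta)\mu_\gl^{(\Eg)}(\theta)\,d\theta$ via Kac's formula and feed in the explicit integral form of $\mu_\gl^{(\Eg)}$.

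The only substantive difference is in the last step, which you correctly flag as the non-automatic one but leave unexecuted. The paper carries it out as follows: after the change of variables $x=\cotan\theta$, the integral becomes $\int_\R\big(\tfrac{2x\sqrt\Eg(\Eg-\lambda)}{1+x^2}+\tfrac{1-x^2}{(1+x^2)^2}\big)f_\gl^{(\Eg)}(x)\,dx$; one then uses the identity $f_\gl'(x)=-2V_\gl'(x)f_\gl(x)+2$ to integrate by parts the piece $-\tfrac{2x}{1+x^2}V_\gl'(x)f_\gl(x)$, which cancels the $\tfrac{1-x^2}{(1+x^2)^2}$ term and reduces everything to $\Eg^{3/2}\int_\R 2x f_\gl(x)\,dx$ plus vanishing boundary terms. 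Finally, inserting the definition $f_\gl(x)=2e^{-2V_\gl(x)}\int_{-\infty}^x e^{2V_\gl(y)}dy$ and applying the change of variables $(u,v)=(x-y,x+y)$ factorizes the double integral into a Gaussian integral in $v$ and the desired $\sqrt u\,e^{-2\lambda u-u^3/6}$ integral in $u$. This is precisely the ``reuse of the change of variables behind \eqref{expressionmgl}'' you allude to; your outline is accurate, and positivity indeed follows immediately once the formula is in hand.
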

We have a remarkably simple expression for the quantity $\nu_\gl^{(\Eg)}$, from which we can deduce important properties. In particular, a crucial point for our proof is that this expression is \emph{positive} for all $\gl$ and $E$. The proof of this proposition follows from elementary (but not straightforward) computations on the SDEs, and is deferred to the Appendix \ref{Subsec:Lyapounov}.

\smallskip

It is easy to check that in both regimes, uniformly over all $\gl \in \Delta$ we have as $L\to\infty$
$$\nu_\gl^{(\Eg)}/\nu_E^{(\Eg)} \to 1\;.$$
Moreover in the Crossover regime we have
$$ \nu_E^{(E)} \to 1, \quad \mbox{as }E \to \infty\;.$$
As a consequence, we can safely approximate $\nu_\gl$ by $\nu_E$ in the Bulk regime, and $\nu_\gl^{(E)}$ by $1$ in the Crossover regime. A posteriori, this explains the definition of $\boldsymbol{\nu_{E}}$ in \eqref{def:nuEgras}.

Finally, let us mention that uniformly over all $\gl\in\Delta$ and all $L>1$
\begin{equation}\label{Eq:mgl}
m_\gl^{(\Eg)} \asymp \begin{cases}
1&\mbox{ in the Bulk regime}\;,\\
 \Eg^{-3/2}&\mbox{ in the Crossover regime}\;.
\end{cases}
\end{equation}

\subsection{Adjoint diffusions and the main growth estimate}\label{subsec:adjointandmain}

We introduce the processes $\bar\theta_\gl^{(\Eg)}$ and $\bar\rho_\gl^{(\Eg)}$ as the solutions of the following SDEs driven by a Brownian motion $\bar{B}^{(\Eg)}$
\begin{equation}\label{Eq:bartheta}\begin{split}
d\bar{\theta}_\gl^{(\Eg)}(t) &= \big(- \Eg^{3/2} - \sqrt \Eg (\gl-\Eg) \sin^2 \bar{\theta}_\gl^{(\Eg)} + 3 \sin^3(\bar{\theta}_\gl^{(\Eg)})\cos(\bar{\theta}_\gl^{(\Eg)}) +\sin^4 \bar{\theta}_\gl^{(\Eg)} \frac{\partial_\theta \mu_\gl^{(\Eg)}(\bar{\theta}_\gl^{(\Eg)})}{\mu_\gl^{(\Eg)}(\bar{\theta}_\gl^{(\Eg)})} \big)dt\\
&\quad- \sin^2 \bar{\theta}_\gl^{(\Eg)} d\bar{B}^{(\Eg)}(t)\;,
\end{split}\end{equation}
and
\begin{equation}\label{Eq:barrho}\begin{split}
d\bar\rho_\gl^{(\Eg)} &= \Big(\sqrt \Eg (\lambda -\Eg)\sin 2\bar{\theta}_\gl^{(\Eg)} + \frac12 \sin^2 2\bar{\theta}_\gl^{(\Eg)} + \sin^2 \bar{\theta}_\gl^{(\Eg)}\\
&\quad - 8 \sin^2 \bar\theta^{(\Eg)}_\gl \cos^2 \bar\theta^{(\Eg)}_\gl - 2 \sin^3 \bar\theta^{(\Eg)}_\gl \cos \bar\theta^{(\Eg)}_\gl \frac{\partial_\theta \mu_\gl^{(\Eg)}(\bar{\theta}_\gl^{(\Eg)})}{\mu_\gl^{(\Eg)}(\bar{\theta}_\gl^{(\Eg)})}\Big)dt\\
&\quad+ \sin 2\bar{\theta}_\gl^{(\Eg)} d\bar{B}^{(\Eg)}(t)\;.
\end{split}\end{equation}
By coherence with previous notations, we denote by $\bar{\P}_{(s,\theta_0)}$ the law of the process $\bar\theta_\gl^{(\Eg)}$ starting from $\theta_0$ at time $s$, and by $\bar{\P}_{(s,\theta_0)\to (t,\theta_1)}$ the law of the bridge from $(s,\theta_0)$ to $(t,\theta_1)$.

\smallskip

The process $\bar\theta_\gl^{(\Eg)}$ is the adjoint diffusion of $\theta_\gl^{(\Eg)}$ with respect to the invariant measure $\mu_\gl^{(\Eg)}$. In other words, its generator is the operator $\cLa$ which is the adjoint in $L^2(\mu_\gl^{(\Eg)})$ of the generator $\cL$ of $\theta_\gl^{(\Eg)}$ (see the beginning of Subsection \ref{Subsec:Hypocoercivity} for more details on the generators involved). Let us recall that this adjunction implies that the law of the process $\theta_\gl^{(\Eg)}$ starting from its invariant measure is the same as the law of the process $\bar{\theta}_\gl^{(\Eg)}$, read backward-in-time, and starting from its invariant measure. In mathematical terms: for any measurable set $A\subset \cC([0,t],\R)$, we have
$$ \int_{\theta_0=0}^\pi \mu_\gl^{(\Eg)}(\theta_0)\P_{(0,\theta_0)}(A)d\theta_0 = \int_{\theta_1=0}^\pi \mu_\gl^{(\Eg)}(\theta_1)\bar{\P}_{(0,\theta_1)}(\bar{A})d\theta_1\;,$$
where $\bar{A} := \{f: f(t-\cdot) \in A\}$ is the image of $A$ upon reversing time. Desintegrating this last expression, we further get
\begin{equation}\label{Eq:Adjoint}
\mu_\gl^{(\Eg)}(\theta_0) p_{\gl,t}^{(\Eg)}(\theta_0,\theta_1) \P_{(0,\theta_0) \to (t,\theta_1)}(A) = \mu_\gl^{(\Eg)}(\theta_1)\bar{p}_{\gl,t}^{(\Eg)}(\theta_1,\theta_0)\bar\P_{(0,\theta_1)\to (t,\theta_0)}(\bar{A})\;,
\end{equation}

\smallskip

Note that the process $\rho_\gl^{(\Eg)}$, resp.~$\bar{\rho}_\gl^{(\Eg)}$, is actually measurable w.r.t.~the process $\theta_\gl^{(\Eg)}$, resp.~$\bar{\theta}_\gl^{(\Eg)}$. (The stochastic term $\sin 2\theta_\gl^{(\Eg)} dB^{(\Eg)}$ can be expressed via It\^o's formula applied to $\ln \sin \theta^{(\Eg)}_\gl$). It can then be checked, using the adjunction relation above, that the process $(\rho_\gl^{(\Eg)}(s)-\rho_\gl^{(\Eg)}(0), s\in [0,t])$ starting from the invariant measure has the same law as the process $(\bar{\rho}_\gl^{(\Eg)}(t-s) - \bar{\rho}_\gl^{(\Eg)}(t), s\in [0,t])$ starting from the invariant measure.

\bigskip

The main estimate needed for the proof of Proposition \ref{Prop:ExpoDecayCrossover} is presented in the following lemma. From now on, we work simultaneously in the Bulk and the Crossover regimes.

\begin{lemma}[Linear growth of the diffusions $\rho_\gl^{(\Eg)}$ and $\overline{\rho}_\gl^{(\Eg)}$ ]\label{Lemma:Zgl}
Set $Z_\gl^{(\Eg)}(t) := \rho_\gl^{(\Eg)}(t) - \nu_\gl^{(\Eg)} t$ and $\bar{Z}_\gl^{(\Eg)}(t) := \bar{\rho}_\gl^{(\Eg)}(t) + \nu_\gl^{(\Eg)} t$. For any $\eps > 0$, there exists $q>0$ such that
$$ \sup_{L>1} \sup_{\gl \in \Delta} \sup_{\theta\in [0,\pi)} \E_{(0,\theta)}\Big[ \sup_{t\ge 0} e^{q |Z_\gl^{(\Eg)}(t)|} e^{-qt\eps}\Big] < \infty\;,$$
and
$$ \sup_{L>1} \sup_{\gl \in \Delta} \sup_{\theta\in [0,\pi)} \bar{\E}_{(0,\theta)}\Big[ \sup_{t\ge 0} e^{q |\bar{Z}_\gl^{(\Eg)}(t)|} e^{-qt\eps}\Big] < \infty\;,$$
\end{lemma}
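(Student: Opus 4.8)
The plan is to decompose $\rho_\gl^{(\Eg)}$ along the successive rotation times of $\theta_\gl^{(\Eg)}$ and to exploit the resulting renewal structure, $\nu_\gl^{(\Eg)}$ being precisely the constant that turns the increments of $\rho_\gl^{(\Eg)}$ over one rotation into a centered i.i.d.\ sequence. I will carry out the argument for $\rho_\gl^{(\Eg)}$; the bound for $\overline\rho_\gl^{(\Eg)}$ follows verbatim, working instead with the successive \emph{negative} rotation times of $\overline\theta_\gl^{(\Eg)}$ (its drift being $\approx -\Eg^{3/2}$, the process $\lfloor\overline\theta_\gl^{(\Eg)}\rfloor_\pi$ is non-increasing), using that the linear growth rate of $\overline\rho_\gl^{(\Eg)}$ equals $-\nu_\gl^{(\Eg)}$ (a consequence of the time-reversal identity \eqref{Eq:Adjoint} and Proposition \ref{propo:Lyapounov}), and observing that the drift in \eqref{Eq:barrho} is bounded uniformly over all parameters thanks to the bounds on $\mu_\gl^{(\Eg)}$ and $\partial_\theta\mu_\gl^{(\Eg)}$ of Lemma \ref{Lemma:InvMeas}.

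First I would set up the renewal structure. Starting from $\theta_\gl^{(\Eg)}(0)=\theta$, let $\tau_0:=0$ and $\tau_{k+1}:=\inf\{t>\tau_k:\theta_\gl^{(\Eg)}(t)=\theta_\gl^{(\Eg)}(\tau_k)+\pi\}$. Since $\{\theta_\gl^{(\Eg)}(\tau_k)\}_\pi=\{\theta\}_\pi$ for all $k$ and the law of a rotation time does not depend on the starting angle, the pairs $(\Delta_k,R_k):=(\tau_k-\tau_{k-1},\,\rho_\gl^{(\Eg)}(\tau_k)-\rho_\gl^{(\Eg)}(\tau_{k-1}))$, $k\ge1$, are i.i.d.\ with $\Delta_1\stackrel{d}{=}\zeta_\gl^{(\Eg)}$, and $\E[R_1]=\nu_\gl^{(\Eg)}\E[\Delta_1]$ by Proposition \ref{propo:Lyapounov} (the identity being valid from any starting angle since $\nu_\gl^{(\Eg)}$ is the a.s.\ growth rate of $\rho_\gl^{(\Eg)}(t)/t$, which does not depend on it). Hence $X_k:=R_k-\nu_\gl^{(\Eg)}\Delta_k$ are i.i.d.\ centered and $S_n:=\sum_{k\le n}X_k=\rho_\gl^{(\Eg)}(\tau_n)-\nu_\gl^{(\Eg)}\tau_n=Z_\gl^{(\Eg)}(\tau_n)$ is an $(\cF_{\tau_n})$-martingale. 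I would then record the uniform inputs: (i) by \eqref{Eq:nugl} and Proposition \ref{propo:Lyapounov}, $0<c\le\nu_\gl^{(\Eg)}\le C$; (ii) since $\sqrt\Eg(\gl-\Eg)\to0$ over $\gl\in\Delta$ in the Crossover regime (a direct estimate from $n(E)\sim1/(2\pi\sqrt E)$ and $E\ll L$) and is bounded in the Bulk regime, the drift $g_\rho^{(\Eg)}$ of $\rho_\gl^{(\Eg)}$ in \eqref{SDErhoE} satisfies $|g_\rho^{(\Eg)}|\le C$, so $\rho_\gl^{(\Eg)}(t)=\int_0^t g_\rho^{(\Eg)}(\theta_\gl^{(\Eg)}(s))ds+M(t)$ with $\langle M\rangle_t=\int_0^t\sin^2 2\theta_\gl^{(\Eg)}\le t$; (iii) Appendix \ref{Subsec:Moments} provides $q_0>0$ with $\sup\E[\exp(q_0\zeta_\gl^{(\Eg)}/m_\gl^{(\Eg)})]<\infty$. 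Combining (i)--(iii) with \eqref{Eq:mgl} (notably $m_\gl^{(\Eg)}\le C$), one checks that for $q>0$ small enough (uniformly): (a) $\E[X_1^2]\le Cm_\gl^{(\Eg)}$ and $\E[e^{\pm2qX_1}]\le e^{Cq^2 m_\gl^{(\Eg)}}$ (Taylor expansion, using a uniform exponential moment of $|X_1|/\sqrt{m_\gl^{(\Eg)}}$, itself a consequence of (ii)--(iii) via optional stopping and the Burkholder--Davis--Gundy inequality); (b) $\E[e^{-2q\eps\zeta_\gl^{(\Eg)}}]\le e^{-q\eps m_\gl^{(\Eg)}}$; (c) $\E[e^{s\zeta_\gl^{(\Eg)}}]\le e^{Csm_\gl^{(\Eg)}}$ for $s\le1/(Cm_\gl^{(\Eg)})$; (d) if $\langle M\rangle_{\sigma_2}-\langle M\rangle_{\sigma_1}\le\sigma_2-\sigma_1$ then $\E[\exp(2q\sup_{[\sigma_1,\sigma_2]}|M-M_{\sigma_1}|)\mid\sigma_2-\sigma_1]\le Ce^{Cq^2(\sigma_2-\sigma_1)}$ (Dambis--Dubins--Schwarz and the reflection principle).

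The core of the proof is then a two-scale summation. I would group the rotations into macroscopic blocks of $K:=\lceil1/m_\gl^{(\Eg)}\rceil$ rotations, the $n$-th block being $[\tau_{nK},\tau_{(n+1)K})$, so $Km_\gl^{(\Eg)}\in[1,2]$; set $T_n:=\tau_{(n+1)K}-\tau_{nK}=\sum_{j=1}^K\Delta_{nK+j}$ and $\widehat M_n:=\sup_{t\in[\tau_{nK},\tau_{(n+1)K})}|M(t)-M(\tau_{nK})|$. For $t$ in the $n$-th block,
$$Z_\gl^{(\Eg)}(t)=S_{nK}+\int_{\tau_{nK}}^t\big(g_\rho^{(\Eg)}(\theta_\gl^{(\Eg)}(s))-\nu_\gl^{(\Eg)}\big)ds+\big(M(t)-M(\tau_{nK})\big)\,,$$
whence $|Z_\gl^{(\Eg)}(t)|\le|S_{nK}|+C\,T_n+\widehat M_n$ and also $t\ge\tau_{nK}$ there, so that
$$\E_{(0,\theta)}\Big[\sup_{t\ge0}e^{q|Z_\gl^{(\Eg)}(t)|-q\eps t}\Big]\le\sum_{n\ge0}\E\big[e^{q(|S_{nK}|+CT_n+\widehat M_n)}e^{-q\eps\tau_{nK}}\big]\,.$$
By the strong Markov property $(T_n,\widehat M_n)$ is independent of $\cF_{\tau_{nK}}$, so the $n$-th term factorises as $\E[e^{q|S_{nK}|-q\eps\tau_{nK}}]\cdot\E[e^{q(CT_n+\widehat M_n)}]$. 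Cauchy--Schwarz together with (a), (b) and $Km_\gl^{(\Eg)}\asymp1$ bound the first factor by $\sqrt2\,e^{Cq^2 n}e^{-q\eps n/2}$, which after further shrinking $q$ so that $Cq^2\le q\eps/4$ is $\le\sqrt2\,e^{-q\eps n/4}$; Cauchy--Schwarz together with (c), (d) and $Km_\gl^{(\Eg)}\le2$ bound the second factor by a constant uniform in $n$ and in all parameters. Summing the geometric series yields the claim.

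I expect the only real difficulty to be the uniformity in the Crossover regime. There the phase $\theta_\gl^{(\Eg)}$ performs $\asymp1/m_\gl^{(\Eg)}\to\infty$ rotations per unit time, so a union bound over individual rotations would produce $\asymp1/m_\gl^{(\Eg)}$ terms, each of order $1$, and diverge; grouping into blocks of $\asymp1/m_\gl^{(\Eg)}$ rotations is exactly the device that fixes this, because the block length is then $\asymp1$ — which is what makes the per-block corrections $T_n$ and $\widehat M_n$ exponentially integrable with constants not depending on $E$, while keeping the number of blocks up to time $t$ of order $t$. All other constants are forced to be $E$-independent by the boundedness of $g_\rho^{(\Eg)}$ and $\nu_\gl^{(\Eg)}$ and by the rotation-time moment bounds of Appendix \ref{Subsec:Moments} with their correct $m_\gl^{(\Eg)}$-scaling; the adjoint case needs, in addition, only the bounds on $\mu_\gl^{(\Eg)}$ and $\partial_\theta\mu_\gl^{(\Eg)}$ of Lemma \ref{Lemma:InvMeas} to keep the drift of \eqref{Eq:barrho} bounded.
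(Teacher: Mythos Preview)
Your approach is essentially the paper's: both exploit the renewal structure of $\rho_\gl^{(\Eg)}$ along rotation times of $\theta_\gl^{(\Eg)}$, group rotations into blocks so that each block has duration of order~$1$ (the paper takes block size $n=\lfloor\Eg^{3/2}\rfloor$, which agrees with your $K=\lceil1/m_\gl^{(\Eg)}\rceil$ up to constants since $m_\gl^{(\Eg)}\asymp\Eg^{-3/2}$), and sum a geometric series over blocks. The paper works directly with block-level increments $G_k=Z_\gl(T_k)-Z_\gl(T_{k-1})$ and oscillations $Y_k=\sup_{[T_{k-1},T_k]}|Z_\gl-Z_\gl(T_{k-1})|$, shows each has a uniform exponential moment (the block duration being $\asymp1$), and invokes Lemma~\ref{Lemma:ExpoTaylor} at the block scale.

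One point to flag: your justification of (a) via ``a uniform exponential moment of $|X_1|/\sqrt{m_\gl^{(\Eg)}}$'' is not correct. The drift contribution to $X_1$ is bounded by $C\zeta_\gl^{(\Eg)}$, and $\zeta_\gl^{(\Eg)}$ has exponential moments only at scale $m_\gl^{(\Eg)}$ (Lemma~\ref{Lemma:Moments}); in the Crossover regime $\sqrt{m_\gl^{(\Eg)}}\gg m_\gl^{(\Eg)}$, so $\zeta_\gl^{(\Eg)}/\sqrt{m_\gl^{(\Eg)}}$ does \emph{not} have a uniform exponential moment. Your conclusion $\E[e^{\pm2qX_1}]\le e^{Cq^2m_\gl^{(\Eg)}}$ is nonetheless true, but via a different route: use $|e^x-1-x|\le\tfrac{x^2}{2}e^{|x|}$ together with $\E[X_1]=0$, Cauchy--Schwarz, the bound $\E[X_1^4]\le C(m_\gl^{(\Eg)})^2$ (from BDG and the moment bounds on $\zeta_\gl^{(\Eg)}$), and the uniform bound $\sup_{|q|\le q_0}\E[e^{2q|X_1|}]<\infty$. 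Alternatively---and this is what the paper does---work directly at the block level, where all relevant exponential moments are uniformly of order~$1$ and Lemma~\ref{Lemma:ExpoTaylor} applies with no $m_\gl^{(\Eg)}$-bookkeeping.
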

\begin{proof}
We drop the superscript $(\Eg)$ to alleviate notations. The proof is virtually the same for the two processes so we concentrate on $Z_\gl$. Let $n := \lfloor \Eg^{3/2} \rfloor$. Let $0 =:T_0 < T_1 <\ldots$ be the stopping times defined by
$$ T_k := \inf\{t>T_{k-1}: \theta_\gl(t) = \theta_\gl(T_{k-1})+n\pi\}\;.$$
The r.v.~$T_k$ is equal in law to the sum of $kn$ i.i.d.~r.v.~distributed as $\zeta_\gl$.
We then define the i.i.d.~r.v.
$$ G_k : = Z_\gl(T_k) - Z_\gl(T_{k-1})\;,\quad k\ge 1\;.$$

Decomposing $Z_\gl(T_k) - Z_{\gl}(T_{k-1})$ into the sum of the increments of $Z_\gl$ in between the successive hitting times of $\pi \Z$ by the phase $\theta_\gl$, we see that $\E[G_k] = 0$ by Proposition \ref{propo:Lyapounov}.
Introduce also
$$Y_k := \sup_{t\in [T_{k-1},T_{k}]} |Z_\gl(t)-Z_\gl(T_{k-1})|\;,\quad k\ge 1\;.$$
We have for any $q\ge 0$
\begin{align*}
\sup_{t\ge 0} e^{q {Z}_\gl(t)} e^{-qt\eps} &\le \sum_{k\ge 0} e^{q(G_1+\ldots+G_k)} e^{q Y_{k+1}} e^{-q T_k \eps}\\
\sup_{t\ge 0} e^{-q {Z}_\gl(t)} e^{-qt\eps} &\le \sum_{k\ge 0} e^{-q(G_1+\ldots+G_k)} e^{q Y_{k+1}} e^{-q T_k \eps}\;.
\end{align*}
By symmetry of the arguments, we only present the details on the bound of the first term. Using the Cauchy-Schwarz inequality twice and the fact that the $Y_k$'s, the $G_k$'s and the $(T_k-T_{k-1})$'s are i.i.d., we have for every $k\ge 0$
\begin{equation}\label{Eq:GYtau} \E[e^{q(G_1+\ldots+G_k)} e^{q Y_{k+1}} e^{-q T_k \eps}] \le \E[e^{4qG_1}]^{k/4} \E[e^{-4q T_1 \eps}]^{k/4} \E[e^{2q Y_1}]^{1/2}\;.\end{equation}
Assume that there exist $C_0 > 0$ and $q_0 > 0$ such that for all $L>1$, all $\lambda \in \Delta$ and all $q \in [-q_0,q_0]$ we have
$$ \E[e^{q Y_1}] < C_0\;,\quad \E[e^{q T_1}] < C_0\;.$$
Note that $|G_1| \le Y_1$ almost surely. Note also that $\E[T_1]=n m_\gl$ and recall $\E[G_1] = 0$. By Lemma \ref{Lemma:ExpoTaylor} we deduce that there exist $C_1,q_1 > 0$ such that the r.h.s.~of \eqref{Eq:GYtau} is bounded by
$$ (1+ 16C_1 q^2)^{k/4} (1-4q\eps n m_\gl + 16C_1 q^2 \eps^2)^{k/4}(1+ C_0 + C_1 q^2)^{1/2}\;,$$
for all $q \in (0,q_1)$. Recall from \eqref{Eq:mgl} that $m_\gl \asymp \Eg^{-3/2}$ so that $nm_\gl$ is bounded from below by a positive constant uniformly over all parameters. Recall also that $\eps$ is fixed. We deduce that by choosing $q$ small enough, the last term is bounded by $C \eta^k$ for some constants $C>0$ and $\eta \in (0,1)$. Summing this term over $k\ge 0$, we get the desired upper bound.

It remains to prove the exponential bounds on the non-negative r.v.~$Y_1$ and $T_1$. Regarding $T_1$, we use Lemma \ref{Lemma:Moments} to deduce that for $q > 0$ small enough we have
$$\E[e^{q T_1}] = \E[e^{q \zeta_1}]^n \le (\frac1{1-qm_\gl})^n=e^{-n\log(1-qm_\gl)}\;.$$
Since $nm_\gl$ is of order $1$, this suffices to conclude. We turn to $Y_1$. For any $x,T>0$ we have
$$ \P(Y_1 > x) \le \P(T_1 > T) + \P(\sup_{t\in [0,T]} |Z_\gl(t)| > x)\;.$$
The first term on the r.h.s~decays exponentially in $T$ by the exponential bound already obtained. The second term decays exponentially in $x^2/T$ by Lemma \ref{Lemma:Mgale} as soon as $x$ is large enough. Adjusting $T$ and $x$, we easily obtain an exponential decay in $x$ uniformly over all parameters, thus concluding the proof.
\end{proof}

\subsection{Proof of Proposition \ref{Prop:ExpoDecayCrossover}}\label{Subsec:ProofExpoDecay}

By analogy with $G_u$, let us introduce the observable:
$$ H_u(\gl,\varphi,\psi) := \frac{1}{(\int \varphi^2(s) ds)^{1/2}}\sup_{t\in [-\frac{L}{2\Eg},\frac{L}{2\Eg}]} \Big(\varphi^2(t) + \frac{\psi^2(t)}{\Eg^3}\Big)^{1/2} e^{\frac12(\nu_{\gl}^{(\Eg)} - \eps) |t-u|}\;.$$

Set $H = \inf_{u\in [-\frac{L}{2\Eg},\frac{L}{2\Eg}]} H_u$. By the GMP formula given in Proposition \ref{Prop:GMP}, it suffices to bound
\begin{align*}
&\sqrt \Eg \int_{u=-\frac{L}{2\Eg}}^{\frac{L}{2\Eg}} \int_{\gl \in\Delta} \int_{\theta=0}^\pi p_{\gl,u+\frac{L}{2\Eg}}^{(\Eg)}(\theta) p_{\gl,\frac{L}{2\Eg}-u}^{(\Eg)}(\pi-\theta)\sin^2 \theta\; \E^{(u)}_{\theta,\pi-\theta}\Big[\Big(H\big(\gl,\yu_\gl^{(\Eg)}, (\yu_\gl^{(\Eg)})'\big)\Big)^q\Big] \, d\theta d\gl du\\
\le&\sqrt \Eg \int_{u=-\frac{L}{2\Eg}}^{\frac{L}{2\Eg}} \int_{\gl \in\Delta} \int_{\theta=0}^\pi p_{\gl,u+\frac{L}{2\Eg}}^{(\Eg)}(\theta) p_{\gl,\frac{L}{2\Eg}-u}^{(\Eg)}(\pi-\theta)\sin^2 \theta \;\E^{(u)}_{\theta,\pi-\theta}\Big[\Big(H_u\big(\gl,\yu_\gl^{(\Eg)}, (\yu_\gl^{(\Eg)})'\big)\Big)^q\Big] \, d\theta d\gl du\;.
\end{align*}
Note that at the second line, we have bounded $H$ by $H_u$ where $u$ the concatenation time.

Under $\P^{(u)}_{\theta,\pi-\theta}$ we have
\begin{align}
 H_u(\gl,\yu_\gl^{(\Eg)}, (\yu_\gl^{(\Eg)})') = \|\yu_\gl^{(\Eg)}\|_2^{-1} \sup_{t\in [-\frac{L}{2\Eg},\frac{L}{2\Eg}]} \ru_\gl^{(\Eg)}(t) e^{\frac12(\nu_{\gl}^{(\Eg)} - \eps) |t-u|}\;. \label{def:Hu}
\end{align}
The Lebesgue measure of $\Delta$ is $2h/(n(E)L) \asymp \sqrt\Eg/L$. Consequently, it suffices to show that 
\begin{align}\label{Eq:GMPH_u}
 \int_{\theta=0}^\pi p_{\gl,u+\frac{L}{2\Eg}}^{(\Eg)}(\theta) p_{\gl,\frac{L}{2\Eg}-u}^{(\Eg)}(\pi-\theta)\sin^2 \theta \;\E^{(u)}_{\theta,\pi-\theta}\Big[\Big(H_u\big(\gl,\yu_\gl^{(\Eg)}, (\yu_\gl^{(\Eg)})'\big)\Big)^q\Big] \, d\theta 
\end{align}
is bounded by some constant uniformly over all $u\in [-L/(2\Eg),L/2\Eg]$ and all $\gl\in \Delta$.

\smallskip

By the Cauchy-Schwarz inequality and the bound $\sqrt{x} \le 1+x$ for any $x\ge 0$, we have
\begin{align*}
&\E^{(u)}_{\theta,\pi-\theta}\Big[\Big(H_u\big(\gl,\yu_\gl^{(\Eg)}, (\yu_\gl^{(\Eg)})'\big)\Big)^q\Big]\\
&\le \Big(1+\E^{(u)}_{\theta,\pi-\theta}\Big[\sup_{t\in [-\frac{L}{2\Eg},\frac{L}{2\Eg}]} \ru_\gl^{(\Eg)}(t)^{2q} \exp\big(q(\nu_{\gl}^{(\Eg)} - \eps) |t-u|\big)\Big]\Big) \,\E^{(u)}_{\theta,\pi-\theta}\Big[\|\yu_\gl^{(\Eg)}\|_2^{-2q}\Big]^{1/2}\;.
\end{align*}
By Theorem \ref{Th:CVDensities}, the term
$$\int_{\theta=0}^\pi p_{\gl,u+\frac{L}{2\Eg}}^{(\Eg)}(\theta) p_{\gl,\frac{L}{2\Eg}-u}^{(\Eg)}(\pi-\theta)\sin^2 \theta d\theta\;,$$
is bounded by some constant uniformly over all parameters. To conclude, it therefore suffices to prove that (for a different choice of $q$)
\begin{align}\label{Eq:IntExpoDecay}
\int_{0}^\pi p_{\gl,u+\frac{L}{2\Eg}}^{(\Eg)}(\theta) p_{\gl,\frac{L}{2\Eg}-u}^{(\Eg)}(\pi-\theta)\sin^2 \theta \;\E^{(u)}_{\theta,\pi-\theta}\Big[\sup_{t\in [-\frac{L}{2\Eg},\frac{L}{2\Eg}]} \ru_\gl^{(\Eg)}(t)^q \exp\big(\frac{q}{2}(\nu_{\gl}^{(\Eg)} - \eps) |t-u|\big)\Big] \, d\theta\,,
\end{align}
and
\begin{align}\label{Eq:MassExpoDecay}
\sup_{\theta} \E^{(u)}_{\theta,\pi-\theta}\Big[\|\yu_\gl^{(\Eg)}\|_2^{-q}\Big]\;,
\end{align}
are bounded by some constant uniformly over all $u$ and $\gl$.\\

We aim at applying the estimates of Lemma \ref{Lemma:Zgl}. The difficulty is twofold. First, the estimates in the lemma concern unconditioned processes while in \eqref{Eq:IntExpoDecay} we have (a concatenation of) conditioned processes. Second, in \eqref{Eq:IntExpoDecay} the process $\ru^{(\Eg)}_\gl$ is set to $1$ at $u$ so we have to be careful with this normalisation in the bounds.\\

To deal with conditioned processes, we use our estimates on the convergence to equilibrium of the diffusions to prove the following.
\begin{lemma}[Absolute continuity of the bridges]\label{Lemma:CondToUncond}
There exists $t_0 \ge 1$ and a constant $C>0$ such that for all $L$ large enough, for all $\gl \in \Delta$, $t>0$, $\theta\in [0,\pi)$ and for all events $A$ that depend on $\theta^{(\Eg)}_\gl(s), s\in [0,t]$ we have
$$ \sup_{\theta'}\P_{(0,\theta) \to (t+t_0,\theta')}(A) \le C\,\P_{(0,\theta)}(A)\;,$$
and
$$ \sup_{\theta'}\bar{\P}_{(0,\theta) \to (t+t_0,\theta')}(A) \le C\,\bar{\P}_{(0,\theta)}(A)\;.$$
In addition, as $t_0\to\infty$ we have
$$ \P_{(0,\theta) \to (t+t_0,\theta')}(A) = (1+o(1))\P_{(0,\theta)}(A)\;,\quad \bar{\P}_{(0,\theta) \to (t+t_0,\theta')}(A) = (1+o(1)) \bar{\P}_{(0,\theta)}(A)\;,$$
uniformly over all $\theta,\theta' \in [0,\pi]$ and all events $A$ as above.
\end{lemma}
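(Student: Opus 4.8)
The plan is to reduce everything to the Doob $h$-transform (bridge) decomposition of the diffusion and then to feed in the exponential convergence to equilibrium of Theorem~\ref{Th:CVDensities}. Recall the classical fact that, for a time-homogeneous Markov process with transition density $p_{\gl,\cdot}^{(\Eg)}$ (with respect to Lebesgue measure on the circle), the law $\P_{(0,\theta)\to(T,\theta')}$ of the bridge, restricted to the events depending only on $(\theta_\gl^{(\Eg)}(s))_{s\le t}$ with $t<T$, is absolutely continuous with respect to $\P_{(0,\theta)}$ restricted to the same events, with Radon--Nikodym density
\[
\frac{p_{\gl,T-t}^{(\Eg)}\big(\theta_\gl^{(\Eg)}(t),\theta'\big)}{p_{\gl,T}^{(\Eg)}(\theta,\theta')}\;.
\]
(Here an event depending on $\theta_\gl^{(\Eg)}(s)$, $s\le t$, is measurable with respect to the path of $\{\theta_\gl^{(\Eg)}\}_\pi$ on $[0,t]$ together with the fixed initial value of $\lfloor\theta_\gl^{(\Eg)}\rfloor_\pi$, so we may work with the circle-valued process and its transition density.) First I would specialize this with $T=t+t_0$, so that for any event $A$ as in the statement,
\[
\P_{(0,\theta)\to(t+t_0,\theta')}(A) = \E_{(0,\theta)}\Big[\un_A\,\frac{p_{\gl,t_0}^{(\Eg)}\big(\theta_\gl^{(\Eg)}(t),\theta'\big)}{p_{\gl,t+t_0}^{(\Eg)}(\theta,\theta')}\Big]\;.
\]

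Next I would estimate this likelihood ratio. Since $t_0\ge 1$ and $t+t_0\ge 1$, Theorem~\ref{Th:CVDensities} gives constants $c,C>0$, uniform over all parameters, with
\[
\big|p_{\gl,t_0}^{(\Eg)}(\cdot,\theta')-\mu_\gl^{(\Eg)}(\theta')\big|\le c\,e^{-Ct_0}\;,\qquad \big|p_{\gl,t+t_0}^{(\Eg)}(\theta,\theta')-\mu_\gl^{(\Eg)}(\theta')\big|\le c\,e^{-Ct_0}\;,
\]
while Lemma~\ref{Lemma:InvMeas} gives $0<c_0\le\mu_\gl^{(\Eg)}(\theta')\le C_0$ uniformly. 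Choosing $t_0$ large enough (but fixed, depending only on $c,C,c_0$) so that $c\,e^{-Ct_0}\le c_0/2$, the denominator is $\ge c_0/2$ and the numerator is $\le C_0+c_0/2$, whence the ratio is bounded by a universal constant $C$; integrating against $\un_A$ gives the first two inequalities, uniformly over $\gl\in\Delta$, $E>1$, $t>0$ and $\theta,\theta'\in[0,\pi)$. For the asymptotic statement I would instead let $t_0\to\infty$: numerator and denominator both equal $\mu_\gl^{(\Eg)}(\theta')+O(e^{-Ct_0})$, so the ratio equals $1+O(e^{-Ct_0})$ with an error that is uniform over all parameters \emph{and does not depend on $A$}; pulling it out of the expectation yields $\P_{(0,\theta)\to(t+t_0,\theta')}(A)=(1+o(1))\,\P_{(0,\theta)}(A)$ uniformly.

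Finally, for the adjoint diffusion I would run the identical argument, once I know that its transition density $\bar p_{\gl,\cdot}^{(\Eg)}$ also converges to $\mu_\gl^{(\Eg)}$ exponentially fast and uniformly. This is immediate from \eqref{Eq:Adjoint}: taking $A$ to be the whole space there gives $\mu_\gl^{(\Eg)}(\theta_0)\,p_{\gl,t}^{(\Eg)}(\theta_0,\theta_1)=\mu_\gl^{(\Eg)}(\theta_1)\,\bar p_{\gl,t}^{(\Eg)}(\theta_1,\theta_0)$, hence
\[
\big|\bar p_{\gl,t}^{(\Eg)}(\theta_1,\theta_0)-\mu_\gl^{(\Eg)}(\theta_0)\big| = \frac{\mu_\gl^{(\Eg)}(\theta_0)}{\mu_\gl^{(\Eg)}(\theta_1)}\,\big|p_{\gl,t}^{(\Eg)}(\theta_0,\theta_1)-\mu_\gl^{(\Eg)}(\theta_1)\big|\le \frac{C_0}{c_0}\,c\,e^{-Ct}\;,
\]
using Lemma~\ref{Lemma:InvMeas} and Theorem~\ref{Th:CVDensities}; the same $h$-transform computation as above, with $\bar p$ in place of $p$, then gives both statements for $\bar\P$. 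In this proof there is essentially no obstacle beyond invoking Theorem~\ref{Th:CVDensities}: the real work — and in particular the delicate uniformity over $E>1$ in the distorted coordinates — has already been carried out there, and the only point requiring a little attention is that $t_0$ must be fixed large enough to keep the denominator bounded away from $0$, which is legitimate precisely because the error term $c\,e^{-Ct_0}$ in Theorem~\ref{Th:CVDensities} is uniform in $t$ and in all other parameters.
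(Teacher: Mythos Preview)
Your proof is correct and follows essentially the same approach as the paper: both express the bridge probability via the Radon--Nikodym density $p_{\gl,t_0}^{(\Eg)}(\theta_\gl^{(\Eg)}(t),\theta')/p_{\gl,t+t_0}^{(\Eg)}(\theta,\theta')$ and then invoke Theorem~\ref{Th:CVDensities} together with Lemma~\ref{Lemma:InvMeas} to control numerator and denominator. Your treatment of the adjoint case, deriving the exponential convergence of $\bar p$ from \eqref{Eq:Adjoint}, is slightly more explicit than the paper's, which simply asserts that the argument is identical.
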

\begin{proof}
The proof is identical for $\theta_\gl^{(\Eg)}$ and $\bar{\theta}_\gl^{(\Eg)}$ so we restrict to the former. For any given $t>0$ and for any event $A$ that only depends on the trajectory of $\theta_\gl^{(\Eg)}(s)$ for $s\in [0,t]$, by the Markov property we have
\begin{align*}
\P_{(0,\theta) \to (t+t_0,\theta')}(A) &= \lim_{\delta \downarrow 0} \frac{\E_{(0,\theta)}\big[\un_A \P_{(0,\theta_\gl^{(\Eg)}(t))}(\theta_\gl^{(\Eg)}(t_0) \in [\theta'-\delta,\theta'+\delta])\big]}{\P_{(0,\theta)}(\theta_\gl^{(\Eg)}(t+t_0) \in [\theta'-\delta,\theta'+\delta])}\\
&= \lim_{\delta \downarrow 0}\frac{ \E_{(0,\theta)}\big[\un_A \Big( 1 + \frac{\P_{(0,\theta_\gl^{(\Eg)}(t))}(\theta_\gl^{(\Eg)}(t_0) \in [\theta'-\delta,\theta'+\delta])-\mu_\gl^{(\Eg)}(\theta')}{\mu_\gl^{(\Eg)}(\theta')}\Big)\big]}{\Big( 1 + \frac{\P_{(0,\theta)}(\theta_\gl^{(\Eg)}(t+t_0) \in [\theta'-\delta,\theta'+\delta])-\mu_\gl^{(\Eg)}(\theta')}{\mu_\gl^{(\Eg)}(\theta')}\Big)}\;.
\end{align*}
By Theorem \ref{Th:CVDensities} and Lemma \ref{Lemma:InvMeas} as $t\to\infty$ we have
\begin{equation}\label{Eq:BdDens} \sup_{\gl \in \Delta}\sup_{\theta_0,\theta\in [0,\pi]} \frac{|p_{\gl,t}^{(\Eg)}(\theta_0,\theta) - \mu_{\gl}^{(\Eg)}(\theta)|}{\mu_{\gl}^{(\Eg)}(\theta)} \to 0\;.\end{equation}
This suffices to conclude.
\end{proof}

Let us now bound \eqref{Eq:MassExpoDecay}. By symmetry, we restrict to $u\in [-\frac{L}{2\Eg},0]$. As $u+1 < L/(2\Eg)$, we write
\begin{align*}
\E^{(u)}_{\theta,\pi-\theta}[\|\yu_\gl^{(\Eg)}\|_2^{-q}] &\le \E^{(u)}_{\theta,\pi-\theta}\Big[\Big(\int_u^{u+1} (\yu_\gl^{(\Eg)}(t))^2 dt\Big)^{-q/2}\Big]\;.
\end{align*}
This expression only involves the backward diffusion. Using \eqref{Eq:Adjoint} this last term equals
$$ \frac{\mu_\gl^{(\Eg)}(\pi-\theta)\bar{p}_{\gl,\frac{L}{2\Eg} -u}^{(\Eg)}(\pi-\theta,0)}{\mu_\gl^{(\Eg)}(0){p}_{\gl,\frac{L}{2\Eg} -u}^{(\Eg)}(0,\pi-\theta)}\; \bar{\E}_{(0,\pi-\theta)\to (\frac{L}{2\Eg} -u,0)}\Big[\Big(\int_0^{1} (\bar{y}_\gl^{(\Eg)}(t))^2 dt\Big)^{-q/2}\Big]\;,$$
with $\bar{r}_\gl^{(\Eg)}(0)=1$ (recall that $\ru_\gl^{(\Eg)}(u)=1$). By Theorem \ref{Th:CVDensities} and Lemma \ref{Lemma:InvMeas}, the prefactor is bounded by a constant uniformly over all parameters. By Lemma \ref{Lemma:CondToUncond} the expectation can be bounded by a constant times
\begin{align*}
\bar{\E}_{\pi-\theta}\Big[\Big(\inf_{t\in [0,1]} \bar{r}_\gl^{(\Eg)}(t)\Big)^{-2q}\Big]^{1/2}\; \bar{\E}_{\pi-\theta}\Big[\Big(\int_0^{1} \sin^2 \bar{\theta}_\gl^{(\Eg)}(t) dt\Big)^{-q}\Big]^{1/2}\;.
\end{align*}
To bound the first term we apply Lemma \ref{Lemma:Zgl}. To bound the second term we use Lemma \ref{Lemma:ExpoBoundTheta}.\\

We turn to \eqref{Eq:IntExpoDecay}, which is more involved. Let us denote by $\tilde u := u+L/(2\Eg)$ the distance of $u$ to the left boundary of the interval and $\tilde v := -u + L/(2\Eg)$ the distance to the right boundary. By symmetry, it suffices to bound
\begin{align*}
&\int_{0}^\pi p_{\gl,\tilde u}^{(\Eg)}(\theta) p_{\gl,\tilde v}^{(\Eg)}(\pi-\theta)\sin^2 \theta \;\E^{(u)}_{\theta,\pi-\theta}\Big[\sup_{t\in [-\frac{L}{2\Eg},u]} \ru_\gl^{(\Eg)}(t)^q \exp\big(\frac{q}{2}(\nu_{\gl}^{(\Eg)} - \eps) |t-u|\big)\Big] \, d\theta\;.
\end{align*}
This expression only involves the forward diffusion. By shifting time appropriately, it rewrites
\begin{align*}
\int_{0}^\pi p_{\gl,\tilde u}^{(\Eg)}(\theta) p_{\gl,\tilde v}^{(\Eg)}(\pi-\theta)\sin^2 \theta \;\E_{(0,0)\to (\tilde{u},\theta)}\Big[\sup_{t\in [0,\tilde{u}]} \Big(\frac{r_\gl^{(\Eg)}(t)}{r_\gl^{(\Eg)}(\tilde{u})}\Big)^q \exp\big(\frac{q}{2}(\nu_{\gl}^{(\Eg)} - \eps) |t-\tilde{u}|\big)\Big] \, d\theta\;,
\end{align*}
with $r_\gl^{(\Eg)}(0)=1$. Let us take $t_0$ from Lemma \ref{Lemma:CondToUncond}. We distinguish two cases according to the relative values of $\tilde{u}$ and $3t_0$.

Assume $\tilde u \leq 3t_0$. Theorem \ref{Th:CVDensities} and Lemma \ref{Lemma:InvMeas} allow to bound $p_{\gl,\tilde v}^{(\Eg)}(\pi-\theta)$ by a constant. Bounding $\sin^2\theta$ by one, and integrating in $\theta$, we see that it suffices to bound:
\begin{align*}
\E_{(0,0)}\Big[\sup_{t\in [0,\tilde u]} \Big(\frac{r_\gl^{(\Eg)}(t)}{r_\gl^{(\Eg)}(\tilde u)}\Big)^q \exp\big(\frac{q}{2}(\nu_{\gl}^{(\Eg)} - \eps) |t-\tilde u|\big)\Big]\,,
\end{align*}
which is itself bounded thanks to Lemma \ref{Lemma:Zgl}.\\

Assume $\tilde u > 3t_0$. It suffices to bound (uniformly over all the parameters):
\begin{align}\label{Eq:BdMomentTildeu}
 \E_{(0,0) \to (\tilde u,\theta)}\Big[\sup_{t\in [0,\tilde u]} \Big(\frac{r_\gl^{(\Eg)}(t)}{r_\gl^{(\Eg)}(\tilde u)}e^{\frac12(\nu_\gl^{(\Eg)}-\eps)|t-\tilde u|}\Big)^q\Big]\,.
\end{align}
Indeed, either $\tilde v > 3t_0$ and then we bound $p_{\gl,\tilde u}^{(\Eg)}(\theta) p_{\gl,\tilde v}^{(\Eg)}(\pi-\theta)$ using Theorem \ref{Th:CVDensities} and Lemma \ref{Lemma:InvMeas}; or $\tilde v \leq 3 t_0$ and then we apply the same arguments to bound $p_{\gl,\tilde u}(\theta)$ while we integrate over $\theta$ the term $p_{\gl,\tilde v}(\pi - \theta)$. To bound \eqref{Eq:BdMomentTildeu}, we split the supremum into two parts:
\begin{itemize}
\item
For the supremum over $t\in [2t_0, \tilde u]$, we use the adjoint diffusion and the identity \eqref{Eq:Adjoint} so that it suffices to bound
$$ \bar\E_{(0,\theta) \to (\tilde u,0)}\Big[\sup_{t\in [0,\tilde u-2t_0]} \Big(\bar{r}_\gl^{(\Eg)}(t) e^{\frac12(\nu_\gl^{(\Eg)}-\eps)t}\Big)^q\Big]\;,$$
with $\bar{r}_\gl^{(\Eg)}(0)=1$. Using Lemma \ref{Lemma:CondToUncond}, the latter is bounded from above by a constant times
$$ \bar\E_{(0,\theta)}\Big[\sup_{t\in [0,\tilde u-2t_0]} \Big(\bar{r}_\gl^{(\Eg)}(t) e^{\frac12(\nu_\gl^{(\Eg)}-\eps)t}\Big)^q\Big]\;,$$
and we can then apply Lemma \ref{Lemma:Zgl}.\\
\item For the supremum over $t\in [0,2t_0]$, we write
\begin{align*}
\frac{r_\gl^{(\Eg)}(t)}{r_\gl^{(\Eg)}(\tilde u)}e^{\frac12(\nu_\gl^{(\Eg)}-\eps)(\tilde u-t)} = e^{\frac12(Z_\gl^{(\Eg)}(t) - Z_\gl^{(\Eg)}(2t_0) -\eps(2t_0-t))} e^{\frac12(Z_\gl^{(\Eg)}(2t_0) - Z_\gl^{(\Eg)}(\tilde u) -\eps(\tilde u-2t_0))}\;.
\end{align*}
Using the Cauchy-Schwarz inequality we thus find
\begin{align*}
&\E_{(0,0) \to (\tilde u,\theta)}\bigg[\sup_{t\in [0,2t_0]} \Big(\frac{r_\gl^{(\Eg)}(t)}{r_\gl^{(\Eg)}(\tilde u)}e^{\frac12(\nu_\gl^{(\Eg)}-\eps)|t-\tilde u|}\Big)^q\bigg]\\
\le\;&
\E_{(0,0) \to (\tilde u,\theta)}\Big[\sup_{t\in [0,2t_0]} e^{q(Z_\gl^{(\Eg)}(t) - Z_\gl^{(\Eg)}(2t_0) -\eps(2t_0-t))}\Big]^{1/2}\\
&\qquad \times\E_{(0,0) \to (\tilde u,\theta)}\Big[e^{q(Z_\gl^{(\Eg)}(2t_0) - Z_\gl^{(\Eg)}(\tilde u) -\eps(\tilde u-2t_0))}\Big]^{1/2}\;.
\end{align*}
The second expectation on the r.h.s.~can be bounded using the adjoint diffusion as above. By Lemma \ref{Lemma:CondToUncond}, the first expectation is bounded by a constant times
$$\E_{(0,0)}\Big[\sup_{t\in [0,2t_0]}e^{q(Z_\gl^{(\Eg)}(t) - Z_\gl^{(\Eg)}(2t_0) -\eps(2t_0-t))}\Big]\;,$$
which is itself bounded by a constant by Lemma \ref{Lemma:Zgl}.
\end{itemize}
This concludes the proof of Proposition \ref{Prop:ExpoDecayCrossover}.

\section{Fine estimates on the diffusion}\label{Sec:TwoPoints}

Set $\Delta := [E- h/(Ln(E)),E+h/(Ln(E))]$. In this section, we concentrate on the r.v.~$N_L(\Delta)$ and $N_L^{(j)}(\Delta)$ and establish Propositions \ref{Prop:Moment} and \ref{Prop:OneEigen}. To simplify the notation, we work on the interval $[0,L/\Eg]$ instead of $[-L/(2 \Eg),L/(2\Eg)]$ since our arguments will only rely on diffusions going forward in time. As a consequence, in this whole section, the diffusion $\theta_\gl^{(\Eg)}$ starts from $0$ at time $0$ and live on the interval of time $[0,L/\Eg]$.

\emph{Furthermore to alleviate the notations, we drop the superscript $(\Eg)$ from the diffusions although the proof is carried out simultaneously in both regimes.}

\subsection{A thorough study of a joint diffusion}\label{Subsec:OneEigen}

The whole discussion revolves around a thorough study of the joint diffusion $(\theta_\gl,\theta_\mu)$ where $[\gl,\mu] := \Delta$. Actually, it is more convenient to deal with $(\theta_\gl,\alpha)$ where
$$ \alpha(t) := \theta_\mu(t)-\theta_\gl(t)\;,\quad t\ge 0\;.$$
From the expression of the diffusions $\theta_\mu$ and $\theta_\gl$ in \eqref{SDEthetaE}, it is easy to see that $(\{\alpha\}_\pi, \{\theta_\gl\}_\pi)$ is a Markov process.
Moreover, the value of $\theta_\gl(0)$ being fixed, the process $\alpha$ remains non-negative and is stochastically non-decreasing w.r.t.~its initial condition, we will use these properties below without further mention.

Since we shifted the interval $[-L/(2\Eg),L/(2\Eg)]$ to $[0,L/\Eg]$, the number of eigenvalues of $\mathcal{H}_L$ in the interval $[\gl,\mu]$ is given by $N_L(\Delta) = \lfloor \theta_\mu(L/\Eg) \rfloor_\pi - \lfloor \theta_\gl(L/\Eg)\rfloor_\pi$. As this quantity is not very tractable, we instead look at $\lfloor \alpha(L/\Eg)\rfloor_\pi$ but we need to argue that it is a faithful approximation of the former.\\
Since $x = \lfloor x \rfloor_\pi \pi +\{x\}_\pi$, we have the identity
\begin{equation}\label{Eq:alphaN}
\alpha(L/\Eg) = N_L(\Delta)\pi + \{\theta_\mu(L/\Eg)\}_\pi - \{\theta_\gl(L/\Eg)\}_\pi\;,
\end{equation}
from which one deduces the following simple inequalities
\begin{equation}\label{Eq:ObviousBound}
N_L(\Delta) - 1 \le \lfloor {\alpha(L/\Eg)} \rfloor_\pi \le N_L(\Delta)\;.
\end{equation}

Recall that $\theta_\gl(0) = \theta_\mu(0) = 0$. Define the durations of the successive excursions of $\{\theta_\gl\}_\pi$ defined as
\begin{equation}\label{Eq:Defzetai}\zeta^{(0)} := 0,\quad \zeta^{(i)} := \inf\{t\ge 0: \theta_\gl(\zeta^{(1)} + \ldots + \zeta^{(i-1)}+t) = i\pi\} \mbox{ for }i \geq 1\;.\end{equation}
We introduce the analogous stopping times for $\alpha$, namely
$$\tau^{(0)} := 0,\quad \tau^{(i)} := \inf\{t\ge 0: \alpha(\tau^{(1)} + \ldots + \tau^{(i-1)}+t) = i\pi\}  \mbox{ for }i \geq 1\;.$$
At this point, let us observe that the $\zeta^{(i)}$ are typically of order $\Eg^{-3/2}$ since their expectation equals $m_\gl$. On the other hand, the $\tau^{(i)}$ should typically be of order $L/\Eg$ since the number of eigenvalues in $\Delta$ is of order $1$. This illustrates that the two processes $\theta_\gl$ and $\alpha$ do not evolve at all on the same time-scale. It is also important to note that the $\tau^{(i)}$'s are not i.i.d.~since they are coupled by the values of the diffusion $\theta_\gl$ at the times $\sum_{j=1}^{i-1} \tau^{(j)}$'s.\\
To circumvent this difficulty, we show that the $\tau^{(i)}$'s are stochastically larger than a sequence of i.i.d.~r.v.~whose law is \emph{almost} the law of $\tau^{(1)}$: this is the content of the next lemma, on which the rest of our arguments rely.
\begin{lemma}[Stochastic lower bound of the hitting times $\tau^{(i)}$]\label{Lemma:KeyCoupling}
Let $(\tilde{\tau}^{(i)},\tilde{\zeta}^{(i)})_{i\ge 1}$ be a sequence of i.i.d.~r.v.~such that each $(\tilde{\tau}^{(i)},\tilde{\zeta}^{(i)})$ has the law of $(\tau^{(1)},\zeta^{(1)})$. Then the sequence $(\tau^{(i)})_{i\ge 1}$ is stochastically larger than the sequence $((\tilde{\tau}^{(i)} - \tilde{\zeta}^{(i)})_+)_{i\ge 1}$, that is, for any $n\ge 1$ and any bounded and non-decreasing function $f:\R^n\to\R$, we have
$$ \E[f(\tau^{(1)},\ldots,\tau^{(n)})] \ge \E[f((\tilde{\tau}^{(1)} - \tilde{\zeta}^{(1)})_+,\ldots,(\tilde{\tau}^{(n)} - \tilde{\zeta}^{(n)})_+)]\;.$$
\end{lemma}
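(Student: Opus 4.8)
The plan is to reduce the lemma to a \emph{single-epoch} stochastic comparison and then assemble the global coupling epoch by epoch. Recall that both $\lfloor\theta_\gl\rfloor_\pi$ and $\lfloor\alpha\rfloor_\pi$ are non-decreasing in time, and write $S_i:=\tau^{(1)}+\dots+\tau^{(i)}$, so that at time $S_{i-1}$ one has $\alpha(S_{i-1})=(i-1)\pi$ and $\{\theta_\gl(S_{i-1})\}_\pi=:\Theta_{i-1}\in[0,\pi)$ (with $\Theta_0=0$). Since $(\{\alpha\}_\pi,\{\theta_\gl\}_\pi)$ is Markov, the strong Markov property at $S_{i-1}$ shows that, conditionally on $\cF_{S_{i-1}}$, the law of $\tau^{(i)}$ depends only on $\Theta_{i-1}$ and equals $\Lambda_{\Theta_{i-1}}$, where $\Lambda_{\theta_0}$ denotes the law of the first hitting time of $\pi$ by $\alpha$ when $(\alpha,\theta_\gl)$ starts from $(0,\theta_0)$; in particular $\Lambda_0$ is the law of $\tau^{(1)}$.

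The heart of the argument is the claim that \emph{for every $\theta_0\in[0,\pi)$ the law $\Lambda_{\theta_0}$ stochastically dominates the law of $(\tau^{(1)}-\zeta^{(1)})_+$}. I would prove this with an explicit coupling: run $(\alpha,\theta_\gl)$ from $(0,0)$ and let $\sigma$ be the first time $\theta_\gl$ reaches the level $\theta_0$. As $\theta_\gl$ is continuous, starts at $0$ and eventually hits $\pi$, we have $0\le\sigma<\zeta^{(1)}$ and $\{\theta_\gl(\sigma)\}_\pi=\theta_0$, $\alpha(\sigma)\ge0$. On the event $\{\tau^{(1)}>\sigma\}$ the strong Markov property at $\sigma$ identifies $\tau^{(1)}-\sigma$, conditionally, as the hitting time of $\pi$ by $\alpha$ started from $(\alpha(\sigma),\theta_0)$; since $\alpha(\sigma)\ge0$ and, with $\theta_\gl(0)$ held fixed at $\theta_0$, $\alpha$ is stochastically non-decreasing in its initial value, this hitting time is stochastically dominated by the one started from $(0,\theta_0)$, i.e.\ by $\Lambda_{\theta_0}$. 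On $\{\tau^{(1)}\le\sigma\}$ one has $(\tau^{(1)}-\sigma)_+=0$. Hence $(\tau^{(1)}-\sigma)_+$ is stochastically dominated by $\Lambda_{\theta_0}$, and because $\sigma<\zeta^{(1)}$ we also have $(\tau^{(1)}-\zeta^{(1)})_+\le(\tau^{(1)}-\sigma)_+$, which proves the claim. Note that the pair $(\tau^{(1)},\zeta^{(1)})$ is read off this same realisation, so the domination comes with a coupling in which the small variable $(\tau^{(1)}-\zeta^{(1)})_+$ sits below a draw from $\Lambda_{\theta_0}$ while carrying a copy of $(\tau^{(1)},\zeta^{(1)})$.

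Finally I would build the global coupling as follows. Run the joint diffusion honestly, producing $(\tau^{(i)},\Theta_i)_{i\ge1}$ with its true law (so $(\tau^{(1)},\dots,\tau^{(n)})$ is already correctly distributed). Using extra independent randomness, attach to step $i$ a pair $(\tilde\tau^{(i)},\tilde\zeta^{(i)})$ such that, conditionally on $(\Theta_{i-1},\tau^{(i)})$, the triple $(\tau^{(i)},\tilde\tau^{(i)},\tilde\zeta^{(i)})$ follows the coupling of $\Lambda_{\Theta_{i-1}}$ with the law of $(\tau^{(1)},\zeta^{(1)})$ supplied by the single-epoch step (a regular conditional distribution, measurable in $(\Theta_{i-1},\tau^{(i)})$; the underlying stochastic domination can be realised by a quantile coupling). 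By construction $\tau^{(i)}\ge(\tilde\tau^{(i)}-\tilde\zeta^{(i)})_+$ almost surely, each $(\tilde\tau^{(i)},\tilde\zeta^{(i)})$ has the law of $(\tau^{(1)},\zeta^{(1)})$ whatever $\Theta_{i-1}$ is, and a short induction along the natural filtration shows that the pairs $(\tilde\tau^{(i)},\tilde\zeta^{(i)})_{i\ge1}$ are i.i.d.; monotonicity of $f$ then yields the stated inequality. The main obstacle is exactly the dependence between the $\tau^{(i)}$'s, inherited from the varying starting position $\Theta_{i-1}$ of $\theta_\gl$ at the beginning of each epoch: the correction $-\zeta^{(i)}$ is designed to absorb the time needed to move $\theta_\gl$ from $0$ to that position, which is what makes the single-epoch comparison uniform over $\theta_0$.
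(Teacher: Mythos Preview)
Your proof is correct and follows essentially the same route as the paper: both reduce to the single-epoch claim that $\Lambda_{\theta_0}$ stochastically dominates the law of $(\tau^{(1)}-\zeta^{(1)})_+$, and both prove it via the hitting time of $\theta_0$ by $\theta_\gl$ combined with the monotonicity of $\alpha$ in its initial value. The only packaging difference is in the global step: the paper avoids building an explicit i.i.d.\ coupling by a short recursion on conditional expectations (integrate the last coordinate against $\nu_{X_{n-1}}$, replace by $\tilde\nu$, observe the resulting function of $n-1$ variables is again non-decreasing, iterate), which is somewhat lighter than your construction of the $(\tilde\tau^{(i)},\tilde\zeta^{(i)})$ via regular conditional distributions, though both arguments are standard and equivalent.
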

\begin{proof}
Define $X_0 = 0$ and for every $i\ge 1$
$$ X_{i}:= \{\theta_\gl(\tau^{(1)}+\ldots+\tau^{(i)})\}_\pi\;.$$
The Markov property implies that the conditional law of $\tau^{(i)}$ given $\cF_{\tau^{(1)} + \ldots +\tau^{(i-1)}}$ is $\nu_{X_{i-1}}$, where $\nu_x$ is the law of $\tau^{(1)}$ given $(\theta_\gl(0),\alpha(0)) = (x,0)$. For any bounded and non-decreasing function $f:\R^n\to\R$ we thus have
$$ \E\big[f(\tau^{(1)},\ldots,\tau^{(n)}) \,|\, \cF_{\tau^{(1)} + \ldots +\tau^{(n-1)}}\big] = F(\tau^{(1)},\ldots,\tau^{(n-1)}, X_{n-1})\;,$$
where
$$ F(v_1,\ldots,v_{n-1},x_{n-1}) = \int_y f(v_1,\ldots,v_{n-1},y) \nu_{x_{n-1}}(dy)\;.$$
Let $\tilde{\nu}$ be the law of $(\tilde{\tau}^{(1)} - \tilde{\zeta}^{(1)})_+$. We claim that for every $x\in [0,\pi)$, $\nu_{x}$ is larger\footnote{We say $\nu$ is larger than $\mu$ if for any non-decreasing and bounded function $f$ we have $\int f d\nu \ge \int f d\mu$.} than $\nu_x$. We deduce that
$$ F(v_1,\ldots,v_{n-1},x_{n-1}) \ge G(v_1,\ldots,v_{n-1}) := \int_y f(v_1,\ldots,v_{n-1},y) \tilde{\nu}(dy)\;.$$
Note that $G$ is bounded and non-decreasing so that a simple recursion yields
$$ \E\big[f(\tau^{(1)},\ldots,\tau^{(n)})] \ge \int f(y_1,\ldots,y_n) \tilde{\nu}(dy_1)\ldots \tilde{\nu}(dy_n)\;.$$
To prove the claim, we fix $x\in [0,\pi)$ and we consider the process $(\theta_\gl,\alpha)$ starting from $(x,0)$ and driven by the Brownian motion $B$: the law of the associated r.v.~$\tau^{(1)}$ is therefore $\nu_x$. We consider an independent Brownian motion $\tilde{B}$ and we build a diffusion $(\tilde{\theta}_\gl,\tilde{\alpha})$ starting from $(0,0)$ as follows: up to the stopping time $S_x:= \inf\{t\ge 0: \tilde{\theta}_\gl(t) = x\}$, the diffusion $(\tilde{\theta}_\gl,\tilde{\alpha})$ is driven by $\tilde{B}$; at any time $t\ge S_x$, the diffusion is driven by $B(t-S_x)$. Consequently $\tilde{\theta}_\gl(S_x+t) = \theta_\gl(t)$ for all $t\ge 0$. By monotonicity property of $\alpha$, we have almost surely
$$(\tilde{\tau}^{(1)} - S_\pi)_+ \le (\tilde{\tau}^{(1)} - S_x)_+ \le {\tau}^{(1)}\;.$$
Since $S_\pi=\tilde{\zeta}^{(1)}$, we deduce that $(\tilde{\tau}^{(1)} - \tilde{\zeta}^{(1)})_+$ is stochastically lower than $\tau^{(1)}$ (see Figure \ref{Fig:coupling} for an illustration of this coupling). This concludes the proof.
\end{proof}
\begin{figure}
\includegraphics[width=7cm]{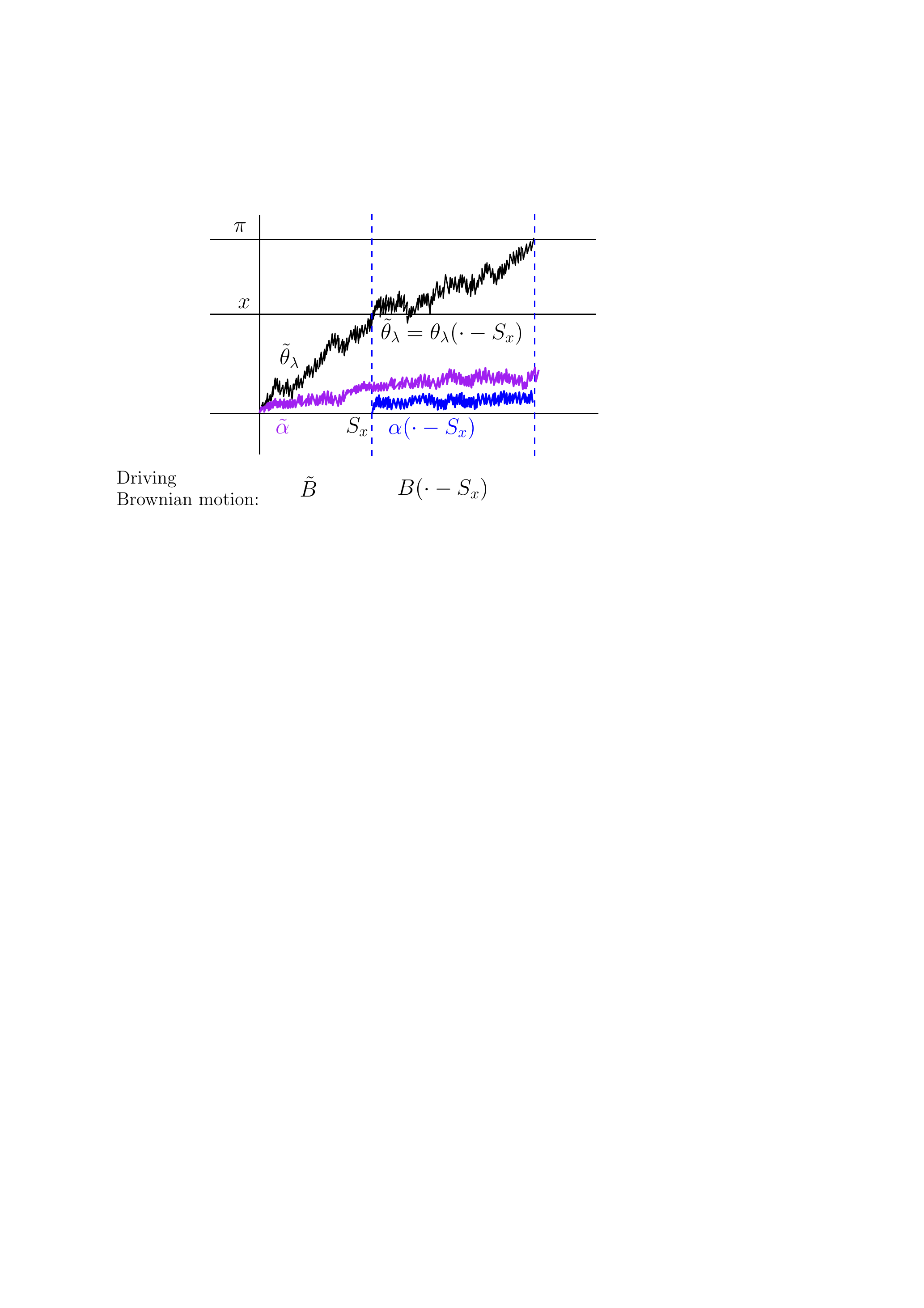}
\caption{Coupling of $(\theta_\gl,\alpha)$ and $(\tilde\theta_\gl,\tilde\alpha)$.}\label{Fig:coupling}
\end{figure}

We can now provide the proof of Proposition \ref{Prop:Moment}.
\begin{proof}[Proof of Proposition \ref{Prop:Moment}]
We start with the bound of $N_L(\Delta)^2$. Without loss of generality, we can assume that $h$ is small. Indeed, since $N_L(\Delta\cup \Delta') = N_L(\Delta) + N_L(\Delta')$ for any two disjoint intervals $\Delta$ and $\Delta'$, the bound of the second moment for a small interval propagates to larger intervals.\\
By \eqref{Eq:ObviousBound}, it suffices to prove that
$$ \sup_{L > 1} \E\big[ \lfloor \alpha(L/\Eg) \rfloor_\pi^2\big] < \infty\;.$$
By Lemma \ref{Lemma:KeyCoupling}, we have
\begin{align*}
\P(\tau_1+\ldots+\tau_n \le L/\Eg) &\le \P\big(\sum_{i=1}^n (\tilde{\tau}^{(i)} - \tilde{\zeta}^{(i)})_+ \le L/\Eg\big) \le \P\big((\tilde{\tau}^{(1)} - \tilde{\zeta}^{(1)})_+ \le L/\Eg\big)^n \\
&\le \big(\P(\tilde{\tau}^{(1)} \le 2L/\Eg) + \P(\tilde{\zeta}^{(1)} > L/\Eg)\big)^n\;.
\end{align*}
Observe that
$$ \P(\tilde{\tau}^{(1)} \le 2L/\Eg) = \P(\lfloor {\alpha(2L/\Eg)} \rfloor_\pi \ge 1) \le \P(N_{2L}(\Delta) \ge 1) \le \E[N_{2L}(\Delta)]\;,$$
which, by Proposition \ref{Prop:MicroEstimates} and provided that $h$ is small enough, is smaller than $1/4$ for all $L$ large enough. Furthermore, recall that $m_\gl \asymp \Eg^{-3/2}$, and by Markov's inequality
$$ \P( \tilde{\zeta}^{(1)} > L/\Eg) \le m_\gl  \frac{\Eg}{L}\;,$$
which is also smaller than $1/4$ for all $L$ large enough. As a consequence
$$ \E\big[ \lfloor {\alpha(L/\Eg)}\rfloor_\pi^2\big] = \sum_{n\ge 1} (2n-1)\P(\tau_1+\ldots+\tau_n \le L/\Eg)\;,$$
is bounded uniformly over all $L$ large enough.

\smallskip

Regarding the bound of $N_L^{(j)}(\Delta)^2$, let us set $L'=L/(k\Eg)$. One needs to show that
$$ \sup_{L > 1} k\E\big[ \lfloor \alpha(L') \rfloor_\pi^2\big] < \infty\;.$$
Repeating the same steps, we see that
$$ \P(\tilde{\tau}^{(1)} \le 2L') \le \E[N_{2L}^{(j)}(\Delta)]\;,$$
which, by Proposition \ref{Prop:MicroEstimates} and provided that $h$ is small enough, is bounded by $1/(4k)$. Moreover
$$ \P( \tilde{\zeta}^{(1)} > L') \le m_\gl  / L'\;,$$
which is smaller than $1/(4k)$ provided\footnote{This is a restriction on the speed at which $k$ can go to $\infty$.} $k^2 \ll L\sqrt\Eg$. This suffices to conclude.
\end{proof}

Set $L' = L/(k\Eg)$. Before we present the proof of Proposition \ref{Prop:OneEigen}, we need a last estimate whose proof is involved and therefore postponed to the next subsection.
\begin{lemma}[Key bound on $\alpha$ and $\theta$]\label{Lemma:Eventalphatheta}
As $L\to\infty$ we have $k \P(\{\alpha(L')\}_\pi + \{\theta_\gl(L')\}_\pi \ge \pi) \to 0$.
\end{lemma}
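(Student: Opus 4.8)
The plan is to start from a reformulation of the event. Since $\theta_\mu=\theta_\gl+\alpha$ and $\theta_\mu(L')-\theta_\gl(L')=\alpha(L')\ge 0$, a direct computation with fractional parts (writing $\theta_\gl(L')=n+\beta$, $\alpha(L')=m\pi+\gamma$ with $\beta,\gamma\in[0,\pi)$) shows that
$$\{\alpha(L')\}_\pi+\{\theta_\gl(L')\}_\pi\ge\pi\quad\Longleftrightarrow\quad\{\theta_\mu(L')\}_\pi<\{\theta_\gl(L')\}_\pi\,,$$
so the task is to prove $k\,\P\big(\{\theta_\mu(L')\}_\pi<\{\theta_\gl(L')\}_\pi\big)\to 0$. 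Since the energy gap $\mu-\gl=2h/(Ln(E))$ is microscopic, $\theta_\mu$ rotates only marginally faster than $\theta_\gl$ over $[0,L']$, so that $\theta_\mu(L')$ sits just above $\theta_\gl(L')$; the event is the rare configuration in which, \emph{modulo} $\pi$, $\theta_\mu(L')$ has nevertheless slipped below $\theta_\gl(L')$, i.e.\ $\alpha$ has just ``wrapped'' at the right endpoint.

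Next I would condition at time $s:=L'-t_0$, where $t_0=t_0(L)\to\infty$ is chosen slowly enough that $t_0\ll L'$ yet fast enough (e.g.\ $t_0\asymp\log L$) that $e^{-Ct_0}=o(1/k)$ with $C$ the rate of Theorem~\ref{Th:CVDensities}; this is possible because $k\to\infty$ more slowly than any power of $L$. Conditionally on $\cF_s$, Theorem~\ref{Th:CVDensities} together with Lemma~\ref{Lemma:InvMeas} gives that the law of $\{\theta_\gl(L')\}_\pi$ is within $o(1/k)$ in total variation of $\mu_\gl$, whose density is bounded by a universal constant. Hence on the event that $\alpha$ increases by at most $\delta>0$ over $[s,L']$, the conditional probability that $\{\theta_\gl(L')\}_\pi$ falls in the (wrapped) window of width $\{\alpha(L')\}_\pi\le\{\alpha(s)\}_\pi+\delta$ required for the event is $\lesssim\{\alpha(s)\}_\pi+\delta+o(1/k)$. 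It then remains to bound (i) $\E[\{\alpha(s)\}_\pi]$ and (ii) $\P(\alpha(L')-\alpha(s)>\delta)$, together with the $\cF_s$-events on which the increment of $\lfloor\alpha\rfloor_\pi$ is $\ge1$ — the last being controlled by Lemma~\ref{Lemma:KeyCoupling} exactly as in the proof of Proposition~\ref{Prop:Moment}, since then $\lfloor\alpha(L')\rfloor_\pi\ge1$ which already forces $N_L^{(1)}(\Delta)\ge 2$.

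For (ii) I would use the integral representation \eqref{Eq:zgl}: $\alpha(L')-\alpha(s)\le|\Delta|\sup_{\gl\in\Delta}(z_\gl(L')-z_\gl(s))_+$, and split $z_\gl(L')=r_\gl(L')^{-2}\int_0^{L'}\sqrt\Eg\sin^2\theta_\gl\, r_\gl^2$ at time $s$, bounding $r_\gl$ through the linear-growth estimates of Lemma~\ref{Lemma:Zgl} — forward on $[0,s]$ and, for the last stretch, via time reversal and the adjoint diffusion exactly as in Section~\ref{Sec:ExpoDecay}. This yields a tail of the form $\P(\alpha(L')-\alpha(s)>\delta)\lesssim (C/(k\delta))^{q}$ for some $q>0$; in combination with the $o(1/k)$ target this imposes a further mild upper restriction on the growth rate of $k$, to be added to the ones already collected in Section~\ref{Sec:TwoPoints}. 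For (i), the key point is that the $\{\alpha\}_\pi$-marginal of the joint diffusion concentrates near $0$ at the large time $s$: the typical size of $\alpha(s)$ is $\asymp|\Delta|/\sqrt\Eg\asymp1/L=o(1/k)$, because $z_\gl(s)\asymp\sqrt\Eg/\nu_\gl$ unless $\rho_\gl$ has had an anomalous dip near time $s$, and the contribution of those rare dips to $\E[\{\alpha(s)\}_\pi]$ is again governed by the same Lemma~\ref{Lemma:Zgl}/time-reversal tail bound. Assembling all pieces gives the claim.

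The main obstacle is precisely the interplay in (i)–(ii): a soft estimate yields only $\P(\text{event})=O(1/k)=O\big(\P(N_L^{(1)}(\Delta)\ge1)\big)$, whereas we need the strictly smaller $o(1/k)$. Extracting the extra vanishing factor forces one to genuinely \emph{decouple}, using the quantitative convergence to equilibrium of Theorem~\ref{Th:CVDensities}, the near-uniform terminal phase $\{\theta_\gl(L')\}_\pi$ from the rare large excursions of the Prüfer amplitude that make $\alpha$ large, and to control those excursions \emph{uniformly in $E$} by the adjoint/time-reversal argument. This is why the lemma is not a formal corollary of the Wegner estimate and is the delicate technical step deferred to the next subsection.
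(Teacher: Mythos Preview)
Your reformulation of the event and the idea of conditioning at a slightly earlier time $s=L'-t_0$ to exploit the equilibrium convergence of $\{\theta_\gl\}_\pi$ are natural, but the argument has a genuine gap that you yourself partly anticipate in your last paragraph and do not resolve.

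The bound you obtain is essentially
\[
k\,\P(A)\;\lesssim\; k\,\E[\{\alpha(s)\}_\pi] + k\delta + k\,\P(\text{increment}>\delta) + k\,\P(\text{wrap on }[s,L']) + o(1)\,,
\]
and the crux is the first term. Your claim that $\E[\{\alpha(s)\}_\pi]=o(1/k)$ because ``the typical size of $\alpha(s)$ is $\asymp 1/L$'' only captures the contribution of $\{\alpha(s)<\pi\}$; on the complementary event $\{\alpha(s)\ge\pi\}$ (which by the Wegner estimate has probability $\asymp 1/k$, \emph{not} $o(1/k)$), the fractional part $\{\alpha(s)\}_\pi$ can be anywhere in $[0,\pi)$, contributing a term of size $\cO(1/k)$ to the expectation. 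To kill this you would need to show that $\{\alpha(s)\}_\pi$ is small even \emph{after} a rotation --- but that is exactly the content of the lemma at time $s$ instead of $L'$. Your attempt to dispose of the rotation case by saying it ``forces $N_L^{(1)}(\Delta)\ge 2$'' and invoking Lemma~\ref{Lemma:KeyCoupling} is circular: $\lfloor\alpha(L')\rfloor_\pi\ge 1$ together with $A$ indeed gives $N_L^{(1)}(\Delta)\ge 2$, but the Minami bound $k\,\P(N_L^{(1)}(\Delta)\ge 2)\to 0$ (Proposition~\ref{Prop:OneEigen}) is precisely what Lemma~\ref{Lemma:Eventalphatheta} is being used to prove. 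What Lemma~\ref{Lemma:KeyCoupling} gives \emph{independently} is only $k\,\P(\lfloor\alpha(L')\rfloor_\pi\ge 2)\to 0$, which does not cover the case $\lfloor\alpha(L')\rfloor_\pi=1$.

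The paper takes a completely different route that avoids this circularity. It first disposes of the easy piece $k\,\P(\{\theta_\gl(L')\}_\pi>\pi-3\epsilon_L)\to 0$ via the invariant measure, and then proves the hard estimate $k\,\P(\{\alpha(L')\}_\pi>\tfrac52\epsilon_L)\to 0$ by introducing $R:=\log\tan(\{\alpha\}_\pi/2)$ and analysing it as a (quasi-)diffusion in a potential with a deep well near $-\ln L'$, an unstable point near $0$, and an exit at $+\infty$. A three-regime argument (Lemmas~\ref{Lemma:Small}--\ref{Lemma:Large}) shows that from \emph{any} starting point --- in particular, from the state right after a rotation --- $R$ reaches the well within time $\cO(\ln L')$ with probability $1-o(1/k)$. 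This uniform-in-initial-condition control is precisely what your conditioning approach cannot supply without already assuming what is to be proved.
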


\begin{proof}[Proof of Proposition \ref{Prop:OneEigen}]
Identities \eqref{Eq:alphaN} and \eqref{Eq:ObviousBound} remain true upon replacing $N_L(\Delta)$ by $N_L^{(1)}(\Delta)$ and the time $L/\Eg$, at which the processes are evaluated, by $L'$. 

Inequalities  \eqref{Eq:ObviousBound} imply that $N^{(1)}_{L}(\Delta) =  \lfloor {\alpha(L')} \rfloor_\pi$ or $N^{(1)}_L(\Delta) =  \lfloor {\alpha(L')} \rfloor_\pi + 1$. Therefore, if $N^{(1)}_L(\Delta) \ge 2$, then necessarily one of the two events $\{\lfloor {\alpha(L')} \rfloor_\pi \ge 2\}$ and $\{N^{(1)}_L(\Delta) \neq \lfloor {\alpha(L')} \rfloor_\pi\}$ must be satisfied. We will control the probabilities of the two events. For the last event, note that  \eqref{Eq:alphaN} implies:
$$ N_L^{(1)}(\Delta)  = \lfloor {\alpha(L')}\rfloor_\pi \Leftrightarrow \{\theta_\mu(L')\}_\pi - \{\theta_\gl(L')\}_\pi \ge 0\;.$$
Since we have
$$ \{\alpha(L')\}_\pi = \begin{cases} \{\theta_\mu(L')\}_\pi - \{\theta_\gl(L')\}_\pi\quad&\mbox{ if }\{\theta_\mu(L')\}_\pi - \{\theta_\gl(L')\}_\pi \ge 0\;,\\
\pi + \{\theta_\mu(L')\}_\pi - \{\theta_\gl(L')\}_\pi \quad&\mbox{ if } \{\theta_\mu(L')\}_\pi - \{\theta_\gl(L')\}_\pi < 0\;.
\end{cases}$$
we deduce that
$$ N_L^{(1)}(\Delta)  = \lfloor {\alpha(L')} \rfloor_\pi \Leftrightarrow \{\alpha(L')\}_\pi + \{\theta_\gl(L')\}_\pi < \pi\;.$$
Consequently
\begin{align*}
k\P(N_L^{(1)}(\Delta) \ge 2) &\le k\P(\lfloor {\alpha(L')} \rfloor_\pi \ge 2) + k\P(\{\alpha(L')\}_\pi + \{\theta_\gl(L')\}_\pi \ge \pi)\;.
\end{align*}
By Lemma \ref{Lemma:Eventalphatheta} the second term on the r.h.s.~goes to $0$ as $L\to\infty$. Regarding the first term, by Lemma \ref{Lemma:KeyCoupling} we have
\begin{align*}
\P\big(\lfloor {\alpha(L')} \rfloor_\pi \ge 2\big) &= \P(\tau^{(1)}+\tau^{(2)} \le L') \le \P((\tilde{\tau}^{(1)}-\tilde{\zeta}^{(1)})_+ +(\tilde{\tau}^{(2)}-\tilde{\zeta}^{(2)})_+ \le L')\\
&\le (\P((\tilde{\tau}^{(1)}-\tilde{\zeta}^{(1)})_+ \le L'))^2\\
&\le (\P(\tilde{\tau}^{(1)} \le 2L')) + \P(\tilde{\zeta}^{(1)} \ge L'))^2\;.
\end{align*}
Applying the same arguments as in the previous proof, we get
$$ k\P\big(\lfloor {\alpha(L')} \rfloor_\pi \ge 2\big) \le k\Big(\frac{m_\gl}{L'} + \frac{5h}{k}\Big)^2\;,$$
which goes to $0$ since $m_\gl \asymp \Eg^{-3/2}$ and provided\footnote{This is another restriction on the speed at which $k$ can go to $\infty$.} $k^3 \ll L^2 \Eg$.
\end{proof}

\subsection{Proof of Lemma \ref{Lemma:Eventalphatheta}}

From now on, we write $\alpha$ for $\{\alpha\}_\pi$ and $\theta_\gl$ for $\{\theta_\gl\}_\pi$. Recall that we drop the superscript $(\Eg)$ although the proof is carried out simultaneously in both regimes. For simplicity we define $L' := L/(k\Eg)$.\\
We set $\epsilon_L := 1/\ln\ln\ln L'$ in the Bulk regime and $\epsilon_L := 1/(\ln\ln\ln L' \wedge \ln E)$ in the Crossover regime. We note that this choice is relatively arbitrary. From the estimates on the invariant measure stated in Lemma \ref{Lemma:InvMeas} and the convergence of the densities of Theorem \ref{Th:CVDensities}, there exist two constant $c,C>0$ such that
$$ \P(\theta_\gl(L') > \pi-3\epsilon_L) \le c(\epsilon_L + e^{-CL'})\;,$$
for all $L$ large enough and therefore $k\P(\theta_\gl(L') > \pi-3\epsilon_L)$ goes to $0$ as $L\to\infty$ (provided $k$ goes to $\infty$ slowly enough). To establish the lemma it suffices to show that
\begin{equation}\label{Eq:alphaToprove}
\lim_{L\to\infty} k\P(\alpha(L') > \frac52 \epsilon_L) = 0\;.
\end{equation}
Recall that $(\theta_\gl,\alpha)$ is markovian. If we can show that for some $C>0$
\begin{equation}\label{Eq:alphaToProve2} \lim_{L\to\infty} k \sup_{\theta_0, \alpha_0 \in [0,\pi)} \P(\alpha(C \ln L') > \frac52 \epsilon_L \,|\, \theta_\gl(0) = \theta_0, \alpha(0) = \alpha_0) = 0\;,\end{equation}
then the Markov property applied at time $L'-C\ln L'$ yields \eqref{Eq:alphaToprove}. The order of magnitude $\ln L'$ will be justified by the discussion below.\\

The proof of this convergence relies on a thorough study of the process
$$ R := \log \tan (\alpha/2) \in [-\infty,\infty)\;,$$
which happens to behave very much like a diffusion in $\R$ within a potential represented on Figure \ref{Fig:R} (note that this potential is similar to the one studied in \cite{AllezDumazSine} for the small $\beta$ limit of the Sine$_\beta$ process). The main features of this potential are: $-\infty$ is an entrance point while $+\infty$ is an exit point ; the potential admits a well centered at $-\ln L'$ and an unstable equilibrium point near $0$ ; the drift generated by this potential is (roughly) a negative constant, resp.~a positive constant, on $(-\ln L', 0)$, resp.~on $(0,\ln L')$.\\
Actually, the process $\alpha$ (and therefore the process $R$) is not markovian as its evolution depends on the Markov process $\theta_\gl$: however, $\theta_\gl$ evolves at a smaller time-scale than $R$ so that after ``averaging'' over the evolution of $\theta_\gl$, the process $R$ can be seen as a diffusion. One difficulty in our proofs will then consist in controlling the error made upon this replacement.\\

To alleviate the notation, we write $\P_{\theta_0,R_0}$ for the law of $(\theta_\gl,R)$ starting from $(\theta_0,R_0)$ at time $0$. The convergence \eqref{Eq:alphaToProve2} is implied by
\begin{equation}\label{Eq:RToProve} \lim_{L\to\infty} k \sup_{\theta_0 \in [0,\pi), R_0 \in [-\infty,\infty)} \P_{\theta_0,R_0}(R(C \ln L') > -\ln \epsilon_L^{-1} ) = 0\;.\end{equation}
Set $u_L := \ln \epsilon_L^{-1}$. We divide the proof of \eqref{Eq:RToProve} according to the initial position of $R$. The first lemma shows that if $R$ starts in the interval $[-\infty,-2 u_L]$, that is, within the well of the potential of Figure \ref{Fig:R}, then it typically remains in $[-\infty,-u_L)$ up to time $C \ln L'$.
\begin{lemma}[Small initial values]\label{Lemma:Small}
For any constant $C>0$ we have
$$ \lim_{L\to\infty} k \sup_{\theta_0 \in [0,\pi), R_0 \in [-\infty,-2 u_L]} \P_{\theta_0,R_0}(\sup_{[0,C\ln L']} R > -u_L) = 0\;.$$
\end{lemma}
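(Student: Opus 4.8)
The plan is to read Lemma~\ref{Lemma:Small} off the shape of the effective potential of Figure~\ref{Fig:R} (throughout, as in the rest of this section, we drop the superscript~$(\Eg)$). Its well has bottom $R^*\approx\log\big(\sqrt\Eg(\mu-\gl)/\nu_\gl\big)=-(1+o(1))\ln L'$ — using $\mu-\gl\asymp\sqrt\Eg/L$, the asymptotics of $n(E)$, Lemma~\ref{Lemma:InvMeas} and Proposition~\ref{propo:Lyapounov} — while $u_L=\ln\epsilon_L^{-1}$ is at most the fourth iterated logarithm of $L'$, so $R^*\ll -2u_L<-u_L<0$ with $-2u_L-R^*=(1+o(1))\ln L'\to\infty$. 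Thus the starting point $R_0\le-2u_L$ lies far above the bottom of the well, in the region where $R$ has negative drift $\approx-\nu_\gl$; to reach $-u_L$ from there one must push $R$ up by $\ge u_L$ against this drift, which will cost a power of $\epsilon_L$, and once such an attempt fails $R$ relaxes back to $R^*\approx-\ln L'$, whence reaching $-u_L$ requires an upward excursion of size $\Theta(\ln L')$, costing a power of $L'$. Multiplied by $k$ (slow), both costs vanish — one more restriction on the admissible speed of $k$.

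First I would compute, by It\^o's formula for $R=\log\tan(\{\alpha\}_\pi/2)$ and~\eqref{SDEthetaE} applied to $\theta_\gl,\theta_\mu$, expanded to first order in $\{\alpha\}_\pi=\{\theta_\mu-\theta_\gl\}_\pi$, the semimartingale decomposition of $R$ on $\{R\le-u_L\}$ (where $\{\alpha\}_\pi\le2e^{-u_L}$): $dR=b_t\,dt+\sigma_t\,dB$ with $|\sigma_t|\le1+C\{\alpha\}_\pi$ and
$$b_t=\underbrace{\tfrac{\sqrt\Eg(\mu-\gl)\sin^2\{\theta_\gl\}_\pi}{\{\alpha\}_\pi}\big(1+O(\{\alpha\}_\pi)\big)}_{\ge0}-D_\gl(\{\theta_\gl\}_\pi)+O\big(\{\alpha\}_\pi+\sqrt\Eg(\mu-\gl)\big)\,,$$
where $D_\gl(\theta)=-\sqrt\Eg(\gl-\Eg)\sin2\theta-\tfrac12\sin^22\theta+\sin^2\theta$ is the drift of $\rho_\gl$ in~\eqref{SDErhoE} (equivalently, $dR$ agrees up to $O(\mu-\gl)$ with $d\log z_\gl$, $z_\gl=\partial_\gl\theta_\gl$, cf.~\eqref{Eq:zgl}). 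Two facts will be used: (i) $\int_0^\pi D_\gl\,d\mu_\gl=\nu_\gl$ — the ergodic/renewal identity behind Proposition~\ref{propo:Lyapounov} — so the $\mu_\gl$-average of the oscillating part $-D_\gl$ of $b_t$ equals $-\nu_\gl\le-v<0$ for some $v>0$ uniform over all parameters (by the lower bounds recalled in Subsection~\ref{subsec:Lyapounov}); (ii) there is $C_1=O(1)$ such that the confining term is $\le\nu_\gl/2$ on $\{R\ge R^{**}\}$ with $R^{**}:=R^*+C_1$, while the error $O(\{\alpha\}_\pi+\sqrt\Eg(\mu-\gl))$ is $o(1)$ uniformly on $\{R\le-u_L\}$.

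Next I would remove the non-Markovianity coming from the fast variable $\{\theta_\gl\}_\pi$ by a corrector: let $g$ solve the Poisson equation $\cL g=D_\gl-\nu_\gl$ on the circle, $\cL$ the generator of $\{\theta_\gl\}_\pi$; it exists since the right-hand side has zero $\mu_\gl$-mean, and $\|g\|_\infty,\|g'\|_\infty$ are bounded uniformly over all parameters (by Lemma~\ref{Lemma:InvMeas} and the spectral gap of Theorem~\ref{Th:CVDensities}; in distorted coordinates $g=O(\Eg^{-3/2})$, the leading term $\Eg^{3/2}\partial_\theta$ of $\cL$ being trivially inverted). Dynkin's formula gives $\int_s^tD_\gl(\{\theta_\gl\}_\pi)\,du=\nu_\gl(t-s)+g(\{\theta_\gl(s)\}_\pi)-g(\{\theta_\gl(t)\}_\pi)+\tilde M_t-\tilde M_s$ with $\tilde M$ a martingale, $\langle\tilde M\rangle_t\le\|g'\|_\infty^2\,t$. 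Combined with the bound on $b_t$, this yields, as long as $R$ remains in $[R^{**},-u_L]$ and $L$ is large, $R_t\le R_s-\tfrac v4(t-s)+K+(N_t-N_s)$ with $N:=\tilde M+\int\sigma\,dB$, $\langle N\rangle_t\le K't$ and $K,K'$ uniform. The exponential supermartingale $\exp\!\big(\lambda N_t-\lambda\tfrac v4t\big)$ with $\lambda:=v/(4K')$, together with optional stopping, then gives the one-shot bound: for any stopping time $S$ with $R_S\in[R^{**},-u_L]$, $\P(R$ reaches $-u_L$ before leaving $[R^{**},\infty)\mid\cF_S)\le Ce^{-\lambda(-u_L-R_S)}$.

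Finally I would assemble the estimate by decomposing $\{\sup_{[0,C\ln L']}R>-u_L\}$ along the excursions of $R$ above $R^{**}+1$, the only intervals during which $-u_L$ can be reached. If $R_0>R^{**}+1$ the first such excursion starts at $R_0$, so it reaches $-u_L$ with probability $\le Ce^{-\lambda(-u_L-R_0)}\le Ce^{-\lambda u_L}=C\epsilon_L^{\lambda}$ (and nothing if $R_0<R^{**}$). Each later excursion starts from $R^{**}+1=-(1+o(1))\ln L'$, hence must climb $\Theta(\ln L')$ and thus reaches $-u_L$ with conditional probability $\le Ce^{-\lambda\Theta(\ln L')}=C(L')^{-\lambda'}$ for some $\lambda'>0$; moreover, since $b_t$ is bounded and $\langle N\rangle_t\le K't$ on the strip $[R^{**},R^{**}+1]$, optional sampling bounds the expected number of these excursions started by time $C\ln L'$ by $O(\ln L')$. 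A union bound over the excursions (at their stopping-time starting points) then gives $\sup_{\theta_0}\sup_{R_0\le-2u_L}\P_{\theta_0,R_0}(\sup_{[0,C\ln L']}R>-u_L)\le C\epsilon_L^{\lambda}+C(\ln L')(L')^{-\lambda'}$, and multiplying by $k$ and using $\epsilon_L^{-1}=\ln\ln\ln L'\wedge\ln E\to\infty$ and $(L')^{\lambda'}\gg\ln L'$ finishes the proof once $k$ grows slowly enough. I expect the hardest part to be this quantitative reduction of $R$ to an autonomous diffusion: the corrector bounds uniform over both regimes (especially over $E\to\infty$), and the careful bookkeeping of the linearisation error $O(\{\alpha\}_\pi)$ and of the confining term on $[R^{**},-u_L]$ given the deliberately slow choice of $\epsilon_L$.
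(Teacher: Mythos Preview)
Your approach is correct and reaches the same conclusion, but it differs from the paper's in two structural ways.

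\textbf{Averaging over the fast variable.} You remove the dependence on $\{\theta_\gl\}_\pi$ via a Poisson corrector $g$ solving $\cL g = D_\gl - \nu_\gl$. The paper instead writes $dR = A(t)\,dt + dM(t)$, observes that on $[-(3/4)\ln L', -u_L]$ the drift equals $C(t) + \cO(\epsilon_L + (L')^{-1/4})$ with $C(t)$ exactly your $-D_\gl(\{\theta_\gl\}_\pi)$, and then introduces $d\tilde R = C(t)\,dt + dM(t)$. The averaging is handled not by a corrector but by the renewal structure already set up in Lemma~\ref{Lemma:Zgl}: one shows $\sup_{t\ge 0} e^{q|\tilde R(t)-R_0+\nu_\gl t|}e^{-q\epsilon t}$ has bounded expectation by decomposing along the i.i.d.~excursions of $\theta_\gl$ between hits of $\pi\Z$, together with Lemma~\ref{Lemma:Mgale} for the martingale part. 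This sidesteps the need to justify $C^1$ bounds on $g$ uniformly over both regimes (your appeal to Theorem~\ref{Th:CVDensities} for this is plausible but would need some work, since that theorem gives density convergence rather than a resolvent bound).

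\textbf{Global structure.} You decompose the event into excursions of $R$ above $R^{**}+1$, bound each later excursion by $(L')^{-\lambda'}$, and count excursions. The paper is more direct: by monotonicity it takes $R_0=-2u_L$, sets $S=\inf\{t:R(t)\in\{-(3/4)\ln L',-u_L\}\}$, and proves the single estimate $\P(R(S)=-(3/4)\ln L';\,S>c'\ln L')>1-\epsilon_L^c$. Since each successful descent to $-(3/4)\ln L'$ consumes time $>c'\ln L'$ and restarts the process from below $-2u_L$, at most $\lceil C/c'\rceil$ iterations cover the window $[0,C\ln L']$, giving the bound $O(\epsilon_L^c)$ with no need for the $(L')^{-\lambda'}$ tier or an excursion count.

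In short, the paper's proof is shorter and stays within machinery already built (Lemma~\ref{Lemma:Zgl}); yours is a valid alternative that makes the two-scale homogenization more explicit at the cost of an extra analytic input (the corrector) and a more elaborate decomposition.
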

The second lemma shows that if $R$ starts in the interval $[-2 u_L,2 u_L]$, that is, near the unstable equilibrium point of the potential of Figure \ref{Fig:R}, then it typically escapes this interval by time $2\ln L'$.
\begin{lemma}[Intermediate initial values]\label{Lemma:Intermediate}
We have
$$ \lim_{L\to\infty} k \sup_{\theta_0 \in [0,\pi), R_0 \in [-2 u_L,2 u_L]} \P_{\theta_0,R_0}(|R| \mbox{ does not hit } 2 u_L \mbox{ by time } 2\ln L' ) = 0\;.$$
\end{lemma}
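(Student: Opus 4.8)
The plan is to study directly the real‑valued process $R_t:=\log\tan(\alpha(t)/2)$ from the statement, treating it as a diffusion in $\R$ driven by the (much faster) auxiliary process $\theta_\gl$. Writing $\alpha=\theta_\mu-\theta_\gl$ as the difference of two copies of \eqref{SDEthetaE} — in which the leading drift $\Eg^{3/2}$ cancels — using $\sin^2\theta_\mu-\sin^2\theta_\gl=\sin\alpha\,\sin(2\theta_\gl+\alpha)$, and applying It\^o to $R=\log\tan(\cdot/2)$, one finds
\[
 dR_t=b_R(\theta_\gl(t),\alpha(t))\,dt-\sin(2\theta_\gl(t)+\alpha(t))\,dB^{(\Eg)}(t),\qquad d\langle R\rangle_t=\sin^2(2\theta_\gl(t)+\alpha(t))\,dt,
\]
where $b_R(\theta,\alpha)=-\tfrac14\cos\alpha+\sqrt\Eg(\mu-\gl)\dfrac{\sin^2(\theta+\alpha)}{\sin\alpha}+G(\theta,\alpha)$ with $G$ smooth, $\pi$‑periodic in $\theta$, $|G|\le\sqrt\Eg|\gl-\Eg|+1$, and $\int_0^\pi G(\theta,\alpha)\,\mu_\gl^{(\Eg)}(\theta)\,d\theta\to0$ in the Crossover regime. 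The two facts I would extract are: (i) on the band $\{|R|\le2u_L\}$ — equivalently $\alpha\in[2\arctan e^{-2u_L},2\arctan e^{2u_L}]$, which keeps $\sin\alpha\gtrsim e^{-2u_L}=\epsilon_L^2$ — the middle term of $b_R$ is $O(\sqrt\Eg(\mu-\gl)/\epsilon_L^2)=O(\Eg/(L\epsilon_L^2))=o(1)$, so $|b_R|\le M$ for a constant $M$ uniform in $L$ (and in $E$ in the Crossover regime); (ii) the running quadratic variation of $R$ is $\int\sin^2(2\theta_\gl+\alpha)$, which vanishes pointwise when $2\theta_\gl+\alpha\in\pi\Z$ but whose equilibrium average $\int_0^\pi\sin^2(2\theta+\alpha)\,\mu_\gl^{(\Eg)}(\theta)\,d\theta$ is $\ge c_0>0$, uniformly in $\alpha$ and in the parameters by Lemma \ref{Lemma:InvMeas}.

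The core is an \emph{averaging} step over $\theta_\gl$. I would fix a slowly growing $\ell=\ell_L\to\infty$ and cut $[0,2\ln L']$ into $O(\ln L'/\ell)$ blocks of length $\ell$. On such a block $\alpha$ barely moves while $\theta_\gl$ equilibrates at the exponential rate of Theorem \ref{Th:CVDensities}; combining this with (ii), I would show that, conditionally on $\cF_t$ and uniformly over all parameters and over $(\theta_\gl(t),\alpha(t))$ with $|R_t|\le2u_L$, the quadratic‑variation increment $\langle R\rangle_{t+\ell}-\langle R\rangle_t$ exceeds $c_0\ell/2$ except on an event of conditional probability $\le e^{-c\ell}$, while the drift increment $\int_t^{t+\ell}b_R\,ds$ is bounded by $M\ell$ deterministically by (i). Summing the bad events over the blocks produces total probability $O((\ln L'/\ell)e^{-c\ell})$, which times $k$ tends to $0$ for $\ell$ growing slowly enough; on the complementary good event the endpoint chain $(R_{i\ell})_i$ is a random walk on $[-2u_L,2u_L]$ with uniformly bounded step‑means and uniformly non‑degenerate step‑variances.

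It then remains to apply a standard one‑dimensional exit estimate: a process with increments of mean $\le M\ell$ and variance $\ge c_0\ell/2$, started inside an interval of width $4u_L$, leaves it in a number of blocks whose expectation is $O(e^{Cu_L})$ — the worst case being an adversarial inward drift, so that only the \emph{boundedness} of $b_R$ is used. Since $u_L=\ln\ln\ln\ln L'$, this is $O((\ln\ln\ln L')^{C}\ell_L)\ll\ln L'$ in real time; a Markov‑property iteration over successive windows of that length then yields $\P(|R|\ \text{does not hit}\ 2u_L\ \text{by}\ 2\ln L')\le\exp(-c\,\ln L'/((\ln\ln\ln L')^{C}\ell_L))$, and multiplying by $k$ gives $0$ as soon as $k$ grows slowly enough — this is one of the restrictions on $k$ collected in the statement. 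In the Crossover regime the argument is cheaper: $\mu_\gl^{(E)}\to\tfrac1\pi$ kills the average of $G$, so the averaged drift converges to $-\tfrac14\cos\alpha$, which is genuinely outward once $|R|\ge M_1$ and makes the exit time only of order $u_L$.

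The main obstacle is precisely the averaging step of the second paragraph: $R$ is not Markov — its coefficients depend on $\theta_\gl$, which lives on a much faster time scale — so one must quantitatively control the deviations of $\int_t^{t+\ell}\sin^2(2\theta_\gl+\alpha)\,ds$ and of $\int_t^{t+\ell}b_R\,ds$ from their equilibrium values, uniformly over all parameters and in particular uniformly over $E\to\infty$ in the Crossover regime. Theorem \ref{Th:CVDensities} is exactly the tool that makes this possible; the residual difficulty is bookkeeping — ensuring that $M$, $c_0$, $C$, the threshold $M_1$, and the admissible growth rates of $\ell_L$ and $k$ are all uniform across the two regimes.
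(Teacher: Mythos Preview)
Your approach is genuinely different from the paper's and is conceptually reasonable, but the step you call ``bookkeeping'' is in fact the crux, and it is not obviously covered by the tools you cite.

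\textbf{The gap.} You need, conditionally on $\cF_t$ and uniformly over all starting points on the band, that
\[
\P\Big(\int_t^{t+\ell}\sin^2\!\big(2\theta_\gl(s)+\alpha(s)\big)\,ds<\tfrac{c_0}{2}\ell\ \Big|\ \cF_t\Big)\le e^{-c\ell}.
\]
Theorem~\ref{Th:CVDensities} controls the one–time marginal density of $\{\theta_\gl\}_\pi$; it says nothing about concentration of time-averages of functionals of the \emph{joint} process $(\theta_\gl,\alpha)$. The integrand is $\sin^2(\theta_\gl+\theta_\mu)$, and $\theta_\gl,\theta_\mu$ are driven by the \emph{same} Brownian motion, so you cannot freeze $\alpha$ and average over an independent $\theta_\gl$. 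Your remark that ``$\alpha$ barely moves'' on a block of length $\ell$ is also not accurate on the whole band: near $R=0$ one has $\sin\alpha\approx 1$, so $\alpha$ can move by order $\ell$; the saving grace is only that the $\theta$-average of $\sin^2(2\theta+\alpha)$ is bounded below \emph{uniformly in $\alpha$} --- but that uniformity is a statement about expectations, not about pathwise concentration. Proving the displayed inequality would require either a spectral-gap/large-deviation estimate for the two-dimensional process $(\{\theta_\gl\}_\pi,\{\alpha\}_\pi)$, or an excursion decomposition of $\theta_\gl+\theta_\mu$ analogous to Lemma~\ref{Lemma:Zgl}. Either is plausible, but neither is available off the shelf in the paper, and neither is ``bookkeeping''.

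\textbf{How the paper sidesteps this.} The paper does not attempt to show that the quadratic variation grows at a uniform positive rate. Instead it works excursion by excursion in $\theta_\gl$: over one block $[\tau_k,\tau_{k+1}]$ (a time of order $1$, during which $\theta_\gl$ makes $n=\lfloor\Eg^{3/2}\rfloor$ rotations) it proves only the very weak lower bound $\langle N\rangle_{\tau_1}\ge r:=\epsilon_L^{32}$ with high probability, obtained by restricting to times where $\theta_\gl$ is within $\epsilon_L^{11}$ of $\pi\Z$ --- a set on which $\sin^2(\alpha+2\theta_\gl)\ge\tfrac12\epsilon_L^{20}$ deterministically once $|R|\le 10u_L$. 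Dubins--Schwarz then converts the question into a Brownian estimate, giving a tiny but explicit exit probability $\delta_L=\exp(-\epsilon_L^{-33})$ per block. Since there are $N\asymp\ln L'$ blocks and $\delta_L\,\ln L'\to\infty$ (because $\epsilon_L^{-1}\le\ln\ln\ln L'$), the product $(1-\delta_L)^N$ beats $k$. The point is that the paper trades a hard averaging/concentration statement for a crude pointwise lower bound on $\sin^2(\alpha+2\theta_\gl)$ that holds on an explicit, $\alpha$-independent subset of times; this is why it never needs to control the joint ergodicity of $(\theta_\gl,\alpha)$.

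\textbf{Minor remarks.} Your drift bound (i) and the identification of the averaged drift $-\tfrac14\cos\alpha$ (via $\cosh R=1/\sin\alpha$, $\tanh R=-\cos\alpha$) are correct, and the exit-time heuristic $O(e^{Cu_L})$ for a bounded-drift, uniformly nondegenerate diffusion on $[-2u_L,2u_L]$ is standard. The Markov iteration you invoke is fine provided the per-window estimates are uniform in the \emph{joint} starting point $(\theta_0,R_0)$ --- which again feeds back into needing the concentration estimate uniformly. If you can supply that estimate (e.g.\ by proving an analogue of Lemma~\ref{Lemma:ExpoBoundTheta} for the process $\theta_\gl+\theta_\mu$, whose drift near $\pi\Z$ is $\sim 2\Eg^{3/2}>0$ and whose diffusion coefficient vanishes there), your route would go through; but as written it is a genuine gap, not bookkeeping.
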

Finally we show that if $R$ starts in the interval $[2 u_L,\infty)$, that is, near the exit point $+\infty$ of the potential of Figure \ref{Fig:R}, then it typically explodes to $+\infty$ within a time of order $\ln L'$. As $R$ restarts from $-\infty$ when it hits $+\infty$, we are then back to the regime of the first lemma.
\begin{lemma}[Large initial values]\label{Lemma:Large}
There exists a constant $C>0$ such that
$$ \lim_{L\to\infty} k \sup_{\theta_0 \in [0,\pi), R_0 \in [2 u_L,\infty)} \P_{\theta_0,R_0}(R \mbox{ does not hit } +\infty \mbox{ by time }C\ln L' ) = 0\;.$$
\end{lemma}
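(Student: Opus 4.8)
The plan is to show that, starting from $R_0 \in [2u_L,\infty)$, the process $R$ is pushed to the right by a drift that is bounded below by a positive constant on $(0, \ln L')$, so it reaches $+\infty$ (equivalently, $\alpha$ reaches $\pi$) within a time of order $\ln L'$ with overwhelming probability. First I would derive the SDE satisfied by $R = \log\tan(\alpha/2)$ from the SDE for $\alpha = \theta_\mu - \theta_\gl$: subtracting the equations \eqref{SDEthetaE} for $\theta_\mu$ and $\theta_\gl$ gives $d\alpha$, whose drift contains the term $\sqrt\Eg(\mu-\gl)\sin^2\theta_\gl$ up to lower-order terms, and then an application of It\^o's formula to $\log\tan(\alpha/2)$ produces a diffusion in $R$ with a drift that, after taking into account the $\sin^2\alpha$-type factors, is uniformly positive and bounded away from $0$ on the region $\{R \in [0, \ln L']\}$ (this is where the ``positive constant drift on $(0,\ln L')$'' picture of Figure \ref{Fig:R} comes from), with a diffusion coefficient that is bounded. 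The subtlety, as flagged in the text, is that $R$ is not itself Markov — its drift depends on the fast process $\theta_\gl$ — so the positivity of the drift holds only after averaging $\theta_\gl$ over its (much shorter) mixing time-scale.

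The key steps, in order: (1) Establish the SDE for $R$ and identify the lower bound on the averaged drift; here I would use Theorem \ref{Th:CVDensities} (exponential convergence of $\theta_\gl$ to $\mu_\gl^{(\Eg)}$ on time scale $1$, or $O(1)$ in the sped-up coordinates) to replace the instantaneous drift by its average against $\mu_\gl^{(\Eg)}$, up to an error that is integrable in time and uniformly small. Concretely, over blocks of length $O(1)$ the running integral of the drift of $R$ exceeds $c\cdot(\text{block length})$ with high probability, uniformly in the starting phase $\theta_0$ and uniformly over $\gl \in \Delta$ and $E>1$. (2) Compare $R$ with an auxiliary process whose drift is the constant $c>0$: a Girsanov/coupling argument, or a direct supermartingale estimate on $e^{-\lambda R}$ for suitable $\lambda>0$, shows that $R$ dominates a Brownian motion with positive drift $c$ (minus a controllable error), hence the hitting time of any level $M$ from $2u_L$ is at most $(M - 2u_L)/c$ plus Gaussian fluctuations. (3) Since $+\infty$ is an exit point — i.e. $\alpha$ actually reaches $\pi$ in finite time once $R$ is large, because near $\alpha = \pi$ the drift of $\alpha$ stays positive and the diffusion coefficient $\sin^2\alpha$ degenerates, so the scale function has a finite limit — I would quantify this: from $R = M$ for $M$ a large constant multiple of $\ln L'$, the remaining time to explosion is $O(1)$ in probability. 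Combining (2) with $M \asymp \ln L'$ and (3), the total time to hit $+\infty$ is $\le C\ln L'$ with probability $1 - o(1/k)$, using that $u_L = \ln\epsilon_L^{-1} = o(\ln L')$ so the factor $k$ is absorbed (one needs $k$ to grow slowly enough, as already assumed in the companion estimates).

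The main obstacle I anticipate is step (1): making the averaging of the fast phase $\theta_\gl$ rigorous \emph{with constants uniform over $E$}, since in the Crossover regime $\theta_\gl$ rotates at speed $\Eg^{3/2}$ while $R$ evolves on a comparatively slow scale, and one must ensure the time-averaged drift of $R$ does not degenerate as $E\to\infty$. This is precisely where the uniform bound in Theorem \ref{Th:CVDensities} and the bounds on $\mu_\gl^{(\Eg)}$ in Lemma \ref{Lemma:InvMeas} are essential: they guarantee that over any interval of length $O(1)$ the phase equilibrates and the averaged drift of $R$ against $\mu_\gl^{(\Eg)}$ is bounded below by a strictly positive constant independent of $E$ and of $\gl \in \Delta$. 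A secondary technical point is controlling the martingale part of $R$ uniformly (via Lemma \ref{Lemma:Mgale}) so that the Gaussian fluctuations around the linear-in-time lower bound are genuinely negligible on the scale $\ln L'$ even after multiplication by $k$.
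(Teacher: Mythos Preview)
Your two-stage outline (linear growth of $R$ to order $\ln L'$, then explosion to $+\infty$) matches the paper's sketch, but you have misidentified the mechanism driving the first stage. You locate the positive drift in the term $\sqrt\Eg(\mu-\gl)\sin^2\theta_\gl$ coming from $d\alpha$; however $\sqrt\Eg(\mu-\gl)$ is of order $\Eg/L$, so this contribution is negligible while $R \ll \ln L'$. The paper instead expands the drift of $R$ for $R\in[u_L,\infty)$ (where $\alpha$ is close to $\pi$) as
\[
-\Big(\sqrt\Eg(\gl-\Eg)\sin(2\theta_\gl) + \tfrac12\cos(2\theta_\gl) - \tfrac14(1+\cos(4\theta_\gl))\Big)
\;+\; \sqrt\Eg(\mu-\gl)\sin^2\theta_\gl\,\frac{\sinh^2 R}{\cosh R} \;+\; \cO(\epsilon_L+L^{-1}),
\]
and observes that the first bracket is the \emph{exact negative} of the drift $C(t)$ appearing in the proof of Lemma~\ref{Lemma:Small}. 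Since $\E\big[\int_0^{\zeta_\gl} C(s)\,ds\big]=-\nu_\gl m_\gl$ by \eqref{Eq:Expectlnr}, the averaged drift here is $+\nu_\gl>0$. The averaging is thus carried out by the same excursion decomposition underlying Lemma~\ref{Lemma:Zgl}, not by Theorem~\ref{Th:CVDensities}; this lets one recycle the exponential-moment bound from Lemma~\ref{Lemma:Small} with no new work, which is the main economy of the paper's argument. Your route through Theorem~\ref{Th:CVDensities} would also succeed, but would require separately computing $\int(-C)\,d\mu_\gl^{(\Eg)}$ and verifying the constant is uniformly positive in $E$.

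The term you singled out, $\sqrt\Eg(\mu-\gl)\sin^2\theta_\gl\,\frac{\sinh^2 R}{\cosh R}$, is precisely what governs the \emph{second} stage: it scales like $(\Eg/L)\,e^R$ and becomes dominant once $R$ is of order $\ln L'$, after which it forces explosion in finite time. Your scale-function alternative for this part is fine, but the paper simply invokes this exponentially growing positive drift.
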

\noindent It is straightforward to deduce \eqref{Eq:RToProve} from the three lemmas and the Markov property.

\medskip

Let us now write down the SDE solved by $R$
\begin{align*}
dR &= \Big(\sqrt \Eg (\mu-\gl) \sin^2(\alpha+\theta_\gl) \cosh R \;+\;\sqrt \Eg (\gl-\Eg) \sin(\alpha+2\theta_\gl) \;+\; \frac12 \cos(\alpha+2\theta_\gl)\\
&\quad\;+\; \frac{\tanh R}4(1+\cos (2 \alpha +4\theta_\gl))\Big) dt - \sin(\alpha+2\theta_\gl) dB(t)\;.
\end{align*}

Note that we intentionally left some occurrences of the process $\alpha$ in this expression. Note also that, depending on the range of values $\alpha$ (or equivalently, $R$) at which our analysis focuses, we will neglect some terms of this SDE.

\begin{figure}
\centering
\includegraphics[width = 6cm]{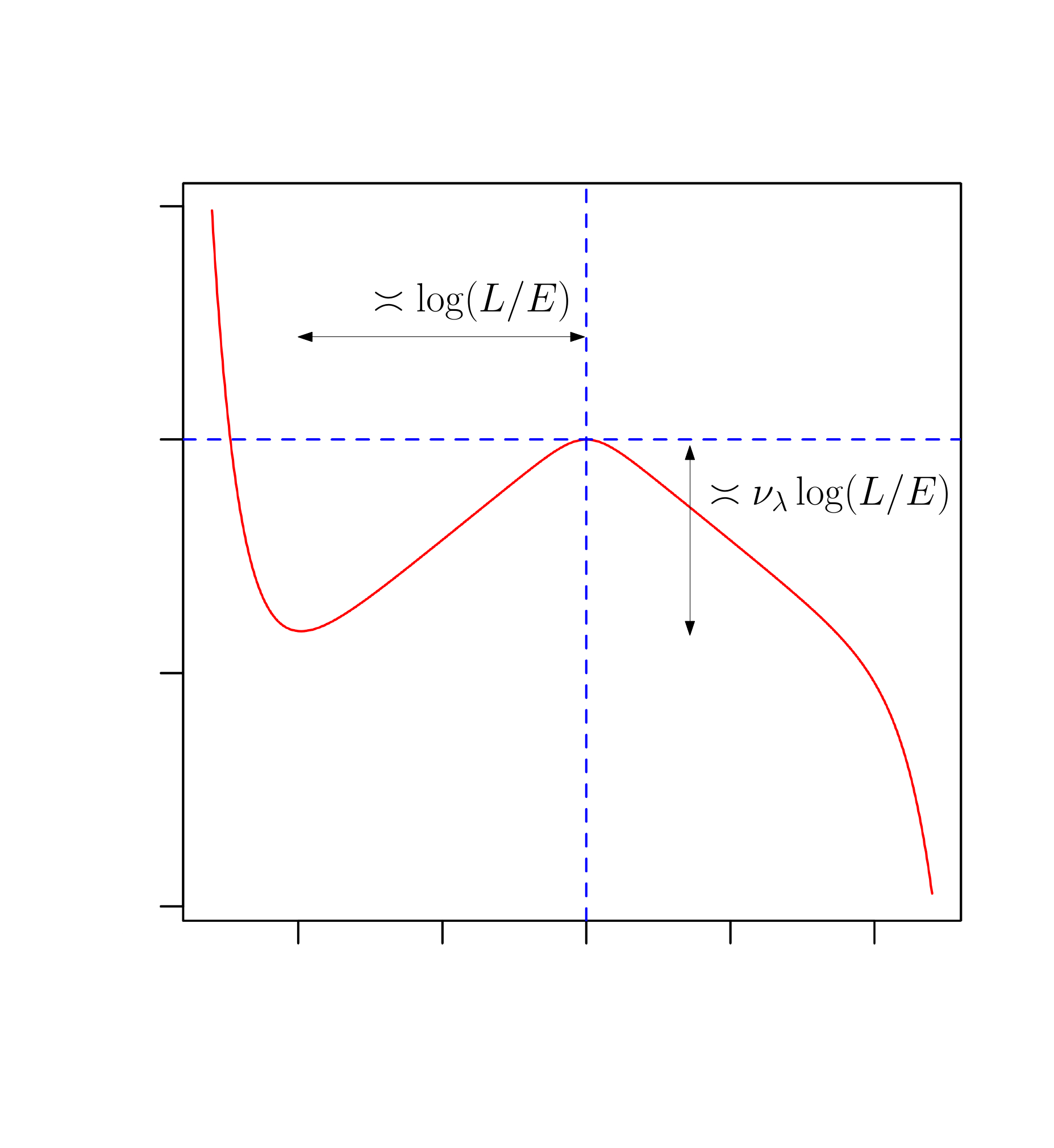}
\caption{Effective potential in which $R$ evolves.}\label{Fig:R}
\end{figure}

\begin{proof}[Proof of Lemma \ref{Lemma:Small}]
By monotonicity, it suffices to consider the case $R_0 = -2u_L$. Let $S := \inf\{t\ge 0: R(t) \in \{-(3/4) \ln L',-u_L\}\}$. We are going to prove that there exist $c,c'>0$ such that
\begin{equation}\label{Eq:alphatau}
\inf_{\theta_0\in [0,\pi)} \P_{\theta_0,R_0}(R(S) = -(3/4) \ln L' ; S > c' \ln L') > 1 - \epsilon_L^{c}\;.
\end{equation}
From this estimate and the Markov property, one deduces the statement of the lemma provided\footnote{This is a further constraint on $k$.} $k \ll \epsilon_L^{-c}$.\\
Let us write $dR(t) = A(t) dt + dM(t)$ where $M(t) := \int_0^t - \sin(\alpha+2\theta_\gl) dB(t)$ is the martingale part of the SDE. One can check that for all $t \le S$, the drift term satisfies 
\begin{align*}
A(t) = C(t) + \cO(\epsilon_L + (L')^{-1/4})\,,
\end{align*}
where
$$ C(t) = \sqrt \Eg (\gl-\Eg) \sin(2\theta_\gl) + \frac12 \cos(2\theta_\gl) - \frac14(1+\cos(4\theta_\gl))\;.$$
By \eqref{Eq:Expectlnr} we have $\E[\int_0^{\zeta_\gl} C(s) ds] = -\nu_\gl m_\gl$. Recall that the expectation of $\zeta_\gl$ is $m_\gl$. Therefore for $t$ large enough and as long as $t < S$, the process $R(t)-R_0$ should roughly behave like $-\nu_\gl t + M(t)$ so that it has negligible probability to reach large values.

To put that on firm ground, introduce $d\tilde{R}(t) := C(t) dt + dM(t)$ with $\tilde{R}(0) := R_0$. For any $\epsilon > 0$ there exists $q>0$ such that
$$ \sup_{L>1}\sup_{\theta_0}\E_{\theta_0,R_0}[\sup_{t\ge 0} e^{q |\tilde{R}(t) - R_0 + \nu_\gl t|} e^{-q\epsilon t}] < \infty\;.$$
Indeed for the integral $\int_0^t C(s)ds$, it follows from the very same arguments as in Lemma \ref{Lemma:Zgl} and we can deal with the martingale part using Lemma \ref{Lemma:Mgale}.

Using $\sup_{s \leq t }|R(s) - \tilde R(s)| \leq \cO(\epsilon_L + (L')^{-1/4}) \times t$ for all $t \leq S$, we therefore deduce that
$$ \sup_{L>1}\sup_{\theta_0}\E_{\theta_0,R_0}[\sup_{t\in [0,S]} e^{q |R(t) - R_0 + \nu_\gl t|} e^{-2q\epsilon t}] < \infty\;.$$
Call $K$ the expression on the l.h.s. Provided $2\epsilon < \nu_\gl$ we deduce that
$$ \P_{\theta_0,R_0}(R(S) = -u_L) \le K e^{-q u_L}\;.$$
A simple computation allows to deduce that there exist $c'>0$ and $C'>0$ such that
$$ \P_{\theta_0,R_0}\big(R(S) = -(3/4) \ln L' \;;\;S \le c' \ln L'\big) \le K e^{-q C' \ln L'}\;.$$
This concludes the proof of \eqref{Eq:alphatau}.
\end{proof}
For our next proof, recall the definitions of the hitting times \eqref{Eq:Defzetai} and set $T_k = \zeta^{(1)} +\ldots + \zeta^{(k)}$. We set $n := \lfloor \Eg^{3/2} \rfloor$ and $\tau_k := T_{nk}$ for every $k\ge 1$.

\begin{proof}[Proof of Lemma \ref{Lemma:Intermediate}]
It suffices to prove the statement of the lemma with $\theta_0 = 0$ and with $2\ln L'$ replaced by $\ln L'$. Indeed, if $\theta_0 \ne 0$, then by Lemma \ref{Lemma:Moments} there exists $c>0$ such that with probability at least $1-e^{-c \ln L' \Eg^{3/2}}$ the process $\{\theta_\gl\}_\pi$ hits $0$ by time $\ln L'$. Therefore, we now assume that $\theta_0 = 0$.\\
Set $N := C \ln L'$ and define the event $\cB := \{\tau_N < \ln L'\}$. If $C > 0$ is small enough then by Lemma \ref{Lemma:Moments} there exists $c>0$ such that for all $L$ large enough we have
$$\P(\cB^\cc) < e^{-c \Eg^{3/2} \ln L'}\;.$$
Introduce the stopping time
$$ S := \inf\{t\ge 0: R(t) \notin [-2 u_L,2  u_L]\}\;.$$
We claim that on any excursion of the diffusion $\theta_\gl$ from $nk\pi$ to $n(k+1)\pi$, $R$ has a small chance to escape from $[-2u_L,2u_L]$ i.e. for every $k\ge 0$, we have
\begin{equation}\label{Eq:Claimtauk} \P(\tau_k < S \le \tau_{k+1}) \ge \delta_L \times \P(S > \tau_k)\;,\end{equation}
where $\delta_L := \exp(-\epsilon_L^{-33})$ is small but large enough for our purposes.

We postpone the proof of the claim and proceed with the proof of the lemma. For every $k\ge 0$, we have
\begin{align*}
\P(S \le \tau_{k+1} \,|\, S > \tau_k) = \frac{\P(\tau_k< S \le \tau_{k+1})}{\P(S>\tau_k)} \ge \delta_L\;.
\end{align*}
Consequently
$$ \P(S \le \tau_{k+1}) \ge \P(S \le \tau_k) + \delta_L(1-\P(S \le \tau_k))\;,$$
and a simple recursion yields
$$ \P(S \le \tau_{N}) \ge 1 - (1-\delta_L)^N\;.$$
Therefore
\begin{align*}
\P(S \le \ln L') &\ge \P(\cB \cap \{S \le \tau_{N}\}) \ge \P(S \le \tau_{N}) - \P(\cB^\cc)\ge 1 - (1-\delta_L)^N - e^{-c \Eg^{3/2} \ln(L')}\;
\end{align*}
and
$$ k \P(S > \ln L') < k e^{-C \ln L' \delta_L} + k e^{-c \Eg^{3/2} \ln(L')}\;,$$
which goes to $0$ since $\epsilon_L^{-1} \le \ln\ln\ln L'$ and provided $k$ does not go too fast to $+\infty$ (for instance provided $k \ll \ln L'$).\\

We now prove \eqref{Eq:Claimtauk}. By the strong Markov property applied at time $\tau_k$, it suffices to prove that
\begin{equation}\label{Eq:BoundSalpha}
\inf_{R_0 \in [-2 u_L,2 u_L]} \P_{0,R_0}(S < \tau_1) \ge \delta_L\;.
\end{equation}
Let us write $dR(t) = A(t) dt + dM(t)$ where
$$ M(t) = \int_0^t - \sin(\alpha+2\theta_\gl) dB(s)\;.$$
Set $S' := \inf\{t\ge 0: R(t) \notin [-10 u_L,10 u_L]\}$. Set $\alpha_\pm := 2\arctan e^{\pm 10 u_L}$ and introduce the martingale
$$N_t := - \int_{0}^{t}  \sin(\alpha_- \vee \alpha \wedge \alpha_+ +2\theta_\gl) dB(s)\;,$$
which coincides with $M_t$ for all time $t \leq S'$.
%
%
There exists $C>0$ such that whenever $t\le S'$, the drift satisfies $|A(t)| < C$ almost surely, and therefore
\begin{align}\label{Eq:DoubleBdR} -Ct + N_t \leq R(t)-R(0) \leq Ct + N_t\;.\end{align}

We then note that
\begin{align*}
\P(S < \tau_1) &\ge \P(\exists t < \tau_1 \wedge S': N_t \ge Ct + 4 u_L)\\
&\ge \P(\exists t < (2nm_\gl)\wedge \tau_1 \wedge S': N_t \ge 5 u_L)\\
&\ge \P(\exists t < (2nm_\gl)\wedge \tau_1 \wedge S': \sup_{[0,t]} N = 5 u_L \;, \inf_{[0,t]} N \ge - u_L)\;.
\end{align*}
Note that, if at some time $t < (2nm_\gl)\wedge \tau_1$ we have $\sup_{[0,t]} N = 5 u_L$ and $\inf_{[0,t]} N \ge - u_L$ then by the inequalities \eqref{Eq:DoubleBdR} we have $t < S'$. Consequently
\begin{align*}
\P(S < \tau_1) &\ge \P(\exists t < (2nm_\gl)\wedge \tau_1: \sup_{[0,t]} N = 5 u_L \;, \inf_{[0,t]} N \ge - u_L)\;.
\end{align*}

Let $q_r := \inf\{s\ge 0: \langle N \rangle_s \ge r\}$, and define $\beta_r := N_{q_r}$. By Dubins-Schwarz's Theorem, $\beta$ is a Brownian motion. We thus get for any $r>0$
\begin{equation}\begin{split}
\P(S < \tau_1) &\ge \P(\beta_{r} \ge 5 u_L ;  \inf_{[0,r]} \beta \ge - u_L ; q_r <  (2nm_\gl) \wedge \tau_1)\\
&\ge  \P(\beta_{r} \ge 5 u_L ; \inf_{[0,r]} \beta \ge - u_L) - \P(q_r \ge (2nm_\gl) \wedge \tau_1)\;.\label{Eq:Stau1}
\end{split}\end{equation}
We now set $r := \epsilon_L^{32}$. By the Reflection Principle~\cite[Exercise III.3.14]{RevuzYor} and standard Gaussian estimates there exists $c>0$ such that
\begin{equation}\label{Eq:betar}\P(\beta_{r} > 5 u_L ; \inf_{[0,r]} \beta \ge - u_L) = \P(5 u_L < \beta_r < 7 u_L) \ge e^{-c \frac{u_L^2}{r}}\;.\end{equation}
To bound from above $\P(q_r \ge (2nm_\gl) \wedge \tau_1)$, we distinguish the Bulk and the Crossover regimes.\\
We start with the Crossover regime. We have
\begin{align*}
\P(q_r \ge (2nm_\gl) \wedge \tau_1) &= \P(\langle N\rangle_{(2n m_\gl) \wedge \tau_1} \le r)\le \P(\tau_1 > 2n m_\gl) + \P(\langle N\rangle_{\tau_1} \le r)\;.
\end{align*}
From Lemma \ref{Lemma:Moments}, we deduce the existence of $c'>0$ such that
$$ \P(\tau_1 > 2n m_\gl) \le e^{-c' E^{3/2}}\;.$$
Regarding the second term, we write
$$ \langle N\rangle_{\tau_1} = \int_0^{\tau_1} \sin^2(\alpha_-\vee \alpha \wedge \alpha_+ + 2\theta_\gl) ds \ge \frac12 \epsilon_L^{20} \int_0^{\tau_1} \un_{\{|\theta_\gl(s)| \le \epsilon_L^{11}\}} ds\;.$$
Note that the integral on the r.h.s.~is a sum of $n$ i.i.d.~r.v.~$X_k$ with
$$ X_1 = \int_0^{\zeta_\gl} \un_{\{|\theta_\gl(s)| \le \epsilon_L^{11}\}} ds\;.$$
It is not difficult to check that for $q=1/2$ (this is arbitrary) there exists $\delta > 0$ such that for all $L$ large enough
$$ \P(X_1 > \frac{\delta \epsilon_L^{11}}{n}) \ge q\;.$$
Indeed, the drift of the diffusion is of order $n=\lfloor E^{3/2}\rfloor$ and the quadratic variation of the martingale part is small until $\theta_\gl$ has hit $\epsilon_L^{11}$ so that Lemma \ref{Lemma:Mgale} allows to conclude.

Consequently optimizing the Laplace transform of the Binomial distribution one gets the existence of $c''>0$ such that
\begin{align*}
\P(\langle N\rangle_{\tau_1} \le r) \le \P\big(\sum_{k=1}^n \frac{\delta}{2n} \epsilon_L^{31}  \un_{\{X_k> \frac{\delta \epsilon_L^{11}}{n}\}} \le r\big) &\le \P(\mbox{Bin}(n,q) \le \frac{2n \epsilon_L}{\delta})\le e^{-c'' E^{3/2}}\;.
\end{align*}
We henceforth obtain (recalling that $u_L = \ln \epsilon_L^{-1}$)
$$ \P(S < \tau_1) \ge e^{-c \frac{\ln^2 \epsilon_L^{-1}}{\epsilon_L^{32}}} - e^{-c' E^{3/2}} - e^{-c'' E^{3/2}}\;.$$
Since $\epsilon_L^{-1} \le \ln E$, we obtain \eqref{Eq:BoundSalpha}.\\
Let us now consider the Bulk regime, for which $n=1$ and $\tau_1 = \zeta_\gl$. We have
\begin{align*}
\P(q_r \ge (2nm_\gl) \wedge \tau_1) &= \P(\langle N\rangle_{(2 m_\gl) \wedge \zeta_\gl} \le r)\;.
\end{align*}
Then we introduce $\sigma := \inf\{t\ge 0: \theta_\gl(t) = \epsilon_L^{11}\}$, which is smaller than $\zeta_\gl$, and we write
\begin{align*}
\langle N\rangle_{(2 m_\gl) \wedge \zeta_\gl} = \int_0^{(2 m_\gl) \wedge \zeta_\gl} \sin^2(\alpha_-\vee \alpha \wedge \alpha_+ + 2\theta_\gl) ds \ge \frac12 \epsilon_L^{20}\big( \sigma \wedge (2 m_\gl)\big)\;.
\end{align*}
Since $m_\gl$ is of order $1$, we thus have
$$ \P(\langle N\rangle_{(2 m_\gl) \wedge \zeta_\gl} \le r) \le \P( \sigma \wedge (2 m_\gl) \le 2 \epsilon_L^{12}) = \P(\sigma \le 2 \epsilon_L^{12})\;.$$
To bound this last term, we consider the process $\theta_\gl(\cdot \wedge \sigma)$. Its drift term is bounded by some constant $C_\alpha > 0$ (which is of order $1$) and the quadratic variation of its martingale term is bounded by $C_\beta = \epsilon_L^{44}$. By Lemma \ref{Lemma:Mgale}, we thus get
$$ \P(\sigma \le 2 \epsilon_L^{12}) \le \P(\sup_{s\in [0,2 \epsilon_L^{12}]} \theta_\gl(s\wedge \sigma) \ge \frac12 \epsilon_L^{11}) \le 2e^{-\frac{\epsilon_L^{-34}}{64}}\;.$$
Consequently
$$ \P(S < \tau_1) \ge e^{-c \frac{\ln^2 \epsilon_L^{-1}}{\epsilon_L^{32}}} - 2e^{-\frac{\epsilon_L^{-34}}{64}} \ge \delta_L\;.$$
\end{proof}

For large initial values, the arguments are similar to those presented in the case of small initial values so we do not provide a detailed proof.
\begin{proof}[Sketch of proof of Lemma \ref{Lemma:Large}]
By monotonicity, it suffices to consider the case $R(0) = 2 u_L$. Whenever $R(t) \in [u_L,\infty)$,
\begin{align*}
dR(t) &= -\Big(\sqrt\Eg(\gl-\Eg) \sin(2\theta_\gl) + \frac12 \cos(2\theta_\gl) - \frac14(1+\cos(4\theta_\gl))\Big) dt\\
&\quad + \sqrt \Eg (\mu-\gl) \sin^2\theta_\gl \frac{\sinh^2 R}{\cosh R} dt- \sin(\alpha+2\theta_\gl) dB(t) + \cO(\epsilon_L + L^{-1}) dt\;.
\end{align*}
Until $R$ is of order $\ln L'$, the dominant term in the drift is
$$ -\Big(\sqrt\Eg(\gl-\Eg) \sin(2\theta_\gl) + \frac12 \cos(2\theta_\gl) - \frac14(1+\cos(4\theta_\gl))\Big)\;,$$
which is the exact opposite of the drift in the regime of small initial values of Lemma \ref{Lemma:Small}: therefore, in average the process $R$ increases at speed $\nu_\gl$. When $R$ is of order at least $\ln L'$, then the term
$$ \sqrt \Eg (\mu-\gl) \sin^2\theta_\gl \frac{\sinh^2 R}{\cosh R}\;,$$
becomes predominant and makes the process $R$ explode in finite time with large probability.
\end{proof}

\section{Poisson statistics}\label{Sec:Poisson}

In this section, we identify the limiting intensity of $\cN_L^{(j)}$ stated in Proposition \ref{Prop:Intensity}, and we show that $\cN_L$ is well-approximated by $\bar{\cN}_L$, that is, we prove Proposition \ref{Prop:Approx}. We start with the latter.

\subsection{Controlling the approximation}\label{Subsec:Approx}

We fix $h>0$ and set $\Delta = [E-h/(Ln(E)),E+h/(Ln(E))]$.\\

Recall the definition of the points $(t_j)$ at the beginning of Subsection \ref{Subsec:StrategyPoisson}. Let us introduce for $n\ge 1$ and $j\in \{0,\ldots,k\}$, the following neighborhood of $t_j$:
$$ \cD_j(n) := [t_j-nt_L \Eg, t_j + nt_L \Eg] \cap [-L/2,L/2]\;,$$
where $t_L$ is such that\footnote{This implicitly forces $k$ to go to $\infty$ not too fast.} $\ln( L/\Eg) \ll t_L \ll L/(k\Eg)$. We also set $\cD(n) := \cup_j \cD_j(n)$.\\

Recall that we have already proven Theorem \ref{Th:Loc}. Fix $\eps > 0$ small enough. We introduce the event $\cB_L$ on which:\begin{enumerate}[label = (\roman*)]
\item for every $\lambda_i\in\Delta$, we have
$$ \Big(\varphi_i(t)^2 + \frac{\varphi_i'(t)^2}{\Eg}\Big)^{1/2} \le \frac{t_L}{\sqrt \Eg} e^{-\frac{1}{2}(\boldsymbol{\nu_E}-\eps)\frac{|t-U_i|}{\Eg}}\;,\quad \forall t\in [-L/2,L/2]\;,$$
\item for every $\lambda_i \in \Delta$, we have $U_i \notin \cD(3)$,
\item we have $\int_{\cD(2)} \Big(\sum_{\lambda_i \in \Delta} |\varphi_i| + \frac{|\varphi_i'|}{\sqrt \Eg}\Big)^2 ds < e^{-(\boldsymbol{\nu_E}-2\eps) t_L}$,
\end{enumerate}
\begin{lemma}\label{Lemma:BL}
We have $\P(\cB_L) \to 1$ as $L\to\infty$.
\end{lemma}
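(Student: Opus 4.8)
The plan is to show that each of the three defining events (i), (ii), (iii) of $\cB_L$ holds with probability tending to $1$ (in fact, with probability $1 - o(1/k)$ would be more than enough, but $1-o(1)$ suffices here), and then conclude by a union bound.

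First, event (i). By Theorem \ref{Th:Loc}, for $\eps>0$ small there are random variables $c_i>0$ with $\limsup_L \E[\sum_{\lambda_i\in\Delta} c_i^q] <\infty$ for some $q=q(\eps)>0$, and such that
$$ \Big(\varphi_i(t)^2 + \frac{\varphi_i'(t)^2}{\Eg}\Big)^{1/2} \le \frac{c_i}{\sqrt\Eg}\, e^{-\frac12(\boldsymbol{\nu_E}-\eps)\frac{|t-U_i|}{\Eg}}\;,\quad\forall t\;.$$
Hence $\P(\text{(i) fails}) \le \P(\exists\,\lambda_i\in\Delta: c_i > t_L) \le \E[\sum_{\lambda_i\in\Delta} c_i^q]/t_L^q$, which goes to $0$ since $t_L\to\infty$. (Here I would replace $\eps$ by, say, $\eps/2$ in Theorem \ref{Th:Loc} so that the rate $\boldsymbol{\nu_E}-\eps$ in (i) has a little room to spare; this plays a role in (iii).)

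Second, event (ii), that no center of mass $U_i$ with $\lambda_i\in\Delta$ falls in the $nt_L\Eg$-neighborhood $\cD(3)$ of the grid points $t_j$. The set $\cD(3)$ has total Lebesgue measure $O(k\, t_L\Eg)$, i.e. a fraction $O(k t_L\Eg/L) = o(1)$ of $[-L/2,L/2]$ since $t_L \ll L/(k\Eg)$. By Theorem \ref{Th:Poisson} the rescaled centers of mass $U_i/L$ of the (finitely many) eigenvalues in $\Delta$ converge to a Poisson point process on $[-1/2,1/2]$ with intensity proportional to Lebesgue; since the image of $\cD(3)$ under $t\mapsto t/L$ is a set whose Lebesgue measure tends to $0$, the probability that this limiting process charges it is $0$, and a routine argument (the number $N_L(\Delta)$ of eigenvalues in $\Delta$ is tight by Proposition \ref{Prop:Moment}, and conditionally on having $m$ eigenvalues their rescaled centers of mass cannot concentrate on a vanishing set — this can be made precise via the first-moment/intensity bound coming from the GMP formula and Proposition \ref{Prop:MicroEstimates}, namely $\E[\#\{i:\lambda_i\in\Delta, U_i\in \cD(3)\}] = O(k t_L\Eg/L)\cdot 2h \to 0$) gives $\P(\text{(ii) fails})\to 0$. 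I would in fact prove (ii) directly via this first-moment computation rather than invoking the Poisson limit, since the intensity of $\sum_i\delta_{(\lambda_i,U_i)}$ restricted to $\Delta\times\cD(3)$ is controlled by a GMP-type formula exactly as in the proof of Proposition \ref{Prop:MicroEstimates}.

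Third, event (iii). On the event that (i) and (ii) both hold, every $\lambda_i\in\Delta$ has $U_i\notin\cD(3)$, so for $t\in\cD(2)$ one has $|t-U_i|\ge t_L\Eg$, whence $|\varphi_i(t)| + |\varphi_i'(t)|/\sqrt\Eg \le 2\frac{t_L}{\sqrt\Eg} e^{-\frac12(\boldsymbol{\nu_E}-\eps)|t-U_i|/\Eg}$ decays exponentially in $|t-U_i|/\Eg$ starting from distance $t_L$. Summing over $i$ and using that, for fixed $t$, the number of $U_i$ within distance $\sim r$ grows at most polynomially (controlled again by the tightness of $N_L(\Delta)$, or by a crude deterministic bound on the number of eigenvalues of $\cH_L$ with eigenvalue in $\Delta$ together with the exponential tails), one gets $\big(\sum_{\lambda_i\in\Delta}|\varphi_i(t)| + |\varphi_i'(t)|/\sqrt\Eg\big)^2 \lesssim \frac{t_L^2}{\Eg} N_L(\Delta)^2 e^{-(\boldsymbol{\nu_E}-\eps)\operatorname{dist}(t,\cD(3)^c)/\Eg}$ on $\cD(2)$; integrating over $\cD(2)$ (whose relevant part lies at distance $\ge t_L\Eg$ from the $U_i$'s and has measure $O(k t_L\Eg)$) yields a bound $\lesssim k\, t_L^3\, N_L(\Delta)^2\, e^{-(\boldsymbol{\nu_E}-\eps)t_L}$, which is $\le e^{-(\boldsymbol{\nu_E}-2\eps)t_L}$ once $t_L$ is large enough, on the event $\{N_L(\Delta)\le M\}$ for any fixed $M$. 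Choosing $M=M_L\to\infty$ slowly with $\P(N_L(\Delta)>M_L)\to 0$ (possible by Proposition \ref{Prop:Moment}, e.g. $M_L = \sqrt{t_L}$), we conclude $\P(\text{(iii) fails})\to 0$. Finally $\P(\cB_L^c) \le \P(\text{(i) fails}) + \P(\text{(ii) fails}) + \P(\text{(iii) fails}) \to 0$.

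\textbf{Main obstacle.} The delicate point is (iii): one must sum the exponentially localized eigenfunctions and integrate over $\cD(2)$ while keeping track of the (random) number of eigenvalues in $\Delta$ and the polynomial-in-$r$ count of centers of mass near a given point, so that the final estimate beats $e^{-(\boldsymbol{\nu_E}-2\eps)t_L}$ — this forces the precise hierarchy $\ln(L/\Eg)\ll t_L\ll L/(k\Eg)$ and the small loss $\eps$ in the exponent. Everything else is a first-moment computation via the GMP formula together with the already-established tightness of $N_L(\Delta)$ and the exponential localization of Theorem \ref{Th:Loc}.
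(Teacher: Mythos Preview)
Your treatment of (i) and (iii) matches the paper's and is correct. The issue is (ii).

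First, invoking Theorem~\ref{Th:Poisson} here is circular: that theorem is deduced from Theorem~\ref{Th:Shape}, whose proof goes through Proposition~\ref{Prop:Approx}, which in turn relies on the present Lemma~\ref{Lemma:BL}.

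Second, your fallback---bounding $\E[\#\{i:\lambda_i\in\Delta,\ U_i\in\cD(3)\}]$ ``exactly as in the proof of Proposition~\ref{Prop:MicroEstimates}''---is not as direct as you suggest. In Proposition~\ref{Prop:MicroEstimates} one takes $G=\un_\Delta(\lambda)$, so the inner expectation $\E^{(u)}_{\theta,\pi-\theta}[\cdot]$ in the GMP formula equals $1$. With $G(\lambda,\varphi)=\un_\Delta(\lambda)\,\un_{\cD(3)}(U(\varphi))$ the inner expectation becomes $\P^{(u)}_{\theta,\pi-\theta}(\hat U_\lambda\in\cD(3))$, and extracting the factor $kt_L\Eg/L$ requires showing that $\hat U_\lambda$ concentrates near $u\Eg$ under the concatenation law. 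That is doable via the exponential-decay machinery of Section~\ref{Sec:ExpoDecay}, but it is genuinely more work than what Proposition~\ref{Prop:MicroEstimates} does.

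The paper avoids this by reversing the logic. It first proves, via Proposition~\ref{Prop:GMPalternative} with $a=1$, $b=1/\Eg$, $G=\un_\Delta$ (so the inner expectation is again trivial), the mass bound
\[
\E\Big[\int_{\cD(4)}\sum_{\lambda_i\in\Delta}\Big(\varphi_i^2+\frac{\varphi_i'^2}{\Eg}\Big)\,dv\Big]\le c\,\frac{kt_L\Eg}{L}\,.
\]
Then, working on the event (i), if some $U_i\in\cD(3)$ the decay of $\varphi_i$ forces $\int_{|s-U_i|\le t_L\Eg}\varphi_i^2\ge 1-o(1)$, and this integration domain is contained in $\cD(4)$; Markov's inequality on the display then gives $\P(\text{(ii) fails, (i) holds})=O(kt_L\Eg/L)\to 0$. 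This is both shorter and never needs to locate the center of mass of the concatenated process.
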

\begin{proof}
The proof requires the following preliminary estimate: there exists $c> 0$ such that uniformly over all $L$ large enough we have
\begin{equation}\label{Eq:4p}
\E\Big[\int_{\cD(4)} \sum_{\lambda_i \in \Delta} \Big(\varphi_i(v)^2 + \frac{\varphi_i'(v)^2}{\Eg}\Big) dv\Big] \le c \frac{kt_L \Eg}{L}\;.
\end{equation}
This bound easily follows from Proposition \ref{Prop:GMPalternative} (with $a=1$, $b=\Eg^{-1}$ and $G\equiv 1_{\Delta}$) and Theorem \ref{Th:CVDensities}.\\
Applying Markov's inequality in the estimates of Theorem \ref{Th:Loc}, we see that (i) holds true with a probability that goes to $1$ as $L\to\infty$. We now work on this event.\\
Assume that there exists $\lambda_i \in \Delta$ such that $U_i \in \cD(3)$. We find
$$ \int_{|s-U_i| \le t_L \Eg} \varphi_i^2(s)ds = 1 - \int_{|s-U_i| > t_L \Eg} \varphi_i^2(s)ds \ge 1 - \frac{2t_L^2}{\boldsymbol{\nu_E}-\eps} e^{-(\boldsymbol{\nu_E}-\eps)t_L}\;.$$
By assumption $\{s\in [0,L]: |s-U_i| \le t_L \Eg\} \subset \cD(4)$. Markov's inequality applied to \eqref{Eq:4p} and our assumption $t_L \ll L/(k\Eg)$ therefore show that the probability of (ii) goes to $1$.\\
We now prove (iii). By Cauchy-Schwarz's inequality we have
\begin{align*}
\Big(\sum_{\lambda_i \in \Delta} |\varphi_i| + \frac{|\varphi_i'|}{\sqrt \Eg} \Big)^2 &\le N_L(\Delta) \sum_{\lambda_i \in \Delta} \Big(|\varphi_i| + \frac{|\varphi_i'|}{\sqrt \Eg}\Big)^2\\
&\le 2 N_L(\Delta) \sum_{\lambda_i \in \Delta} \varphi_i^2 + \big(\frac{\varphi_i'}{\sqrt \Eg}\big)^2\;.
\end{align*}
We deduce that on the event where (i) and (ii) hold we have
\begin{align*}
\int_{\cD(2)} \Big(\sum_{\lambda_i \in \Delta} |\varphi_i| + \frac{|\varphi_i'|}{\sqrt \Eg} \Big)^2 ds &\le 4 N_L(\Delta)^2 \frac{t_L^2}{\boldsymbol{\nu_E}-\eps}e^{-(\boldsymbol{\nu_E}-\eps)t_L}\;.
\end{align*}
Markov's inequality combined with Proposition \ref{Prop:Moment} shows that $\P(N_L(\Delta)^2 < t_L) \to 1$. We thus deduce (iii) (we use a factor $\eps t_L$ in the exponential term to ``kill'' the prefactors).
\end{proof}

We also need estimates on the exponential decay of the $\varphi_i^{(j)}$. We let $\bar{\cB}_L$ be the event on which for every $\lambda_i^{(j)}\in\Delta$, we have
$$ \Big(\varphi_i^{(j)}(t)^2 + \frac{(\varphi_i^{(j)})'(t)^2}{\Eg}\Big)^{1/2} \le \frac{t_L}{\sqrt \Eg} e^{-\frac{1}{2}(\boldsymbol{\nu_E}-\eps)\frac{|t-U_i^{(j)}|}{\Eg}}\;,\quad \forall t\in (t_{j-1},t_j)\;.$$

\begin{lemma}
We have $\P(\bar{\cB}_L) \to 1$ as $L\to\infty$.
\end{lemma}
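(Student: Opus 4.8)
The plan is to mimic the proof of Lemma \ref{Lemma:BL}, noting that the situation here is in fact simpler because each Hamiltonian $\cH_L^{(j)}$ lives on its own small box $(t_{j-1},t_j)$, which has length $L/(\Eg k)$, and the eigenfunctions $\varphi_i^{(j)}$ together with their derivatives are exponentially localized from their centers of mass at the rate $(1/2)(\boldsymbol{\nu_E}-\eps')$ for any $\eps' < \eps$, by exactly the same argument that gives Theorem \ref{Th:Loc}. The key point to establish is the quantitative bound:
\begin{equation*}
\sup_{L>1} \E\Big[\sum_{\lambda_i^{(j)}\in\Delta} (c_i^{(j)})^q\Big] < \infty \quad\text{for every }j\text{, with } k\E\Big[\sum_{\lambda_i^{(1)}\in\Delta} (c_i^{(1)})^q\Big] \text{ bounded},
\end{equation*}
where $c_i^{(j)}$ is the prefactor in the exponential decay of $\varphi_i^{(j)}$. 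This follows from Proposition \ref{Prop:ExpoDecayCrossover} applied to $\cH_L^{(j)}$: the proof of that proposition only uses the GMP formula (Proposition \ref{Prop:GMP}), the convergence to equilibrium (Theorem \ref{Th:CVDensities}), and the growth estimates of Lemma \ref{Lemma:Zgl}, all of which are insensitive to the length of the interval provided it is $\gg 1$; here $L/(\Eg k)\gg 1$ because $k$ grows slowly, and the extra factor $1/k$ comes from the Lebesgue measure of the interval of energies intersected with a box of length $L/(\Eg k)$, exactly as in the second estimate of Proposition \ref{Prop:MicroEstimates}.

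Concretely, I would first invoke Proposition \ref{Prop:ExpoDecayCrossover} in its box-version to obtain, for every $\eps'\in(0,\eps)$, some $q=q(\eps')>0$ and a random variable $\tilde c_i^{(j)}$ (defined as an infimum over concatenation times exactly as in the proof of Theorem \ref{Th:Loc}) such that
\begin{equation*}
\Big(\varphi_i^{(j)}(t)^2 + \frac{(\varphi_i^{(j)})'(t)^2}{\Eg}\Big)^{1/2} \le \frac{\tilde c_i^{(j)}}{\sqrt\Eg}\, e^{-\frac12(\boldsymbol{\nu_E}-\eps')\frac{|t-\tilde U_i^{(j)}|}{\Eg}}\;,\qquad t\in(t_{j-1},t_j)\;,
\end{equation*}
and $\sup_{L>1} k\,\E\big[\sum_{\lambda_i^{(1)}\in\Delta}(\tilde c_i^{(1)})^q\big] < \infty$. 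Then, repeating verbatim the Jensen-inequality argument at the end of the proof of Theorem \ref{Th:Loc}, I would recenter the exponential at $U_i^{(j)}$ instead of $\tilde U_i^{(j)}$, at the cost of replacing $\tilde c_i^{(j)}$ by $c_i^{(j)}= \tilde c_i^{(j)}\exp\big(\tfrac12(\boldsymbol{\nu_E}-\eps')|\tilde U_i^{(j)}-U_i^{(j)}|/\Eg\big)$ and decreasing $q$; the $q$-th moment bound (summed over $i$, times $k$) survives this operation.

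Finally I would conclude by a union bound / Markov inequality: choosing $\eps'\in(0,\eps)$, we have
\begin{equation*}
\P(\bar\cB_L^\cc) \le \sum_{j=1}^k \P\Big(\exists\, \lambda_i^{(j)}\in\Delta:\ c_i^{(j)} > t_L\Big) \le t_L^{-q}\sum_{j=1}^k \E\Big[\sum_{\lambda_i^{(j)}\in\Delta} (c_i^{(j)})^q\Big] \le C\, t_L^{-q}\;,
\end{equation*}
using that the $j$ contributions are i.i.d.\ and $k\,\E[\sum_{\lambda_i^{(1)}\in\Delta}(c_i^{(1)})^q]$ is bounded; since $t_L\to\infty$, this goes to $0$, whence $\P(\bar\cB_L)\to 1$. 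The only mild subtlety — and the step I expect to require the most care — is checking that the proof of Proposition \ref{Prop:ExpoDecayCrossover} indeed transfers to the truncated operator $\cH_L^{(j)}$ with all constants uniform in $j$ and the additional gain of $1/k$; but this is a routine re-reading of Section \ref{Sec:ExpoDecay}, since there the interval $[-L/2,L/2]$ enters only through its length (which here is $L/(\Eg k)$, still $\gg 1$) and through the Lebesgue measure of $\Delta$ intersected with the relevant range of concatenation times, which now contributes the factor $1/k$.
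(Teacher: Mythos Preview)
Your proposal is correct and follows essentially the same approach as the paper: adapt Proposition \ref{Prop:ExpoDecayCrossover} to each box $(t_{j-1},t_j)$ to obtain decay constants $c_i^{(j)}$ with $\E\big[\sum_{j}\sum_{\lambda_i^{(j)}\in\Delta}(c_i^{(j)})^q\big]$ bounded (the paper states exactly this), then conclude by Markov's inequality. Your explanation of where the $1/k$ gain comes from in the GMP formula is accurate; the introduction of an auxiliary $\eps'<\eps$ is harmless but unnecessary, since the decay statement already holds at rate $(\boldsymbol{\nu_E}-\eps)/2$ for the given $\eps$.
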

\begin{proof}
A simple adaptation of the proof of Proposition \ref{Prop:ExpoDecayCrossover} allows to show the counterpart of Theorem \ref{Th:Loc} for the operators $\cH_L^{(j)}$. Namely, there exist some r.v.~$c_i^{(j)}$ such that for every $\gl_i^{(j)}\in\Delta$
\begin{equation}\label{Eq:PSDecaybar}
\Big(\varphi_i^{(j)}(t)^2 + \frac{(\varphi_i^{(j)})'(t)^2}{\Eg}\Big)^{1/2} \le \frac{c_i^{(j)}}{\sqrt \Eg} e^{-\frac{1}{2}(\boldsymbol{\nu_E}-\eps)\frac{|t-U_i^{(j)}|}{\Eg}}\;,\end{equation}
and for some $q>0$ we have
$$ \limsup_{L\to\infty} \E\Big[\sum_{j=1}^{k}\sum_{\gl_i^{(j)} \in \Delta} (c_i^{(j)})^q\Big] < \infty\;.$$
This being given, the proof of the lemma follows from Markov's inequality.
\end{proof}

Finally, we need some control on the \emph{gaps} between the eigenvalues of $\cH^{(j)}_L$ and on the distance of these eigenvalues to the boundary of $\Delta$. Take some $\delta_L \to 0$ as $L\to\infty$. Let $\cG_L$ be the event on which for any $(i,j) \ne (i',j')$ we have
\begin{equation}\label{Eq:Gap} \lambda_i^{(j)}, \lambda_{i'}^{(j')} \in \Delta \Longrightarrow  | Ln(E)(\lambda_i^{(j)} - \lambda_{i'}^{(j')})| > \delta_L\;,\end{equation}
and
$$ \forall i,j\;,\quad \mbox{dist}(Ln(E)(\lambda_i^{(j)} - E), \partial \Delta) > \delta_L\;.$$
Since we already know from the arguments\footnote{The only missing ingredient is the proof of Proposition \ref{Prop:Intensity} which is presented in the next subsection and does not rely on the present arguments.} in Subsection \ref{Subsec:StrategyPoisson} that $\bar{\cN}_L$ converges to a Poisson point process, we deduce that
$$ \P(\cG_L) \to 1\;,\quad L\to\infty\;.$$
Note that this convergence holds for any given sequence $\delta_L$ that converges to $0$. In the proof below, we will need to impose some restriction on the speed at which $\delta_L$ goes to $0$.\\

Let us also state a simple fact of the theory of generalized Sturm-Liouville operators. The domain of $\cH_L$ is given by
\begin{align*}
\Big\{f \in L^2([0,L]):& \;f(0) = f(L) = 0,\; f \mbox{ A.C.}, \; f'- Bf  \mbox{ A.C.},\\
&\mbox{and } -(f'-Bf)' - Bf' \in L^2([0,L])\Big\}\,.
\end{align*}
As a consequence, if one multiplies an element of the domain by some smooth function, compactly supported in $(t_{j-1},t_j)$, then one gets an element of the domain of $\cH_L^{(j)}$. We will use this fact in the next proof.\\

Finally, let us recall the L\'evy-Prokhorov distance on $\cM=\cM(\R)$ that metrizes the weak convergence topology:
\begin{equation}\label{Eq:LevyPro}
d_\cM(w,w') := \inf\big\{\epsilon > 0: \forall B\in \cB(\R),\; w(B) \le w'(B^\epsilon) + \epsilon \mbox{ and } w'(B) \le w(B^\epsilon) + \epsilon\big\}\;,
\end{equation}
where $B^\epsilon$ is the $\epsilon$-neighborhood of $B$.\\
Recall also that we actually work on $\bar{\cM}=\cM(\bar\R)$ and that one can define $d_{\bar\cM}$ similarly as above: the only difference is that the $\epsilon$-neighborhoods need to be taken w.r.t.~a distance that metrizes $\bar\R$. If one chooses this distance in such a way that the $\epsilon$-neighborhoods on $\bar\cM$ always contain the $\epsilon$-neighborhoods on $\cM$, then one can check that for any measures $w,w' \in \cM(\R)$ we have
$$ d_{\bar{\cM}}(w,w') \le d_{\cM}(w,w')\;.$$
Consequently, since all the measures that we manipulate are actually elements of $\cM$, in the sequel we will only deal with $d_\cM$.

\begin{proof}[Proof of Proposition \ref{Prop:Approx}]
Fix $\Delta := [E-h/(Ln(E)),E+h/(Ln(E))]$. We will show that, if we restrict ourselves to $[-h,h] \times [-1/2,1/2]\times\bar\cM$, then with large probability there is a one-to-one correspondence between the atoms of $\cN_L$ and of $\bar{\cN}_L$, and that the distance between the corresponding pairs of atoms goes to $0$ as $L\to\infty$. The proof consists of two steps.\\

\emph{Step 1.} We argue deterministically on the event $\cB_L\cap\bar{\cB}_L \cap\cG_L$.
Let $(\lambda,\varphi)$ be an eigenvalue/eigenfunction of $\cH_L$ such that $\lambda \in \Delta$. Let $U$ be its center of mass. From the conditions stated in the event $\cB_L$, we know that there exists $j\in \{1,\ldots,k\}$ such
$$ U \in (t_{j-1}+3 t_L \Eg,t_j - 3 t_L \Eg)\;.$$
We consider a smooth function $\chi_j : [-L/2,L/2] \to [0,1]$ that equals $1$ on $[t_{j-1}+2t_L \Eg,t_j-2t_L \Eg]$ and $0$ on $(t_{j-1} + t_L \Eg,t_j-t_L \Eg)^\cc$, and such that
$$ \sup_t |\chi_j'(t)|\vee |\chi_j''(t)| \le \frac{2}{t_L\Eg}\;.$$
We then set
$$ \psi := \frac{\varphi \chi_j}{\|\varphi \chi_j\|_2}\;.$$
Note that $\psi$ is a compactly supported function in $(t_{j-1}, t_j)$ whose $L^2$ norm equals one. Furthermore, by (iii) of $\cB_L$
\begin{align}\label{diffpsivarphi}
 \| \psi - \varphi\|_2^2 \lesssim e^{-Kt_L}\;,
\end{align}
for some constant $K>0$. From now on, the constants $K$ will never depend on $L$ and $j$, but may change from line to line.\\
By construction, $\psi$ belongs to the domain of $\cH_L^{(j)}$ and we have
$$ (\cH_L^{(j)} - \gl) \psi = \begin{cases} 0 &\mbox{ on } (t_{j-1}+2t_L \Eg, t_j - 2t_L \Eg)\\
\|\varphi \chi_j \|^{-1}_2 \; \big(- \varphi \chi_j'' - 2 \varphi' \chi_j'\big) &\mbox{ on } (t_{j-1}, t_{j-1}+2t_L \Eg]\cup [t_j - 2t_L \Eg,t_j)\;.
\end{cases}$$
Henceforth by (iii) of $\cB_L$
\begin{equation}\label{Eq:psi} \|(\cH_L^{(j)} - \gl) \psi \|_{L^2(t_{j-1},t_j)}^2 \le \frac{1}{\Eg} e^{-K t_L}\;.\end{equation}

On the other hand, we can expand $\psi$ on the $L^2(t_{j-1},t_j)$-basis made of the eigenfunctions of $\cH_L^{(j)}$:
$$ \|(\cH_L^{(j)} - \gl) \psi \|_{L^2(t_{j-1},t_j)}^2 = \sum_i |\lambda_i^{(j)} - \gl|^2 \langle \psi,\varphi_i^{(j)}\rangle^2\;.$$
The r.h.s.~is a convex combination of the $|\lambda_i^{(j)} - \gl|^2$. Given the bound \eqref{Eq:psi}, there must exist some $\ell \ge 1$ such that
$$ |\lambda_\ell^{(j)} - \gl|^2 \le \frac{1}{\Eg} e^{-K t_L}\;.$$

Recall that $t_L \gg \ln (L/\Eg)$ so that, provided $\delta_L$ does not go too fast to $0$ (its speed can be fixed arbitrarily), we have
$$ \frac{1}{\Eg} e^{-K t_L} \ll \Big(\frac{\delta_L}{Ln(E)}\Big)^2\;.$$
Given the definition of the event $\cG_L$ we deduce that the integer $\ell$ above is unique, that $\lambda_\ell^{(j)} \in \Delta$, and that we have (recall that $t_L \gg \ln (L/\Eg)$)
$$ \sum_{i: i\ne \ell} \langle \psi,\varphi_i^{(j)}\rangle^2 \le \frac{(Ln(E))^2}{\delta_L^2 \Eg} e^{-K t_L} \le e^{-K t_L}\;,$$
and therefore $\langle \psi,\varphi_\ell^{(j)}\rangle^2 \ge 1-e^{-K t_L}$, and thus
$$ \| \psi - \varphi_\ell^{(j)} \|_2^2 \le e^{-K t_L}\;.$$
Together with \eqref{diffpsivarphi}, we find
$$ \| \varphi - \varphi_\ell^{(j)}\|^2_2 \le  e^{-K t_L}\;.$$
Let $w(dt) = \Eg \varphi(U+t\Eg)^2 dt$ be the probability measure built from $\varphi$ and recentered at its center of mass denoted by $U$. Recall that $w_\ell^{(j)}$ (introduced in Subsection \ref{Subsec:StrategyPoisson}) is the corresponding object for $\varphi_\ell^{(j)}$. By $\cB_L$ and $\bar{\cB}_L$, we have some exponential decay of $\varphi$ and $\varphi_\ell^{(j)}$ from their centers of mass. We thus apply Lemma \ref{Lemma:Centers} and deduce the existence of a constant $C$, which is at most polynomial in $t_L$, such that
$$ \frac{|U-U_\ell^{(j)}|}{\Eg} \le C e^{-\frac{K}{4}t_L}\;,\quad d_\cM(w,w_\ell^{(j)}) \le C e^{-\frac{K}{4}t_L}\;.$$
These quantities vanish when $L\to\infty$.\\

We have built a map that associates to any eigenvalue $\gl \in \Delta$ of the operator $\cH_L$ an eigenvalue $\gl_\ell^{(j)} \in \Delta$ in such a way that
$$ |\lambda_\ell^{(j)} - \gl|^2 \le \frac{1}{\Eg} e^{-K t_L}\;,\quad \frac{|U-U_\ell^{(j)}|}{\Eg} \le C e^{-\frac{K}{4}t_L}\;,\quad d_\cM(w,w_\ell^{(j)}) \le C e^{-\frac{K}{4}t_L}\;,$$
as well as $\| \varphi - \varphi_\ell^{(j)}\|^2_2 \le  e^{-Kt_L}$. This map is necessarily injective: indeed, we cannot find two orthonormal functions $\varphi$ and $\tilde{\varphi}$ satisfying
$$ \| \varphi - \varphi_\ell^{(j)}\|^2_2 \le  e^{-Kt_L}\;,\quad \| \tilde\varphi - \varphi_\ell^{(j)}\|^2_2 \le  e^{-Kt_L}\;.$$
To conclude the proof, it remains to show that this map is actually bijective, this is the purpose of the second step.\\

\emph{Step 2.} Let $F_L$ be the event $\{\sum_{j=1}^k N_L^{(j)}(\Delta) \ge N_L(\Delta)\}$. By the first step, $\P(F_L) \to 1$ as $L\to\infty$. Proposition \ref{Prop:Moment} ensures uniform integrability of the collection of r.v.~$N_L(\Delta)$, $L>1$, so that
$$ \lim_{L\to\infty}\E[N_L(\Delta) \un_{F_L^\cc}] = 0\;.$$
Since
$$ 0 \le \E\Big[\big(N_L(\Delta)-\sum_j N^{(j)}_L(\Delta)\big)\un_{F_L^\cc}\Big] \le \E\Big[N_L(\Delta)\un_{F_L^\cc}\Big]\;,$$
we deduce that the term in the middle vanishes as $L\to\infty$. Combining this convergence with (1) and (2) of Proposition \ref{Prop:MicroEstimates}, we deduce that
$$ \lim_{L\to\infty} \E\Big[\big(\sum_j N^{(j)}_L(\Delta) - N_L(\Delta)\big)\un_{F_L}\Big] = \lim_{L\to\infty} \E\Big[\sum_j N^{(j)}_L(\Delta) - N_L(\Delta)\Big]= 0\;.$$
Since $(\sum_j N^{(j)}_L(\Delta) - N_L(\Delta))\un_{F_L}$ is a non-negative r.v. taking values in $\N$, we deduce that
$$ \lim_{L\to\infty}\P\Big(\sum_{j=1}^k N_L^{(j)}(\Delta) = N_L(\Delta)\Big) = 1\;.$$
Consequently, the injective map constructed in Step 1 on $\cB_L\cap\bar\cB_L\cap\cG_L$ is actually bijective, possibly on a smaller event, but whose probability still goes to one.
\end{proof}

\subsection{The limiting intensity}\label{Subsec:Intensity}

Let us start by defining the two random processes $Y_E$ and $Y_\infty$ introduced before the statement of Theorem \ref{Th:Shape}.
\medskip

In the Crossover regime, we have seen that for some two-sided Brownian motion $\cB$,
$$ Y_\infty(t) := \exp\Big(- \frac{|t|}{8} + \frac{\cB(t)}{2\sqrt 2}\Big)\;,\quad t\in\R\;.$$

In the Bulk regime, the definition of $Y_E$ requires more notations. Consider two independent adjoint diffusions $(\bar{\theta}^+_E(t), \bar{\rho}^+_E(t); t\ge 0)$ and $(\bar{\theta}^-_E(t), \bar{\rho}^-_E(t); t\ge 0)$, satisfying the SDEs \eqref{Eq:bartheta} and \eqref{Eq:barrho}, and starting from
$$ \bar{\theta}^+_E(0) = \theta\;,\quad \bar{\theta}^-_E(0) = \pi-\theta\;,\quad \bar{\rho}^+_E(0) = \bar{\rho}^-_E(0) = 0\;.$$
In other words, we work under the product measure $\bar{\P}^+_{(0,\theta)} \otimes \bar{\P}^-_{(0,\pi-\theta)}$ with the additional convention that $\bar{r}^\pm_E(0)=1$. We let $\bar{y}_E^\pm(t) := e^{\frac12 \bar{\rho}_E^\pm(t)} \sin \bar\theta_E^\pm(t)$. We define their concatenation\footnote{We follow $\bar{y}^-$ on $[0,\infty)$ and $\bar{y}^+(-\cdot)$ on $(-\infty,0]$ in order to be consistent with the definition of the concatenation of Subsection \ref{subsec:defforwardbackward}.}
$$ \hat{\bar{y}}_E(t) := \begin{cases} \bar{y}_E^-(t) &\mbox{ if } t\ge 0\;,\\
\bar{y}_E^+(t) &\mbox{ if } t\le 0\;.
\end{cases}$$
We then consider a mixture of these concatenations over different values of $\theta$: under the probability measure
$$ \frac{\mu_E(\theta) \mu_E(\pi-\theta) \sin^2 \theta}{n(E)} \bar{\P}^+_{(0,\theta)} \otimes \bar{\P}^-_{(0,\pi-\theta)}(\cdot) d\theta\;,$$
we define the process
\begin{align*}
 Y_E(t) := \hat{\bar{y}}_E(t)\;,\quad t\in\R\;.
\end{align*}

One can now provide the definition of the limiting probability measure $\boldsymbol{\sigma_E}$, which is a unified notation for the probability measures $\sigma_E$ in the Bulk regime and $\sigma_\infty$ in the Crossover regime that appear in the statement of Theorem \ref{Th:Shape}. Let us denote by $\boldsymbol{Y_E}$ the process $Y_E$ in the Bulk regime, and the process $Y_\infty$ in the Crossover regime.
In both regimes, $\boldsymbol{\sigma_E}$ is the law of the random element in $\bar\cM$ defined as
$$ \boldsymbol{w_E}(dt) = \frac{\boldsymbol{Y_E}(t+\boldsymbol{U_E})^2 dt}{\int \boldsymbol{Y_E}(t)^2 dt}\;,$$
where $\boldsymbol{U_E}$ is the associated center of mass
$$ \boldsymbol{U_E} :=  \frac{\int t \boldsymbol{Y_E}(t)^2 dt}{\int \boldsymbol{Y_E}(t)^2 dt}\;.$$

\medskip

The rest of this subsection is devoted to the proof of Proposition \ref{Prop:Intensity}. From now on, let us fix some function $f$ as in Proposition \ref{Prop:Intensity}, and we let $h>0$ be such that $f(\gl,\cdot,\cdot) = 0$ whenever $\gl \notin [-h,h]$. We also set $\Delta := [E- h/(Ln(E)) , E + h/(Ln(E))]$.\\

Recall the notations for the concatenation of the diffusions from Subsection \ref{subsec:defforwardbackward}. For $u \in [-\frac{L}{2k\Eg},\frac{L}{2k\Eg}]$, let us consider the product law $\P^{+}_{(-L/(2k\Eg), 0) \to (u,\theta)} \otimes \P^{-}_{(-L/(2k\Eg), 0) \to (-u,\pi -\theta)}$. With a slight abuse of notation, we still denote this product law by $\P^{(u)}_{(\theta,\pi-\theta)}$ (originally, this notation was for the time interval $[-L/(2\Eg),L/(2\Eg)]$). Then, we define under this product law the probability measure on $\R$ built from the concatenation process $\hat{y}_\gl^{(\Eg)}$:
$$ \frac{\hat{y}_\gl^{(\Eg)}(t)^2 dt}{\int \hat{y}_\gl^{(\Eg)}(t)^2 dt}\;.$$
The support of this measure is $[-\frac{L}{2k\Eg},\frac{L}{2k\Eg}]$. We let $\hat{U}_\gl/\Eg$ be the center of mass of this measure
\begin{equation}\label{Eq:hatUgl} \frac{\hat{U}_\gl}{\Eg} := \frac{\int t \,\hat{y}_\gl^{(\Eg)}(t)^2 dt}{\int \hat{y}_\gl^{(\Eg)}(t)^2 dt}\;.\end{equation}
We then recenter the probability measure by defining:
$$ \hat{w}_\gl^{(\Eg)}(dt) := \frac{\hat{y}_\gl^{(\Eg)}(\hat{U}_\gl/\Eg + t)^2 dt}{\int \hat{y}_\gl^{(\Eg)}(\hat{U}_\gl/\Eg + t)^2 dt}\;.$$

We introduce the measure
$$ \bbQ^{(u)}_\gl(\cdot):= \frac1{\sqrt\Eg \,n(\gl)}\int_{\theta=0}^{\pi} \mu^{(\Eg)}_\gl(\theta) \mu^{(\Eg)}_\gl(\pi-\theta) \sin^2 \theta \;\P^{(u)}_{(\theta,\pi-\theta)}(\cdot) d\theta\;.$$
By Corollary \ref{cor:DOSformula}, this is a probability measure. Our next lemma is the main argument for the identification of the limiting intensity of $\cN_L^{(j)}$.

\begin{lemma}\label{Lemma:CVmgl}
Take $t_L$ such that $\ln( L/\Eg) \ll t_L \ll L/(k\Eg)$. Uniformly over all $\gl \in \Delta$ and all $u\in [-\frac{L}{2k\Eg}+t_L, \frac{L}{2k\Eg}-t_L]$, the law of $\hat{{w}}_\gl^{(\Eg)}$ under $\bbQ^{(u)}_\gl$ converges weakly to the probability measure $\boldsymbol{\sigma_E}$ on $\bar\cM$.
\end{lemma}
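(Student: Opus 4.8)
The plan is to localise the concatenation near the gluing point $u$, use time reversal to recognise the adjoint diffusions, and then pass to the limit — by continuity of the SDEs in the Bulk regime, and by a homogenisation argument in the Crossover regime.

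\emph{Step 1: localisation near $u$.} Combining the linear growth estimates of Lemma \ref{Lemma:Zgl} with the bridge absolute continuity of Lemma \ref{Lemma:CondToUncond} (legitimate since $u$ lies at distance $\ge t_L \gg \ln(L/\Eg)$ from both endpoints $\pm L/(2k\Eg)$), one shows: under $\bbQ^{(u)}_\gl$, for every $\eta>0$ there is a fixed $t_0=t_0(\eta)$ so that, with $\bbQ^{(u)}_\gl$-probability $\ge 1-\eta$ uniformly in $\gl\in\Delta$, in $u$ and in $L$ large, the trajectory $\hat y^{(\Eg)}_\gl$ carries all but an $\eta$-fraction of its $L^2$ mass — and of $\int|t-u|\,\hat y^{(\Eg)}_\gl(t)^2\,dt$ — inside $[u-t_0,u+t_0]$; the denominator $\|\hat y^{(\Eg)}_\gl\|_2$ is bounded below w.h.p. exactly as in the control of \eqref{Eq:MassExpoDecay}. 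Since the map from a trajectory on a fixed window to its normalised, barycentre-recentred measure $\tfrac{y(\hat U+\cdot)^2 dt}{\int y(\hat U+\cdot)^2 ds}$ is continuous on $\bar\cM$, and the associated first-moment functionals are uniformly integrable under $\bbQ^{(u)}_\gl$ (same exponential decay; cf.~Lemma \ref{Lemma:Centers}), it suffices to prove that the law under $\bbQ^{(u)}_\gl$ of the random measure $\hat y^{(\Eg)}_\gl(u+t)^2\,dt$ on $[-t_0,t_0]$ converges as $L\to\infty$, uniformly over $\gl\in\Delta$ and $u$, to that of $\boldsymbol{Y_E}(t)^2\,dt$ on $[-t_0,t_0]$, and then to let $t_0\to\infty$.

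\emph{Step 2: time reversal.} By definition of $\bbQ^{(u)}_\gl$, conditionally on the common angle $\theta$ at the gluing point — drawn with density $\mu^{(\Eg)}_\gl(\theta)\mu^{(\Eg)}_\gl(\pi-\theta)\sin^2\theta/(\sqrt\Eg\,n(\gl))$ — the forward piece on $[-L/(2k\Eg),u]$ is a bridge ending at $(u,\theta)$ and the backward piece a bridge ending at $(-u,\pi-\theta)$. Restricting to the window and dropping the conditioning at the far endpoints via Lemma \ref{Lemma:CondToUncond} (again since $u$ is $\gg t_0$ from the boundary), the windowed forward piece read backward from $u$ has, up to a total variation error tending to $0$, the law of the adjoint diffusion $\bar\theta^{(\Eg)}_\gl$ started from $\theta$ with $\bar r^{(\Eg)}_\gl(0)=1$; this is precisely the adjunction identity \eqref{Eq:Adjoint} and the remark following it, which also transfers the increments of $\rho$ to those of $\bar\rho$. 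The same holds for the backward piece with $\pi-\theta$. Hence, up to vanishing error, $(\hat y^{(\Eg)}_\gl(u+t))_{|t|\le t_0}$ is distributed as the concatenation $\hat{\bar{y}}^{(\Eg)}_\gl$ of two adjoint diffusions from $\theta$ and $\pi-\theta$, under the above $\theta$-mixture. In the Bulk regime this is the object defining $Y_E$ but with $\gl$ in place of $E$; since $\Delta$ shrinks to $\{E\}$, standard stability of SDEs in the drift parameter together with continuity of $\mu_\gl$, $n(\gl)$ in $\gl$ (Lemma \ref{Lemma:InvMeas}) give convergence of these laws, uniformly over $\gl\in\Delta$, to the law of $\hat{\bar{y}}_E$, hence — after normalisation and recentring — to $\sigma_E$.

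\emph{Step 3: the Crossover regime.} Here one lets $E\to\infty$ in the distorted picture. By Lemma \ref{Lemma:InvMeas} and Corollary \ref{cor:DOSformula} ($\sqrt E\,n(\gl)=\int\mu^{(E)}_\gl(\theta)\mu^{(E)}_\gl(\pi-\theta)\sin^2\theta\,d\theta\to\tfrac1{2\pi}$) the mixing density converges uniformly to $\tfrac2\pi\sin^2\theta$, so the initial angle becomes asymptotically irrelevant — the source of universality. Writing $\hat y^{(E)}_\gl(u+t)^2=e^{\hat\rho^{(E)}_\gl(u+t)}\sin^2\hat\theta^{(E)}_\gl(u+t)$ and $\sin^2=\tfrac12-\tfrac12\cos 2\hat\theta$, an averaging estimate — the phase rotates at speed $\asymp E^{3/2}$ while $\hat\rho$ varies on scale $1$, the error being controlled quantitatively by Theorem \ref{Th:CVDensities} — gives $\int f(t)\,e^{\hat\rho^{(E)}_\gl(u+t)}\cos 2\hat\theta^{(E)}_\gl(u+t)\,dt\to 0$ in probability for smooth $f$, so $\hat y^{(E)}_\gl(u+t)^2\,dt$ is asymptotically $\tfrac12 e^{\hat\rho^{(E)}_\gl(u+t)}\,dt$. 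Finally $t\mapsto\hat\rho^{(E)}_\gl(u+t)$ converges in law in $C[-t_0,t_0]$ to $\rho_\infty(t):=-\tfrac14|t|+\tfrac1{\sqrt2}\cB(t)$: the drift averages to $\mp\tfrac14$ on the two sides of $u$ ($\sqrt E(\gl-\Eg)=O(E/L)\to 0$ on $\Delta$, and $\sin^2 2\hat\theta$, $\sin^2\hat\theta$ average to $\tfrac12$), while the martingale part $\int\sin 2\hat\theta^{(E)}_\gl\,dB^{(E)}$ has bracket converging to $\tfrac12$ times elapsed time and hence converges to $\tfrac1{\sqrt2}\cB$ by the martingale central limit theorem — both uniformly in $\gl\in\Delta$ and $u$, the error terms being bounded via Theorem \ref{Th:CVDensities} and Lemma \ref{Lemma:Zgl}. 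Since $\tfrac12 e^{\rho_\infty(t)}\,dt$ normalised and recentred at its barycentre is by definition $w_\infty$, one gets convergence to $\sigma_\infty$. Letting $t_0\to\infty$ and combining with Step 1 yields the lemma. The main obstacle is this Crossover homogenisation: controlling, uniformly in $E$, $\gl$ and $u$, both the vanishing of the fast-oscillating factor against the slowly varying exponential weight and the invariance-principle convergence of $\hat\rho^{(E)}_\gl$ to Brownian motion with drift; the Bulk regime, by contrast, reduces to routine continuous dependence of SDEs on parameters once the bridge-to-free reduction of Lemma \ref{Lemma:CondToUncond} is in hand.
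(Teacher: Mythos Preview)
Your proof follows essentially the same route as the paper: localise via the exponential decay of $\hat r^{(\Eg)}_\gl$ (the paper's ``control of the mass at infinity''), pass from bridges to unconditioned adjoint diffusions via \eqref{Eq:Adjoint} and Lemma \ref{Lemma:CondToUncond}, then treat the Bulk regime by continuity in $\gl$ and the Crossover regime by homogenisation. The paper isolates the Crossover step into a separate Lemma \ref{Lemma:SDECV}, which establishes joint process-level convergence of $(\bar\theta^{(E)}_\gl+E^{3/2}t,\bar\rho^{(E)}_\gl)$ via tightness and a martingale problem, and only then applies Skorohod and the Riemann--Lebesgue lemma to kill the $\sin^2\bar\theta$ factor; your inline treatment is the same idea, but one point to correct is that the vanishing of $\int f\,e^{\bar\rho}\cos 2\bar\theta\,dt$ and the convergence of the bracket of $\int\sin 2\bar\theta\,dB^{(E)}$ are not consequences of Theorem \ref{Th:CVDensities} --- that theorem controls long-time relaxation to $\mu^{(E)}_\gl$, whereas what is needed here is the deterministic rotation at speed $E^{3/2}$, made rigorous by first establishing tightness of the recentred phase $\bar\theta^{(E)}_\gl+E^{3/2}t$.
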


With this lemma at hand, we can proceed with the proof of Proposition \ref{Prop:Intensity}.

\begin{proof}[Proof of Proposition \ref{Prop:Intensity}]
First of all, we have
$$ \int f d\cN^{(j)}_L  = \sum_{i\ge 1} f\big(Ln(E)(\lambda_i^{(j)}-E),U_i^{(j)}/L,w_i^{(j)} \big)\;.$$
Note that the operator at stake here is $\cH_L^{(j)}$ on $(t_{j-1},t_j)$. Let $a_j$ be the midpoint of $(t_{j-1},t_j)$. By the GMP formula of Proposition \ref{Prop:GMP}, using forward/backward processes on the interval $[-L/(2k\Eg),L/(2k\Eg)]$ and then shifting the evaluations by $a_j$, we find
\begin{align*}
&\E\Big[\sum_{i\ge 1} f\big(Ln(E)(\lambda_i^{(j)}-E),U_i^{(j)}/L ,w_i^{(j)}\big)\Big]\\
&= \sqrt \Eg \int_{u=-\frac{L}{2 k\Eg}}^{\frac{L}{2k\Eg}} \int_{\gl \in\R} \int_{\theta=0}^\pi p_{\gl,\frac{L}{2k\Eg}+u}^{(\Eg)}(\theta) p_{\gl,\frac{L}{2k\Eg}-u}^{(\Eg)}(\pi-\theta)\sin^2 \theta\\
&\qquad\qquad\times\E^{(u)}_{\theta,\pi-\theta}\Big[f\big(Ln(E)(\lambda-E) , (\hat{U}_\gl+a_j) / L , \hat{w}_\gl^{(\Eg)}\big)\Big]\, d\theta\, d\gl\, du\;.
\end{align*}

Note that $|\hat{U}_\gl/L| \le 1/(2k)$ and $|u\Eg/L| \le 1/(2k)$ so that they vanishes as $L\to\infty$. By the uniform continuity of $f$, up to some negligible term $o(1/k)$, we can replace the expectation term by
\begin{align*}
\E^{(u)}_{\theta,\pi-\theta}\Big[f(Ln(E)(\lambda-E) ,(a_j+u\Eg)/L, \hat{w}_\gl^{(\Eg)})\Big]\;.
\end{align*}
Let $t_L$ be such that $\ln (L/\Eg) \ll t_L \ll L/(k\Eg)$. In the previous expression, the integral over $u \in [-\frac{L}{2 k\Eg}, -\frac{L}{2 k\Eg}+t_L] \cup [\frac{L}{2 k\Eg}-t_L,\frac{L}{2 k\Eg}]$ yields a negligible term $o(1/k)$. On the remaining integral, Theorem \ref{Th:CVDensities} allows to replace the densities by the invariant measure, up to some negligible term $o(1/k)$. We are left with
\begin{align*}
&\sqrt \Eg \int_{u=-\frac{L}{2 k\Eg}+t_L}^{\frac{L}{2k\Eg}-t_L} \int_{\gl \in\R} \int_{\theta=0}^\pi \mu_{\gl}^{(\Eg)}(\theta) \mu_{\gl}^{(\Eg)}(\pi-\theta) \sin^2 \theta\\
&\qquad\qquad\qquad\times\E^{(u)}_{\theta,\pi-\theta}\Big[f(Ln(E)(\lambda-E) ,(a_j+u\Eg)/L, \hat{w}_\gl^{(\Eg)})\Big]\, d\theta \,d\gl\, du\\
&= \Eg \int_{u=-\frac{L}{2 k\Eg}+t_L}^{\frac{L}{2k\Eg}-t_L} \int_{\gl \in\R} n(\gl) \bbQ^{(u)}_{\gl}\Big[f(Ln(E)(\lambda-E) ,(a_j+u\Eg)/L, \hat{w}_\gl^{(\Eg)})\Big]\, d\theta \,d\gl \,du\;.
\end{align*}
Note that $n(\gl) / n(E) \to 1$ uniformly over all $\gl\in\Delta$. By Lemma \ref{Lemma:CVmgl}, the last expression is equivalent as $L\to\infty$ to
\begin{align*}
&\Eg \int_{u=-\frac{L}{2 k\Eg}+t_L}^{\frac{L}{2k\Eg}-t_L} \int_{\gl \in\R} n(E) \boldsymbol{\sigma_E}\Big[f(Ln(E)(\lambda-E) ,(a_j+u\Eg)/L, w)\Big]\, d\gl \,du\;.
\end{align*}
Applying the changes of variables $\gl \mapsto Ln(E)(\lambda-E)$ and $u\mapsto (a_j+u\Eg)/L$, this last expression is equivalent to
$$\int_{\R\times [-\frac{1}{2} + \frac{j-1}{k}, -\frac{1}{2} + \frac{j}{k}]\times \bar\cM} f(\gl,u,w) \; d\gl\otimes du \otimes \boldsymbol{\sigma_E}(dw)\;,$$
as required.
\end{proof}

\subsection{Proof of Lemma \ref{Lemma:CVmgl}}

We are left with proving Lemma \ref{Lemma:CVmgl}. To that end, we need a convergence result in the Crossover regime, that follows from very similar arguments to those presented in~\cite[Prop 4.2]{DLCritical}.
\begin{lemma}\label{Lemma:SDECV}
In the Crossover regime, uniformly over all $\gl \in \Delta$ and all $\theta\in [0,\pi)$ the process
$$ (\bar{\theta}^{(E)}_\gl(t)+E^{3/2}t, \bar{\rho}^{(E)}_\gl(t);t\ge 0)\;,\mbox{ with }\bar{\theta}^{(E)}_\gl(0) = \theta\;,\quad \bar{\rho}^{(E)}_\gl(0) = 0\;;$$
converges in law to $(\bar{\Theta}(t),\bar{R}(t);t\ge 0)$ which solves the following SDEs:
\begin{align*}
d\bar{\Theta}(t) &= -\frac12 d\cB(t) + \frac1{2\sqrt 2}\Re( e^{2i \bar{\Theta}_\gl(t)} d\cW(t))\;,\quad t\ge 0\\
d\bar R(t) &= - \frac14dt + \frac1{\sqrt 2}\Im(e^{2i \bar{\Theta}_\gl(t)} d\cW(t))\;,\quad t\ge 0\;,
\end{align*}
starting from $\bar{\Theta}(0) = \theta$, $\bar R(0) = 0$, where $\cW$ is a complex Brownian motion\footnote{We mean that the real and imaginary parts are independent, standard Brownian motions.} and $\cB$ is an independent real Brownian motion.\\
Furthermore for any compactly supported, continuous function $g:[0,\infty)\to \R$, we have the following convergence in law
$$ \int g(t) e^{\bar{\rho}^{(E)}_\gl(t)} \sin^2 \bar{\theta}_\gl^{(E)}(t) dt \Rightarrow \frac12 \int g(t) e^{\bar R(t)}dt\;.$$
\end{lemma}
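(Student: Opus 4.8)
The plan is to run a slow--fast averaging argument, entirely parallel to~\cite[Prop.~4.2]{DLCritical}. Fix a finite horizon $T>0$; all convergences below are on $[0,T]$. Set $\bar\Theta_E(t):=\bar\theta_\gl^{(E)}(t)+E^{3/2}t$, so that by~\eqref{Eq:bartheta}
\begin{align*}
d\bar\Theta_E(t)=\Big(-\sqrt E(\gl-E)\sin^2\bar\theta+3\sin^3\bar\theta\cos\bar\theta+\sin^4\bar\theta\,\frac{\partial_\theta\mu_\gl^{(E)}(\bar\theta)}{\mu_\gl^{(E)}(\bar\theta)}\Big)dt-\sin^2\bar\theta\,d\bar B^{(E)}(t)\,,
\end{align*}
where $\bar\theta=\bar\theta_\gl^{(E)}(t)=\bar\Theta_E(t)-E^{3/2}t$ plays the role of the fast phase; the unbounded drift $-E^{3/2}$ has disappeared. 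Since $|\sqrt E(\gl-E)|\le\sqrt E\,h/(Ln(E))\asymp hE/L\to0$ uniformly over $\gl\in\Delta$ (here $E\ll L$), and $\partial_\theta\mu_\gl^{(E)}/\mu_\gl^{(E)}$ is bounded by Lemma~\ref{Lemma:InvMeas}, the coefficients of this equation and of~\eqref{Eq:barrho} are bounded uniformly in $E>1$ and $\gl\in\Delta$. Using this, routine moment bounds, Kolmogorov's criterion, and Lemma~\ref{Lemma:Zgl} to control $\sup_{[0,T]}|\bar\rho_\gl^{(E)}|$, I would first prove tightness of the laws of $(\bar\Theta_E,\bar\rho_\gl^{(E)})$, and of the additive functional $t\mapsto\int_0^t g(s)e^{\bar\rho_\gl^{(E)}(s)}\sin^2\bar\theta_\gl^{(E)}(s)\,ds$, in $C([0,T])$, uniformly in $E$ and $\gl$.

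The core of the argument is the identification of the subsequential limits via the martingale problem, for which I would establish the following averaging lemma: for every continuous $F\colon\R^2\to\R$, every continuous weight $g$, and every bounded measurable $\psi$ on $\R/\pi\Z$, along any subsequence with $(\bar\Theta_E,\bar\rho_\gl^{(E)})\Rightarrow(\bar\Theta,\bar R)$ one has, jointly,
\begin{align*}
\int_0^\cdot g(s)\,F\big(\bar\Theta_E(s),\bar\rho_\gl^{(E)}(s)\big)\,\psi\big(\bar\Theta_E(s)-E^{3/2}s\big)\,ds\;\Longrightarrow\;\langle\psi\rangle\int_0^\cdot g(s)\,F\big(\bar\Theta(s),\bar R(s)\big)\,ds\,,\quad\langle\psi\rangle:=\frac1\pi\int_0^\pi\psi\,.
\end{align*}
This is proved by splitting $[0,T]$ into short blocks on which $\bar\Theta_E$ is nearly constant while $E^{3/2}s$ performs many rotations, plus an elementary oscillatory-integral estimate for the remainder. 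Applying it to every trigonometric coefficient of~\eqref{Eq:bartheta}--\eqref{Eq:barrho} — and to the $\partial_\theta\mu_\gl^{(E)}/\mu_\gl^{(E)}$ term, where one additionally uses $\mu_\gl^{(E)}\to1/\pi$ (Lemma~\ref{Lemma:InvMeas}) together with an integration by parts giving $\langle\sin^3 x\cos x\,(\ln\mu_\gl^{(E)})'(x)\rangle\to0$ — the averaged drifts and diffusion coefficients are constants: drift $0$ for $\bar\Theta$; drift $\langle\tfrac12\sin^22x+\sin^2x-8\sin^2x\cos^2x\rangle=-\tfrac14$ for $\bar R$; variance rates $\langle\sin^4\rangle=\tfrac38$ and $\langle\sin^22\cdot\rangle=\tfrac12$; cross-rate $\langle-2\sin^3x\cos x\rangle=0$. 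Hence any subsequential limit is a continuous $\R^2$-valued martingale with constant covariance plus the linear drift $-t/4$ in the second coordinate — in particular a Gaussian process — so the limit law is unique, the whole family converges, and it only remains to note that the SDE system in the statement has exactly the generator $\tfrac3{16}\partial_\theta^2+\tfrac14\partial_r^2-\tfrac14\partial_r$, hence the same law.

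The one genuinely delicate step is to produce the precise noise representation of the statement, in which the single input $\bar B^{(E)}$ generates in the limit a real Brownian motion $\cB$ together with an \emph{independent} complex Brownian motion $\cW$ carrying the coefficients $e^{2i\bar\Theta}$. For this I would decompose $\sin^2\bar\theta=\tfrac12-\tfrac12\cos2\bar\theta$, so that the martingale parts of $\bar\theta_\gl^{(E)}$ and $\bar\rho_\gl^{(E)}$ are $\bar B^{(E)}$ itself plus stochastic integrals against $\cos(2\bar\Theta_E(s)-2E^{3/2}s)$ and $\sin(2\bar\Theta_E(s)-2E^{3/2}s)$; expanding these via the addition formulas into slowly-varying factors $\cos2\bar\Theta_E,\sin2\bar\Theta_E$ times the pure fast oscillations $\cos(2E^{3/2}s),\sin(2E^{3/2}s)$, and proving a joint functional central limit theorem for the resulting vector of martingales (again by the block decomposition), the integrals against $\cos(2E^{3/2}s)$ and $\sin(2E^{3/2}s)$ converge to the two independent components of $\cW$, asymptotically uncorrelated with $\bar B^{(E)}\Rightarrow\cB$ because all the relevant covariation rates average to zero, while the factors $\cos2\bar\Theta_E,\sin2\bar\Theta_E$ pass to the limit and assemble into $e^{2i\bar\Theta}$. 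Finally, one more application of the averaging lemma with $\psi=\sin^2$ (so $\langle\psi\rangle=\tfrac12$), the continuous weight $g$, and $F(\theta,r)=e^{r}$ truncated in $r$ by means of Lemma~\ref{Lemma:Zgl}, yields $\int_0^\cdot g(s)e^{\bar\rho_\gl^{(E)}(s)}\sin^2\bar\theta_\gl^{(E)}(s)\,ds\Rightarrow\tfrac12\int_0^\cdot g(s)e^{\bar R(s)}\,ds$ jointly with the convergence of the diffusions, which completes the proof.
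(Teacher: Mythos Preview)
Your proposal is correct and follows essentially the same route as the paper: shift away the fast drift, establish tightness from the now-bounded coefficients, kill the oscillating drift terms by a quantitative Riemann--Lebesgue/averaging argument, and identify the limit through the martingale problem; for the functional $\int g\,e^{\bar\rho}\sin^2\bar\theta$, both you and the paper use the same averaging idea (the paper via Skorohod plus Riemann--Lebesgue, you via your averaging lemma with $\psi=\sin^2$). The one cosmetic difference is that the paper introduces the approximating complex martingale $d\bar W^{(E)}(t)=\sqrt2\,e^{-2iE^{3/2}t}\,d\bar B^{(E)}(t)$ at the outset and rewrites the SDEs directly in the form of the limiting system plus error terms, which packages your addition-formula decomposition and block CLT into a single object and makes the emergence of the independent pair $(\cB,\cW)$ immediate.
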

\begin{proof}[Proof of Lemma \ref{Lemma:SDECV}]
The proof of the first part is essentially the same as the proof of~\cite[Prop 4.2]{DLCritical} so we only present the main steps. Set $v_\gl(t) = \bar{\theta}^{(E)}_\gl(t)+E^{3/2}t$. Introduce the martingale
$$ d\bar{W}^{(E)}(t) = \sqrt{2} \cos(-2E^{3/2}t) d\bar{B}^{(E)}(t) + i \sqrt{2} \sin(-2E^{3/2}t) d\bar{B}^{(E)}(t)\;.$$
One can write the SDEs \eqref{Eq:bartheta} and \eqref{Eq:barrho} solved by these two processes as follows
\begin{align*}
d v_\gl(t) &= -\frac12 d\bar{B}^{(E)}(t) + \frac1{2\sqrt 2}\Re( e^{2i v_\gl(t)} d\bar{W}^{(E)}(t)) + \cE(v_\gl)(t) dt\\
d \bar{\rho}_\gl^{(E)}(t) &= -\frac14 dt + \frac1{\sqrt 2}\Im(e^{2i v_\gl(t)} d\bar{W}^{(E)}(t)) + \cE(\bar{\rho}_\gl^{(E)})(t)dt\;,
\end{align*}
where $\cE(v_\gl)(t)$ and $\cE(\bar{\rho}_\gl^{(E)})(t)$ should be seen as negligible terms
\begin{align*}
\cE(v_\gl)(t) &= - \sqrt E (\gl-E) \sin^2 \overline{\theta}_\gl^{(E)} +\sin^4 \overline{\theta}_\gl^{(E)} \frac{\partial_\theta \mu_\gl^{(E)}(\overline{\theta}_\gl^{(E)})}{\mu_\gl^{(E)}(\overline{\theta}_\gl^{(E)})} + 3\sin^3(\overline{\theta}_\gl^{(E)})\cos(\overline{\theta}_\gl^{(E)})\;,\\
\cE(\bar{\rho}_\gl^{(E)})(t) &= \sqrt E (\lambda -E)\sin 2\overline{\theta}_\gl^{(E)} +\frac{3}{4} \cos 4\bar{\theta}_\gl^{(E)}(t) - \frac12 \cos 2\bar{\theta}_\gl^{(E)}(t)\\
&\quad- 2 \sin^3 \bar\theta^{(E)}_\gl \cos \bar\theta^{(E)}_\gl \frac{\partial_\theta \mu_\gl^{(E)}(\bar{\theta}_\gl^{(E)})}{\mu_\gl^{(E)}(\bar{\theta}_\gl^{(E)})}\;.
\end{align*}

The proof consists of three steps. First, the tightness of these processes can be derived by estimating the moments of their increments, with the help of the Burkholder-Davis-Gundy inequality. Second, one shows that the integrals in time of the terms $\cE(\cdot)$ are negligible: the main argument is a quantitative Riemann-Lebesgue Lemma that takes advantage of the rapid oscillations of $\bar{\theta}_\gl^{(E)}(t) = v_\gl(t) - E^{3/2}t$. Finally, one identifies the limit of any converging subsequence by a martingale problem. We refer to~\cite[Prop 4.2]{DLCritical} for more details.\\

We turn to the last part of the statement. Fix $g : [0,\infty) \to \R$ a compactly supported continuous function. By Skorohod's Representation Theorem, we can assume that the already proven convergence holds almost surely. Then, using that the function $\sin^2$ is Lipschitz, the term
$$ \int g(t) e^{\bar{\rho}^{(E)}_\gl(t)} \sin^2 \bar{\theta}_\gl^{(E)}(t) dt\;,$$
is asymptotically as close as desired to
$$
\int g(t) e^{\bar{R}(t)} \sin^2 \big(\bar{\Theta}(t) - E^{3/2}t\big) dt = \frac12 \int g(t) e^{\bar{R}(t)} dt - \frac12 \int g(t) e^{\bar{R}(t)} \cos\big(2(\bar{\Theta}(t) - E^{3/2}t)\big) dt\;.$$
The second term on the r.h.s.~rewrites
$$ \frac12 \int g(t) e^{\bar{R}(t)}\Big( \cos\big(2\bar{\Theta}(t)\big)\cos(2E^{3/2}t) + \sin\big(2\bar{\Theta}(t)\big)\sin(2E^{3/2}t)\Big) dt\;,$$
and this goes to $0$ as $L\to\infty$ by the Riemann-Lebesgue Lemma.
\end{proof}
We now proceed with the proof of Lemma \ref{Lemma:CVmgl}.
\begin{proof}[Proof of Lemma \ref{Lemma:CVmgl}]
Assume that we can show:\begin{enumerate}
\item For every compactly supported, continuous function $g : \R\to\R$ the r.v.~
$$ \int g(t) \yu_\gl^{(\Eg)}(t+u)^2dt\;,$$
under $\bbQ^{(u)}_\gl$ converges in law to
$$\int g(t) \boldsymbol{Y_E}(t)^2 dt\;,$$
uniformly over all $u$ and all $\gl$.
\item \emph{Control of the mass at infinity.} There exists $q>0$ such that
$$ \limsup_{x\to\infty}\limsup_{L\to\infty} \,\sup_{u,\gl}\, \bbQ^{(u)}_\gl\Big(\int e^{q|t-u|} \yu_\gl^{(\Eg)}(t)^2dt > x \Big) = 0\;.$$
\end{enumerate}

The control of the mass at infinity (2) allows to lift the convergence in law stated in (1): namely, for every continuous and bounded function $g:\bar\R \to \R$ the r.v. 
$$ \int g(t) \yu_\gl^{(\Eg)}(t+u)^2dt\;,$$
under $\bbQ^{(u)}_\gl$ converges in law to $\int g(t) \boldsymbol{Y_E}(t)^2 dt$ uniformly over all $u$ and all $\gl$. In turn, this implies that the corresponding probability measures converge in law in $\bar\cM$
\begin{equation}\label{Eq:CVq}
\frac{\yu_\gl^{(\Eg)}(t+u)^2 dt}{\int \yu_\gl^{(\Eg)}(t+u)^2dt} \Rightarrow  \frac{\boldsymbol{Y_E}(t)^2 dt}{\int \boldsymbol{Y_E}(t)^2 dt}\;,
\end{equation}
uniformly over all $u$ and all $\gl$.\\
Recall \eqref{Eq:hatUgl} and note that the center of mass of the probability measure on the left is given by $\frac{\hat{U}_\gl}{\Eg}-u$. Assertion (2) then ensures the convergence in law of the centers of mass:
$$ \frac{\hat{U}_\gl}{\Eg}-u \Rightarrow \boldsymbol{U_E}\;.$$
Combined with \eqref{Eq:CVq}, this implies that the following convergence in law in $\bar\cM$ holds
$$\frac{\yu_\gl^{(\Eg)}(t+\hat{U}_\gl /\Eg)^2 dt}{\int \yu_\gl^{(\Eg)}(t+\hat{U}_\gl /\Eg)^2dt} \Rightarrow  \frac{\boldsymbol{Y_E}(t+\boldsymbol{U_E})^2 dt}{\int \boldsymbol{Y_E}(t+\boldsymbol{U_E})^2 dt}\;,$$
as required.\\
Let us now prove the two assumptions above. Regarding Assumption (2), it suffices to show
$$ \limsup_{x\to\infty}\limsup_{L\to\infty} \sup_{u,\gl,\theta} {\P}^{(u)}_{\theta,\pi-\theta}\Big(\int e^{q|t-u|} \yu_\gl^{(\Eg)}(t)^2dt > x \Big)= 0\;.$$
Using the symmetry between the forward and the backward diffusions, it suffices to show
$$ \limsup_{x\to\infty}\limsup_{L\to\infty} \sup_{u,\gl,\theta} {\P}_{(-\frac{L}{2k \Eg},0) \to (u, \theta)}\Big(\int_{-\frac{L}{2k\Eg}}^u e^{q|t-u|} \frac{r_\gl^{(\Eg)}(t)^2}{r_\gl^{(\Eg)}(u)^2} dt > x \Big) = 0\;.$$
Writing $\tilde{u} = u + L/(2k \Eg)$, we deduce that this is implied by the bound on \eqref{Eq:BdMomentTildeu} (using $\tilde u \geq t_L$ and observing that $t_L \gg t_0$ therein) and Markov's inequality.\\
We concentrate on Assumption (1). First of all, the adjunction relation \eqref{Eq:Adjoint} together with Theorem \ref{Th:CVDensities} show that we can deal with the process $\hat{\bar{y}}_\gl^{(\Eg)}$ under
$$\frac1{\sqrt\Eg n(\gl)} \mu^{(\Eg)}_\gl(\theta) \mu^{(\Eg)}_\gl(\pi-\theta) \sin^2 \theta \; \bar{\P}^+_{(0,\theta)\to (u+\frac{L}{2k\Eg}, 0)} \otimes \bar{\P}^-_{(0,\pi-\theta)\to (\frac{L}{2k\Eg}-u, 0)}(\cdot) d\theta\;.$$
By the last part of Lemma \ref{Lemma:CondToUncond} we can disregard the conditionning of the diffusions and work under
$$\frac1{\sqrt\Eg n(\gl)} \mu^{(\Eg)}_\gl(\theta) \mu^{(\Eg)}_\gl(\pi-\theta) \sin^2 \theta \; \bar{\P}^+_{(0,\theta)} \otimes \bar{\P}^-_{(0,\pi-\theta)}(\cdot) d\theta\;.$$

The rest of the proof is presented separately for the Bulk and the Crossover regimes. In the Bulk regime, recall the definition of the process $Y_E$ from the beginning of Subsection \ref{Subsec:Intensity}. We simply need to show that the process $Y_\gl$ under $\sigma_\gl$ converges in law, for the local uniform topology on $\R$, to the process $Y_E$ under $\sigma_E$: this easily follows from the almost sure continuity (for the local uniform topology) of $\gl \mapsto \bar{y}_\gl$.\\

We turn to the Crossover regime. It suffices to show that uniformly over $\theta$ and $\gl$, the two r.v.
$$ \Big(\int_{[0,\infty)} g(t)\hat{\bar{y}}_\gl^{(E)}(t)^2 dt , \int_{(-\infty,0]} g(t) \hat{\bar{y}}_\gl^{(E)}(t)^2 dt \Big)\;,$$
under $\bar{\P}^+_{(0,\theta)} \otimes \bar{\P}^-_{(0,\pi-\theta)}(\cdot)$ converge in law to
$$ \Big(\int_{[0,\infty)} g(t) Y_\infty(t)^2 dt , \int_{(-\infty,0]} g(t) Y_\infty(t)^2 dt \Big)\;.$$
Note that in both cases, the two r.v.~are independent from each other, and that $(Y_\infty(t),t\ge 0)$ and $(Y_\infty(-t),t\ge 0)$ have the same law. Then it suffices to apply Lemma \ref{Lemma:SDECV} to conclude.
\end{proof}

\appendix

\section{Technical results}\label{Appendix}

\subsection{Simple estimate on the Laplace transform}
The following elementary lemma provides a quantitative estimate on the difference between the Laplace transform of a r.v.~and its Taylor expansion at $0$ (here we only push the Taylor expansion to the second order).

 \begin{lemma}\label{Lemma:ExpoTaylor}
Let $X$ be a real-valued r.v. Assume that there exist $C_0>0$ and $q_0>0$ such that $\E[e^{q|X|}] < C_0$ for all $q \le q_0$. Then, for any $q_1 < q_0$ there exist a constant $C_1 > 0$, that only depends on $C_0$, $q_0$ and $q_1$, which is such that for all $q\in [-q_1,q_1]$
$$ \big| \E[e^{qX}] - 1 - q\E[X]\big| \le C_1 q^2\;.$$
\end{lemma}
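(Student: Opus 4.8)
The statement is a standard ``Taylor expansion of the Laplace transform with controlled remainder'' lemma, and the plan is to prove it directly from the integral form of the remainder in Taylor's theorem. First I would write, for a fixed $q$ and for each fixed value $x$ of the random variable, the exact identity
\[
e^{qx} = 1 + qx + \int_0^q (q-s) x^2 e^{sx}\, ds\;,
\]
which is just the second-order Taylor formula with integral remainder applied to $s\mapsto e^{sx}$. Taking expectations and using Fubini's theorem (justified below), one gets
\[
\E[e^{qX}] - 1 - q\E[X] = \int_0^q (q-s)\, \E[X^2 e^{sX}]\, ds\;,
\]
so that the whole task reduces to bounding $\E[X^2 e^{sX}]$ uniformly over $s\in[-q_1,q_1]$.

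The key estimate is then the following: for $|s|\le q_1$ and any $\delta>0$ with $q_1+\delta\le q_0$, using $|x|^2 \le C_\delta\, e^{\delta|x|}$ for some constant $C_\delta$ depending only on $\delta$ (and $e^{sx}\le e^{|s|\,|x|}\le e^{q_1|x|}$), one has $|x|^2 e^{sx}\le C_\delta\, e^{(q_1+\delta)|x|}$, hence
\[
\big|\E[X^2 e^{sX}]\big| \le C_\delta\, \E\big[e^{(q_1+\delta)|X|}\big] \le C_\delta\, C_0\;,
\]
by the hypothesis. The same bound (with $e^{q_0|X|}$) also shows $\E[|X|e^{|sX|}]<\infty$, which legitimizes the use of Fubini in the previous step, and shows $\E[|X|]<\infty$ so that the right-hand side of the statement makes sense. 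Plugging this into the integral remainder gives
\[
\big|\E[e^{qX}] - 1 - q\E[X]\big| \le \Big|\int_0^q (q-s)\, C_\delta C_0\, ds\Big| = C_\delta C_0\, \frac{q^2}{2}\;,
\]
so the claim holds with $C_1 := \tfrac12 C_\delta C_0$, which depends only on $C_0$, $q_0$, $q_1$ (via the fixed choice $\delta := (q_0-q_1)/2$, say).

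There is really no hard step here; the only points requiring a word of care are (i) choosing $\delta$ so that $q_1+\delta$ stays strictly below $q_0$, ensuring the moment bound is available, and (ii) invoking Fubini, for which the finiteness of $\E\big[\int_0^{|q|}|q-s|\,|X|^2 e^{|sX|}\,ds\big]\le |q|^2 C_\delta C_0<\infty$ is exactly what was just established. Alternatively, if one prefers to avoid Fubini altogether, the same conclusion follows from the mean value form: $e^{qx}-1-qx = \tfrac12 q^2 x^2 e^{\xi x}$ for some $\xi$ between $0$ and $q$, hence $\big|\E[e^{qX}]-1-q\E[X]\big|\le \tfrac12 q^2\,\E[X^2 e^{q_1|X|}]$ directly (the pointwise $\xi$ depends measurably on $x$, which is harmless since we only take absolute values). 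Either route yields the stated quadratic bound with the advertised dependence of $C_1$.
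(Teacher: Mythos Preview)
Your proof is correct. It takes a somewhat different route from the paper's argument: the paper expands the moment generating function as a power series $\E[e^{qX}]=\sum_{k\ge 0} q^k\E[X^k]/k!$, observes that the hypothesis gives $q_0^k|\E[X^k]|/k!\le C_0$ for all $k$, and then bounds the tail $\sum_{k\ge 2}$ by a geometric series, obtaining the explicit constant $C_1=C_0/\big(q_0^2(1-q_1/q_0)\big)$. You instead use the integral (or Lagrange) form of the Taylor remainder for $s\mapsto e^{sx}$ pointwise, take expectations via Fubini, and bound $\E[X^2 e^{sX}]$ using $x^2\le C_\delta e^{\delta|x|}$ with $\delta<q_0-q_1$. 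Both arguments are elementary and yield constants with the advertised dependence; the paper's version gives a slightly cleaner explicit constant, while your version avoids invoking analyticity of the MGF and is arguably more self-contained.
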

\begin{proof}
The map $q\mapsto \E[e^{qX}]$ is real-analytic on $(-q_0,q_0)$ and therefore for any such $q$ we have
$$\E[e^{qX}] - 1 - q\E[X] = \sum_{k\ge 2} q^k \frac{\E[X^k]}{k!}\;.$$
By assumption $\sup_{k\ge 2} q_0^k\frac{|\E[X^k]|}{k!} \le C_0$ so that for any $|q| < q_0$ we have
$$ \big|\E[e^{qX}] - 1 - q\E[X]\big| \le C_0 \frac{(q/q_0)^2}{1-q/q_0}\;,$$
and the bound of the statement follows.
\end{proof}

\subsection{Elementary deviation estimate for general SDEs}

\begin{lemma}\label{Lemma:Mgale}
Let $X$ be the solution of $dX(t) = \alpha(t) dt + \beta(t) dB(t)$ with $X(0) = 0$. Assume that $\alpha,\beta$ are adapted processes and that there exist $C_1,C_2>0$ such that almost surely for all $t\ge 0$
$$ |\alpha(t)| \le C_1\;,\quad |\beta(t)|^2 \le C_2\;.$$
Then for all $x> 2C_1 t$ we have
$$ \P(\sup_{s\in [0,t]} |X(s)| > x) \le 2\exp(-\frac{x^2}{8 C_2 t})\;.$$
\end{lemma}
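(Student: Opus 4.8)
The plan is to reduce everything to a standard exponential martingale inequality. Write $X(t) = A(t) + M(t)$ where $A(t) := \int_0^t \alpha(s)\,ds$ is the bounded-variation part and $M(t) := \int_0^t \beta(s)\,dB(s)$ is a continuous local martingale with $\langle M\rangle_t = \int_0^t \beta(s)^2\,ds \le C_2 t$ almost surely. Since $|\alpha(s)| \le C_1$, we have $\sup_{s\in[0,t]}|A(s)| \le C_1 t$, so on the event $\{\sup_{s\in[0,t]}|X(s)| > x\}$ with $x > 2C_1 t$ we must have $\sup_{s\in[0,t]}|M(s)| > x - C_1 t > x/2$. Hence it suffices to bound $\P(\sup_{s\in[0,t]}|M(s)| > x/2)$.

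For the martingale part I would use the exponential supermartingale: for any $\theta \in \R$, the process $\mathcal{E}_\theta(s) := \exp(\theta M(s) - \tfrac{\theta^2}{2}\langle M\rangle_s)$ is a nonnegative local martingale, hence a supermartingale, with $\mathcal{E}_\theta(0) = 1$. Since $\langle M\rangle_s \le C_2 t$ for all $s \le t$, we get $\exp(\theta M(s)) \le \mathcal{E}_\theta(s)\exp(\tfrac{\theta^2}{2}C_2 t)$, so $\exp(\theta M(s))$ is dominated by a supermartingale times a deterministic constant. Applying Doob's maximal inequality for the nonnegative supermartingale $\mathcal{E}_\theta$ gives, for $\theta > 0$,
\[
\P\Big(\sup_{s\in[0,t]} M(s) > \tfrac{x}{2}\Big) \le \P\Big(\sup_{s\in[0,t]} \mathcal{E}_\theta(s) > e^{\theta x/2 - \theta^2 C_2 t/2}\Big) \le \exp\Big(-\tfrac{\theta x}{2} + \tfrac{\theta^2 C_2 t}{2}\Big).
\]
Optimizing over $\theta$ by taking $\theta = x/(2C_2 t)$ yields the bound $\exp(-x^2/(8C_2 t))$. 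The same argument applied to $-M$ controls $\P(\inf_{s\in[0,t]} M(s) < -x/2)$ by the same quantity, and a union bound gives $\P(\sup_{s\in[0,t]}|M(s)| > x/2) \le 2\exp(-x^2/(8C_2 t))$, which is exactly the claimed estimate.

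There is essentially no serious obstacle here; this is a textbook concentration argument. The only point requiring a small amount of care is the passage from the local martingale to a genuine supermartingale: one should note that $\mathcal{E}_\theta$ is a nonnegative local martingale (being a stochastic exponential), hence automatically a supermartingale by Fatou's lemma, so that Doob's maximal inequality for supermartingales applies without any integrability assumption on $\beta$ beyond the pathwise bound. One should also double-check the numerology: $x > 2C_1 t$ guarantees $x - C_1 t \ge x/2$, and the factor $8$ in the exponent comes from combining the factor $1/2$ in $x/2$ (squared, giving $1/4$) with the factor $1/2$ in the Gaussian exponent. This matches the statement, so the proof is complete modulo these routine verifications.
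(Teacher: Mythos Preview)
Your proof is correct and follows essentially the same approach as the paper's own proof: decompose $X$ into its drift and martingale parts, use $x > 2C_1 t$ to absorb the drift, and then apply the exponential martingale inequality to the martingale part. The only difference is that the paper simply cites the exponential martingale inequality from Revuz--Yor \cite[p.~153--154]{RevuzYor}, whereas you prove it directly via the stochastic exponential and Doob's maximal inequality for nonnegative supermartingales; the numerology matches exactly.
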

\begin{proof}
Let $M(t) := \int_0^t \beta(s) dB(s)$. Given the bound on the drift, it suffices to show that for all $y>C_1 t$
$$ \P(\sup_{s\in [0,t]} |M(s)| > y) \le 2\exp(-\frac{y^2}{2 C_2 t})\;.$$
This is precisely the exponential martingale inequality of~\cite[p.153-154]{RevuzYor}.
\end{proof}

\subsection{Moments estimates}\label{Subsec:Moments}

\begin{lemma}\label{Lemma:Moments}
For every $\alpha \in [0,1)$
$$ \sup_{\gl \in \R} \,\E\Big[\exp\Big(\frac{\alpha}{m_\gl^{(\Eg)}} \zeta_\gl^{(\Eg)}\Big)\Big] \le \frac{1}{1-\alpha}\;.$$
\end{lemma}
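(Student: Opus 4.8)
The plan is to show that the survival function $g(t):=\P(\zeta_\gl^{(\Eg)}>t)$ of the rotation time is submultiplicative, to deduce that every moment of $\zeta_\gl^{(\Eg)}$ is dominated by the corresponding moment of an exponential variable with the same mean, and then to sum the exponential series. By the $\pi$-periodicity of the coefficients of \eqref{SDEthetaE}, the law of $\zeta_\gl^{(\Eg)}$ does not depend on the starting point, so we fix $\theta_\gl^{(\Eg)}(0)=0$; recall that $m_\gl^{(\Eg)}=\E[\zeta_\gl^{(\Eg)}]\in(0,\infty)$ by \eqref{expressionmgl}, and that $g$ is continuous, non-increasing, with $g(0)=1$. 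Every estimate below is uniform over $\gl\in\R$ (and, in the distorted coordinates, over $E>1$), since each ingredient is.

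\emph{Step 1: submultiplicativity.} I would first prove $g(s+t)\le g(s)g(t)$ for all $s,t\ge 0$. Since $r\mapsto\lfloor\theta_\gl^{(\Eg)}(r)\rfloor_\pi$ is non-decreasing, on the event $\{\zeta_\gl^{(\Eg)}>s\}$ one has $\theta_\gl^{(\Eg)}(s)\in[0,\pi)$, and by the Markov property $\P(\zeta_\gl^{(\Eg)}>s+t\mid\cF_s)=h(\theta_\gl^{(\Eg)}(s))$ on that event, where $h(x):=\P_x\big(\theta_\gl^{(\Eg)}(r)<\pi\ \forall r\in[0,t]\big)$ and $\P_x$ is the law of \eqref{SDEthetaE} started from $x$. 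It then suffices to check that $h(x)\le h(0)=g(t)$ for $x\in[0,\pi)$. For this I would couple two solutions of \eqref{SDEthetaE} driven by the \emph{same} Brownian motion, started from $0$ and from $x\ge 0$ respectively: the drift of \eqref{SDEthetaE} and the diffusion coefficient $\theta\mapsto-\sin^2\theta$ are smooth and $\pi$-periodic, hence Lipschitz on $\R$, so the comparison theorem for one-dimensional stochastic differential equations gives $\theta^{(0)}(r)\le\theta^{(x)}(r)$ for all $r\ge 0$ almost surely, whence $\{\theta^{(x)}<\pi\text{ on }[0,t]\}\subseteq\{\theta^{(0)}<\pi\text{ on }[0,t]\}$ and $h(x)\le h(0)$. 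Plugging this into the Markov identity yields $g(s+t)=\E\big[\un_{\{\zeta_\gl^{(\Eg)}>s\}}h(\theta_\gl^{(\Eg)}(s))\big]\le g(t)\,\P(\zeta_\gl^{(\Eg)}>s)=g(s)g(t)$.

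\emph{Step 2: moment bound and conclusion.} From submultiplicativity I would then derive $\E[(\zeta_\gl^{(\Eg)})^n]\le n!\,(m_\gl^{(\Eg)})^n$ for every $n\ge 1$. Writing $T^n=n!\int_{0\le s_1\le\cdots\le s_n\le T}ds_1\cdots ds_n$, applying Tonelli's theorem, and using the measure-preserving change of variables $u_1=s_1$, $u_i=s_i-s_{i-1}$ (so that $s_n=u_1+\cdots+u_n$), one obtains $\E[(\zeta_\gl^{(\Eg)})^n]=n!\int_{[0,\infty)^n}g(u_1+\cdots+u_n)\,du_1\cdots du_n$; iterating Step 1 gives $g(u_1+\cdots+u_n)\le\prod_{i}g(u_i)$, hence the bound $n!\,(\int_0^\infty g)^n=n!\,(m_\gl^{(\Eg)})^n$. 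Finally, for $\alpha\in[0,1)$, Tonelli and this moment estimate give
$$\E\Big[\exp\Big(\tfrac{\alpha}{m_\gl^{(\Eg)}}\,\zeta_\gl^{(\Eg)}\Big)\Big]=\sum_{n\ge 0}\frac{\alpha^n}{n!\,(m_\gl^{(\Eg)})^n}\,\E[(\zeta_\gl^{(\Eg)})^n]\le\sum_{n\ge 0}\alpha^n=\frac{1}{1-\alpha}\,,$$
uniformly over all $\gl$ and $E$, which is the claim.

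The only point requiring a bit of care is the monotonicity in the initial condition used in Step 1: the diffusion coefficient $-\sin^2\theta$ degenerates at $0$ and $\pi$, but since it and the drift are globally Lipschitz on the line this causes no difficulty and the standard one-dimensional comparison theorem applies verbatim; everything else is a routine computation.
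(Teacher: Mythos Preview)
Your proof is correct and self-contained, but the route is genuinely different from the paper's. The paper simply quotes the moment bound $\E[\zeta_\gl^n]\le n!\,m_\gl^n$ from an external reference \cite[Appendix A]{AllezDumazTW} (stated there as $0\le v(n,\gl)\le m_\gl^n$ for the Taylor coefficients of the Laplace transform) and then uses analytic continuation from $\alpha<0$ to $\alpha\in[0,1)$. You instead derive the same moment bound from scratch: the coupling argument in Step~1 (monotonicity of the solution of \eqref{SDEthetaE} in its initial condition, which indeed follows from pathwise uniqueness for Lipschitz coefficients) yields submultiplicativity of the survival function, and the simplex computation in Step~2 converts this into $\E[\zeta_\gl^n]\le n!\,m_\gl^n$.

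What you gain is a fully probabilistic, reference-free argument that makes transparent \emph{why} the moments are dominated by those of an exponential law: it is precisely because the residual rotation time from any point $x\in[0,\pi)$ is stochastically smaller than a fresh rotation time from $0$. The paper's approach is shorter on the page but opaque without consulting the cited appendix; your approach is slightly longer but entirely self-explanatory and would transplant to other hitting-time problems with the same monotonicity structure.
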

\begin{proof}
Since $\zeta_\gl^{(E)}/m_\gl^{(E)}$ is equal in law to $\zeta_\gl/m_\gl$, it suffices to work with the original coordinates. It is shown in~\cite[Appendix A]{AllezDumazTW} that for all $\alpha < 0$ we have
$$\E[e^{\frac{\alpha}{m_\gl} \zeta_\gl}] = 1 + \alpha + \sum_{n\ge 2} \Big(\frac{\alpha}{m_\gl}\Big)^n v(n,\gl)\;,$$
for some $0 \le v(n,\gl) \le m_\gl^n$. The function on the r.h.s.~is analytic in $\alpha$ on $\{z\in\C: |z| < 1\}$ so that a standard analytic continuation argument allows to deduce that the identity actually holds for all $\alpha \in (-\infty,1)$. The bound of the statement then follows easily.
\end{proof}

\begin{lemma}[Uniform bound of Laplace transform of the number of eigenvalues]\label{Eq:BdExpoDS}
There exists $q >0$, such that for every $E\in\R$ 
\begin{align*} 
\sup_{L/m_E\geq 3} \,\E\Big[\exp\Big(\frac{q m_E}{L} \#\{\gl_i: \gl_i \le E\}\Big)\Big] < \infty\;.
\end{align*}
In particular, for any given $E\in\R$ the collection of r.v.~$(\#\{\gl_i: \gl_i \le E\}/L)_{L \geq 1}$ is uniformly integrable.
\end{lemma}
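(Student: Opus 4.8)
The plan is to combine the Sturm--Liouville property with a renewal/Chernoff estimate on the rotation times of the phase, keeping everything uniform in $E$. Fix $E\in\R$ and set $N:=\#\{\gl_i:\gl_i\le E\}$. Since the phase starts at $0$ at the left endpoint of the interval and $t\mapsto\lfloor\theta_E(t)\rfloor_\pi$ is non-decreasing, \eqref{SturmLiouvilleTheta} identifies $N$ with the number of completed rotations of $\theta_E$ over an interval of length $L$. Writing $S_0:=0$ and $S_n:=\zeta^{(1)}+\cdots+\zeta^{(n)}$, where the $\zeta^{(i)}$ are i.i.d.\ copies of $\zeta_E$ (independence by the strong Markov property applied at the successive times the phase hits $\pi\Z$), we have the exact renewal identity $\{N\ge n\}=\{S_n\le L\}$ for every $n\ge 1$.

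The key input is a one-sided Laplace estimate on $\zeta_E$ that is \emph{uniform} in $E$. The plan is to read this off the expansion established in the proof of Lemma \ref{Lemma:Moments}: $\E[e^{(\alpha/m_E)\zeta_E}]=1+\alpha+\sum_{n\ge 2}\alpha^n v(n,E)/m_E^n$ with $0\le v(n,E)/m_E^n\le 1$, valid for all $\alpha<1$. Taking $\alpha=-s\,m_E$ with $0<s\,m_E\le 1/4$ yields, uniformly over $E$,
\[
\E[e^{-s\zeta_E}]\le 1-s\,m_E+\frac{(s\,m_E)^2}{1-s\,m_E}\le 1-\tfrac12 s\,m_E\le e^{-\frac12 s\,m_E}\,,
\]
i.e.\ $\E[e^{-s\zeta_E}]\le e^{-c_1 s\,m_E}$ for all $0<s\le c_0/m_E$, with universal constants $c_0=1/4$, $c_1=1/2$.

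Next I would run a Chernoff bound: for $0<s\le c_0/m_E$, $\P(N\ge n)=\P(S_n\le L)\le e^{sL}\E[e^{-s\zeta_E}]^n\le e^{sL-c_1 n s\,m_E}$. The crucial point is to take $s:=\beta/L$ with $\beta$ a small \emph{universal} constant: the constraint $s\le c_0/m_E$ then reads $\beta\le c_0 L/m_E$, which holds because $L/m_E\ge 3$ provided $\beta\le 3c_0$. With $\lambda:=q\,m_E/L$, Abel summation gives
\[
\E[e^{\lambda N}]=1+(1-e^{-\lambda})\sum_{n\ge 1}e^{\lambda n}\P(N\ge n)\le 1+\lambda\, e^{\beta}\sum_{n\ge 1}e^{\,n\,m_E(q-c_1\beta)/L}\,;
\]
choosing $q<c_1\beta$ makes the geometric ratio $<1$, and since $m_E(c_1\beta-q)/L\le c_1\beta/3\le 1$ (if moreover $c_1\beta\le 3$), one has $1-e^{-m_E(c_1\beta-q)/L}\ge m_E(c_1\beta-q)/(2L)$. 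Hence
\[
\E[e^{q\,m_E N/L}]\le 1+\frac{q\,m_E}{L}\cdot e^{\beta}\cdot\frac{2L}{m_E(c_1\beta-q)}=1+\frac{2q\,e^{\beta}}{c_1\beta-q}\,,
\]
a bound independent of $L$ and $E$; it then suffices to fix $\beta:=\min(3c_0,3/c_1)$ and $q:=c_1\beta/2$. For the "in particular" claim one treats the remaining range $1\le L<3m_E$ (for fixed $E$) separately: there $\P(N\ge n)\le\P(\zeta_E\le 3m_E)^n=p_\ast^n$ with $p_\ast<1$ (the rotation time has unbounded support), giving a finite $\E[e^{\mu N}]$ for some $\mu>0$, and $N/L\le N$ for $L\ge 1$; combined with the previous bound this produces a uniformly bounded exponential moment of $N/L$ over all $L\ge 1$, hence uniform integrability.

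The only genuinely delicate point is the uniformity in $E$: one must scale the Chernoff parameter as $s\sim 1/L$ (rather than $s\sim 1/m_E$) so that neither the prefactor $e^{sL}$ nor the denominator $1-e^{-(\cdots)/L}$ of the resulting geometric series degenerates, all while respecting the constraint $s\le c_0/m_E$ — which is precisely where the standing hypothesis $L/m_E\ge 3$ is used. Everything else is a routine renewal-theory computation.
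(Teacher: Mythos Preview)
Your proof is correct and follows essentially the same route as the paper: the Sturm--Liouville/renewal identification $\{N\ge n\}=\{S_n\le L\}$, a Chernoff bound using the uniform Laplace estimate on $\zeta_E$ drawn from Lemma~\ref{Lemma:Moments}, and then summation. The only cosmetic differences are that the paper applies Lemma~\ref{Lemma:ExpoTaylor} to the centered variable $1-\zeta_E/m_E$ (obtaining $\P(N\ge k)\le e^{-qk/2}$ for $k\ge 4L/m_E$) and finishes with the integral formula for the Laplace transform, whereas you parametrize the Chernoff bound directly by $s=\beta/L$ and do an explicit Abel summation; both yield the same uniform-in-$E$ bound.
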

\begin{proof}
We need exponential tails on the probability that there are more than $k$ eigenvalues below $E$. Let $\theta_E$ start from $\theta_E(0)=0$ and denote by $\zeta^{(k)}_E, k\ge 1$ the i.i.d.~r.v.~which are such that $\sum_{i=1}^k \zeta^{(k)}_E$ is the hitting time of $k\pi$ by $\theta_E$.  By the Sturm-Liouville and the strong Markov properties, we have for all $q >0$,
\begin{align*}
\P( \#\{\gl_i: \gl_i \le E\} \geq k) &= \P\Big(\sum_{i=1}^k \zeta_E^{(i)} \leq L\Big) =\P\Big(q \sum_{i=1}^k (1 - \zeta_E^{(i)}/m_E) \geq q k - q L/m_E\Big) \\
&\leq \E[\exp(q (1 - \zeta_1/m_E))]^k \exp(-q k + q L/m_E)\,.
\end{align*}
By Lemma \ref{Lemma:ExpoTaylor} and \ref{Lemma:Moments}, there exists a constant $C_1$ (independent of $E$) such that for all $q >0$ small enough,
\begin{align*}
\P( \#\{\gl_i: \gl_i \le E\} \geq k)&\leq (1+ C_1 q^2)^k \exp(-q k + q L/m_E)\,.
\end{align*}
Taking $q_0 >0$ small enough, we deduce that for all  for all $q \in [0,q_0]$ and $k \geq 4 L/m_E$,
\begin{align*}
\P( \#\{\gl_i: \gl_i \le E\} \geq k) \leq \exp(- q k/2)\,,
\end{align*}
which implies the result, since
$$ \E\Big[\exp\Big(\frac{q m_E}{L} \#\{\gl_i: \gl_i \le E\}\Big)\Big] = 1 + \int_{u\ge 0} \frac{qm_E}{L} e^{\frac{qm_E}{L} u} \P(\#\{\gl_i: \gl_i \le E\}\ge u) du\;.$$
\end{proof}

\subsection{Integral formulas}\label{Appendix:integral}
In this paragraph, we gather useful integral formulas about the invariant measure of the phase function and the expectations of several estimates on the diffusions.

\subsubsection{Invariant measure}\label{Appendix:invariantmeasure}
The phase function $\{\theta^{(\Eg)}_\gl\}_\pi$ can be mapped to a simple additive SDE via the function $\cotan$, namely $X^{(\Eg)}_\gl := \cotan \{\theta^{(\Eg)}_\gl\}_\pi$ satisfies the SDE:
 \begin{align*}
 dX^{(\Eg)}_\gl = - (V_\gl^{(\Eg)})'(X^{(\Eg)}_\gl)dt + dB^{(\Eg)}(t),\quad \mbox{with } V_\gl^{(\Eg)}(x) := \gl \sqrt{\Eg} x + \Eg^{3/2} \frac{x^3}{3}\,,
\end{align*}
and where $X^{(\Eg)}_\gl$ immediately restarts from $+\infty$ when it blows up to $-\infty$. The diffusion $X_\gl^{(\Eg)}$ admits a unique invariant measure whose density writes $ f_\gl^{(\Eg)}(x)/m_\gl^{(\Eg)}$ where:
\begin{align*}
 &f_\gl^{(\Eg)}(x) := 2 e^{-2V_\gl^{(\Eg)}(x)} \int_{-\infty}^x e^{2V_\gl^{(\Eg)}(y)} dy\;,\quad x\in \R\;,
\end{align*}
 (see again~\cite{AllezDumazTW,DL17} for more details). Note that $\int_{-\infty}^{\infty} f_\gl^{(\Eg)}(x) dx$ coincides with $m_\gl^{(\Eg)}$, which is defined as the expectation of $\zeta_\gl^{(\Eg)}$.
 
Therefore, the unique invariant measure $\mu_\gl^{(\Eg)}$ of the Markov process $\{\theta_\gl^{(\Eg)}\}_\pi$ is the image of $f_\gl^{(\Eg)}(x) dx/m_\gl^{(\Eg)}$ through the map $x\mapsto \arccotan x$. Its density writes
$$ \mu_\gl^{(\Eg)}(\theta) = \frac{f_\gl^{(\Eg)}(\cotan \theta)}{\sin^2 \theta \;m_\gl^{(\Eg)}} =g^{(\Eg)}_\gl(\cotan \theta) \;,\quad \theta \in [0,\pi)\;,$$
where $g^{(\Eg)}_\gl(x) = (1+x^2) f_\gl^{(\Eg)}(x) /m_\gl^{(\Eg)}$.
\begin{proof}[Proof of Lemma \ref{Lemma:InvMeas}]
We concentrate on the distorted coordinates: indeed, since the bounds need to be taken uniformly over the unbounded parameter $E>1$, this is the most involved setting. For simplicity, we make the further assumption that $E$ is large: that is, for fixed $h>0$ we will assume that $E>E_0(h)$ for some implicit quantity $E_0(h)\ge 1$. The complementary case follows by adapting the arguments (and is actually simpler).\\
Let us drop abbreviate $V_\gl^{(E)}$ by $V$ to ease the notations. An integration by parts prompts
\begin{align*}
f_\gl^{(E)}(x) &= \frac{1}{V'(x)} +  2 e^{-2 V(x)}\int_{-\infty}^x e^{2 V(y)} \frac{V''(y)}{2 (V'(y))^2} dy\;.
\end{align*}
One can check that
$$\int_{-\infty}^x e^{2 V(y)} \frac{V''(y)}{2 (V'(y))^2} dy = \int_{-\infty}^x \Big(\frac{e^{2 V(y)}}{V'(y)}\Big)' u(y) dy\;,$$ 
where
$$ u(y) := \frac{V''(y) / V'(y)^2}{4 - 2V''(y) / V'(y)^2}\;.$$
There exists a constant $C_E>0$, that goes to $0$ as $E\to\infty$, such that for all $y\in\R$, we have $0\le u(y) \le C_E$. Moreover, $\Big(\frac{e^{2 V(y)}}{V'(y)}\Big)' \ge 0$ whenever $E\ge E_0(h)$. Consequently
$$ \frac{1}{V'(x)} \le f_\gl^{(E)}(x) \le \frac{1+2C_E}{V'(x)}\;.$$
Recall from \eqref{Eq:Asympmgl} that $m_\gl^{(E)} \sim \pi/(\sqrt \gl\, E)$ as $E\to\infty$. Given that $V'(x) = \gl \sqrt E + E^{3/2} x^2$ one can check that $g_\gl^{(E)}(x)$ is bounded from above and below uniformly over all $x\in\R$ and all $E>E_0(h)$. Moreover, since $C_E \to 0$ as $E\to\infty$, we have $g_\gl^{(E)}(x) \to 1/\pi$ as $E\to \infty$ uniformly over $x$.\\
Regarding the bound on the derivative of $\mu_\gl^{(E)}$, first observe that ${f_\gl^{(E)}}' = 2 - 2 f_\gl^{(E)} V'$. Consequently $\partial_\theta \mu_\gl^{(E)}(\theta) = q_\gl^{(E)}(\cotan \theta)$ where
\begin{align*}
q_\gl^{(E)}(x) = \frac{(1+x^2)^2}{m_\gl^{(E)}} \Big(-2 + 2 f_\gl^{(E)}(x) V'(x) -  \frac{2x}{1+x^2} f_\gl^{(E)}(x)\Big)\;.
\end{align*}
Applying successive integrations by parts we obtain
$$ f_\gl^{(E)}(x) =\frac{1}{V'(x)} + \frac{V''(x)}{2(V'(x))^3} + \frac{3 (V'')^2 - V''' V'}{4 (V')^5} - 2 e^{-2 V(x)}\int_{-\infty}^x e^{2 V(y)}\Big( \frac{3 (V'')^2 - V''' V'}{8 (V')^5}\Big)'dy\;,$$
so that straightforward computations allow to show that $q_\gl^{(E)}(x)$ is bounded from above uniformly in $x$ and $E>E_0(h)$.
\end{proof}

\subsubsection{Expression of the Lyapounov exponent}\label{Subsec:Lyapounov}

\begin{proof}[Proof of Proposition \ref{propo:Lyapounov}]
Assume that $\rho_\gl^{(\Eg)}, \zeta_\gl^{(\Eg)}$ are integrable. Decomposing the trajectory of $\rho_\gl^{(\Eg)}$ into i.i.d.~excursions in between the successive hitting times of $\pi\Z$ by $\theta_\gl^{(\Eg)}$, one deduces from the law of large numbers that almost surely
$$ \frac{\ln \rho_\gl^{(\Eg)}(t)}{t} \to \frac{\E[\rho_\gl^{(\Eg)}(\zeta_\gl^{(\Eg)})]}{\E[\zeta_\gl^{(\Eg)}]}\;,\quad t\to\infty\;.$$
In particular, the limit coincides with $\nu_\gl^{(\Eg)}$.\\
The drift and diffusions coefficients of the (additive) SDE satisfied by $\rho_\gl^{(\Eg)}$ are bounded. Furthermore we already saw that $\zeta_\gl^{(\Eg)}$ is integrable. Standard stochastic calculus arguments therefore show that 
\begin{align}\label{Eq:Expectlnr}
\E[\rho_\gl^{(\Eg)}(\zeta_\gl^{(\Eg)})] = \E\Big[ \int_0^{\zeta_\gl^{(\Eg)}} \Big( -\sqrt \Eg(\gl - \Eg)\sin 2\theta_\gl^{(\Eg)} -\frac12 \sin^2 2\theta_\gl^{(\Eg)} + \sin^2 \theta_\gl^{(\Eg)}\Big) dt \Big]\;.
\end{align}
Recall that $m_\gl^{(\Eg)} = \E[\zeta_\gl^{(\Eg)}]$ admits the explicit integral expression \eqref{expressionmgl}. To complete the proof, it suffices to show that
$$\E[\rho_\gl^{(\Eg)}(\zeta_\gl^{(\Eg)})] = \sqrt{2 \pi} \int_0^{+\infty} \sqrt{u} \exp(-2 \lambda u - \frac{u^3}{6} ) du\;.$$
By the standard characterization of the invariant probability measure of positive recurrent Markov processes, we have
\begin{align*}
\E[\rho_\gl^{(\Eg)}(\zeta_\gl^{(\Eg)})] 
&= \int_{0}^\pi \Big( -\sqrt \Eg(\gl - \Eg)\sin 2\theta -\frac12 \sin^2 2\theta + \sin^2 \theta \Big)\mu_\gl^{(\Eg)}(\theta) m_\gl^{(\Eg)}d\theta\;.
\end{align*}
Applying the change of variable $x = \cotan \theta$, we get
\begin{equation}\label{Eq:IntMoche}
\E[\rho_\gl^{(\Eg)}(\zeta_\gl^{(\Eg)})] = \int \Big(\frac{2x \sqrt \Eg(\Eg-\lambda)}{1+x^2} + \frac{1-x^2}{(1+x^2)^2}\Big) f_\lambda^{(\Eg)}(x)dx\;.
\end{equation}
We compute the r.h.s.~of \eqref{Eq:IntMoche}. By the Dominated Convergence Theorem, we have
$$ \E[\rho_\gl^{(\Eg)}(\zeta_\gl^{(\Eg)})] = \lim_{A\to\infty} \int_{-A}^A \Big(\sqrt \Eg(\Eg-\lambda)\frac{2x}{1+x^2} + \frac{1-x^2}{(1+x^2)^2}\Big) f_\lambda^{(\Eg)}(x)dx\;.$$
Fix $A>0$. We have (from now on, we abbreviate $f_\gl^{(\Eg)},V_\gl^{(\Eg)}$ in $f_\gl,V_\gl$)
$$ \sqrt \Eg(\Eg-\lambda)\frac{2x}{1+x^2} = 2x \Eg^{3/2} - \frac{2x}{1+x^2} V'_\gl(x)\;.$$
Using the identity $f_\gl'(x) = -2V'_\gl(x) f_\gl(x) + 2$, an integration by parts yields
\begin{align*}
\int_{-A}^A - \frac{2x}{1+x^2} V'_\gl(x) f_\gl(x) dx &= \int_{-A}^A \frac{x}{1+x^2} (f'_\gl(x)-2) dx\\
&= \Big[\frac{x}{1+x^2} f_\gl(x)\Big]_{-A}^A -  \int_{-A}^A \frac{1-x^2}{(1+x^2)^2} f_\gl(x) dx - \int_{-A}^A  \frac{2x}{1+x^2}\;.
\end{align*}
The last term vanishes by symmetry. We deduce from the previous computation that
$$ \int_{-A}^A \Big(\sqrt \Eg(\Eg-\lambda) \frac{2x}{1+x^2} + \frac{1-x^2}{(1+x^2)^2}\Big) f_\lambda(x)dx = \int_{-A}^A 2x \Eg^{3/2} f_\gl(x) dx + \Big[\frac{x}{1+x^2} f_\gl(x)\Big]_{-A}^A\;.$$
The second term vanishes as $A\to\infty$. Indeed, an integration by parts on the defining expression of $f_\gl$ and some elementary computations show that
\begin{align*}
f_\gl(x) \sim \frac1{V_\gl'(x)} = \frac{1}{\gl \sqrt\Eg + \Eg^{3/2} x^2}, \quad \mbox{when }x \to \pm \infty\;.
\end{align*}
Regarding the first term, we have
\begin{align*}
\int_{-A}^A 2x f_\gl(x) dx &= \int_{-A}^A 4x e^{-2V_\gl(x)} \int_{-\infty}^x e^{2V_\gl(y)} dy dx\;.
\end{align*}
We then apply the change of variables $(u,v) := (x-y,x+y)$ and get
\begin{align*}
\int_{-A}^A 2x f_\gl(x) dx &= \int_{u \in (0,\infty)} \int_{v\in \R} \un_{\{u+v \in [-2A,2A]\}} u e^{-2 \lambda \sqrt \Eg u - \frac{\Eg^{3/2}}{6}u^3} e^{- \frac{\Eg^{3/2}}{2} u v^2} dv du\\
&+ \int_{u \in (0,\infty)} \int_{v\in \R} \un_{\{u+v \in [-2A,2A]\}} v e^{-2 \lambda \sqrt \Eg u - \frac{\Eg^{3/2}}{6}u^3} e^{- \frac{\Eg^{3/2}}{2} u v^2} dv du\;.
\end{align*}
It is straightforward to check that the second term on the r.h.s.~goes to $0$ as $A\to\infty$. By the Dominated Convergence Theorem, the first term converges as $A\to\infty$ to
$$ \int_{0}^{+\infty} u e^{-2 \lambda \sqrt \Eg u - \frac{\Eg^{3/2}}{6} u^3} (\int_{-\infty}^{+\infty}e^{- \frac{\Eg^{3/2}}{2} u v^2} dv) du =  \frac{\sqrt{2 \pi}}{ \Eg^{3/2}} \int_0^{+\infty} \sqrt{u} e^{-2 \lambda u - \frac{u^3}{6}} du \;,$$
as required.
\end{proof}

\subsection{Control of the time spent near $\pi \Z$}\label{subsec:controlpiZ}

In the previous sections, we needed a moment bound on the r.v.
$$\Big(\int_0^1 F(\theta_\gl^{(\Eg)}(t))dt\Big)^{-1}\quad \mbox{and}\quad \Big(\int_0^1 F(\bar\theta_\gl^{(\Eg)}(t))dt\Big)^{-1}\;,$$
where $F$ is either $F(x) =\sin^2(x)$ or $F(x) = \sin^4(x)$. To prove such a bound, one needs to show that $\theta_\gl^{(\Eg)}$ does not spend too much time near $\pi\Z$ (since the function $F$ vanishes there).\\
In both cases, $F:\R \to \R_+$ is a $\pi$-periodic smooth function that satisfies, for some $k\ge 1$ and some $c>0$
\begin{equation}\label{Eq:Fck} F(x) \ge c\, d(x,\pi\Z)^k\;,\quad x\in \R\;,\end{equation}
where $d$ is the Euclidean distance. Our proof will only rely on this property, and therefore applies to a large class of functions $F$.

\begin{lemma}[Control of the time spent near $\pi \Z$ by the phase function]\label{Lemma:ExpoBoundTheta}
\emph{Original coordinates.} For any compact interval $\Delta \subset\R$, there exists a constant $q>0$ such that
$$ \sup_{\lambda\in \Delta} \sup_{\theta\in [0,\pi)} \E_{(0,\theta)}\Big[\exp\Big(q \big(\int_0^1 F(\theta_\gl(t)) dt\big)^{-\frac1{k+1}} \Big)\Big] < \infty\;.$$
\emph{Distorted coordinates.} Fix $h>0$ and set $\Delta :=  [E-h/(n(E) E) , E + h/(n(E)E)]$. There exists a constant $q>0$ such that
$$ \sup_{E>1} \sup_{\lambda\in \Delta}  \sup_{\theta\in [0,\pi)} \E_{(0,\theta)}\Big[\exp\Big(q \big(\int_0^1 F(\theta_\gl^{(E)}(t)) dt\big)^{-\frac1{k+1}} \Big)\Big] < \infty\;.$$
The same holds with $\theta_\gl, \theta_\gl^{(E)}$ replaced by $\bar{\theta}_\gl, \bar{\theta}_\gl^{(E)}$, the adjoint diffusions defined in \eqref{Eq:bartheta}.
\end{lemma}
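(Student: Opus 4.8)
The plan is to reduce the problem to a deterministic comparison argument plus a simple Gaussian estimate, exploiting the key structural fact that at the points of $\pi\Z$ the drift of the phase is strictly positive while the diffusion coefficient vanishes. Concretely, I would first observe that it suffices to prove, for some $q>0$ uniform over all parameters, a tail bound of the form $\P_{(0,\theta)}\big(\int_0^1 F(\theta_\gl^{(\Eg)}(t))\,dt < \eta\big) \le C\exp(-c\,\eta^{-1/(k+1)})$ for all small $\eta>0$, since the exponential moment then follows by the layer-cake formula. By the lower bound \eqref{Eq:Fck} it is in turn enough to control $\int_0^1 d(\theta_\gl^{(\Eg)}(t),\pi\Z)^k\,dt$ from below, i.e. to show that the phase cannot stay too close to $\pi\Z$ for too long.

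The main step is a pathwise comparison. Write the SDE for $\theta_\gl^{(\Eg)}$ as $d\theta = b(\theta)\,dt + \sigma(\theta)\,dB$ with $b(x)=\Eg^{3/2}+\sqrt\Eg(\gl-\Eg)\sin^2 x + \sin^3 x\cos x$ and $\sigma(x)=-\sin^2 x$. Near $\pi\Z$, say on $\{d(x,\pi\Z)\le \delta_0\}$ for a small fixed $\delta_0$, one has $b(x)\ge b_0>0$ uniformly over all parameters (here $b_0$ can be taken of order $\Eg^{3/2}$, but a constant lower bound suffices and is all we need), while $|\sigma(x)| = \sin^2 x \le d(x,\pi\Z)^2$. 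Thus, starting from any $\theta_0$, on the event that the phase remains within $\delta_0$ of a given point $m\pi$ throughout a time interval, $\theta_\gl^{(\Eg)}(t) - m\pi$ is bounded below by $c\, t + M(t)$ where $M$ is a martingale with quadratic variation at most $\delta_0^4 t$. Hence, for $\theta_\gl^{(\Eg)}$ to have $\int_0^1 d(\theta_\gl^{(\Eg)}(t),\pi\Z)^k\,dt<\eta$ it must be the case that on a set of times of measure at least $1-\eta^{1/(k+1)}$ the distance $d(\theta_\gl^{(\Eg)}(t),\pi\Z)$ is below $\eta^{1/(k(k+1))}$ (Chebyshev in the occupation measure), which forces the increment $c\,t$ coming from the drift to be compensated by the martingale taking an atypically small value; by Lemma \ref{Lemma:Mgale} (with drift bound $b_0$ and diffusion bound $\delta_0^4$) the probability of such a deviation of $M$ over a time window of length comparable to $\eta^{1/(k+1)}$ decays like $\exp(-c\,\eta^{-1/(k+1)})$, uniformly over all parameters. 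One must also rule out the phase making a full excursion away from $\pi\Z$ and back in short time without accumulating enough of the integral; here the monotonicity of $\lfloor\theta_\gl^{(\Eg)}\rfloor_\pi$ and the lower bound $b\ge \Eg^{3/2}\ge 1$ on the drift show that between two consecutive visits to $\pi\Z$ the phase spends a definite proportion of time at distance $\gtrsim 1$ from $\pi\Z$, so the only dangerous scenario is genuinely the ``pinned near $\pi\Z$'' one handled above.

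For the adjoint diffusions $\bar\theta_\gl^{(\Eg)}$ the same argument applies verbatim: the extra drift terms $3\sin^3\bar\theta\cos\bar\theta$ and $\sin^4\bar\theta\,\partial_\theta\mu_\gl^{(\Eg)}/\mu_\gl^{(\Eg)}$ are bounded (the latter uniformly, by Lemma \ref{Lemma:InvMeas}) and vanish at $\pi\Z$, so near $\pi\Z$ the drift of $\bar\theta_\gl^{(\Eg)}$ is $-\Eg^{3/2}+o(1)$, i.e. bounded \emph{away from $0$} with the opposite sign — which means $\bar\theta_\gl^{(\Eg)}$ is pushed \emph{toward} decreasing values and cannot be pinned just above $m\pi$ either; one runs the comparison on whichever side of $\pi\Z$ the process sits, and the diffusion coefficient $\sin^2\bar\theta$ still satisfies $|\sigma|\le d(\cdot,\pi\Z)^2$. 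The uniformity over $E>1$ is automatic because, as emphasized throughout Section \ref{Sec:Expo}, the unbounded term $\Eg^{3/2}$ only \emph{helps} (it makes the drift larger in absolute value near $\pi\Z$) and never enters the martingale part near $\pi\Z$ since $\sigma(x)^2 \le d(x,\pi\Z)^4$ there.

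The step I expect to be the main obstacle is making the ``occupation measure'' reduction fully rigorous while keeping all constants uniform in $\Eg$: one needs to choose the threshold $\delta_0$ and the scales (the fraction $1/(k+1)$ of time, the spatial scale $\eta^{1/(k(k+1))}$, the length of the time window over which Lemma \ref{Lemma:Mgale} is applied) consistently so that the drift-versus-martingale balance genuinely forces an exponentially unlikely event, and so that the possibility of the phase wandering between several integer multiples of $\pi$ within the unit time interval does not leak probability mass. This is essentially bookkeeping — a stopping-time decomposition of $[0,1]$ at successive entries/exits of the $\delta_0$-neighborhood of $\pi\Z$, with Lemma \ref{Lemma:Mgale} applied on each pinned sub-interval and a union bound over the (at most $O(\Eg^{3/2})$, but one only needs polynomially many that matter) sub-intervals — but it requires care to state cleanly.
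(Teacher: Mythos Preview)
Your proposal shares the essential structural ingredients with the paper's proof --- reduce to a tail bound $\P(\int_0^1 F(\theta_\gl^{(\Eg)})\,dt < \eps)\le Ce^{-c\,\eps^{-1/(k+1)}}$, exploit that near $\pi\Z$ the drift is bounded away from zero while the diffusion coefficient is $O(d(\cdot,\pi\Z)^2)$, and close with Lemma~\ref{Lemma:Mgale} --- but the organization is different, and the paper's choice sidesteps exactly the bookkeeping you flag as the obstacle.

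The paper does \emph{not} work on the full interval $[0,1]$ via an occupation-measure/stopping-time decomposition. Instead it fixes a short window $[0,T]$ with $T\in(0,1]$ and compares $\theta_\gl^{(\Eg)}$ to the solution $\gamma_\gl$ of the \emph{deterministic} ODE obtained by deleting the martingale term, with the same initial point. Two elementary facts about $\gamma_\gl$ are checked directly: $a:=\sup_{[0,T]}d(\gamma_\gl,\pi\Z)\ge c_0\min(T\Eg^{3/2},1)\ge c_0T$, and $\gamma_\gl$ spends a definite fraction $c_1T$ of $[0,T]$ at distance $>a/2$. Gr\"onwall then controls $\sup_{[0,T]}|\theta_\gl^{(\Eg)}-\gamma_\gl|$ by the martingale $M$, whose quadratic variation is at most $C'a^4$ per unit time while $|\theta_\gl^{(\Eg)}-\gamma_\gl|<a/4$; Lemma~\ref{Lemma:Mgale} gives $\P(\sup_{[0,T]}|\theta_\gl^{(\Eg)}-\gamma_\gl|\ge a/4)\le 2e^{-C_0/T}$ (the $a^2$ in the numerator and the $a^4$ in the bracket combine to leave only $1/(a^2T)\ge C_0/T$, with no residual $\Eg$-dependence). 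On the complementary event $\int_0^T F(\theta_\gl^{(\Eg)})\,dt\ge c_2T^{k+1}$, so taking $T=(\eps/c_2)^{1/(k+1)}$ finishes. The adjoint case is identical, replacing $\gamma_\gl$ by the flow of the adjoint drift.

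What this buys over your plan: by localizing to $[0,T]$, the paper never needs to slice $[0,1]$ into $O(\Eg^{3/2})$ pinned sub-intervals, control the random number of rotations, or union-bound across them --- the single Gr\"onwall-plus-martingale estimate already delivers a bound uniform in $\Eg$. Your Chebyshev step also has an exponent slip (with threshold $\alpha$ one gets $|\{d>\alpha\}|\le\eta/\alpha^k$, so $\alpha=\eta^{1/(k+1)}$ gives bad-set measure $\le\eta^{1/(k+1)}$, not your $\eta^{1/(k(k+1))}$), but the real issue is that on $[0,1]$ with large $\Eg$ the process rotates $\sim\Eg^{3/2}$ times, so ``the martingale must compensate the drift on a window of length $\eta^{1/(k+1)}$'' is not a single event but a union over many windows, and you have not shown that this union still has the required exponential tail uniformly in $\Eg$. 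The paper's ODE-comparison on a short window is precisely the device that makes this disappear.
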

\begin{proof}
From now on, ``all the parameters'' will refer to $\gl$ and $\theta$ when working with the original coordinates, and $E,\gl,\theta$ when working with the distorted coordinates.\\
We only need to prove that there exists a constant $c' >0$ such that for all $\eps >0$ small enough, and for all the parameters
\begin{align}\label{ineqonprobalargetime}
 \P_{(0,\theta)}\Big(\int_0^1 F(\theta^{(\Eg)}_\gl(t))dt < \epsilon\Big) \leq \exp\big(- c' \;\eps^{-\frac1{k+1}}\big)\;.
\end{align}

For simplicity, we now drop the superscript $(\Eg)$ although our proof is carried out simultaneously for the two sets of coordinates. 
The idea of the proof is to consider the solution $\gamma_\gl$ of the ODE corresponding to the deterministic part of our diffusion, to estimate the integral of $F(\gamma_\gl)$ on some interval $[0,T]$, with $T<1$, and to control the probability that $\gamma_\gl$ and $\theta_\gl$ differ on $[0,T]$.\\
Let $\gamma_\gl$ be the solution of
$$ d\gamma_\gl(t) = \Big(\Eg^{3/2} + \sqrt{\Eg} (\gl-\Eg) \sin^2 \gamma_\gl +  \sin^3 \gamma_\gl \cos\gamma_\gl\Big) dt\;,$$
starting from $\gamma_\gl(0)=\theta_\gl(0) = \theta$. We would like to bound from below the time spent by $\gamma_\gl$ near $\pi \Z$ on the time interval $[0,T]$.

\smallskip

Let us introduce the largest value (mod $\pi$) that $\gamma_\gl$ reaches on $[0,T]$, that we denote by $a := \sup_{t\in [0,T]} d(\gamma_\gl(t),\pi \Z)$. We claim that: 
\begin{itemize}
\item The distance $a$ is not too small i.e. there exists a constant $c_0\in (0,1)$ independent of all the parameters and of $T$ such that
\begin{equation}\label{Eq:aT} a \ge c_0 \min(T\Eg^{3/2},1) \ge c_0 T\;.\end{equation}
\item The function $\gamma_\gl$ spends some time above $a/2$: There exists a constant $c_1>0$ independent of all the parameters and of $T$ such that
\begin{equation}\label{Eq:infa} \int_0^T \un_{\{d(\gamma_\gl(t),\pi\Z) > a/2\}} dt \ge c_1 T\;.\end{equation}
\end{itemize}
Given these two claims, we now control the difference between $\theta_\gl$ and $\gamma_\gl$. By Gr\"onwall's Lemma there exists a constant $C>0$ such that for all $T\in [0,1]$,
$$ \sup_{t\in [0,T]} |\theta_\gl(t) - \gamma_\gl(t)| \le C \sup_{t\in[0,T]} |M(t)|\;,$$
where $M(t) := - \int_0^t \sin^2 \theta_\gl dB(s)$. Let $\tau := \inf\{t\ge 0: |\theta_\gl(t)-\gamma_\gl(t)| \ge a/4\}$. We deduce that
\begin{align*}
 \P(\tau \le T) \leq \P\Big( \sup_{t\in[0,T\wedge \tau]} |M(t)| \ge \frac{a}{4C}\Big)\;.
\end{align*}

Recall that $a = \sup_{t\in [0,T]} d(\gamma_\gl(t),\pi \Z)$. There exists a constant $C'>0$ such that for all $t \leq T\wedge \tau$, $\partial_t\langle M \rangle_t \le C' a^4$. Consequently Lemma \ref{Lemma:Mgale} yields the existence of $C_0>0$ such that
$$ \P(\tau \le T) \le 2e^{-\frac{C_0}{T}}\;.$$

\smallskip

Combining \eqref{Eq:Fck}, \eqref{Eq:aT} and \eqref{Eq:infa}, we deduce that on the event $\{\tau > T\}$ we have
$$ \int_0^T F(\theta_\gl(t))dt = \int_0^T F(\gamma_\gl(t) + \theta_\gl(t)-\gamma_\gl(t)) dt \ge c_1 c \; T (a/4)^k \ge c_2 T^{k+1}\;,$$
for some constant $c_2>0$ independent of all parameters and of $T$. For any $\eps \in (0,c_2]$, one can choose $T\in [0,1]$ such that $\eps= c_2 T^{k+1}$. Then we have
$$ \P\Big(\int_0^T F(\theta_\gl(t))dt < \eps\Big) \le \P(\tau \le T) < 2\exp\Big(- C_0 \Big(\frac{c_2}{\eps}\Big)^{\frac1{k+1}}\Big)\;,$$ 
as required. It remains to prove the two claims stated above.

\medskip

For the first point, there exists a constant $c'>0$ such that uniformly over the parameters the derivative of $\gamma_\gl$ is larger than $c'\Eg^{3/2}$ whenever $\gamma_\gl$ lies in some fixed neighborhood of $
\pi \Z$, say $[-\delta,\delta]  + \pi \Z$ with $\delta \leq \pi$. Set $c_0 := \min(c'/4,\delta)$. We now distinguish two cases. If $d(\theta,\pi\Z)$ is larger than $\min( (c'/4)T\Eg^{3/2} , \delta)$ then
$$ a \ge d(\theta,\pi\Z) \ge \min( (c'/4)T\Eg^{3/2} , \delta) \ge c_0 \min(T\Eg^{3/2},1)\;.$$
If $d(\theta,\pi\Z)$ is smaller than $\min( (c'/4)T\Eg^{3/2} , \delta)$, then until $\gamma_\gl$ reaches $\delta$, its derivative is larger than $c'\Eg^{3/2}$. Therefore by time $T$, the distance of $\gamma_\gl$ to $\pi\Z$ passes above $\min((c'/2)T\Eg^{3/2},\delta)$, and this yields $a \ge c_0 \min(T\Eg^{3/2},1)$. Finally, since $T\in [0,1]$ and $\Eg\ge 1$, the bound $c_0 \min(T\Eg^{3/2},1) \ge c_0 T$ is immediate.

\smallskip

For the second point, the absolute value of the derivative of $\gamma_\gl$ is bounded from above by $K \Eg^{3/2}$ for some $K>0$ . Consequently (the distance to $\pi\Z$ of) $\theta_\gl$ takes a time at least $a/(2K \Eg^{3/2})$ to go from $a/2$ to $a$ (or from $a$ to $a/2$). With the original coordinates, using \eqref{Eq:aT} we thus get $a/(2K) \ge c_0 T/(2K)$ as required.\\
Let us now consider the distorted coordinates: for convenience, we work under the further assumption that $E>E_0(h)$ where $E_0(h)\ge 1$ is a constant that depends only on $h$; the complementary case follows from an easy adaptation. Then the derivative of $\gamma_\gl$ is larger than $E^{3/2}/2$ everywhere. First assume that $TE^{3/2} < 4\pi$. Then by \eqref{Eq:aT} we have $a \ge c_0 T E^{3/2}/(4\pi)$ and thus we find $a/(2K E^{3/2}) \ge c_0 T /(8 \pi K)$ as required. We now assume that $TE^{3/2}\ge 4\pi$. The function $\gamma_\gl$ makes a number of rotations over the circle which is at least $TE^{3/2}/(2\pi) \ge 2$. Necessarily $a=\pi/2$. On each rotation, it spends a time at least $a/(2KE^{3/2})$ in the favorable region. We easily conclude to the second point of our claim.
\end{proof}

\subsection{A bound on the centers of mass}

Let $f,g$ be two functions with $L^2(\R)$-norm equal to one. Let $V_f,V_g$ be their centers of mass, that is, $V_f := \int t f^2(t) dt$ and $V_g:=\int t g^2(t)dt$, and let $w_f,w_g$ be the associated probability measures on $\R$ recentered at $V_f, V_g$, that is, $w_f(dt) = f^2(V_f +t)dt$ and $w_g(dt) = g^2(V_g+t)dt$. 

\begin{lemma}\label{Lemma:Centers}
Assume that there exist some constants $q,c>0$ and some $\delta \in (0,1/10)$ such that
$$ \int |f(t)-g(t)|^2 dt \le \delta^2\;,\quad \mbox{ and for all }t\in \R\;,\quad  |f(t)| \le c\, e^{-q|t-V_f|}\;,\quad |g(t)| \le c\, e^{-q|t-V_g|}\;.$$
Then, there exists a constant $C>0$, which grows at most polynomially in $c$, such that $|V_f - V_g| \le C \delta^{1/2}$ and $d_\cM(w_f,w_g) \le C\delta^{1/2}$, where $d_\cM$ is the L\'evy-Prokhorov distance introduced in \eqref{Eq:LevyPro}.
\end{lemma}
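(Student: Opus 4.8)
The statement is a stability result: two $L^2$-normalized functions that are close in $L^2$ and both exponentially localized (from their centers of mass) have close centers of mass and close recentered profiles in the L\'evy--Prokhorov distance. The plan is to first control the tails uniformly, then transfer $L^2$-closeness of $f,g$ to closeness of the centers of mass, and finally deduce closeness of the recentered measures.

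First I would record the basic tail estimate. Since $|f(t)|\le c\,e^{-q|t-V_f|}$ and $\|f\|_2=1$, integrating $f^2$ over $\{|t-V_f|>R\}$ gives a bound of order $c^2 e^{-2qR}$; the same holds for $g$. In particular, choosing $R = R_\delta := \frac{1}{2q}\ln(1/\delta)$ (times a suitable constant) makes these tail masses $\lesssim \delta$ (up to polynomial factors in $c$). Likewise the contributions $\int_{|t-V_f|>R}|t-V_f|f^2(t)\,dt$ are $\lesssim c^2(1+R)e^{-2qR}$, which is still $\lesssim \delta^{1/2}$ for this choice of $R$, using that the polynomial factor $R_\delta$ is absorbed by a small power of $\delta$. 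These are the only places exponential localization is used.

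Next, the center of mass bound. Write $V_f - V_g = \int t(f^2-g^2)\,dt = \int t (f-g)(f+g)\,dt$. Split the integral over $\{|t|\le T\}$ and its complement, where $T$ is of order $|V_f|\vee|V_g|\vee R_\delta$ plus a constant — but one must be careful that $|V_f|,|V_g|$ are a priori unbounded, so instead I would first estimate $|V_f-V_g|$ by re-centering: for any $a\in\R$, $V_f - V_g = \int (t-a)(f^2-g^2)\,dt$, and choose $a = V_f$. Then $V_f - V_g = -\int (t-V_f) g^2(t)\,dt + \int(t-V_f)f^2(t)\,dt = \int (t-V_f)(f^2-g^2)(t)\,dt$, and on the region $|t-V_f|\le R_\delta$ we bound $|f^2-g^2|=|f-g||f+g|$ by Cauchy--Schwarz: $\int_{|t-V_f|\le R_\delta}|t-V_f||f-g||f+g|\,dt \le R_\delta \|f-g\|_2 \|f+g\|_2 \le 2R_\delta\,\delta$; on the complement, the $g^2$ part is controlled using $|g(t)|\le c e^{-q|t-V_g|}$ together with $|t-V_g|\ge |t-V_f|-|V_f-V_g|$, which introduces $|V_f-V_g|$ on the right-hand side but with an exponentially small coefficient, so it can be absorbed. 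This gives $|V_f-V_g|\le C\delta^{1/2}$ after solving the resulting inequality (the $\delta^{1/2}$ rather than $\delta$ coming from balancing $R_\delta\delta$ with the tail terms).

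Finally, for the L\'evy--Prokhorov distance between $w_f(dt)=f^2(V_f+t)\,dt$ and $w_g(dt)=g^2(V_g+t)\,dt$: for any Borel set $B$, $w_f(B) = \int_B f^2(V_f+t)\,dt$, and I would compare this to $w_g(B^\eps)$ by writing $f^2(V_f+t) = g^2(V_g+t) + (f^2(V_f+t)-g^2(V_g+t))$. The difference splits as $(f(V_f+t)-g(V_f+t))(f+g)(V_f+t)$ plus $g^2(V_f+t)-g^2(V_g+t)$; the first has $L^1$-norm $\le \|f-g\|_2\|f+g\|_2\le 2\delta$ by Cauchy--Schwarz, and the second is an $L^1$-small shift of $g^2$ by the amount $|V_f-V_g|\le C\delta^{1/2}$ — here one uses that $g^2$ has $L^1$-norm $1$ and, via the exponential bound, a uniform modulus of continuity in $L^1$ under translations, giving a bound of order $|V_f-V_g|$ times a polynomial in $c$, hence $\lesssim \delta^{1/2}$. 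Choosing $\eps = C\delta^{1/2}$ large enough that $B \subseteq B^\eps$ absorbs the translation and the residual mass yields $w_f(B)\le w_g(B^\eps)+\eps$, and symmetrically; so $d_\cM(w_f,w_g)\le C\delta^{1/2}$.

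The main obstacle is the bookkeeping around the a priori unboundedness of $V_f$ and $V_g$: one cannot naively integrate $t$ against $f^2-g^2$ over a fixed window, so every estimate must be phrased relative to the (unknown) centers, and the term $|V_f-V_g|$ reappears on the right-hand side of its own bound and must be absorbed using the exponential smallness of the tail coefficients. Keeping track of the polynomial-in-$c$ constants through the $L^1$-continuity of translations of $g^2$ is the other point requiring care, but it is routine given the uniform exponential bound.
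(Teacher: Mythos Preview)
Your overall plan is sound and matches the paper's structure, but there is a genuine gap in the center--of--mass step, and a smaller (easily fixed) issue in the L\'evy--Prokhorov step.

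\textbf{Center of mass.} You write $V_f-V_g=\int(t-V_f)(f^2-g^2)\,dt$, split at radius $R_\delta$ around $V_f$, and say the $g^2$--tail ``introduces $|V_f-V_g|$ on the right-hand side but with an exponentially small coefficient, so it can be absorbed.'' This is not what actually happens. On $\{|t-V_f|>R_\delta\}$ the bound $g^2(t)\le c^2 e^{-2q|t-V_g|}$ together with $|t-V_g|\ge |t-V_f|-|V_f-V_g|$ yields a factor $e^{2q|V_f-V_g|}$ multiplying the small tail integral: the dependence on the unknown $|V_f-V_g|$ is \emph{exponential}, not linear, so there is nothing to absorb. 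An inequality of the form $x\le \epsilon(1+e^{ax})$ with $\epsilon$ small does not force $x$ to be small. The paper deals with exactly this obstruction by first establishing, via a short contradiction argument, a \emph{crude a priori bound} $|V_f-V_g|\le 2A$ with $A$ depending only on $c,q$ (essentially: if the centers were far apart, then $f$ and $g$ would be nearly disjointly supported, contradicting $\|f-g\|_2\le\delta$). Only with this a priori bound in hand does the paper set up a window of width $A'=\delta^{-1/2}$ around both centers and carry out the refined estimate you describe. Your plan is missing this preliminary step, and without it the argument does not close.

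\textbf{L\'evy--Prokhorov.} You propose to bound $\|g^2(V_f+\cdot)-g^2(V_g+\cdot)\|_{L^1}$ by $|V_f-V_g|$ times a constant, invoking ``via the exponential bound, a uniform modulus of continuity in $L^1$ under translations.'' Exponential decay of $|g|$ gives no such modulus: $g$ may oscillate on arbitrarily small scales. The paper avoids this entirely by introducing the intermediate measure $w_f'(dt)=f^2(V_g+t)\,dt$: then $d_\cM(w_g,w_f')\le\|f^2-g^2\|_{L^1}\le 2\delta$ by Cauchy--Schwarz, and $d_\cM(w_f,w_f')\le|V_f-V_g|$ directly from the definition of $d_\cM$ (enlarging the Borel set by $|V_f-V_g|$ absorbs a pure translation). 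This is both simpler and correct without any regularity of $g$.
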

\begin{proof}
Let $X,Y$ be two r.v.~with densities $f^2$ and $g^2$ respectively. Note that $V_f = \E[X]$ and $V_g = \E[Y]$. By assumption, we have for any $A>0$
$$ \P(|X-\E[X]| > A) < \frac{c^2}{q} e^{-qA}\;,\quad   \P(|Y-\E[Y]| > A) < \frac{c^2}{q} e^{-qA}\;.$$
Furthermore
\begin{align*}
\big| \P(|X-\E[Y]| \le A) - \P(|Y-\E[Y]| \le A) \big| &= \Big|\int_{[\E[Y]-A,\E[Y]+A]} (f^2(t) - g^2(t))dt\Big|\\
&\le \big(\int |f(t) - g(t)|^2dt\big)^{1/2} \big(\int |f(t) + g(t)|^2dt\big)^{1/2}\\
&\le 2{\delta}\;.
\end{align*}
Choose $A>0$ such that (it can be chosen in such a way that it grows logarithmically in $c$)
$$2\frac{c^2}{q} e^{-qA} + 2{\delta} <  2\frac{c^2}{q} e^{-qA} + 2\frac1{10}< 1\;.$$
Assume that $|\E[X]-\E[Y]|>2A$. Then
$$ \P(|X-\E[Y]| \le A) \le  \P(|X-\E[X]| > A) < \frac{c^2}{q} e^{-qA}\;,$$
while
$$ \P(|Y-\E[Y]| \le A) \ge 1-\frac{c^2}{q} e^{-qA}\;,$$
so that
$$ \big| \P(|X-\E[Y]| \le A)-\P(|Y-\E[Y]| \le A)\big| > 1 - 2\frac{c^2}{q} e^{-qA} > 2{\delta}\;,$$
thus raising a contradiction. Consequently, $|\E[X]-\E[Y]|\le 2A$.

This being given, take $A' >0$ and define the interval $I := [\min(\E[X],\E[Y]) -A', \max(\E[X],\E[Y]) +A']$, and let $m$ be its midpoint. We have
\begin{align*}
\E[X] - m = \int_{I} (t-m) f^2(t) dt + \int_{\R\backslash I} (t-m) f^2(t) dt\;,\\
\E[Y] - m = \int_{I} (t-m) g^2(t) dt + \int_{\R\backslash I} (t-m) g^2(t) dt\;.
\end{align*}
Note that
$$ \Big| \int_{I} (t-m) f^2(t) dt - \int_{I} (t-m) g^2(t) dt\Big| \le (A+A') \int_\R |f^2(t)-g^2(t)| dt \le 2(A+A') \delta\;.$$
Moreover,
\begin{align*}
\big|\int_{\R\backslash I} (t-m) f^2(t) dt \big| &\le \int_{\R\backslash [\E[X]-A',\E[X]+A'])}(|t-\E[X]| + A) c^2 e^{-2q|t-\E[X]|}\\
&\le \Big(\frac{A'}{q} + \frac1{2q^2} + \frac{A}{q}\Big)c^2e^{-2qA'}\;.
\end{align*}
The same bound holds for $\big|\int_{\R\backslash I} (t-m) g^2(t) dt\big|$. This ensures that
$$ \big| \E[X]-\E[Y]\big| \le 2(A+A') \delta + 2\Big(\frac{A'}{q} + \frac1{2q^2} + \frac{A}{q}\Big)c^2e^{-2qA'}\;.$$
Choosing $A'=\delta^{-1/2}$, we get the desired bound on $|V_f-V_g|$. Let $w'_f$ be the probability measure associated with $f^2$ recentered at $V_g$, that is, $w'_f(dt) = f^2(V_g+t)dt$. Then, it is easy to check that
$$ d_\cM(w_g,w'_f) \le \int |f^2-g^2|(t) dt \le 2 (\int |f-g|^2dt)^{1/2} \le 2 \delta\;.$$
Furthermore for any Borel set $B\subset \R$, note that for any $\epsilon > |V_f-V_g|$ we have $V_f + B \subset V_g + B^{\epsilon}$ and $V_g + B \subset V_f + B^\epsilon$ so that
\begin{align*}
w_f(B) = \int_{V_f+B} f^2(t) dt \le \int_{V_g+B^{\epsilon}} f^2(t) dt = w_f'(B^{\epsilon})\;,\\
w_f'(B) = \int_{V_g+B} f^2(t) dt \le \int_{V_f+B^{\epsilon}} f^2(t) dt = w_f(B^{\epsilon})\;,
\end{align*}
so that $d_\cM(w_f,w'_f) \le |V_f-V_g|$, thus concluding the proof.
\end{proof}

\bibliographystyle{Martin}
\bibliography{library}

\end{document}